\numberwithin{equation}{section}
\DeclareMathOperator*{\argmin}{arg\,min}
\definecolor{DukeBlue}{HTML}{001A57}
\newtheorem{theorem}{Theorem}
\newtheorem{lemma}[theorem]{Lemma}
\newtheorem{corollary}[theorem]{Corollary}
\newtheorem{remark}{Remark}[section]
\newtheorem{definition}{Definition}[section]
\newtheorem{example}[theorem]{Example}
\newtheorem{assumption}[theorem]{Assumption}
\begin{document}

\title{The Diffusion Geometry of Fibre Bundles: Horizontal Diffusion Maps}
\author{Tingran Gao\thanks{Department of Statistics and Committee on Computational and Applied Mathematics (CCAM), The University of Chicago (tingrangao@galton.uchicago.edu)}}
\date{}
\maketitle

\begin{abstract}
Kernel-based non-linear dimensionality reduction methods, such as Local Linear Embedding (LLE) and Laplacian Eigenmaps, rely heavily upon pairwise distances or similarity scores, with which one can construct and study a weighted graph associated with the dataset. When each individual data object carries additional structural details, however, the correspondence relations between these structures provide extra information that can be leveraged for studying the dataset using the graph. Based on this observation, we generalize \emph{Diffusion Maps} (DM) in manifold learning and introduce the framework of \emph{Horizontal Diffusion Maps} (HDM). We model a dataset with pairwise structural correspondences as a \emph{fibre bundle} equipped with a \emph{connection}. We demonstrate the advantage of incorporating such additional information and study the asymptotic behavior of HDM on general fibre bundles. In a broader context, HDM reveals the sub-Riemannian structure of high-dimensional datasets, and provides a nonparametric learning framework for datasets with structural correspondences.
\end{abstract}


\tableofcontents

\section{Introduction}
\label{sec:introduction}
Acquiring complex, massive, and often high-dimensional data sets has become a common practice in many fields of science. While inspiring and stimulating, these data sets can be challenging to analyze or understand efficiently. To gain insight despite the volume and dimension of the data, methods from a wide range of science fields have been brought into the picture, rooted in statistical inference, machine learning, signal processing, to mention just a few. Among the exploding research interests and directions in data science, the relation between the graph Laplacian~\cite{Chung1997} and the manifold Laplacian~\cite{Rosenberg1997Laplacian} has emerged as a useful guiding principle. Specifically, the field of \emph{non-linear dimensionality reduction} has witnessed the emergence of a variety of kernel-based spectral techniques, such as Locally Linear Embedding (LLE)~\cite{LLE2000}, ISOMAP~\cite{ISOMAP2000}, Hessian Eigenmaps~\cite{HessianLLE2003}, Local Tangent Space Alignment (LTSA)~\cite{LTSA2005}, Diffusion Maps~\cite{CoifmanLafon2006}, Orientable Diffusion Maps (ODM)~\cite{SingerWu2011ODM}, Vector Diffusion Maps (VDM)~\cite{SingerWu2012VDM}, and Schr\"odinger Eigenmaps~\cite{SSSE2014}. The general practice of these methods is to treat each object (images, texts, shapes, etc.) in the data set as a vertex of a graph, and two ``similar'' vertices are connected through an edge weighted by their similarity score. The graph is then embedded into a Euclidean space of relatively low dimensionality using the eigenvectors of the graph Laplacian (or its variant) associated with the similarity graph. Built with varying flexibility, these methods provide valuable tools for organizing complex networks and data sets by ``learning'' the global geometry from the local connectivity and weights.

In reality, graph-based data analysis is known to fall short of their expressiveness in capturing multiplex, heterogeneous, and time-varying pairwise relations commonly encountered in data science problems. Social network analysis has long been aware of the importance of preserving the ``additional information,'' such as structural, compositional, and affiliation \emph{attributes}, for avoiding potential loss of accuracy due to the over-simplified abstraction of complex social relations into simple nodes and edges in graph models \cite{Goffman1974,WF1994,BMBL2009,BCMM2015}. Recent technological advancement has also fostered an increasing trend of extending the graph-based analysis to networks of multiple types of connections, or \emph{networks of networks} \cite{DAS2014,KPB2015}, that encode multi-modal pairwise relations as \emph{multilayer} complex systems supported on a set of shared vertices \cite{BBCDG+2014,KABGMP2014,Bianconi2019}. These new developments essentially follow the same methodology of enriching the graph representation with structures beyond simple vertices/edges and scalar weights on them.

We propose in this paper \emph{Horizontal Diffusion Maps} (HDM), a novel graph-based framework for analyzing complex data sets with non-scalar or functional pairwise relations, with a focus on data sets in which similarity scores between samples can be obtained from ``correspondence relations'' between sophisticated \emph{individual structures} carried within each sample. We distinguish \emph{data objects}, which constitute the vertices of the graph, from the \emph{data points} sampled from each data object that represent the internal structure of the data object. Just like manifold learning assumes that data lie approximately on a smooth manifold, we view the data objects as approximately sampled from a smooth \emph{base} manifold, and the data points as samples on the \emph{fibres} of a fibre bundle over the base manifold; data points on the same data object are assumed to come from the same fibre. One such example is the biological shape data in geometric morphometrics (see Figure~\ref{fig:correspondence} and Section~\ref{sec:autogm}), where each individual shape is a data object and each point on the shape in a data object; similar examples can be found e.g. in image analysis, where images are data objects and pixels on each image are data points.  In many of these instances, the data acquired is too noisy, has huge degrees of freedom, or contains un-ordered (as opposed to sequential) features. Computing pairwise similarity between data objects typically requires optimizing some functional over the space of admissible \emph{pairwise structural correspondences}, and the ``optimal correspondence'' is used to assign a distance or similarity score between the two data objects under comparison. Figure~\ref{fig:correspondence} illustrates two objects from a data set of anatomical surfaces, discretized as triangular meshes; an ``optimal correspondence'' between the pair is a diffeomorphism between the two meshes that minimizes an energy functional whose minimum defines a distance between disk-type surfaces. Often the optimal correspondence encodes substantial information missing from the distance, which is merely a scalar condensed from the diffeomorphism. The HDM framework aims to mine this hidden information from pairwise structural correspondences. For a data set consisting of data objects, data points, and pairwise structural correspondences, horizontal diffusion maps provide a two-level data representation that first ``synchronizes'' the data objects with respect to ``denoised'' structure correspondences by embedding the data points into a Euclidean space, and then, building on top of the first-level embedding for the data points, embed the data objects into another Euclidean space as the second level. As the second-level embedding for the data objects leverages the rich structural information at the level of data points, they are expected to be semantically more meaningful than the spectral representation obtained from standard diffusion maps which can not take advantage of the individual structural information; the synchronized spectral representation of the data points at the first level also adds to the interpretative power of HDM, enabling detailed domain-specific analysis for the data objects that is often beyond the scope of standard diffusion maps.


\begin{figure}[htp]
  \centering  \includegraphics[width=0.3\textwidth]{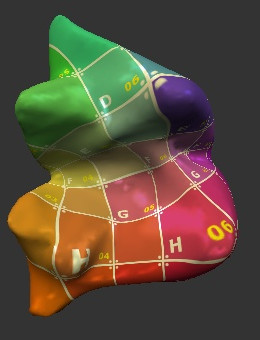}\qquad\qquad
  \includegraphics[width=0.3\textwidth]{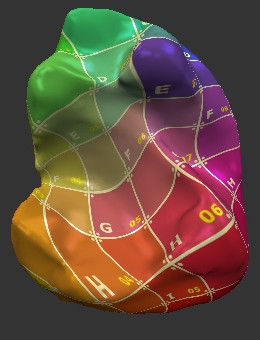}
  \caption{\emph{An optimal structural correspondence between two lemur teeth illustrated by pushing forward a texture on the left tooth onto the right tooth. This correspondence leads to the \emph{Continuous Procrustes Distance}~\cite{CP13} between shape pairs. HDM utilizes the abundant geometric information in such correspondences.}}
  \label{fig:correspondence}\vspace{-0.1in}
\end{figure}

In the remainder of this section we relate HDM to other recent work in diffusion geometry, summarize our main theoretical contribution, and then describe the organization of the paper.

\subsection{Related Work}
\label{sec:related-work}

The Diffusion Map (DM) framework~\cite{CoifmanLafon2006,LafonThesis2004,CoifmanLafonLMNWZ2005PNAS1,CoifmanLafonLMNWZ2005PNAS2,CoifmanMaggioni2006,SingerWu2011ODM,SingerWu2012VDM} proposes a probabilistic interpretation for graph-Laplacian-based dimensionality reduction algorithms. Under the assumption that the discrete graph is appropriately sampled from a smooth manifold, it assigns transition probabilities from a vertex to each of its neighbors (vertices connected to it) according to the edge weights, thus defining a graph random walk the continuous limit of which is a diffusion process~\cite{WatanabeIkeda1981SDE,Durrett1996SC} over the underlying manifold. The eigenvalues and eigenvectors of the graph Laplacian, which converge to those of the manifold Laplacian under appropriate assumptions~\cite{BelkinNiyogi2005,BelkinNiyogi2007}, then reveal intrinsic information about the smooth manifold. More precisely, ~\cite{BBG1994} proves that these eigenvectors embed the manifold into an infinite dimensional $\ell^2$ space, in such a way that the $\ell^2$ distance between embedded points equals to the \emph{diffusion distance}~\cite{CoifmanLafon2006} between the sample points on the manifold. Appropriate truncation of these sequences leads to an embedding of the smooth manifold into a finite dimensional Euclidean space, with small metric distortion.

Under the manifold assumption, ~\cite{SingerWu2011ODM,SingerWu2012VDM} recently observed that estimating random walks and diffusion processes on structures associated with the original manifold (as opposed to estimates of diffusion on the manifold itself) are able to handle a wider range of tasks, or obtain improved precision or robustness for tasks considered earlier. For instance, ~\cite{SingerWu2011ODM} constructed a random walk on the \emph{orientation bundle}~\cite[\S I.7]{BottTu1982} associated with the manifold, and translated the detection of orientability into an eigenvector problem, the solution of which reveals the existence of a global section on the orientation bundle; ~\cite{SingerWu2012VDM} introduced a random walk on the \emph{tangent bundle} associated with the manifold, and proposed an algorithm that embeds the manifold into an $l^2$ space using eigen-vector-fields instead of eigenvectors (and thus the name Vector Diffusion Maps (VDM)). Both ~\cite{SingerWu2011ODM} and ~\cite{SingerWu2012VDM} incorporate additional structures into the graph Laplacian framework: in ~\cite{SingerWu2012VDM} this is an extra orthogonal transformation (estimated from local tangent planes) attached to each weighted edge in the graph; in ~\cite{SingerWu2011ODM} the edge weights are overwritten with signs determined by this orthogonal transformation. These methods are successful, partly because they incorporate more local geometry (by estimating tangent planes) \emph{en route} to dimensionality reduction. In~\cite{Wu2013VDMEmbedding} the VDM approach is used, analogously to ~\cite{BBG1994}, to embed the manifold into a finite dimensional Euclidean space. Although the VDM embedding does not reduce the dimensionality as much as standard diffusion embedding methods, it benefits from improved robustness to noise, as illustrated by the analysis of some notoriously noisy data sets~\cite{KarouiWu2015,KW2016}.

This paper stems from the observation that it is possible to adopt the methodology of \cite{SingerWu2011ODM,SingerWu2012VDM} to tackle problems in much broader contexts, where the local geometric information can be of a different type than tangent spaces. For instance, many data sets carries abundant structural details on each individual object in the data set, such as pixels in an image, vertices/faces on a triangular mesh, or a collection of persistent diagrams~\cite{TMB2014} representing a shape. Typically, kernel eigenmap methods begin by ``abstracting away'' these details, encoding only pairwise similarites using a kernel function. The major advantage, like kernel methods in general, is the flexibility (no need to extract explicit features) and efficiency (most kernels are easy to compute); however, in some circumstances, the structural details may themselves be of interest. For example, in the geometry processing problem of analyzing large collections of 3D shapes, it is desirable to enable user exploration of shape variations across the collection, for which reducing each individual shape as a graph vertex completely ignores its spatial configuration. Furthermore, even when sticking to pairwise similarity scores significantly simplifies the data manipulation, the best way to score similarity (or to craft the kernel function) is not always clear. In practice, the similarity measure is often dictated by practical heuristics, which may be misguided for incompletely understood data.

Like ODM and VDM, HDM extends the diffusion map framework, but takes an essentially different path. In this paper, we are most interested in the scenario in which the sample points are themselves manifolds; the entire data set is thus modeled as a ``manifold of manifolds.'' To provide a mathematical model for such consideration, we first augment the manifold underlying diffusion maps, denoted as $M$, with extra dimensions. To each point $x$ on $M$, this augmentation attaches an individual manifold, denoted as $F_x$; since pairwise correspondences exist between nearby individual manifolds, we assume that around each $x\in M$ there exists an open neighborhood $U$ such that on $U$ the augmented structure ``looks like'' $U\times F$, the product of $U$ with a ``universal template'' manifold $F$. Intuitively, $M$ plays the role of a ``parametrization'' for the collection of individual manifolds $\left\{F_x\mid x\in M\right\}$. Of course, the existence of such a universal template makes sense only if the $F_x$'s are compatible with each other in some appropriate sense (e.g. each $F_x$ should at least be diffeomophic to $F$); however, such compatibility is not uncommon for many data sets of interest, as we shall see in Section \ref{sec:formulation}. This picture of parametrizing a family of manifolds with an underlying manifold is reminiscent of the modern differential geometric concept of a \emph{fibre bundle}, which played an important role in the development of geometry, topology, and mathematical physics in the past century. Therefore, we shall refer to this geometric object as the underlying \emph{fibre bundle} of the data set. Adopting the terminology from differential geometry, we call $M$ the \emph{base manifold}, the universal template manifold $F$ the \emph{fibre}, and each $F_x$ a \emph{fibre at $x$}. The fibre bundle is itself a manifold, denoted as $E$ and referred to as the \emph{total manifold}. We emphasize here that the fibre bundle setting we consider in this paper is even more general and flexible than the principal bundle formulation in \cite{SW2016}, which provided a unified theoretical framework for diffusion maps and its various extensions. Whereas the principal bundle framework \cite{SW2016} builds upon an explicitly specified Lie group and defines the fibre bundle as a quotient space of the group action, in the framework of HDM the fibre bundles are trivialized by local parallel-transports. This flexibility allows us to analyze data sets satisfying the \emph{fibre bundle assumption} (see Section~\ref{sec:formulation}) but for which the structure group can not be identified \emph{a priori}. We shall elaborate on this in greater detail in Section~\ref{sec:formulation}.

A different line of research closely related to our work is the construction of adaptive \emph{cone kernels} \cite{Diannakis2015,ZG2016} in the data-driven study of dynamical systems. Unlike the geometric setting in our work (or \cite{SingerWu2012VDM,SW2016}), the low-dimensional manifold structure lives in the phase space, and the kernels are constructed from finite differences of time-ordered data samples. In \cite{Diannakis2015}, the author constructed a family of nonhomogeneous and anisotropic family of kernels that assign higher affinity scores to more aligned velocity vectors; the resulting diffusion processes generate paths that asymptotically ``follow along'' the integral curves of the dynamical vector field. The intimate connection between the intrinsic geometry of the data and general nonhomogeneous, anisotropic kernels is characterized in great detail in \cite{BT2016}. The usage of these more general and flexible kernels is similar in spirit to our construction of the \emph{coupled diffusion operator} in Section~\ref{sec:horiz-rand-walk} in the specific case when the Riemannian metric on the fibre bundle splits into the direct sum of horizontal and vertical components; however, it is worth pointing out that the lack of a fibre bundle structure in \cite{CoifmanLafon2006,Diannakis2015,ZG2016} makes these applications of anisotropic diffusions drastically different from HDM: in our terminology, these constructions are targeted at understanding the total manifold, whereas our goal is to extract information jointly and consistently from the total manifold and the base manifold. Specifically, our definitions of \emph{horizontal base diffusion map} (HBDM) and \emph{horizontal base diffusion distance} (HBDD) in Section~\ref{sec:spectr-dist-embedd} are meaningful only at the presence of an underlying fibre bundle structure. Most strikingly, as we point out in Remark~\ref{rem:hdm-nontrivial}, the HDM framework differs in an essential way from directly applying an anisotropic diffusion kernel construction to the total manifold of the fibre bundle; the two constructions coincide only in the very special case when the fibres are totally geodesically embedded into the total manifold. These subtle phenomena are characterized for the first time in the diffusion geometry literature. We thus believe that the classical differential geomtric concepts of fibre bundles, Riemannian submersions, and horizontal/vertical Laplacians, though introduced into the blossoming field of geometric data analysis only for the first time, open new opportunities for gaining deeper understandings of real world data through the lens of diffusion geometry.

\subsection{Main Contribution}
\label{sec:main-contribution}

The main theoretical contribution of this paper is to provide a probabilistic interpretation of HDM as a \emph{horizontal random walk} on the fibre bundle, extending the random walk picture of diffusion maps to a broader class of geometric objects. In one step, the transition occurs either between points on adjacent but distinct fibres, or within the same fibre. If transitions between distinct fibres depend solely on geometric proximity specified through a metric on the total manifold $E$, this looks no different from a direct application of diffusion maps on $E$. In contrast, HDM also incorporates the pairwise correspondences between individual manifolds in the fibre bundle formulation, by requiring transitions between distinct fibres to follow certain directional constraints imposed by correspondences. The resulting random walk is no longer a standard random walk on the total manifold, but rather a ``horizontal lift'' of a random walk on the base manifold $M$. Under mild assumptions, its continuous limit is a diffusion process on the total manifold $E$, infinitesimally generated by a \emph{hypoelliptic differential operator}~\cite{Hoermander1967}. We can then map the total manifold into a Euclidean space using the eigenfunctions of this partial differential operator; discretely this corresponds to solving for the eigenvectors of \emph{graph horizontal Laplacians}. It turns out that, by varying a couple of parameters in its construction, the family of graph horizontal Laplacians includes the discrete analogue of several important and informative partial differential operators on the fibre bundle, relating the geometry of the base manifold with that of the total manifold. Compared with \cite{SingerWu2012VDM,SW2016}, the limiting differential operators can be employed to reveal the sub-Riemannian structures of a fibre bundle (or \emph{Riemannian submersion} \cite[Chapter 9]{Besse2007EinsteinManifolds}), a task that can not be accomplished in the principal bundle framework of \cite{SingerWu2012VDM,SW2016}. Our numerical experiments revealed intriguing geometric phenomena, such as \emph{adiabatic limits}, when embedding the fibre bundle using eigenvectors of these new graph Laplacians; these phenomena have never been reported in any related work within the framework of \cite{SingerWu2012VDM,SW2016}.

We note that the idea of studying diffusion processes and random walks on an ``augmentation'' of the original data set, or extracting information from pairwise structural correspondences between sample points, has appeared elsewhere as well, in several distinct fields (e.g. shape collection analysis~\cite{Kim12FuzzyCorr}, manifold alignment~\cite{WangMahadevan2009}, and neurogeometry~\cite{Boscain2014}). To our knowledge, HDM is the first theoretical framework that provides the mathematical and statistical foundation for these research directions; in particular, like diffusion maps, HDM enables decoupling the probabilistic treatment of sampling from the geometry of the data set.

The rest of this paper is organized as follows:  Section~\ref{sec:formulation} formulates the problem and discusses the \emph{fibre bundle assumption}; Section~\ref{sec:algorithm} describes the algorithmic construction; Section~\ref{sec:infin-gener} contains the main technical results of this paper, several explicit calculations on some concrete examples of fibre bundles with totally geodesic fibres, along with a numerical example on $\mathrm{SO(3)}$ to validate the theoretical findings; finite sampling results and applications to biological shape analysis problems will be pursued in Section~\ref{sec:finite-sampl-results} and Section~\ref{sec:autogm}, respectively; Section~\ref{sec:conlusion} concludes with a brief discussion and propose potentially interesting directions for future work. The differential geometry concepts essential for developing the theoretical framework, as well as technical proofs of the main results, are postponed to the appendices.


\section{Horizontal Diffusion Maps on Fibre Bundles}
\label{sec:formulation}
In this section, we build the theoretical framework of horizontal diffusion maps, and relate it, where appropriate, with practical considerations for data processing.

\subsection{The Fibre Bundle Assumption}
\label{sec:fibre-bundles}

We say that the data set consists of \emph{data objects}, and each data object contains \emph{data points} (note that the number of data points contained in each data object may vary). Pairwise structural correspondences exist between data objects with high similarity scores; each correspondence is defined from a \emph{source} data object (the collection of source data points) to a \emph{target} data object (the collection of target data points), and can either be a point-to-point map or a ``multi-valued map'' that associates a source data point with multiple target data points. In the latter case, the correspondence may also assign similarity scores between source and target data points. To put data objects, data points, and pairwise structure correspondences in a unified geometric model, we resort to the following general definition of \emph{fibre bundles}.

\begin{definition} [Fibre Bundle, \cite{BGV2003,Michor2008}]
  \label{defn:fibre-bundle}
Let $\pi:E\rightarrow M$ be a smooth map from a \emph{total manifold} $E$ to a \emph{base manifold} $M$. We call the quadruple $\mathscr{E}=\left(E,M,F,\pi\right)$ a \emph{fibre bundle} with \emph{fibre manifold} $F$ if there is an open cover $\left\{U_i\right\}$ of $M$ with diffeomorphisms
\begin{equation*}
\phi_i:\pi^{-1}\left(U_i\right)\longrightarrow U_i\times F
\end{equation*}
such that $\pi:\pi^{-1}\left(U_i\right)\rightarrow U_i$ is the composition of $\phi_i$ with projection onto the first factor $U_i$ in $U_i\times F$. In other words, the following diagram is commutative:
\begin{center}
\begin{tikzcd}[column sep=small]
\pi^{-1}\left( U_i \right) \arrow{rr}{\phi_i} \arrow[swap]{dr}{\pi}& &U_{i}\times F \arrow{dl}{\mathrm{Proj}_1}\\
& U_i & 
\end{tikzcd} 
\end{center}
\end{definition}

It follows immediately from this definition that $\pi^{-1}\left( x \right)$ is diffeomorphic to $F$ for any $x\in M$. We denote $F_x$ for $\pi^{-1}\left( x \right)$ and call it the \emph{fibre over $x\in M$}. The diffeomorphism $\phi_i:\pi^{-1}\left( U_i \right)\rightarrow U_i\times F$ is also known as a \emph{local trivialization} of the fibre bundle $\mathscr{E}$ over the open set $U_i$. Unless otherwise stated, we assume throughout this paper that $M$ and $F$ are orientable Riemannian manifolds so the volume form and integration are well-defined; the dimensions of $M,F$ will be denoted as $d=\mathrm{dim}\left( M \right)$, $n=\mathrm{dim}\left( F \right)$, respectively. Using the language of fibre bundles, our basic assumptions for the data set can be summarized as follows:
\begin{enumerate}
\item Data points lie approximately on a fibre bundle;
\item Data points on the same data object are sampled from the same fibre.
\end{enumerate}
As stated above, the data sets of interest, to which the fibre bundle assumption applies, are those with pairwise correspondences between data objects, or fibres in the fibre bundle. This additional piece of information can now be easily incorporated into the fibre bundle framework: we interpret pairwise correspondences as \emph{parallel-transports} along \emph{geodesics} on the base manifold $M$, generated by a \emph{connection} (see \ref{sec:connections}) on the fibre bundle $\mathscr{E}$. For our purposes, the base manifold $M$ plays the same role as the manifold that underlies the diffusion maps (i.e., from which data objects are drawn); additionally, we assume that each data object $x\in M$ carries a manifold structure that is diffeomorphic to a fixed fibre manifold $F$; the entire data set can thus be interpreted as a collection of instantiations of the fibre $F$ (which can be viewed as a ``template''), indexed by points on the base manifold $M$ as $\left\{F_x:x\in M\right\}$. From a fibre bundle point of view, it is natural to study the base manifold $M$ using the extra information in the total manifold $E=\cup_{x\in M} F_x$. In the remaining paper, unless otherwise specified, we assume all Riemannian manifolds are geodesically complete.



Roughly speaking, a data set satisfies the fibre bundle assumption if the data generation process can be viewed as first drawing fibres from the fibre bundle (equivalent to sampling on the base manifold) and then sampling on each fibre. The fibre bundle assumption admits ``inconsistency'' of pairwise correspondences as to the nature of the underlying geometry: though pairwise correspondences only exist (or are of high fidelity) between nearby data objects, by knitting together these correspondences along ``small hops'' one can still build correspondences between far-apart data objects (provided the base manifold is connected); correspondences constructed in this manner are generally inconsistent with each other in the sense that knitting together correspondences along different paths connecting the same data objects leads to different correspondences. In our framework, this inconsistency would reflect the \emph{curvature} and \emph{holonomy} of the \emph{connection} on the fibre bundle; see \ref{sec:connections}.

The concept of fibre bundles we chose to present above is but one of several equivalent definitions; some other popular ones can be found, for instance, in \cite[Chapter 3, Chapter 10]{Taubes2011DG}. Our choice is based not only on the conciseness and flexibility of Definition~\ref{defn:fibre-bundle}, but also---most importantly---because there is no need to explicitly specify a \emph{structure group}. In stark contrast is the equivalent definition of principal and associated fibre bundles, e.g. in \cite[Appendix A]{SW2016}, in which principal bundles are defined as orbit spaces of Lie group actions, and an associated bundle is obtained from a principal bundle through representations of the Lie group. The unification of all diffusion maps and variants in \cite{SW2016} is made possible by specifying the structure groups explicitly for each particular type of diffusion maps. Nevertheless, in most practical applications of interest to us, it is difficult to explicitly know the structure group of the fibre bundle underlying the data set. For instance, as briefly surveyed in \cite[\S4.1]{GBM2016}, for some data sets it may be unrealistic to model the correspondence relations between data objects as group elements; \emph{groupoids} seem to be the more natural abstraction in those settings. Similar consideration motivated topological data analysts to propose \emph{sheaves} as data models; see e.g. \cite{HG2018} and the references therein. Even in cases in which the pairwise correspondences can be modeled as group elements, the group can be too large to manipulate efficiently, such as Lie groups of diffeomorphisms or isometries commonly encountered in non-isometric collection shape analysis \cite{BBK2008,HuangZhangGHBG2012,HuangGuibas2013,LZ2017}. While it is not uncommon to perform \emph{reductions} of principal bundles to reduce the structure group to smaller subgroups whenever possible, in the discrete setting this often boils down to the difficult group theoretic and combinatorial problem of understanding the rigidity or approximability of representations of discrete lattices of Lie groups \cite{GKR1974,Kazhdan1982,dCGLT2017}. These difficulties motivated us to take an alternative path to viewing the data sets we encountered as fibre bundles, without explicitly referring to the structure group. Fortunately, the following classical result of R. Hermann provides us with one possible route:

\begin{theorem}[{{\cite{Hermann1960},\cite[Theorem 9.3]{Besse2007EinsteinManifolds}}}]
  \label{thm:hermann1960}
  Let $\pi:E\rightarrow M$ be a Riemannian submersion (c.f. \cite[Definition 9.8]{Besse2007EinsteinManifolds}). If $E$ is a complete, then $\pi:E\rightarrow M$ is a fibre bundle.
\end{theorem}

The proof of Theorem~\ref{thm:hermann1960} is constructive. In a nutshell, Hermann explicitly constructed local trivializations around each $x\in M$, by connecting points on the fibre $\pi^{-1} \left( x \right)$ to points on any neighboring fibre $\pi^{-1}\left( y \right)$ by horizontally lifting the geodesic on $M$ that connects $x$ to $y$. Here the horizontal lifting is made possible by the Riemannian structure on $E$, which canonically splits the tangent bundle of $E$ into the direct sum of a horizontal and vertical subbundles. As pointed out in \cite[\S9.E]{Besse2007EinsteinManifolds}, the horizontal subbundle is an \emph{Ehresmann connection} (see \ref{sec:connections}) on the fibre bundle. The structure group of the fibre bundle can then be determined from the \emph{holonomy} of the Ehresmann connection; see \cite[\S9.47]{Besse2007EinsteinManifolds} for more details. Obviously, the data required in Theorem~\ref{thm:hermann1960} to fully specify the fibre bundle structure can be provided in a slightly different order: if we are given a Riemannian manifold $M$ and another manifold $E$ but without a prescribed Riemannian structure, and $\pi:E\rightarrow M$ is a smooth submersion with an Ehresmann connection on $E$, then we can define a product Riemannian structure on $E$ which imposes the orthogonality between the horizontal and vertical subbundles of the tangent bundle $T\!E$. It is straightforward to verify that $\pi:E\rightarrow M$ is a Riemannian submersion with such a Riemannian structure on $E$. In other words, a fibre bundle can be defined equivalently by a smooth submersion between the total and base manifold (with appropriate completeness assumptions), a Riemannian structure on the base manifold, and an Ehresmann connection. We close the discussion in this section by emphasizing that, though it might appear that our fibre bundle framework ``discards'' the notion of structure groups compared with the fibre bundle formulation pioneered in \cite{SingerWu2012VDM,SW2016}, structure groups indeed are specified, just in an indirect manner.



\subsection{Horizontal Random Walks and Diffusion Processes on Fibre Bundles}
\label{sec:horiz-rand-walk}

Equipped with the geometric notion of fibre bundles, we are now ready to define a random walk tailored to a data set with pairwise correspondences. Starting from a point $e\in E$, in one step a random walker is allowed to jump to a neighboring $e'\in E$ only $\pi \left( e' \right)\neq \pi \left( e \right)$ and $e,e'$ can be joined by a horizontally lifted image of a piecewise geodesic connecting $\pi \left( e \right)$ to $\pi' \left( e \right)$ on $M$. More specifically, just as a standard random walk on $M$ jumps from $x\in M$ to a neighbor $y\in M$ following a transition probability $\mathbb{P}\left( y\mid x \right)$, a horizontal random walk jumps from $e\in F_x\subset E$ to $P_{yx}\left( e \right)\in F_y\subset E$ with transition probability $\mathbb{P}\left( y\mid x \right)$; note in particular that this transition probability depends only on the projections $x=\pi \left( e \right)$ and $y=\pi \left( P_{yx}\left( e \right) \right)$. In this sense, a horizontal random walk on the fibre bundle $\mathscr{E}$ can be viewed as ``driven'' by an underlying random walk on the base manifold $M$ (see Figure~\ref{fig:stochastic_parallel_transport} for an illustration). Passing to the continuous limit (in the weak sense as the random walk step size approaches zero, see \cite{BNR2017} and Section~\ref{sec:conv-rate-from}), both random walks on the fibre bundle and the base manifold converge to diffusion processes. For the convenience of exposition, hereafter we refer to the limit diffusion process on the fibre bundle as the \emph{horizontal lift} of the limit diffusion process on the base manifold. In the Riemannian setting, this construction is reminiscent of the notion of \emph{stochastic parallel transport}~\cite{Ito1962ICM,Hsu2002Book} in stochastic differential geometry.
\begin{figure}[htp]
  \centering
  \includegraphics[width=0.75\textwidth]{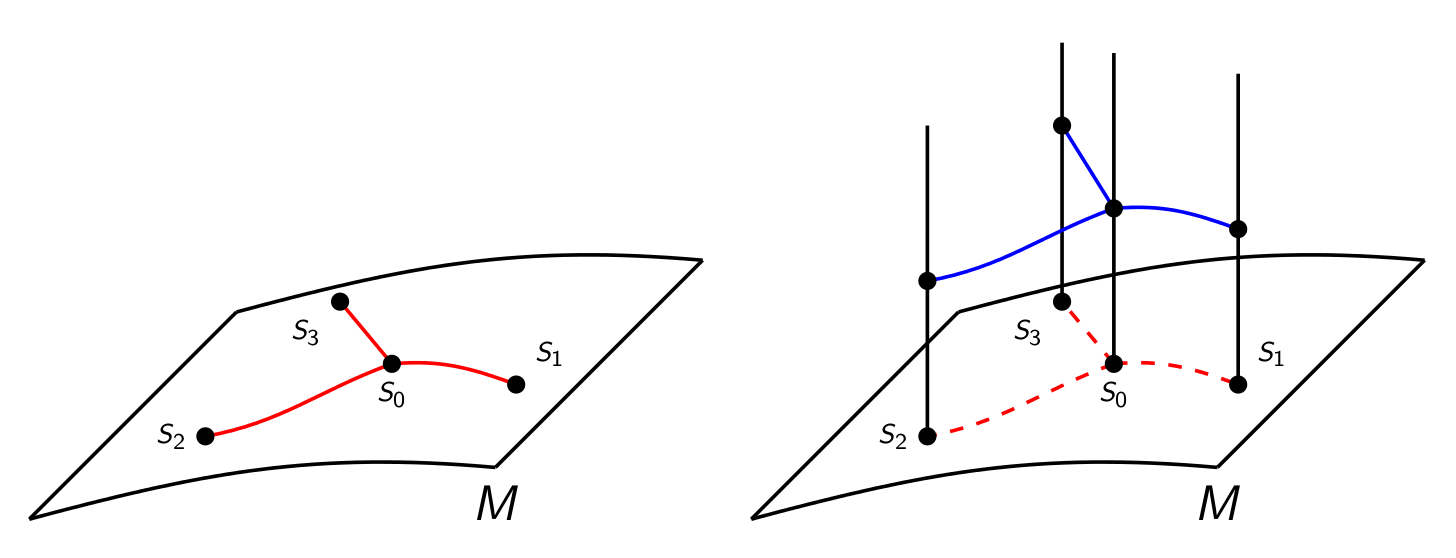}
  \caption{Left: A random walk on the base manifold $M$ jumps in one step from a point $s_0$ on $M$ to one of its neighboring points $s_1,s_2,s_3$. Right: The horizontal lift of the same random walk from $M$ to the fibre bundle $\mathscr{E}$, which jumps in one step from a point on fibre $F_{s_0}$ to a neighboring point on the fibre $F_{s_1}$, $F_{s_2}$, or $F_{s_3}$.}
  \label{fig:stochastic_parallel_transport}
\end{figure}


The following is a precise description of the horizontal diffusion processes on the fibre bundle in the language of symmetric Markov semigroups. For clarity, let us assume $M$ and $F$ are both orientable. Let \emph{kernel} $K:\mathbb{R}\rightarrow \mathbb{R}^{\geq 0}$ be a smooth function compactly supported on the unit interval $\left[ 0,1 \right]$. For \emph{bandwidth parameter} $\epsilon>0$ and any pairs of $x,y\in M$, define
\begin{equation*}
  K_{\epsilon}\left( x,y \right)=K \left( \frac{d_M^2 \left( x,y \right)}{\epsilon} \right),
\end{equation*}
where $d_M \left( \cdot,\cdot \right)$ stands for the geodesic distance on $M$. Note that $K_{\epsilon}\left( \cdot,\cdot \right)$ is non-zero only if $x,y$ are sufficiently close to each other under the Riemannian metric on $M$, due to the compactness of the kernel function $K$. For any $f\in C^{\infty}\left( E \right)$, define the diffusion operator $H_{\epsilon}:C^{\infty}\left( E \right)\rightarrow C^{\infty}\left( E \right)$ as
\begin{equation*}
  H_{\epsilon}f \left( x, v \right)=\int_MK_{\epsilon}\left( x,y \right)f \left( y, P_{yx}v \right)d\mathrm{vol}_M \left( y \right),\quad \forall x\in M, v\in F_x,
\end{equation*}
where $d\mathrm{vol}_M$ stands for the Riemannian volume element on $M$. Intuitively, at each point $\left( x,v \right)\in E$, $H_{\epsilon}$ averages the value of $f$ on a neighborhood around $\left( x,v \right)$ expanded by parallel-transporting $\left( x,v \right)$ along geodesics on $M$. Compared with the diffusion semigroup characterization of a diffusion process on the base manifold, $H_{\epsilon}$ incorporates the extra information provided by the connection. 

Variants of $H_{\epsilon}$ that involve the sampling density can be similarly constructed, which is useful since in practice it is difficult to uniformly sample from $M$. Consider a density function $p\in C^{\infty}\left( M \right)$ with respect to which the samples are generated. For simplicity, assume $p$ is bounded away from zero from below, i.e.,
\begin{equation}
\label{eq:assum_density_below}
  \int_M p\left( y \right)d\mathrm{vol}_M \left( y \right)=1\quad\textrm{and}\quad p \left( y \right)\geq p_0>0\quad\forall y\in M.
\end{equation}
Similar to the construction of diffusion maps~\cite{CoifmanLafon2006}, we can decouple the geometry of the manifold from the influence of sampling density by normalizing the integral kernel. To this end, we set
\begin{equation*}
  p_{\epsilon} \left( x \right)=\int_MK_{\epsilon}\left( x,y \right)p \left( y \right)d\mathrm{vol}_M \left( y \right)
\end{equation*}
and denote for any \emph{normalization parameter} $\alpha\in \left[ 0,1 \right]$
\begin{equation*}
  K_{\epsilon}^{\left( \alpha \right)}\left( x,y \right)=\frac{K_{\epsilon}\left( x,y \right)}{p_{\epsilon}^{\alpha}\left( x \right)p_{\epsilon}^{\alpha}\left( y \right)},
\end{equation*}
then define the \emph{horizontal diffusion operator}
\begin{equation}
\label{eq:horizontal_diffusion_operator}
  H_{\epsilon}^{\left( \alpha \right)}f \left( x,v \right)=\frac{\displaystyle\int_MK_{\epsilon}^{\left( \alpha \right)}\left( x,y \right)f \left( y, P_{yx}v \right)p \left( y \right)d\mathrm{vol}_M \left( y \right)}{\displaystyle\int_MK_{\epsilon}^{\left( \alpha \right)}\left( x,y \right)p \left( y \right)d\mathrm{vol}_M \left( y \right)},\quad \forall x\in M,v \in F_x
\end{equation}
for any $f\in C^{\infty}\left( E \right)$. As we shall see later, the infinitesimal generator of $H_{\epsilon}^{\left( \alpha \right)}$ is a second order partial differential operator in which all derivatives, as vector fields on $E$, are horizontal.

A different practical consideration is that pairwise correspondences can be relaxed from maps to couplings of probability measures when fibres are discretized. Examples for such relaxed pairwise correspondences include the \emph{soft-assign Procrustes matching}~\cite{RangarajanChuiBookstein1997} in medical imaging, the \emph{soft maps}~\cite{SNBBG2012SoftMaps} in geometry processing, the \emph{transport plans}~\cite{Villani2003,Villani2008} in optimal transportation, to name just a few. In the HDM framework, these relaxed correspondences also define diffusion processes on the fibre bundle, now consisting of two ingredients: a horizontal lift of a diffusion process on the base manifold, composed with another diffusion process within the fibre. In this setting, it is an interesting question to ``learn'' the connection from the composition of two diffusion processes; practically, this amounts to ``recovering'' maps from couplings in a collection of data objects. In some applications (see e.g. Section~\ref{sec:autogm}), one can also ``learn'' the structure of the template fibre from the connection. Making an analogy with the terminology \emph{manifold learning}, we call this type of learning problems \emph{fibre learning}. Similar to \eqref{eq:horizontal_diffusion_operator}, we can write the diffusion process considered in fibre learning in the language of Markov semigroups. Let $K:\mathbb{R}^2\rightarrow\mathbb{R}^{\geq 0}$ be a smooth bi-variate function compactly supported on the unit square $\left[ 0,1 \right]\times \left[ 0,1 \right]$, and let $\epsilon>0,\delta>0$ be \emph{bandwidth parameters}. Define
\begin{equation}
\label{eq:warped_kernel}
  K_{\epsilon,\delta}\left( x,v;y,w \right)=K \left( \frac{d_M^2 \left( x,y \right)}{\epsilon},\frac{d_{F_y}^2 \left( P_{yx}v,w \right)}{\delta} \right)
\end{equation}
for $\left( x,v \right)\in E,\left( y,w \right)\in E$, where $d_M \left( \cdot,\cdot \right), d_{F_y} \left( \cdot,\cdot \right)$ are the geodesic distances on $M$, $F_y$ respectively. Assume $p\in C^{\infty}\left( E \right)$ is a density function bounded away from zero from below, i.e.
\begin{equation}
\label{eq:density_assumption}
  \int_M\!\int_{F_y}p \left( y,w \right)d\mathrm{vol}_{F_y}\left( w \right)d\mathrm{vol}_M \left( y \right)=1
\end{equation}
and
\begin{equation}
\label{eq:density_positivity_assumption}
p \left( y,w \right)\geq p_0>0,\quad\forall \left( y,w \right)\in E.
\end{equation}
For $\alpha\in \left[ 0,1 \right]$, if we set
\begin{equation*}
  p_{\epsilon,\delta}\left( x,v \right)=\int_M\!\int_{F_y}K_{\epsilon,\delta}\left( x,v;y,w \right)p \left( y,w \right)d\mathrm{vol}_{F_y}\left( w \right)d\mathrm{vol}_M \left( y \right),
\end{equation*}
and
\begin{equation*}
  K_{\epsilon,\delta}^{\left( \alpha \right)}\left( x,v;y,w \right)=\frac{K_{\epsilon,\delta}\left( x,v;y,w \right)}{p_{\epsilon,\delta}^{\alpha}\left( x,v \right)p_{\epsilon,\delta}^{\alpha}\left( y,w \right)},
\end{equation*}
then the \emph{coupled diffusion operator} for all $\left( x,v \right)\in E$ can be written as
\begin{equation}
  \label{eq:fibre_bundle_diffusion_operator}
  H_{\epsilon,\delta}^{\left( \alpha \right)}f \left( x,v \right)=\frac{\displaystyle\int_M\!\int_{F_y}K_{\epsilon,\delta}^{\left( \alpha \right)}\left( x,v;y,w \right)f \left( y,w \right)p \left( y,w \right)d\mathrm{vol}_{F_y}\left( w \right)d\mathrm{vol}_M \left( y \right)}{\displaystyle\int_M\!\int_{F_y}K_{\epsilon,\delta}^{\left( \alpha \right)}\left( x,v;y,w \right)p \left( y,w \right)d\mathrm{vol}_{F_y}\left( w \right)d\mathrm{vol}_M \left( y \right)}. 
\end{equation}
The infinitesimal generator of $H_{\epsilon,\delta}^{\left( \alpha \right)}$ has to be considered differently from that of $H_{\epsilon}^{\left( \alpha \right)}$ due to the appearance of two (instead of one) bandwidth parameters $\epsilon,\delta$. It turns out that the relative rate with which $\epsilon$ and $\delta$ approach $0$ affects the type of the infinitesimal generator associated with the diffusion process, see Section~\ref{sec:infin-gener}.

\section{The HDM Algorithm}
\label{sec:algorithm}
In this section, we describe the manifold learning framework of HDM that extracts feature information in a data set with pairwise similarity and structural correspondences, based on the geometric intuition explained in Section~\ref{sec:formulation}. The construction of graph horizontal Laplacians and spectral embeddings apply to any fibred graph and symmetric similarity measure satisfying the structural assumptions in this section; the theoretical results to be presented in Section~\ref{sec:infin-gener} and Section~\ref{sec:finite-sampl-results} apply to the concrete scenario where the graph arises from sampling the fibre bundle as an embedded submanifold of an ambient Euclidean space and the similarity measure encodes the connection information (see Section~\ref{sec:conv-rate-from} for more details).

\subsection{Graph Horizontal Laplacians}
\label{sec:graph-hypo-lapl}
The data set considered in the HDM framework is a triplet $\left( \mathscr{X},\rho,G \right)$, where
\begin{enumerate}[(1)]
\item\label{item:10} The \emph{total} data set $\mathscr{X}$ can be partitioned into a collection of \emph{data objects} $X_1,\cdots,X_n$
  \begin{equation*}
    \mathscr{X}=\bigcup_{j=1}^n X_j,\quad X_j\cap X_k=\emptyset\textrm{ for all }1\leq j\neq k\leq n,
  \end{equation*}
where each data object $X_j$ is referred to as the $j$-th \emph{fibre} of $\mathscr{X}$, which contains $\kappa_j$ \emph{data points}
\begin{equation*}
  X_j=\left\{ x_{j,1},x_{j,2},\cdots,x_{j,\kappa_j} \right\}.
\end{equation*}
We call the collection of fibres the \emph{base} data set
\begin{equation*}
  \mathscr{B}=\left\{ X_1,X_2,\cdots,X_n \right\},
\end{equation*}
and let $\pi:\mathscr{X}\rightarrow\mathscr{B}$ be the \emph{canonical projection} from $\mathscr{X}$ to $\mathscr{B}$
\begin{equation*}
  \begin{aligned}
    \pi:\mathscr{X} & \longrightarrow \mathscr{B}\\
    x_{j,k} & \longmapsto X_j,\quad 1\leq j\leq n, 1\leq k\leq \kappa_j.
  \end{aligned}
\end{equation*}
Denote the total number of points in $\mathscr{X}$ as
\begin{equation*}
  \kappa=\kappa_1+\kappa_2+\cdots+\kappa_n.
\end{equation*}

\item\label{item:11} The \emph{mutual similarity measure} $\rho:\mathscr{X}\times \mathscr{X}\rightarrow \mathbb{R}^{\geq 0}$ is a symmetric non-negative function that vanishes on each fibre, i.e.
\begin{equation*}
    \rho \left( \xi, \eta\right)\geq 0,\quad \rho \left( \xi, \eta \right) = \rho \left( \eta, \xi \right)\quad\forall \xi,\eta\in\mathscr{X}
\end{equation*}
and
\begin{equation*}
  \rho \left( \xi, \eta \right)=0\quad\textrm{if $\xi, \eta\in X_j$ for some $1\leq j\leq n$.}
\end{equation*}
For simplicity of notation, we denote the restriction of $\rho$ on $X_i\times X_j$ as
\begin{equation*}
  \rho_{ij}\left( s,t \right):=\rho \left( x_{i,s},x_{j,t} \right)\quad \forall x_{i,s}\in X_i,\,x_{j,t}\in X_j.
\end{equation*}
In words, $\rho_{ij}$ is an $\kappa_i\times \kappa_j$ matrix on $\mathbb{R}$, to which we will refer as the \emph{mutual similarity matrix} between $X_i$ and $X_j$. Note that $\rho_{ij}=0$ if $i=j$.

\item\label{item:12} The \emph{affinity graph} $G= \left( V,E \right)$ has $\kappa$ vertices, with each $v_{i,s}$ corresponding to a point $x_{i,s}\in \mathscr{X}$. Without loss of generality, assume $G$ is connected. (In our applications, each $x_{i,s}$ is typically connected to several $x_{j,t}$'s on neighboring fibres.) If there is an edge between $v_{i,s}$ and $v_{j,t}$ in $G$, then $x_{i,s}$ is a \emph{neighbor} of $x_{j,t}$ (and $x_{j,t}$ is a neighbor of $x_{i,s}$); $X_i$ is called a \emph{neighbor} of $X_j$ (and similarly $X_j$ a neighbor of $X_i$) if there is an edge in $G$ linking one point in $X_i$ with one point in $X_j$. Implicitly, these define a graph $G_B = \left( V_B,E_B \right)$ in which vertices of $V_B$ are in one-to-one correspondences with fibres of $\mathscr{X}$, and $E_B$ encodes the neighborhood relations between pairs of fibres. $G_B$ will be called as the \emph{base affinity graph}.
\end{enumerate}

With the triplet $\left( \mathscr{X},\rho,G \right)$ specified, we detail below the construction of the \emph{graph horizontal Laplacian}. Let $W\in\mathbb{R}^{\kappa\times \kappa}$ be the \emph{weighted adjacency matrix} of the graph $G$, i.e., $W$ is a block matrix in which the $\left( i,j \right)$-th block is $\rho_{ij}$.
The $\left( s,t \right)$ entry in $W_{ij}$ stands for the edge weight $\rho_{ij}\left( s,t \right)$ between $v_{i,s}$ and $v_{j,t}$. Since $\rho_{ij}=\rho_{ji}^{\top}$, $W$ is a symmetric matrix. Let $D$ be the $\kappa\times \kappa$ diagonal matrix in which the $j$-th diagonal entry equals to the $j$-th row sum of $W$.
We define the \emph{graph horizontal Laplacian} for the triplet $\left( \mathscr{X},\rho,G \right)$ as the weighted graph Laplacian of $G$ with edge weights $W$, i.e.
\begin{equation}
  \label{eq:hypoelliptic_graph_laplacian}
  L^H:=D-W.
\end{equation}
Since $G$ is connected, the diagonal elements of $D$ are all non-zero. Thus $D$ is invertible and we can define the \emph{random-walk} and \emph{normalized} version of $L^H$:
\begin{equation}
  \label{eq:hypoelliptic_graph_laplacian_rw}
  L^H_{\textrm{rw}}:=D^{-1}L^H=I-D^{-1}W,
\end{equation}
\begin{equation}
  \label{eq:hypoelliptic_graph_laplacian_normalized}
  L^H_{*}:=D^{-1/2}L^HD^{-1/2} = I-D^{-1/2}WD^{-1/2}.
\end{equation}
Following~\cite{CoifmanLafon2006}, we can also repeat these constructions on a renormalized graph of $G$ by setting for some $\alpha\in \left[ 0, 1 \right]$
\begin{equation}
  \label{eq:W_alpha}
  W_{\alpha}:=D^{-\alpha}WD^{-\alpha}
\end{equation}
and constructing the graph horizontal Laplacians from $W_{\alpha}$ instead of $W$. More precisely, let $D_{\alpha}$ be the $\kappa\times \kappa$ diagonal matrix in which the $j$-th diagonal entry equals to the $j$-th row sum of $W_{\alpha}$, and
set
\begin{equation}
  \label{eq:hypoelliptic_graph_laplacian_alpha}
  L_{\alpha}^H:=D_{\alpha}-W_{\alpha},
\end{equation}
\begin{equation}
  \label{eq:hypoelliptic_graph_laplacian_rw_alpha}
  L^{H}_{\alpha,\textrm{rw}}:=D_{\alpha}^{-1}L^{H}_\alpha=I-D_{\alpha}^{-1}W_{\alpha},
\end{equation}
\begin{equation}
  \label{eq:hypoelliptic_graph_laplacian_normalized_alpha}
  L^{H}_{\alpha,*}:=D_{\alpha}^{-1/2}L^H_{\alpha}D_{\alpha}^{-1/2} = I-D_{\alpha}^{-1/2}W_{\alpha}D_{\alpha}^{-1/2}.
\end{equation}

\begin{remark}
  The block structure in the matrix $W$ is reminiscent of the \emph{graph connection Laplacian} \cite[Section~3]{SingerWu2012VDM}, but the constraints on the blocks are different: blocks of the graph connection Laplacian are built from matrix representations of a Lie group, but blocks of the graph horizontal Laplacian represent similarity between data objects and are matrices with non-negative entries. The normalization we apply to $W$ is the same as for standard diffusion maps \cite{CoifmanLafon2006}. Formally, the constructions of $L_{\alpha}^H$, $L^{H}_{\alpha,\textrm{rw}}$, and $L^{H}_{\alpha,*}$, as well as the embeddings derived from their eigen-decompositions, appears identical to their counterparts in standard diffusion maps, but we will show below that the unique fibred structure of the graph $G$ allows us to characterize more subtle geometry in $\left( \mathscr{X},\rho,G \right)$ than standard diffusion maps could (see Remark~\ref{rem:hdm-nontrivial} and Remark~\ref{rem:hdm-nontrivial-sampling}).
\end{remark}

\subsection{Spectral Distances and Embeddings}
\label{sec:spectr-dist-embedd}

Spectral distances are defined via the eigen-decompositions of graph Laplacians. Since $L^H_{\alpha,\mathrm{rw}}$ differs from $L^{H}_{\alpha,*}$ only by a similarity transformation
$$L_{\alpha,\textrm{*}}^H=D_{\alpha}^{1/2}L_{\alpha,\mathrm{rw}}^H D_{\alpha}^{-1/2},$$
the two Laplacians have essentially the same eigen-decomposition. We shall focus on $L^{H}_{\alpha,*}$ for the rest of this section due to its computational advantage as a real symmetric matrix.

Any right eigenvector $v\in\mathbb{R}^{\kappa}$ of $L_{\alpha,*}^H$ defines a function on the vertices of $G$. By the construction of $L_{\alpha,*}^H$, the length-$\kappa$ vector $v$, when written as the concatenation of $n$ segments of length $\kappa_1,\cdots,\kappa_n$ respectively, defines a function on each of the $n$ fibres $X_1,\cdots,X_n$. We assume eigenvectors are always column vectors, and write
\begin{equation*}
  v = \left( v^{\top}_{\left[ 1 \right]},\cdots, v_{\left[ n \right]}^{\top}\right)^{\top}
\end{equation*}
where each column vector $v_{\left[ j \right]}\in\mathbb{R}^{\kappa_j}$ defines a function on the fibre $X_j$. Now let
$$\lambda_0\leq\lambda_1\leq\lambda_2\leq\cdots\leq\lambda_{\kappa-1}$$
be the $\kappa$ eigenvalues of $L^H_{\alpha,*}$ in ascending order, and denote the eigenvector corresponding to eigenvalue $\lambda_j$ as $v_j$. By the connectivity assumption for $G$, we know from spectral graph theory~\cite{Chung1997} that $\lambda_0=0$, $\lambda_0<\lambda_1$, and $v_0$ is a constant multiple of the column vector with all entries equal to $1$; we have thus
\begin{equation*}
  0=\lambda_0<\lambda_1\leq \lambda_2\leq\cdots\leq \lambda_{\kappa-1}.
\end{equation*}
By the spectral decomposition of $L_{\alpha,*}^H$,
\begin{equation}
  \label{eq:spectral_decomposition}
  L_{\alpha,*}^H=\sum_{l=0}^{\kappa-1} \lambda_l v_lv_l^{\top},
\end{equation}
and for any fixed \emph{diffusion time} $t\in\mathbb{R}^{>0}$,
\begin{equation}
  \label{eq:spectral_decomposition_t}
  \left(L_{\alpha,*}^H\right)^t=\sum_{l=0}^{\kappa-1} \lambda^t_l v_lv_l^{\top},
\end{equation}
with the $\left( i,j \right)$-th block
\begin{equation}
  \label{eq:spectral_decomposition_block}
  \left(\left(L_{\alpha,*}^H\right)^t\right)_{ij}=\sum_{l=0}^{\kappa-1} \lambda^t_l v_{l \left[ i \right]} v_{l \left[ j \right]}^{\top}.
\end{equation}
In general, this block is not a square matrix. Its Frobenius norm can be computed as
\begin{equation}
  \label{eq:spectral_decomposition_block_HSnorm}
  \begin{aligned}
    \left\|\left(\left(L_{\alpha,*}^H\right)^t\right)_{ij}\right\|_{\mathrm{F}}^2&=\mathrm{Tr}\left[ \left(\left(L_{\alpha,*}^H\right)^t\right)_{ij}\left(\left(L_{\alpha,*}^H\right)^t\right)_{ij}^{\top} \right]=\mathrm{Tr}\left[ \sum_{l,m=0}^{\kappa-1}\lambda_l^t\lambda_m^t v_{l \left[ i \right]}v_{l \left[ j \right]}^{\top}v_{m \left[ j \right]}v_{m \left[ i \right]}^{\top} \right]\\
    &=\mathrm{Tr}\left[ \sum_{l,m=0}^{\kappa-1}\lambda_l^t\lambda_m^t v_{m \left[ i \right]}^{\top}v_{l \left[ i \right]}v_{l \left[ j \right]}^{\top}v_{m \left[ j \right]} \right]=\sum_{l,m=0}^{\kappa-1}\lambda_l^t\lambda_m^t v_{m \left[ i \right]}^{\top}v_{l \left[ i \right]}v_{l \left[ j \right]}^{\top}v_{m \left[ j \right]}.
  \end{aligned}
\end{equation}
Define the \emph{horizontal base diffusion map} (HBDM) as
\begin{equation}
  \label{eq:hbdm}
  \begin{aligned}
    V^t:\mathscr{B}&\longrightarrow \mathbb{R}^{\kappa^2}\\
    X_j &\longmapsto \left( \lambda_l^{t/2}\lambda_m^{t/2}v_{l \left[ j \right]}^{\top}v_{m \left[ j \right]} \right)_{0\leq l,m\leq \kappa-1}
  \end{aligned}
\end{equation}
with which
\begin{equation}
  \label{eq:HDM_innerproduct}
  \begin{aligned}
    \left\|\left(\left(L_{\alpha,*}^H\right)^t\right)_{ij}\right\|_{\mathrm{F}}^2 = \left\langle V^t \left( X_i \right), V^t \left( X_j \right) \right\rangle,
\end{aligned}
\end{equation}
where $\langle \cdot,\cdot\rangle$ is the standard Euclidean inner product on $\mathbb{R}^{\kappa^2}$. Furthermore, we define the \emph{horizontal base diffusion distance} (HBDD) on $\mathscr{B}$ as
\begin{equation}
  \label{eq:hbdd}
  \begin{aligned}
    d_{\mathrm{HBDM},t}&\left( X_i,X_j \right) = \left\|V^t \left( X_i \right)- V^t \left( X_j \right) \right\|\\
    &=\left\{\left\langle V^t \left( X_i \right), V^t \left( X_i \right) \right\rangle+\left\langle V^t \left( X_j \right), V^t \left( X_j \right) \right\rangle-2\left\langle V^t \left( X_i \right), V^t \left( X_j \right) \right\rangle  \right\}^{\frac{1}{2}}.
  \end{aligned}
\end{equation}

From a learning point of view, the map $V^t:\mathscr{B}\rightarrow\mathbb{R}^{\kappa^2}$ is equivalent to the unsupervised features learned from the data set with structural correspondences. Note also that HBDM embeds the base data set $\mathscr{B}$ into a Euclidean space of dimension $\kappa^2$, which is of much higher dimensionality than the size of the original data set. In practice, however, one often truncates the spectrum of graph Laplacians, thus embedding the data set into a Euclidean of reduced dimensionality. In our numerical experiments and applications (see Section~\ref{sec:autogm}), we found it is usually sufficient to retain the first $O \left( \sqrt{\kappa} \right)$ to $O \left( \kappa \right)$ eigenvalues. Even though this truncation still involves higher spatial complexity than diffusion maps, our results show that HBDM significantly outperforms DM for our purposes; we thus believe that the high-dimensional embedding is a modest price to pay for extracting the hidden information in the structural correspondences.


In addition to handling the base data set $\mathscr{B}$, HDM is also capable of embedding the total data set $\mathscr{X}$ into Euclidean spaces. Define for each \emph{diffusion time} $t\in\mathbb{R}^{+}$ the \emph{horizontal diffusion map} (HDM)
\begin{equation}
\label{eq:hdm}
  \begin{aligned}
    H^t:\mathscr{X}&\longrightarrow \mathbb{R}^{\kappa-1}\\
    x_{j,s}&\longmapsto \left(\lambda_1^t v_{1\left[ j \right]}\left( s \right),\lambda^t_2v_{2 \left[ j \right]}\left( s \right),\cdots,\lambda_{\kappa-1}^tv_{\left(\kappa-1\right)\left[ j \right]}\left( s \right) \right).
  \end{aligned}
\end{equation}
where $v_{l \left[ j \right]} \left( s \right)$ is the $s$-th entry of the $j$-th segment of the $l$-th eigenvector, with $j=1,\cdots,n, s=1,\cdots,\kappa_j$. We could also have written
\begin{equation*}
  v_{l \left[ j \right]} \left( s \right) = v_l \left( s_j+s \right),\quad\textrm{where } s_1=0 \textrm{ and } s_j=\sum_{p=1}^{j-1}\kappa_p \textrm{ for }j\geq 2.
\end{equation*}
Following a similar argument as in~\cite{CoifmanLafon2006}, we can define the \emph{horizontal diffusion distance} (HDD) on $\mathscr{X}$ as
\begin{equation}
  \label{eq:hdd}
  d_{\mathrm{HDM},t}\left( x_{i,s},x_{j,t} \right) = \left\|H^t \left( x_{i,s} \right) - H^t \left( x_{j,t} \right) \right\|.
\end{equation}
As it stands, $H^t$ embeds the total data set $\mathscr{X}$ into a Euclidean space preserving the horizontal diffusion distance on $\mathscr{X}$. Moreover, this embedding automatically suggests a global registration for all fibres that respects the mutual similarity measure $\rho$; similar ideas was already implicit in~\cite{Kim12FuzzyCorr}. For simplicity of notation, let us write
\begin{equation*}
  H^t_j:=H^t\restriction X_j
\end{equation*}
for the restriction of $H^t$ to fibre $X_j$, and call this the $j$-th \emph{component} of $H^t$. Up to scaling, the components of $H^t$ bring the fibres of $\mathscr{X}$ to a common ``template'', such that points $x_{i,s}$ and $x_{j,t}$ with a high similarity measure $\rho_{ij}\left( s,t \right)$ tend to be close to each other in the embedded Euclidean space. One can then reconstruct pairwise structural correspondences between fibres $X_i,X_j$ in the embedded Euclidean space, now between the embedded point clouds in $\mathbb{R}^{\kappa^2}$. With appropriate truncation of the spectrum of the graph horizontal Laplacian, these reconstructed structural correspondences are the ``denoised version'' of the original correspondences. Moreover, recalling that each $X_j$ is sampled from some manifold $F_j$, one can often estimate a \emph{template fibre} $F\subset\mathbb{R}^m$ from the embedded images
$$H^t_1 \left( X_1 \right),\cdots,H^t_n \left( X_n \right),$$
and extend (by interpolation) $H_j^t$ from a discrete correspondence to a continuous bijective map from $F_j$ to $F$, then build correspondence maps between an arbitrary pair $X_i,X_j$ by composing (the interpolated continuous maps) $H_i^t$ with $\left(H_j^t\right)^{-1}$. Pairwise correspondences reconstructed in this manner are globally consistent, since they all go through the common template manifold $F$. We discuss in greater detail an application of HDM and HDD to a data set of shapes in geometric morphometrics in Section~\ref{sec:autogm}.

\section{Infinitesimal Generators for Horizontal and Coupled Diffusion Operators}
\label{sec:infin-gener}


We are now ready to present the main technical results of this paper. First, we characterize the infinitesimal generator of the horizontal diffusion operator $H^{\left( \alpha \right)}_{\epsilon}$ in \eqref{eq:horizontal_diffusion_operator}.

\begin{theorem}
\label{thm:horizontal_diffusion_generator}
  Suppose $\mathscr{E}=\left(E,M,F,\pi\right)$ is a fibre bundle, $M$ is a smooth Riemannian manifold without boundary, and $E$ is equipped with the Riemannian metric \eqref{eq:metric_on_total_manifold}. For any $f\in C^{\infty}\left( E \right)$ and $\left( x,v \right)\in E$,
  \begin{equation}
    \label{eq:horizontal_generator_theorem}
    \begin{aligned}
      \lim_{\epsilon\rightarrow 0} \frac{H_{\epsilon}^{\left( \alpha \right)} f \left( x,v \right)-f \left( x,v \right)}{\epsilon}=\frac{m_2}{2m_0}\frac{\left[\Delta_H\left(f\bar{p}^{1-\alpha}\right) -f\Delta_H\bar{p}^{1-\alpha}\right]\left( x,v \right)}{p^{1-\alpha}\left( x \right)},
    \end{aligned}
  \end{equation}
where $m_0$, $m_2$ are positive constants depending only on the base manifold $M$ and the kernel $K$, $\Delta_H$ is the rough horizontal Laplacian on $E$ in \eqref{eq:bochner_horizontal_laplacian}, and $\bar{p}=p\circ\pi\in C^{\infty}\left( E \right)$.
\end{theorem}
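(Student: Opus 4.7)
The plan is to adapt the Coifman--Lafon asymptotic analysis of density-normalized diffusion operators to the horizontal lifting setting; the new ingredient is that integrating $f(y,P_{yx}v)$ against a spatially localized kernel on $M$ produces \emph{horizontal} derivatives of $f$ on $E$.

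The first step is a master asymptotic expansion: for any $h\in C^\infty(E)$,
\[
\int_M K_\epsilon(x,y)\, h(y, P_{yx}v)\, d\mathrm{vol}_M(y)
= \epsilon^{d/2}\Bigl[m_0\, h(x,v) + \epsilon\bigl(\tfrac{m_2}{2}\Delta_H h(x,v) + \omega(x) h(x,v)\bigr)\Bigr] + O(\epsilon^{d/2+2}),
\]
where $m_0, m_2$ are the zeroth and second moments of $K$, and $\omega(x)$ collects the Ricci-type correction from the expansion of $d\mathrm{vol}_M$ in geodesic normal coordinates, depending only on $x$, $M$, and $K$. To derive it, set $\tilde h(y) := h(y, P_{yx}v)$, work in geodesic normal coordinates on $M$ about $x$, and choose a geodesic orthonormal frame $\{X_j\}$ at $x$; its horizontal lifts $\{\bar X_j\}$ form a horizontal geodesic frame at $(x,v)$ by the remark preceding the theorem. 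The curve $t\mapsto (\exp_x(tX_j), P_{\exp_x(tX_j),x}v)$ is by definition a horizontal lift of a geodesic on $M$, hence a $g^E$-geodesic with initial velocity $\bar X_j$; since geodesics have zero covariant acceleration, $\partial_j^2 \tilde h(x) = \bar X_j^2 h(x,v)$ with no Christoffel corrections. Summing and invoking the sum-of-squares form of $\Delta_H$ in a horizontal geodesic frame yields $\Delta_M \tilde h(x) = \Delta_H h(x,v)$, and applying the standard Coifman--Lafon expansion to $\tilde h$ produces the master formula.

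The second step is to apply this expansion three times. With $h = \bar p$ it gives the expansion of $p_\epsilon(x)$. In the numerator of $H_\epsilon^{(\alpha)}$, pull the $x$-only factor $p_\epsilon^{-\alpha}(x)$ outside the integral and apply the master expansion with $h(y,w) = f(y,w)\, p_\epsilon^{-\alpha}(y)\, p(y)$; treat the denominator similarly with $f \equiv 1$. Using $p_\epsilon^{-\alpha}(y) = (\epsilon^{d/2}m_0)^{-\alpha} p(y)^{-\alpha}(1 + O(\epsilon))$, the effective integrands at leading order are $f\bar p^{1-\alpha}$ and $\bar p^{1-\alpha}$ respectively. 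The $\epsilon^{d/2}$, $m_0^{-\alpha}$, and $\omega$-dependent factors cancel between numerator and denominator, and using $\Delta_H \bar p^{1-\alpha}(x,v) = \Delta_M p^{1-\alpha}(x)$ (the lift relation for functions pulled back from $M$) yields, after a Leibniz-type manipulation,
\[
H_\epsilon^{(\alpha)} f(x,v) - f(x,v) = \epsilon\cdot \frac{m_2}{2m_0}\cdot \frac{\Delta_H(f\bar p^{1-\alpha})(x,v) - f(x,v)\,\Delta_H\bar p^{1-\alpha}(x,v)}{p^{1-\alpha}(x)} + O(\epsilon^2),
\]
from which the claim follows on dividing by $\epsilon$ and letting $\epsilon \to 0$.

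The main obstacle is in the first step: ensuring that the Taylor expansion produces the \emph{rough} horizontal Laplacian $\Delta_H$ rather than the full $\Delta_E$, or the sub-Riemannian horizontal Laplacian $\Delta_E^H$ which differs by a mean-curvature term as emphasized in the remark above. Two facts make this work. First, parallel transport along a geodesic on $M$ yields a horizontal geodesic in $E$, so no vertical sampling occurs and $\Delta_E^V$ never enters. Second, the O'Neill-type identity $\nabla^E_{\bar X_j}\bar X_j = \overline{\nabla^M_{X_j}X_j}$ for a Riemannian submersion ensures that in a geodesic frame on $M$ the covariant acceleration vanishes purely horizontally at $(x,v)$, with no vertical remainder, so that the sum-of-squares expression for $\Delta_H$ applies exactly and no second-fundamental-form correction appears.
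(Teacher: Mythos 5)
Your proposal is correct and follows essentially the same route as the paper's Appendix~\ref{sec:proof_biase} proof: the key lemma is the kernel expansion for $f\left( y,P_{yx}v \right)$, obtained from the facts that horizontal lifts of base geodesics are $g^E$-geodesics and that $\left(\nabla^E_{\bar{X}_j}\bar{X}_j\right)^V=0$, so the second-order term is the rough horizontal Laplacian in its sum-of-squares form, after which the $\alpha$-normalization cancellation is the standard Coifman--Lafon computation. Your repackaging of that lemma---applying the scalar expansion to $\tilde{h}\left( y \right)=h\left( y,P_{yx}v \right)$ and then identifying $\Delta_M\tilde{h}\left( x \right)=\Delta_H h\left( x,v \right)$---is just cleaner bookkeeping of the paper's Taylor expansion along horizontal lifts, not a different argument.
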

The proof of Theorem~\ref{thm:horizontal_diffusion_generator} can be found in \ref{sec:proof_biase}. 
\begin{corollary}
  Under the same assumptions as in {\rm Theorem~\ref{thm:horizontal_diffusion_generator}}, when $\alpha=1$,
  \begin{equation}
    \label{eq:corollary_horizontal_diffusion_generator}
    \lim_{\epsilon\rightarrow 0} \frac{H_{\epsilon}^{\left( 1 \right)} f \left( x,v \right)-f \left( x,v \right)}{\epsilon} = \frac{m_2}{2m_0}\Delta_Hf \left( x,v \right).
  \end{equation}
\end{corollary}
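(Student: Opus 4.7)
The corollary is a direct specialization of Theorem~\ref{thm:horizontal_diffusion_generator} at the parameter value $\alpha = 1$, so my plan is simply to substitute this value into the right-hand side of \eqref{eq:horizontal_generator_theorem} and simplify. The two ingredients I need are (i) that $\bar p^{1-\alpha}$ becomes the constant function $1$ on $E$ when $\alpha = 1$, and likewise $p^{1-\alpha}(x) \equiv 1$; and (ii) that the rough horizontal Laplacian $\Delta_H$ annihilates constants.

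For the first point, $1-\alpha = 0$ gives $\bar p^{\,0} \equiv 1$ on $E$ and $p^{\,0} \equiv 1$ on $M$, with no issues since $p$ is bounded away from zero by assumption \eqref{eq:assum_density_below}, so the zeroth power is unambiguous. For the second point, $\Delta_H$ is a second order differential operator with no zeroth order term: from the local expression \eqref{eq:bochner_lifted_orthonormal_frame},
\begin{equation*}
\Delta_H \mathbf{1}(e) = \sum_{j=1}^d \bar X_j^2 \mathbf{1}(e) - \Bigl(\sum_{j=1}^d \bigl(\nabla^E_{\bar X_j}\bar X_j\bigr)^H\Bigr)\mathbf{1}(e) = 0,
\end{equation*}
since every derivative of the constant function $\mathbf{1}$ vanishes.

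With these two observations in hand, the numerator of the right-hand side of \eqref{eq:horizontal_generator_theorem} becomes
\begin{equation*}
\Delta_H(f \cdot \mathbf{1})(x,v) - f(x,v)\,\Delta_H\mathbf{1}(x,v) = \Delta_H f(x,v) - 0 = \Delta_H f(x,v),
\end{equation*}
while the denominator $p^{1-\alpha}(x)$ equals $1$. Multiplying by the constant $m_2/(2m_0)$ yields exactly the right-hand side of \eqref{eq:corollary_horizontal_diffusion_generator}.

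There is no real obstacle here; the entire content of the corollary is the observation that normalizing the kernel by the full sampling density ($\alpha = 1$) exactly cancels the $\bar p^{1-\alpha}$ correction that would otherwise appear, leaving only the geometric term $\Delta_H f$. This is the direct analogue of the well-known fact in diffusion maps \cite{CoifmanLafon2006} that $\alpha = 1$ recovers the Laplace--Beltrami operator independently of the sampling density, and it would appear as a one-line consequence once Theorem~\ref{thm:horizontal_diffusion_generator} is established.
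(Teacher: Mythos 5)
Your proposal is correct and matches the paper's treatment: the corollary is stated in the paper as an immediate consequence of Theorem~\ref{thm:horizontal_diffusion_generator} with no separate proof, and your substitution of $\alpha=1$, the observation that $\bar p^{\,0}\equiv 1$, and the fact that $\Delta_H$ annihilates constants is exactly the intended one-line argument.
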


Characterizing the infinitesimal generator of the coupled diffusion operator $H_{\epsilon,\delta}^{\left( \alpha \right)}$ is slightly more subtle: the generator of the diffusion process depends on the relative speed at which the two bandwidth parameters $\epsilon,\delta$ approach $0$. For clarity, we first state the result for the case when the ratio $\delta/\epsilon$ remains bounded as $\epsilon,\delta\rightarrow0$.

\begin{theorem}[Bounded Ratio $\delta/\epsilon$]
\label{thm:warped_diffusion_generator}
  Suppose $\mathscr{E}=\left(E,M,F,\pi\right)$ is a fibre bundle, $M$ is a smooth Riemannian manifold without boundary, and $E$ is equipped with the Riemannian metric \eqref{eq:metric_on_total_manifold}. For any $f\in C^{\infty}\left( E \right)$ and $\left( x,v \right)\in E$, if $\delta=O \left( \epsilon \right)$ as $\epsilon\rightarrow0$,
then
  \begin{equation}
    \label{eq:warped_horizontal_generator_theorem}
    \begin{aligned}
      H_{\epsilon,\delta}^{\left( \alpha \right)} & f \left( x,v \right) = f \left( x,v \right) + \epsilon \frac{m_{21}}{2m_0} \frac{\left[\Delta_H \left( fp^{1-\alpha} \right)-f\Delta_Hp^{1-\alpha} \right]\left( x,v \right)}{p^{1-\alpha}\left( x,v \right)}\\
      &+\delta\frac{m_{22}}{2m_0} \frac{\left[\Delta_E^V \left( fp^{1-\alpha} \right)-f\Delta_E^Vp^{1-\alpha} \right]\left( x,v \right)}{p^{1-\alpha}\left( x,v \right)}+O \left( \epsilon^2+\epsilon\delta+\delta^2 \right),
    \end{aligned}
  \end{equation}
where $m_0$, $m_{21}$, $m_{22}$ are positive constants depending only on the total manifold $E$ and the kernel $K$, $\Delta_H$ is the rough horizontal Laplacian on $E$ defined in \eqref{eq:bochner_horizontal_laplacian}, and $\Delta_E^V$ is the vertical Laplacian of the fibre bundle $\mathscr{E}$ defined in \eqref{eq:horizontal_vertical_laplacians}.
\end{theorem}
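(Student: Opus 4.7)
My plan is to adapt the local-coordinate Taylor expansion strategy used in the proof of Theorem~\ref{thm:horizontal_diffusion_generator} (Appendix~\ref{sec:proof_biase}), now accounting for the bivariate kernel $K(\cdot,\cdot)$ and the second bandwidth $\delta$. The orthogonal splitting $g^E = g^M\oplus g^F$ from~\eqref{eq:metric_on_total_manifold} suggests that, in suitably adapted coordinates near $(x,v)\in E$, the asymptotic contributions should decouple into a horizontal integral of size $O(\epsilon)$ producing $\Delta_H$ and a vertical integral of size $O(\delta)$ producing $\Delta_E^V$, with genuine cross-terms absorbed into the stated remainder $O(\epsilon^2+\epsilon\delta+\delta^2)$.

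Concretely, I fix Fermi-type coordinates $(y,w)$ around $(x,v)$: geodesic normal coordinates $y=(y_1,\dots,y_d)$ on $M$ at $x$, whose coordinate vector fields admit horizontal lifts $\{\bar{X}_1,\dots,\bar{X}_d\}$ forming a horizontal geodesic frame on $E$ near $(x,v)$, together with geodesic normal coordinates $w=(w_1,\dots,w_n)$ on each fibre $F_{\exp_x y}$ centered at the parallel transport $P_{\exp_x y,\,x} v$. In these coordinates $d_M^2(x,\exp_x y)=|y|^2$ and $d_{F_y}^2(P_{yx}v,\exp^{F_y}_{P_{yx}v} w)=|w|^2$, while the joint volume element on $E$ factors as $d\mathrm{vol}_E=(1+O(|y|^2+|w|^2))\,dy\,dw$ (no odd-order corrections, by the orthogonality~\eqref{eq:metric_on_total_manifold} and the geodesic-frame property). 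Taylor-expanding $(fp)(y,w)$ to second order around $(x,v)$, rescaling $y=\sqrt{\epsilon}\,\eta$ and $w=\sqrt{\delta}\,\zeta$, and integrating $K(|\eta|^2,|\zeta|^2)$ against the expansion, all first-order terms and all mixed $\eta_i\zeta_j$ second-order contributions vanish by the independent symmetries $\eta\mapsto-\eta$ and $\zeta\mapsto-\zeta$ of $K$; what remains is
\begin{equation*}
  \int_M\!\int_{F_y} K_{\epsilon,\delta}(x,v;y,w)\,(fp)(y,w)\,d\mathrm{vol} = m_0 (fp)(x,v) + \tfrac{\epsilon m_{21}}{2}\Delta_H(fp)(x,v) + \tfrac{\delta m_{22}}{2}\Delta_E^V(fp)(x,v) + O(\epsilon^2+\epsilon\delta+\delta^2),
\end{equation*}
with the sum-of-squares identity~\eqref{eq:bochner_geodesic_frame} identifying the pure horizontal second-derivative sum with $\Delta_H(fp)$, and~\eqref{eq:restriction_vertical_laplacian} identifying the pure vertical second-derivative sum with $\Delta_E^V(fp)$.

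To finish, I apply this expansion to both the numerator and denominator of~\eqref{eq:fibre_bundle_diffusion_operator}, expanding $p_{\epsilon,\delta}^{-\alpha}(x,v)$ and $p_{\epsilon,\delta}^{-\alpha}(y,w)$ to first order in $\epsilon$ and $\delta$ before forming the ratio. The standard density bookkeeping from~\cite{CoifmanLafon2006}—which already drives the proof of Theorem~\ref{thm:horizontal_diffusion_generator}—then produces the normalized form $[\Delta_\bullet(fp^{1-\alpha})-f\Delta_\bullet p^{1-\alpha}]/p^{1-\alpha}$ independently for $\bullet = H$ and $\bullet = V$, yielding~\eqref{eq:warped_horizontal_generator_theorem}. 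The main obstacle is verifying that the cross-scale interactions genuinely sit inside the stated remainder: the fibre metric $g^{F_y}$ depends smoothly on $y$, so $y$-derivatives of $d_{F_y}^2(P_{yx}v,\cdot)$ introduce corrections to the clean identification $|w|^2$ that a priori generate $O(\sqrt{\epsilon\delta}\cdot\epsilon)$ and $O(\epsilon\delta)$ cross-terms. These must be shown to integrate to zero either by the independent $\eta,\zeta$ symmetries of $K$ or by the vanishing identities $(\nabla^E_{\bar{X}_j}\bar{X}_j)^H|_{(x,v)}=0$ (horizontal geodesic frame) and $\mathrm{Trace}\,(\nabla^E\nabla^V f)^H=0$ established in the calculation preceding the theorem statement. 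Once these cross-terms are controlled, the decoupling is complete and both $\Delta_H$ and $\Delta_E^V$ emerge with the claimed coefficients.
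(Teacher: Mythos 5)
Your outline is sound and rests on the same machinery as the paper (Taylor expansion in adapted coordinates, rescaling by $\sqrt{\epsilon}$ and $\sqrt{\delta}$, kernel moment constants, symmetry cancellation of odd terms), but it organizes the computation differently, and the difference matters for the one point you flag as your main obstacle. The paper does not perform a single joint expansion in $(y,w)$. Instead, its Lemma~\ref{lem:warped_kernel_integral} proceeds in two stages: for each \emph{fixed} $y$ it applies the one-variable expansion (Lemma~\ref{lem:basic_kernel_integral}) to the inner integral over $F_y$, with the kernel's first slot frozen at $d_M^2(x,y)/\epsilon$; this produces moment functions $M_0\bigl(d_M^2(x,y)/\epsilon\bigr)$ and $M_2\bigl(d_M^2(x,y)/\epsilon\bigr)$ multiplying $f(y,P_{yx}v)$ and $\bigl[\Delta_E^V f - \tfrac{1}{3}\mathrm{Scal}^{F_y}f\bigr](y,P_{yx}v)$ respectively, where \eqref{eq:restriction_vertical_laplacian} identifies the fibre Laplacian with $\Delta_E^V$. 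Only then does it integrate over $M$, applying the horizontal-lift expansion (Lemma~\ref{lem:kernel_parallel_transport}) separately to the $M_0$-term (to second order, yielding $\epsilon\,\Delta_H$) and to the $M_2$-term (to leading order only, since it already carries a factor $\delta$). Because the inner fibre integral is evaluated exactly for each $y$ before any expansion in $\epsilon$, the $y$-dependence of $g^{F_y}$ and of the transported center $P_{yx}v$ never needs to be Taylor-expanded against the vertical variable, and the cross-terms you worry about are automatically deposited into $O(\epsilon\delta)$ without any appeal to mixed-moment symmetries. In your joint-expansion version those cross-terms do appear and must be killed by the independent $\eta\mapsto-\eta$, $\zeta\mapsto-\zeta$ symmetries (which works, since your Fermi-type coordinates make $d_{F_y}^2(P_{yx}v,\cdot)=|w|^2$ exact and the product measure $d\mathrm{vol}_{F_y}\,d\mathrm{vol}_M$ has no odd corrections), so your route can be completed; it is just more delicate at precisely the step you identify. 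Your final normalization step (expanding $p_{\epsilon,\delta}^{-\alpha}$ and cancelling the curvature terms in the ratio) matches the paper's exactly.
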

For a proof of Theorem~\ref{thm:warped_diffusion_generator}, see \ref{sec:proof_biase}. Note that in the distribution sense Theorem~\ref{thm:horizontal_diffusion_generator} can be interpreted as a special case of Theorem~\ref{thm:warped_diffusion_generator} when $\delta=o \left( \epsilon \right)$ as $\epsilon\rightarrow0$. From a different point of view, Theorem~\ref{thm:warped_diffusion_generator} can also be interpreted as \cite[Theorem 2]{CoifmanLafon2006} applied on a fibre bundle $\left(E,M,F,\pi\right)$ with a family of varying Riemannian metrics
\begin{equation*}
  g_{\delta/\epsilon}^E=g^M\oplus \frac{\delta}{\epsilon}g^F,
\end{equation*}
which is known as the \emph{canonical variation} in the literature of Riemannian submersion \cite[\S9.G]{Besse2007EinsteinManifolds}\cite[\S2.7.5]{GLP1999}. If $\delta/\epsilon\rightarrow 0$, then the rescaled metric
\begin{equation*}
  \frac{\epsilon}{\delta}g_{\delta/\epsilon}^E=\frac{\epsilon}{\delta}g^M\oplus g^F
\end{equation*}
is said to approach its \emph{adiabatic limit}, or taking adiabatic limits amounts to blowing up or contracting the fibres, which is very useful in studying foliations. In the horizontal diffusion maps framework, the adiabatic limits can be indirectly taken by adjusting the relative magnitudes of the horizontal and vertical bandwidth parameters; see Figure~\ref{fig:laplacians} for an illustration. An in-depth discussion of adiabatic limits is beyond the scope of this paper, and we refer interested readers to \cite{LiuZhang1999Adiabatic,Bismut2013} and references therein.

\begin{corollary}
\label{cor:warped_diffusion_generator}
  Under the same assumptions as in {\rm Theorem~\ref{thm:warped_diffusion_generator}}, if the limit of the ratio $\delta/\epsilon$ exists and is finite, i.e.,
  \begin{equation*}
    \beta:=\lim_{\epsilon\rightarrow 0} \delta / \epsilon<\infty,
  \end{equation*}
then
\begin{equation}
  \label{eq:constant_ratio_warped_diffusion_generator}
  \begin{aligned}
    \lim_{\epsilon\rightarrow 0}\frac{H_{\epsilon,\delta}^{\left( \alpha \right)}f \left( x,v \right)-f \left( x,v \right)}{\epsilon} = \frac{1}{2}\frac{\left[L_{\beta}\left( fp^{1-\alpha} \right)-fL_{\beta}p^{1-\alpha} \right]\left( x,v \right)}{p^{1-\alpha}\left( x,v \right)}
  \end{aligned}
\end{equation}
where $L_{\beta}$ is a second order partial differential operator on $E$ given by
\begin{equation}
  \label{eq:warped_laplacian}
  L_{\beta}=\frac{m_{21}}{m_0}\Delta_H+\beta \frac{m_{22}}{m_0}\Delta_E^V.
\end{equation}
In particular, if $m_{21}=\beta m_{22}$ and $\pi:E\rightarrow M$ is a harmonic map, then $L=c\Delta_E$ where $\Delta_E$ is the Laplace-Beltrami operator on $E$ and $c$ a multiplicative constant. In addition, if $\alpha=1$, then
\begin{equation*}
  \lim_{\epsilon\rightarrow 0}\frac{H_{\epsilon,\delta}^{\left( 1 \right)}f \left( x,v \right)-f \left( x,v \right)}{\epsilon} = \frac{c}{2}\Delta_Ef \left( x,v \right).
\end{equation*}
\end{corollary}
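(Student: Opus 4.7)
The plan is to derive the corollary directly from the expansion established in Theorem~\ref{thm:warped_diffusion_generator}, since the corollary is essentially a limit statement that reorganizes the leading-order terms of that expansion under the extra hypothesis $\delta/\epsilon\to\rho<\infty$. First I would subtract $f(x,v)$ from both sides of \eqref{eq:warped_horizontal_generator_theorem} and divide by $\epsilon$, obtaining
\begin{equation*}
\frac{H_{\epsilon,\delta}^{(\alpha)}f(x,v)-f(x,v)}{\epsilon}=\frac{m_{21}}{2m_0}\,\mathcal{A}_H(x,v)+\frac{\delta}{\epsilon}\cdot\frac{m_{22}}{2m_0}\,\mathcal{A}_V(x,v)+O\!\left(\epsilon+\delta+\tfrac{\delta^2}{\epsilon}\right),
\end{equation*}
where $\mathcal{A}_H$ and $\mathcal{A}_V$ denote the $\Delta_H$- and $\Delta_E^V$-carré-du-champ-type expressions appearing in \eqref{eq:warped_horizontal_generator_theorem}. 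Under the assumption $\delta=\rho\epsilon+o(\epsilon)$, each term in the remainder is $o(1)$: $\epsilon\to 0$, $\delta\to 0$, and $\delta^2/\epsilon=\delta\cdot(\delta/\epsilon)\to 0\cdot\rho=0$.

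Passing to the limit and invoking the linearity of both $\Delta_H$ and $\Delta_E^V$, I would collect the two surviving terms into a single operator. Setting $L_\rho:=\frac{m_{21}}{m_0}\Delta_H+\rho\frac{m_{22}}{m_0}\Delta_E^V$ as in \eqref{eq:warped_laplacian}, linearity gives
\begin{equation*}
\frac{m_{21}}{2m_0}\bigl[\Delta_H(fp^{1-\alpha})-f\Delta_Hp^{1-\alpha}\bigr]+\rho\frac{m_{22}}{2m_0}\bigl[\Delta_E^V(fp^{1-\alpha})-f\Delta_E^Vp^{1-\alpha}\bigr]=\tfrac12\bigl[L_\rho(fp^{1-\alpha})-fL_\rho p^{1-\alpha}\bigr],
\end{equation*}
which after dividing by $p^{1-\alpha}(x,v)$ yields \eqref{eq:constant_ratio_warped_diffusion_generator}.

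For the harmonic-map special case, I would use the classical Eells--Sampson characterization that a Riemannian submersion $\pi\colon E\to M$ is a harmonic map if and only if all fibres $F_x$ are minimal submanifolds of $E$. By Remark~\ref{rem:different_horizontal_laplacian}, minimality of the fibres is precisely the condition under which the rough horizontal Laplacian $\Delta_H$ coincides with the horizontal Laplacian $\Delta_E^H$. Combined with the splitting $\Delta_E=\Delta_E^H+\Delta_E^V$ from \eqref{eq:laplacian_split}, the hypothesis $m_{21}=\rho m_{22}$ collapses $L_\rho$ to $\frac{m_{21}}{m_0}(\Delta_H+\Delta_E^V)=\frac{m_{21}}{m_0}\Delta_E$, so $c=m_{21}/m_0$. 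Finally, specializing to $\alpha=1$ makes $p^{1-\alpha}\equiv 1$, so $L_\rho p^{1-\alpha}=0$ and $L_\rho(fp^{1-\alpha})=L_\rho f$, immediately yielding $\frac{c}{2}\Delta_E f(x,v)$.

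The main obstacle is a minor one: verifying that the $O(\epsilon^2+\epsilon\delta+\delta^2)$ error term in Theorem~\ref{thm:warped_diffusion_generator} truly becomes $o(\epsilon)$ under the weaker hypothesis $\delta/\epsilon\to\rho$ (rather than $\delta=\Theta(\epsilon)$ exactly), which requires only that $\delta/\epsilon$ is bounded near zero---guaranteed by the existence of a finite limit $\rho$. The only other subtlety is citing correctly that harmonic Riemannian submersions have minimal fibres, which is needed to reduce $\Delta_H+\Delta_E^V$ to $\Delta_E$; everything else is algebraic manipulation of the asymptotic expansion already provided.
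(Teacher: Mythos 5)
Your proposal is correct and follows essentially the same route as the paper: the limit identity is read off directly from the expansion in Theorem~\ref{thm:warped_diffusion_generator} (the remainder being $o(\epsilon)$ since $\delta/\epsilon\to\rho<\infty$ forces $\delta=O(\epsilon)$), and the harmonic-map case is handled exactly as in the paper's one-line proof, via the equivalence between harmonicity of the submersion and minimality of the fibres together with Remark~\ref{rem:different_horizontal_laplacian} and the splitting $\Delta_E=\Delta_E^H+\Delta_E^V$. The only cosmetic difference is that you attribute the harmonic-iff-minimal-fibres fact to Eells--Sampson while the paper cites \cite[Lemma 2.2.4]{GLP1999}; the mathematics is identical.
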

\begin{proof}
  If $\pi:E\rightarrow M$ is a harmonic map, the fibres of $\pi$ are minimal submanifolds of $E$ (\emph{vice versa}, see e.g.~\cite[Lemma 2.2.4]{GLP1999}) and $\Delta_H=\Delta_E^H$ (see Remark~\ref{rem:different_horizontal_laplacian}).
\end{proof}

\begin{remark}
  \label{rem:hdm-nontrivial}
  Corollary~\ref{cor:warped_diffusion_generator} clearly indicates that the coupled diffusion operator $H_{\epsilon,\delta}^{\left( \alpha \right)}$ differs from the anisotropic diffusion operators considered in \cite{LafonThesis2004,CoifmanLafon2006} and the dynamical system literature \cite{Diannakis2015,ZG2016} in an essential way: in general, when the fires are not totally geodesic submanifolds of the fibre bundle, the infinitesimal generator \eqref{eq:warped_laplacian} will never equal to the Laplace-Beltrami operator of the total manifold, regardless of the relative ratio between $\delta$ and $\epsilon$ --- even when the two constants in front of $\Delta_H$ and $\Delta_E^V$ coincide. This is essentially due to the difference between the rough horizontal Laplacian $\Delta_H$ and the \emph{bona fide} ``horizontal Laplacian'' $\Delta_E^H$ commonly encountered in sub-Riemannian geometry and Riemannian submersions; see Appendix~\ref{rem:different_horizontal_laplacian} for more details. The HDM framework is thus by no means a straightforward application of the anisotropic diffusion maps to the total manifold of the fibre bundle.
\end{remark}


In order to state the result for the case when the ratio $\delta/\epsilon$ is not asymptotically bounded as $\epsilon\rightarrow0$, let us define the \emph{fibre average} of any function $f\in C^{\infty}\left( E \right)$ as
\begin{equation}
  \label{eq:fibre_average}
  \langle f\rangle \left( x \right)=\int_{F_x}f \left( x,v \right)\,d\mathrm{vol}_{F_x} \left( v \right)
\end{equation}
whenever the integral converges. If $\langle f\rangle \left( x \right)$ exists for all $x\in M$ (e.g. when the fibre is compact or $f$ is integrable), obviously $\langle f\rangle\in C^{\infty}\left( M \right)$.

Consider now the probability density function $p$ in the definition of $H_{\epsilon,\delta}^{\left( \alpha \right)}$. The fibre average $\langle p\rangle$ is a probability density function on $M$, since
\begin{equation*}
  \int_M\langle p\rangle \left( x \right)\,d\mathrm{vol}_M \left( x \right)=\int_M\!\!\int_{F_x}p \left( x,v \right)\,d\mathrm{vol}_{F_x} \left( v \right)d\mathrm{vol}_M \left( x \right) = 1.
\end{equation*}
Note that $\langle p\rangle$ is bounded away from $0$ from below according to our assumption~\eqref{eq:density_positivity_assumption}. We can thus divide $p$ by $\langle p\rangle$ and define the \emph{conditional probability density function} on $E$ as
\begin{equation}
  \label{eq:conditional_pdf}
  p \left( v\mid x \right):= \frac{p \left( x,v \right)}{\langle p \rangle \left( x \right)}.
\end{equation}
The name comes from the observation that $p \left( v\mid x \right)$ defines a probability density function when restricted to a single fibre:
\begin{equation*}
  \int_{F_x}p\left( v\mid x \right)\,d\mathrm{vol}_{F_x}\left( v \right) = \frac{\displaystyle\int_{F_x}p \left( x,v \right)\,d\mathrm{vol}_{F_x} \left( v \right)}{\langle p \rangle \left( x \right)}=1.
\end{equation*}
The last piece of notation we need for Theorem~\ref{thm:warped_diffusion_generator_unbounded_ratio} is
\begin{equation}
  \label{eq:fibre_average_wrt_density}
  \langle f\rangle_p \left( x \right) := \int_{F_x}f \left( x,v \right)p \left( v\mid x \right)\,d\mathrm{vol}_{F_x}\left( v \right),
\end{equation}
for any function $f\in C^{\infty}\left( E \right)$. We shall refer to $\langle f\rangle_p$ as the \emph{fibre average of $f$ with respect to the probability density function $p$}.

\begin{theorem}[Unbounded Ratio $\delta/\epsilon$]
\label{thm:warped_diffusion_generator_unbounded_ratio}
  Suppose $\mathscr{E}=\left(E,M,F,\pi\right)$ is a fibre bundle, $M$ is a smooth Riemannian manifold without boundary, and $E$ is equipped with the Riemannian metric \eqref{eq:metric_on_total_manifold}. Define $\gamma:=\delta/\epsilon$ (equivalently $\delta=\gamma\epsilon$). For any $f\in C^{\infty}\left( E \right)$ and $\left( x,v \right)\in E$, as $\epsilon\rightarrow0$,
  \begin{equation}
    \label{eq:warped_generator_thm_unbounded_ratio}
    \begin{aligned}
      \lim_{\gamma\rightarrow\infty}&H_{\epsilon,\gamma\epsilon}^{\left( \alpha \right)}  f \left( x,v \right)\\
      & = \langle f\rangle_p \left( x \right) + \epsilon \frac{m_{2}'}{2m_0'} \frac{\left[\Delta_M \left( \langle f\rangle_p\langle p\rangle^{1-\alpha} \right)-\langle f\rangle_p\Delta_M\langle p\rangle^{1-\alpha} \right]\left( x \right)}{\langle p\rangle^{1-\alpha}\left( x \right)}+O \left( \epsilon^2 \right),
    \end{aligned}
  \end{equation}
where $m_0'$, $m_2'$ are positive constants depending only on the base manifold $M$ and the kernel $K$, $\Delta_M$ is the Laplace-Beltrami operator on $M$, $\langle p\rangle$ is the fibre average of the probability density function $p$, and $\langle f\rangle_p$ is the fibre average of $f$ with respect to the density $p$. In particular, if $\alpha=1$, then
\begin{equation*}
  \lim_{\gamma\rightarrow\infty}H_{\epsilon,\gamma\epsilon}^{\left( 1 \right)}=\langle f\rangle_p \left( x \right) + \epsilon \frac{m_{2}'}{2m_0'} \Delta_M\langle f\rangle_p+O \left( \epsilon^2 \right).
\end{equation*}
\end{theorem}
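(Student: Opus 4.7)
The plan is to compute the iterated limit by first letting $\gamma\to\infty$ at fixed $\epsilon>0$, and then applying the standard $\epsilon$-expansion of Coifman--Lafon on the base manifold. The key observation is that in the adiabatic regime $\delta=\gamma\epsilon\to\infty$, the bivariate kernel $K_{\epsilon,\delta}$ degenerates into a kernel depending only on the base distance $d_M(x,y)$, so the fibre-direction integration collapses to an ordinary fibre average against the conditional density $p(\cdot\mid y)$.

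First I would compute the adiabatic limit $\gamma\to\infty$ at fixed $\epsilon>0$. Since $K$ is smooth and compactly supported on $[0,1]^2$ and each fibre $F_y$ is compact with uniformly bounded diameter, the second argument $d_{F_y}^2(P_{yx}v,w)/\delta$ tends to $0$ uniformly in $w\in F_y$ as $\delta\to\infty$. Continuity of $K$ then gives
\begin{equation*}
  K_{\epsilon,\gamma\epsilon}(x,v;y,w)\longrightarrow \widetilde{K}\!\left(\frac{d_M^2(x,y)}{\epsilon}\right)=:\widetilde{K}_\epsilon(x,y),\qquad \widetilde{K}(u):=K(u,0),
\end{equation*}
uniformly in $(y,w)$, with the limit independent of $v$. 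Dominated convergence, together with the positivity assumption \eqref{eq:density_positivity_assumption} (which furnishes a uniform lower bound on $p_{\epsilon,\gamma\epsilon}$), lets the limit pass through the quotient defining $H_{\epsilon,\gamma\epsilon}^{(\alpha)}$. The fibre integrals collapse via $\int_{F_y}f(y,w)p(y,w)\,d\mathrm{vol}_{F_y}(w)=\langle f\rangle_p(y)\langle p\rangle(y)$, yielding
\begin{equation*}
  \lim_{\gamma\to\infty}H_{\epsilon,\gamma\epsilon}^{(\alpha)}f(x,v)=\frac{\displaystyle\int_M \widetilde{K}_\epsilon^{(\alpha)}(x,y)\,\langle f\rangle_p(y)\,\langle p\rangle(y)\,d\mathrm{vol}_M(y)}{\displaystyle\int_M \widetilde{K}_\epsilon^{(\alpha)}(x,y)\,\langle p\rangle(y)\,d\mathrm{vol}_M(y)},
\end{equation*}
where $\widetilde{K}_\epsilon^{(\alpha)}(x,y)=\widetilde{K}_\epsilon(x,y)/[\widetilde{p}_\epsilon^\alpha(x)\widetilde{p}_\epsilon^\alpha(y)]$ and $\widetilde{p}_\epsilon(x)=\int_M\widetilde{K}_\epsilon(x,y)\langle p\rangle(y)\,d\mathrm{vol}_M(y)$.

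The right-hand side is exactly the $\alpha$-normalized diffusion-maps operator on the base manifold $M$ built from the univariate kernel $\widetilde{K}$, applied to the function $\langle f\rangle_p$ with sampling density $\langle p\rangle$. Invoking Theorem~2 of \cite{CoifmanLafon2006}, or equivalently specializing Theorem~\ref{thm:horizontal_diffusion_generator} to the trivial bundle $E=M$ (where the rough horizontal Laplacian reduces to $\Delta_M$), produces the claimed expansion \eqref{eq:warped_generator_thm_unbounded_ratio} with $m_0',m_2'$ the zeroth and second moments of $\widetilde{K}$ on $\mathbb{R}^d$. The $\alpha=1$ case is then immediate.

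The main obstacle, as usual when exchanging limits with a normalized quotient, is to ensure that the $\gamma\to\infty$ passage commutes with both the integration and the subsequent $\epsilon$-expansion uniformly enough that the $O(\epsilon^2)$ remainder inherited from Coifman--Lafon remains valid. This requires a uniform-in-$\gamma$ lower bound on $p_{\epsilon,\gamma\epsilon}$, which follows from \eqref{eq:density_positivity_assumption} together with the uniform convergence of the kernels, and the observation that $\langle f\rangle_p\in C^\infty(M)$ because $f\in C^\infty(E)$ and integration against the smooth, positive conditional density $p(\cdot\mid x)$ preserves smoothness. With these uniformities in hand, the two-step procedure recovers the stated expansion.
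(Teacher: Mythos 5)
Your proposal is correct and follows essentially the same route as the paper's proof: pass to the adiabatic limit $\gamma\to\infty$ first so that the kernel degenerates to $\overline{K}_\epsilon(x,y)=K\bigl(d_M^2(x,y)/\epsilon,0\bigr)$, collapse the fibre integrals to $\langle f\rangle_p\langle p\rangle$, and then apply the Coifman--Lafon expansion (their Theorem 2) to the resulting $\alpha$-normalized operator on the base manifold. The only difference is that you supply the uniform-convergence and dominated-convergence justifications that the paper leaves implicit in its ``by direct computation'' step.
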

The proof of Theorem~\ref{thm:warped_diffusion_generator_unbounded_ratio} can be found in \ref{sec:proof_biase}. Intuitively, Theorem~\ref{thm:warped_diffusion_generator_unbounded_ratio} states that if the vertical bandwidth parameter $\delta\rightarrow\infty$ then the coupled diffusion operator contains little information about the fibres.
Comparing Theorem~\ref{thm:warped_diffusion_generator_unbounded_ratio} with Theorem~\ref{thm:warped_diffusion_generator}, one can see that in general
\begin{equation*}
  \lim_{\epsilon\rightarrow 0}\lim_{\gamma\rightarrow\infty} \frac{H_{\epsilon,\gamma\epsilon}^{\left( \alpha \right)}f \left( x,v \right)-f \left( x,v \right)}{\epsilon}\neq \lim_{\gamma\rightarrow\infty}\lim_{\epsilon\rightarrow 0} \frac{H_{\epsilon,\gamma\epsilon}^{\left( \alpha \right)}f \left( x,v \right)- f \left( x,v \right)}{\epsilon},
\end{equation*}
thus an asymptotic expansion of $H_{\epsilon,\delta}^{\left( \alpha \right)}f \left( x,v \right)$ for small $\epsilon,\delta$ is not well-defined without careful consideration of the behavior of $\delta/\epsilon$ if it is not asymptotically bounded.

\begin{remark}
  The subtlety in the characterization of the infinitesimal generator $H_{\epsilon,\delta}^{\left( \alpha \right)}$ speaks of the peculiarity of the nonhomogeneous, anisotropic diffusion processes considered in \cite{CoifmanLafon2006,SEKC2009}, at the presence of an underlying fibre bundle structure. These phenomena not only indicate that the horizontal and coupled diffusion operators are capable of unveiling richer geometric structures in complex real world data sets, but also imply that additional care has to be taken when tuning the bandwidth parameters in practice --- the flexibility in choosing the approriate relative scale between $\delta$ and $\epsilon$ adapts the HDM framework to a myriad of scenarios in which the relative importance of the structural information in the data objects vary drastially. The dependence of the infinitesimal generators on the ratio $\delta/\epsilon$ is also reminiscent of recent trends of studying ``big data'' in high-dimensional statistics \cite{BV2011,RH2017}, where new paradigms of estimation and inference arise as the ratio between the number of features and the number of samples becomes unbounded asymptotically.
\end{remark}

\section{Finite Sampling Results on Unit Tangent Bundles}
\label{sec:finite-sampl-results}

The algorithm and theoretical results discussed so far are very general --- we assumed that the diffusion kernel \eqref{eq:warped_kernel} is constructed from abstract, geodesic distances on the base and fibre manifolds. This section investigates the finite sampling aspects of horizontal diffusion maps, which connects the discrete, graph construction in Section~\ref{sec:algorithm} with the continuous, infinitesimal characterization in Section~\ref{sec:infin-gener}. We focus on analyzing the finite sample rate of convergence for \emph{unit tangent bundles}, the fibre bundle with compact fibres that is as prevalent as manifolds. This is a subbundle of the tangent bundle $T\!M$ (which is non-compact) defined as 
\begin{align*}
  UT\!M := \coprod_{x\in M}S_x,\quad S_x:=\left\{ v\in T_xM\mid g_x \left( v,v \right)=1  \right\} \subset T_xM.
\end{align*}
In particular, $UT\!M$ is a hypersurface of $TM$ equipped with a metric induced from $T\!M$. The volume form on $UT\!M$ with respect to the induced metric
\begin{align*}
  d\Theta \left( x,v \right)=d\mathrm{vol}_{S_x}\!\!\left( v \right)d\mathrm{vol}_M\!\!\left( x \right).
\end{align*}
is often known as the \emph{Liouville measure} or the \emph{kinematic density}~\cite[Chapter VII]{Chavel2006}. It is the only invariant measure on $UT\!M$ under geodesic flows. The coupled diffusion operator on $UT\!M$ can be written with the Liouville measure:
\begin{align*}
  H_{\epsilon,\delta}^{\left(\alpha\right)}f \left( x,v \right)=\frac{\displaystyle\int_{UTM}K_{\epsilon,\delta}^{\left(\alpha\right)}\left( x,v;y,w \right)f \left( y,w \right)p \left( y,w \right)\,d\Theta \left( y,w \right)}{\displaystyle\int_{UTM}K_{\epsilon,\delta}^{\left(\alpha\right)}\left( x,v;y,w \right)p \left( y,w \right)\,d\Theta \left( y,w \right)},\quad\forall f\in C^{\infty}\left( UT\!M \right).
\end{align*}
The horizontal and vertical Laplacians on $UT\!M$ can be defined from $\Delta_{T\!M}^H$ and $\Delta_{T\!M}^V$ by extending $f\in C^{\infty}\left( UT\!M \right)$ to $C^{\infty}\left( T\!M \right)$ and restricting the result back to $U\!TM$. Therefore, for any $f\in C^{\infty}\left( T\!M \right)$, if $\delta=O \left( \epsilon \right)$,
\begin{align*}
  H_{\epsilon,\delta}^{\left(\alpha\right)} & f\left( x,v \right) = f \left( x,v \right)+\epsilon \frac{m_{21}}{2m_0}\frac{\left[\Delta_{UT\!M}^H\left(fp^{1-\alpha}\right)-f\Delta_{UT\!M}^Hp^{1-\alpha}\right]\left( x,v \right)}{p^{1-\alpha}\left( x,v \right)}\\
    &+\delta \frac{m_{22}}{2m_0}\frac{\left[\Delta_{UT\!M}^V\left(fp^{1-\alpha}\right)-f\Delta_{UT\!M}^Vp^{1-\alpha}\right]\left( x,v \right)}{p^{1-\alpha}\left( x,v \right)}+O\left(\epsilon^2+\epsilon\delta+\delta^2 \right).
\end{align*}
This is consistent with the conclusion obtained in \cite[Chapter 3]{Gao2015Thesis}.

The theory of HDM on tangent and unit tangent bundles are parallel to each other, but a general theory for sampling from fibre bundles of arbitrary fibre type will find it easier to consider sampling from the unit tangent bundle due to the compactness of its fibres. Sampling from tangent bundles is special, since its fibres are vector spaces and thus determined by estimating a basis; this is considered in \cite[\S5]{SingerWu2012VDM}. We thus study the behavior of HDM on unit tangent bundles under finite sampling. In this section, we first consider sampling without noise, i.e. where we sample exactly on unit tangent bundles; next, we study the case where the tangent spaces are empirically estimated from samples on the base manifold. The latter scenario is a proof-of-concept for applying HDM to general fibre bundles in practical situations where data representing each fibre are often acquired with noise. The proofs of Theorem~\ref{thm:utm_finite_sampling_noiseless} and Theorem~\ref{thm:utm_finite_sampling_noise} can be found in Appendix~\ref{sec:proof_variance}. In Section~\ref{sec:unit-tangent-bundles}, we shall demonstrate a numerical experiment on $\mathrm{SO(3)}$ (the unit tangent bundle of the $2$-sphere in $\mathbb{R}^3$) that addresses the two sampling strategies. Throughout this section, recall from Remark~\ref{rem:different_horizontal_laplacian} that $\Delta_H=\Delta^H_{UT\!M}$ since the fibres of $UT\!M$ are totally geodesic.

\subsection{Rate of Convergence from Finite Samples}
\label{sec:conv-rate-from}

\subsubsection{Sampling without Noise}
\label{sec:sampl-with-noise}

We begin with some assumptions and definitions. Assumption~\ref{assum:technical} includes our technical assumptions, and Assumption~\ref{assum:utm_finite_sampling_noiseless} specifies the noiseless sampling strategy.
\begin{assumption}
\label{assum:technical}
\begin{enumerate}[(1)]
\item\label{item:13} $\iota: M\hookrightarrow \mathbb{R}^D$ is an isometric embedding of a $d$-dimensional closed Riemannian manifold into $\mathbb{R}^D$, with $D\gg d$.

\item\label{item:20} Let the bi-variate smooth kernel function $K:\mathbb{R}^2\rightarrow\mathbb{R}^{\geq0}$ be compactly supported within the unit square $\left[ 0,1 \right]\times \left[ 0,1 \right]$. The partial derivatives $\partial_1K$, $\partial_2K$ are therefore automatically compactly supported on the unit square as well. (In fact, a similar result still holds if $K$ and its first order derivatives decay faster at infinity than any inverse polynomials; to avoid technicalities and focus on demonstrating the idea, we use compactly supported $K$.)
\end{enumerate}
\end{assumption}

\begin{assumption}
\label{assum:utm_finite_sampling_noiseless}
The $\left( N_B\times N_F \right)$ data points
  \begin{equation*}
    \begin{matrix}
      x_{1,1}, &x_{1,2}, &\cdots, &x_{1,N_F}\\
      x_{2,1}, &x_{2,2}, &\cdots, &x_{2,N_F}\\
      \vdots & \vdots &\cdots &\vdots\\
      x_{N_B,1}, &x_{N_B,2}, &\cdots, &x_{N_B,N_F}
    \end{matrix}
  \end{equation*}
are sampled from $UTM$ with respect to a probability density function $p \left( x,v \right)$ satisfying \eqref{eq:density_positivity_assumption}, following a two-step strategy: (i) sample $N_B$ points $\xi_1,\cdots,\xi_{N_B}$ i.i.d. on $M$ with respect to $\langle p\rangle$, the fibre average of $p$ on $M$; (ii) sample $N_F$ points $x_{j,1},\cdots,x_{j,N_F}$ on $S_{\xi_j}$ with respect to $p \left(\cdot\mid\xi_j\right)$, the conditional probability density.
\end{assumption}

\begin{definition}
\label{defn:utm_finite_sampling_noiseless}
\begin{enumerate}[(1)]
\item\label{item:34} For $\epsilon>0$, $\delta>0$ and $1\leq i,j\leq N_B$, $1\leq r,s\leq N_F$, define
  \begin{equation*}
    \hat{K}_{\epsilon,\delta} \left( x_{i,r}, x_{j,s} \right) =
   \begin{cases}
    \displaystyle K \left( \frac{\|\xi_i-\xi_j\|^2}{\epsilon}, \frac{\|P_{\xi_j,\xi_i}x_{i,r}-x_{j,s}\|^2}{\delta} \right), & i\neq j,\\
   0,&i=j.
    \end{cases}
  \end{equation*} 
where $P_{\xi_j,\xi_i}:S_{\xi_i}\rightarrow S_{\xi_j}$ is the parallel transport from $S_{\xi_i}$ to $S_{\xi_j}$. Note the difference between $\hat{K}_{\epsilon,\delta}$ and $K_{\epsilon,\delta}$ defined in~\eqref{eq:warped_kernel}: $\hat{K}_{\epsilon,\delta}$ uses Euclidean distance while $K_{\epsilon,\delta}$ uses geodesic distance.
\item\label{item:35} For $0\leq \alpha\leq 1$, define
\begin{equation*}
    \hat{p}_{\epsilon,\delta}\left( x_{i,r} \right) = \sum_{j=1}^{N_B}\sum_{s=1}^{N_F}\hat{K}_{\epsilon,\delta} \left( x_{i,r}, x_{j,s} \right)
  \end{equation*}
and the \emph{empirical $\alpha$-normalized kernel} $\hat{K}_{\epsilon,\delta}^{\alpha}$
  \begin{equation*}
    \hat{K}_{\epsilon,\delta}^{\alpha} \left( x_{i,r}, x_{j,s} \right) = \frac{\hat{K}_{\epsilon,\delta}\left( x_{i,r}, x_{j,s} \right)}{\hat{p}^{\alpha}_{\epsilon,\delta}\left( x_{i,r}\right)\hat{p}^{\alpha}_{\epsilon,\delta}\left( x_{j,s} \right)},\quad 1\leq i, j\leq N_B, 1\leq r,s\leq N_F.
  \end{equation*}
\item\label{item:36} For $0\leq \alpha\leq 1$ and $f\in C^{\infty}\left( UTM \right)$, denote the \emph{$\alpha$-normalized empirical horizontal diffusion operator} by
  \begin{equation*}
    \hat{H}_{\epsilon,\delta}^{\alpha}f\left( x_{i,r} \right)=\frac{\displaystyle \sum_{j=1}^{N_B}\sum_{s=1}^{N_F}\hat{K}_{\epsilon,\delta}^{\alpha} \left( x_{i,r}, x_{j,s}\right)f \left( x_{j,s} \right)}{\displaystyle \sum_{j=1}^{N_B}\sum_{s=1}^{N_F}\hat{K}_{\epsilon,\delta}^{\alpha} \left( x_{i,r}, x_{j,s}\right)}.
  \end{equation*}
\end{enumerate}
\end{definition}

\begin{theorem}[Finite Sampling without Noise]
\label{thm:utm_finite_sampling_noiseless}
  Under Assumption~{\rm\ref{assum:technical}} and Assumption~{\rm\ref{assum:utm_finite_sampling_noiseless}}, if
\begin{enumerate}[(i)]
\item\label{item:41} $\delta=O \left( \epsilon \right)$ as $\epsilon\rightarrow0$;
\item\label{item:42}
\begin{equation*}
  \lim_{N_B\rightarrow\infty\atop N_F\rightarrow\infty}\frac{N_F}{N_B}=\beta\in \left( 0,\infty \right),
\end{equation*}
\end{enumerate}
then for any $x_{i,r}$ with $1\leq i\leq N_B$ and $1\leq r\leq N_F$, as $\epsilon\rightarrow0$ (and thus $\delta\rightarrow0$), with high probability
\begin{equation}
\label{eq:finite_sample_theorem}
\begin{aligned}
    \hat{H}_{\epsilon,\delta}^{\alpha}f\left( x_{i,r} \right) &= f \left( x_{i,r} \right)+\epsilon \frac{m_{21}}{2m_0}\left[\frac{\Delta_H\left[fp^{1-\alpha}\right]\left( x_{i,r} \right)}{p^{1-\alpha}\left( x_{i,r} \right)}-f \left( x_{i,r} \right) \frac{\Delta_Hp^{1-\alpha}\left( x_{i,r} \right)}{p^{1-\alpha}\left( x_{i,r} \right)}  \right]\\
    &+\delta \frac{m_{22}}{2m_0}\left[\frac{\Delta_{UTM}^V\left[fp^{1-\alpha}\right]\left( x_{i,r} \right)}{p^{1-\alpha}\left( x_{i,r} \right)}-f \left( x_{i,r} \right) \frac{\Delta_{UTM}^Vp^{1-\alpha}\left( x_{i,r} \right)}{p^{1-\alpha}\left( x_{i,r} \right)}  \right]\\
    &+O \left( \epsilon^2+\delta^2+\theta_{*}^{-1}N_B^{-\frac{1}{2}}\epsilon^{-\frac{d}{4}} \right),
\end{aligned}
\end{equation}
where
\begin{equation*}
  \theta_{*}=1-\frac{\displaystyle 1}{\displaystyle 1+\epsilon^{\frac{d}{4}}\delta^{\frac{d-1}{4}}\sqrt{\frac{N_F}{N_B}}}.
\end{equation*}
\end{theorem}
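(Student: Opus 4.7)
The plan is to decompose $\hat H^{\alpha}_{\epsilon,\delta}f(x_{i,r})$ into a conditional expectation plus a stochastic fluctuation, handle each separately, and combine the results. The expectation will yield the deterministic asymptotic expansion via Theorem~\ref{thm:warped_diffusion_generator}, while a Bernstein-type concentration analysis tailored to the two-level i.i.d.\ sampling will produce the $\theta_{*}^{-1}N_B^{-1/2}\epsilon^{-d/4}$ variance term.

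\emph{Bias step.} First I would replace every Euclidean chord distance appearing in $\hat K_{\epsilon,\delta}$ by the corresponding geodesic distance. For $\xi_i,\xi_j\in M\subset\mathbb{R}^D$ with $d_M(\xi_i,\xi_j)\le\sqrt\epsilon$ the standard expansion gives $\|\xi_i-\xi_j\|^2 = d_M^2(\xi_i,\xi_j)(1+O(\epsilon))$, and by smoothness of $K$ this changes $\hat K_{\epsilon,\delta}$ multiplicatively by $1+O(\epsilon)$; the analogous argument for unit vectors on $S_{\xi_j}\subset T_{\xi_j}M$ gives a factor $1+O(\delta)$. After these substitutions, conditioning on $(\xi_i,x_{i,r})$, every off-diagonal summand has identical distribution, so the conditional expectation of the numerator (respectively denominator) of $\hat H^{\alpha}_{\epsilon,\delta}$ matches that of the continuous operator $H^{(\alpha)}_{\epsilon,\delta}$ at $(\xi_i,x_{i,r})$, up to an $O(N_B^{-1})$ contribution from the diagonal term $j=i$. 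Invoking Theorem~\ref{thm:warped_diffusion_generator} then produces the $\Delta_H$ and $\Delta^V_{UTM}$ terms in \eqref{eq:finite_sample_theorem} with remainder $O(\epsilon^2+\epsilon\delta+\delta^2)$.

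\emph{Variance step.} Write $\hat H^{\alpha}_{\epsilon,\delta}f = \hat A/\hat B$ with
\begin{equation*}
\hat A := \frac{1}{N_BN_F}\sum_{j,s}\hat K^\alpha_{\epsilon,\delta}(x_{i,r},x_{j,s})f(x_{j,s}), \qquad \hat B := \frac{1}{N_BN_F}\sum_{j,s}\hat K^\alpha_{\epsilon,\delta}(x_{i,r},x_{j,s}).
\end{equation*}
Each summand is bounded (by $p_0^{-2\alpha}\|K\|_\infty$ on the numerator side, with $f$ bounded) and supported on an event of probability $\sim \epsilon^{d/2}\delta^{(d-1)/2}$. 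The law of total variance
\begin{equation*}
\mathrm{Var}(\hat A) = \mathbb{E}_{\xi}\bigl[\mathrm{Var}_{x\mid\xi}(\hat A)\bigr]+\mathrm{Var}_{\xi}\bigl(\mathbb{E}_{x\mid\xi}[\hat A]\bigr)
\end{equation*}
separates an intra-fibre contribution of order $(N_BN_F)^{-1}\epsilon^{d/2}\delta^{(d-1)/2}$ from an inter-fibre contribution of order $N_B^{-1}\epsilon^{d/2}\delta^{d-1}$; the two combine into the harmonic-type interpolation captured by $\theta_{*}$. Bernstein's inequality applied to each sum, together with the fact that $\mathbb{E}\hat B$ is bounded away from zero by the $\alpha$-normalization, gives $|\hat A-\mathbb{E}\hat A|$ and $|\hat B-\mathbb{E}\hat B|$ of order $\theta_{*}^{-1}N_B^{-1/2}\epsilon^{-d/4}\cdot\mathbb{E}\hat B$ with probability $1-N_B^{-c}$. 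A Taylor expansion of the ratio $\hat A/\hat B$ around $\mathbb{E}\hat A/\mathbb{E}\hat B$ propagates these bounds to the quotient and produces exactly the stated variance term.

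\emph{Main obstacle.} The subtle part is deriving the precise form of $\theta_{*}$. In classical diffusion maps a single effective sample count $N\epsilon^{d/2}$ controls the variance; here two genuinely independent sources of randomness (the $N_B$ base samples of dimension $d$ and the $N_F$ per-fibre samples of dimension $d-1$) contribute at different rates, and only their specific interpolation inside $\theta_{*}$ correctly connects the base-dominated regime $\theta_{*}\to 1$ (bound $\sim N_B^{-1/2}\epsilon^{-d/4}$, governed by the number of fibres hitting the kernel support) with the fibre-dominated regime $\theta_{*}\to 0$ (bound $\sim \epsilon^{-d/2}\delta^{-(d-1)/4}N_F^{-1/2}$, governed by the total number of sample points in the support). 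Matching the harmonic combination produced by the total-variance decomposition to the explicit $\theta_{*}$ formula, while preserving uniformity of the concentration bound over all $N_BN_F$ query points, is the most delicate piece of bookkeeping in the proof.
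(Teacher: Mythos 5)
Your overall architecture --- bias from the continuous-operator expansion after a Euclidean-to-geodesic correction, variance from a two-level Bernstein analysis respecting the hierarchical (base-then-fibre) sampling, and a splitting parameter that produces $\theta_{*}$ --- is the same as the paper's, and for $\alpha=0$ your outline is essentially a compressed version of the actual proof. There is, however, one genuine gap: for $\alpha\neq 0$ the summands $\hat{K}^{\alpha}_{\epsilon,\delta}(x_{i,r},x_{j,s})$ contain the \emph{empirical} density $\hat{p}_{\epsilon,\delta}$ evaluated at both endpoints, and $\hat{p}_{\epsilon,\delta}(x_{j,s})$ is itself a sum over the entire data set. The terms of your $\hat{A}$ and $\hat{B}$ are therefore not conditionally independent given the fibre locations, so neither Bernstein's inequality nor the law-of-total-variance decomposition applies to them as written. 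The paper's proof devotes its second half to exactly this: it first replaces $\hat{p}_{\epsilon,\delta}$ by the population quantity $\tilde{p}_{\epsilon,\delta}$, proves concentration for that replacement \emph{uniformly over all $N_BN_F$ points} (a union bound which is why the $\alpha\neq0$ rate degrades relative to $\alpha=0$), and only then runs the deterministic-weight argument. Your proposal needs this step.

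A second point needs more care than you give it. The claim that passing from chordal to geodesic distances changes $\hat{K}_{\epsilon,\delta}$ ``multiplicatively by $1+O(\epsilon)$'' is too quick: by Lemma~\ref{lem:distance_expansion} the perturbation of $d^2/\epsilon$ is $O(\epsilon)$ but depends on the direction $\theta$ through $\|\Pi(\theta,\theta)\|^2$, so it is not a factor that trivially cancels between numerator and denominator; after integration it contributes at order $\epsilon^{1+d/2}$, the same order as the Laplacian term you are extracting. Lemma~\ref{lem:euclidean_kernel_integral} shows this contribution equals $\epsilon\frac{m_2}{24}d(d+2)A(x)g(x)$, i.e.\ it is proportional to $g(x)$ and hence does cancel in the ratio (Remark~\ref{rem:lazy_remark}) --- but that must be computed, not assumed. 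Two smaller remarks: $\mathbb{E}\hat{B}$ is \emph{not} bounded away from zero (it scales like $\epsilon^{d/2}\delta^{(d-1)/2}$), so the Taylor expansion of the ratio must be performed after rescaling; and your variance decomposition would actually yield an intra-fibre term of order $(N_BN_F)^{-1/2}\epsilon^{-d/4}\delta^{-(d-1)/4}$, which is sharper than the corresponding piece of $\theta_{*}^{-1}N_B^{-1/2}\epsilon^{-d/4}$ --- the paper's $\theta_{*}$ arises from a cruder union bound over the $N_B$ fibres --- so you would prove the stated bound (indeed a stronger one) but should not expect to ``match'' the $\theta_{*}$ formula exactly.
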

The proof of Theorem~\ref{thm:utm_finite_sampling_noiseless} is deferred to Appendix~\ref{sec:proof_variance}.

\begin{remark}
\label{rem:hdm-nontrivial-sampling}
  Theorem~\ref{thm:utm_finite_sampling_noiseless} reflects the difference in the finite-sample rate of convergence between considering horizontal diffusion and standard diffusion on the total manifold of the fibre bundle. For instance, in the special case $\epsilon=\delta$, by \cite{Singer2006ConvergenceRate}, the variance error associated with the standard diffusion maps on the total manifold is $O \left( N_F ^{-1/2} N_B ^{-1/2} \epsilon^{-\left( 2d-1 \right)/4} \right)$, while the variance error in\eqref{eq:finite_sample_theorem} is $O\left(\theta_{*}^{-1}N_B^{-1/2}\epsilon^{-d/4}\right)$. This is another evidence demonstrating the difference between horizontal diffusion maps and standard diffusion maps on the total manifold of the fibre bundle; see also Remark~\ref{rem:hdm-nontrivial}.
\end{remark}

\subsubsection{Sampling from Empirical Tangent Spaces}
\label{sec:sampl-estim-tang}

In practice, it has been shown in~\cite{SingerWu2012VDM} that, under the manifold assumption, a local PCA procedure can be used for estimating tangent spaces from a point cloud; we are using PCA here as a procedure that determines the dimension of a local good linear approximation to the manifold, and also, conveniently, provides a good basis, which can be viewed as a basis for each tangent plane. To sample on these tangent spaces, it suffices to repeatedly sample coordinate coefficients from a fixed standard unit sphere; each sample can be interpreted as giving the coordinates of a point (approximately) on the tangent space. Parallel-transports will take the corresponding point that truly lies on the tangent space at $\xi$ to the tangent space at $\zeta$, another point on the manifold. This new tangent space is, however, again known only approximately; points in this approximate space are characterized by coordinates with respect to the local PCA basis at $\zeta$. We can thus express the whole (approximate) parallel-transport procedure by maps between coordinates with respect to PCA basis at $\xi$ to sets of coordinates at $\zeta$; these changes of coordinates incorporate information on the choices of basis at each end as well as on the parallel-transport itself.

Let us now describe this in more detail, setting up notations along the way. Throughout this section, Assumption~\ref{assum:technical} still holds. Let $\left\{ \xi_1,\cdots,\xi_{N_B} \right\}$ be a collection of i.i.d. samples from $M$; then the local PCA procedure can be summarized as follows: for any $\xi_j$, $1\leq j\leq N_B$, let $\xi_{j_1},\cdots,\xi_{j_k}$ be its $k$ nearest neighboring points. Then
\begin{equation*}
  X_j=\left[ \xi_{j_1}-\xi_j,\cdots, \xi_{j_k}-\xi_j\right]
\end{equation*}
is a $D\times k$ matrix. Let $K_{\mathrm{PCA}}$ be a positive monotonic decreasing function supported on  the unit interval, e.g. the \emph{Epanechnikov kernel}~\cite{Epanechnikov1969}
\begin{equation*}
  K_{\mathrm{PCA}}\left( u \right)=\left( 1-u^2 \right)\chi_{\left[ 0,1 \right]},
\end{equation*}
where $\chi$ is the indicator function. Fix a scale parameter $\epsilon_{\mathrm{PCA}}>0$, let $D_j$ be the $k\times k$ diagonal matrix
\begin{equation*}
  D_j = \mathrm{diag}\left( \sqrt{K_{\mathrm{PCA}}\left( \frac{\left\|\xi_j-\xi_{j_1}\right\|}{\sqrt{\epsilon_{\mathrm{PCA}}}} \right)}, \cdots, \sqrt{K_{\mathrm{PCA}}\left( \frac{\left\|\xi_j-\xi_{j_k}\right\|}{\sqrt{\epsilon_{\mathrm{PCA}}}} \right)} \right)
\end{equation*}
and carry out the singular value decomposition (SVD) of matrix $X_jD_j$ as
\begin{equation*}
  X_jD_j=U_j\Sigma_jV_j^{\top}.
\end{equation*}
An estimated basis $B_j$ for the local tangent plane at $\xi_j$ is formed by the first $d$ left singular vectors (corresponding to the $d$ largest singular values in $\Sigma_j$), arranged into a matrix as follows:
\begin{equation*}
  B_j=\left[ u_j^{\left( 1 \right)},\cdots, u_j^{\left( d \right)} \right]\in \mathbb{R}^{D\times d}.
\end{equation*}
Note that the intrinsic dimension $d$ is generally not known \emph{a priori}. The authors of \cite{SingerWu2012VDM} proposed a procedure that first estimates local dimensions from the decay of singular values in $\Sigma_j$ and then sets $d$ to be the median of all local dimensions; \cite{LittleMaggioniRosasco2011} proposed a different approach based on multi-scale singular value decomposition.

Once a pair of estimated bases $B_i,B_j$ is obtained for neighboring points $\xi_i,\xi_j$, one estimates a parallel-transport from $T_{\xi_i}M$ to $T_{\xi_j}M$ as
\begin{equation*}
  O_{ji}:=\argmin_{O\in O \left( d \right)}\left\|O-B_j^{\top}B_i\right\|_{\mathrm{HS}},
\end{equation*}
where $\left\|\cdot\right\|_{\mathrm{HS}}$ is the Hilbert-Schmidt norm. Though this minimization problem is non-convex, it has an efficient closed-form solution via the SVD of $B_i^{\top}B_j$, namely
\begin{equation*}
  O_{ji}=UV^{\top},\quad\textrm{where }B_j^{\top}B_i=U\Sigma V^{\top} \textrm{ is the SVD of $B_j^{\top}B_i$.}
\end{equation*}
It is worth noting that $O_{ji}$ depends on the bases; it operates on the coordinates of tangent vectors under $B_i$ and $B_j$, as explained above. $O_{ji}$ approximates the true parallel-transport $P_{\xi_j,\xi_i}$ (composed with the bases-expansions) with an error of $O \left( \epsilon_{\mathrm{PCA}} \right)$, in the sense of \cite[Lemma B.1]{SingerWu2012VDM}.

We summarize our sampling strategy for this section (with some new notations) in the following definition.
\begin{definition}
\label{defn:utm_finite_sampling_noise}
\begin{enumerate}[(1)]
\item\label{item:31} Let $\left\{ \xi_1,\cdots,\xi_{N_B} \right\}$ be a collection of samples from the base manifold $M$, i.i.d. with respect to some probability density function $\overline{p}\in C^{\infty}\left( M \right)$. For each $\xi_j$, $1\leq j\leq N_B$, sample $N_F$ points uniformly from the $\left(d-1\right)$-dimensional standard unit sphere $S^{d-1}$ in $\mathbb{R}^d$, and denote the set of samples as $\mathscr{C}_j = \left\{ c_{j,1},\cdots,c_{j,N_F} \right\}$, where each $c_{j,s}$ is a $d\times 1$ column vector. Using the basis $B_j$  estimated from the local PCA procedure, each $c_{j,s}$ corresponds to an ``approximate tangent vector at $\xi_j$'', denoted as
  \begin{equation*}
    \tau_{j,s}:=B_jc_{j,s}.
  \end{equation*}
We use the notation $\mathscr{S}_j$ for the unit sphere in the estimated tangent space (i.e., the column space of $B_j$). Note that the $\tau_{j,1},\cdots,\tau_{j,N_F}$ are uniformly distributed on $\mathscr{S}_j$.
\item\label{item:40} By \cite[lemma B.1]{SingerWu2012VDM}, for any $B_j$ there exists a $D\times d$ matrix $Q_j$, such that the columns of $Q_j$ constitutes an orthonormal basis for $\iota_{*}T_{\xi_j}M$ and
  \begin{equation*}
    \left\|B_j-Q_j\right\|_{\mathrm{HS}}=O \left( \epsilon_{\mathrm{PCA}} \right).
  \end{equation*}
We define the \emph{tangent projection} from $\iota_{*}S_{\xi_j}$ to the estimated tangent plane as
\begin{equation*}
  \tau_{j,s}\mapsto\overline{\tau}_{j,s}= \frac{Q_jQ_j^{\top}\tau_{j,s}}{\left\|Q_jQ_j^{\top}\tau_{j,s}\right\|}.
\end{equation*}
This map is well-defined for sufficiently small $\epsilon_{\mathrm{PCA}}$, and then it is an isometry. Its inverse is given by
\begin{equation*}
  \overline{\tau}_{j,s}\mapsto \tau_{j,s}=\frac{B_jB_j^{\top}\overline{\tau}_{j,s}}{\left\|B_jB_j^{\top}\overline{\tau}_{j,s}\right\|}.
\end{equation*}
Note that we have
\begin{equation*}
  \left\|\tau_{j,s}-\overline{\tau}_{j,s}\right\|\leq C\epsilon_{\mathrm{PCA}}
\end{equation*}
for some constant $C>0$ independent of indices $j,s$. Since we sample each $\mathscr{S}_j$ uniformly and the projection map $\tau_{j,s}\mapsto\bar{\tau}_{j,s}$ is an isometry, the points $\left\{ \overline{\tau}_{j,1},\cdots,\overline{\tau}_{j,N_F} \right\}$ are also uniformly distributed on $S_{\xi_j}$. The points
\begin{equation*}
  \begin{matrix}
    \overline{\tau}_{1,1}, &\overline{\tau}_{1,2}, &\cdots, &\overline{\tau}_{1,N_F}\\
    \overline{\tau}_{2,1}, &\overline{\tau}_{2,2}, &\cdots, &\overline{\tau}_{2,N_F}\\
    \vdots & \vdots &\cdots &\vdots\\
    \overline{\tau}_{N_B,1}, &\overline{\tau}_{N_B,2}, &\cdots, &\overline{\tau}_{N_B,N_F}
  \end{matrix}
\end{equation*}
are therefore distributed on $UTM$ according to a joint probability density function $p$ on $UTM$ defined as
\begin{equation*}
  p \left( x,v \right)=\overline{p}\left( x \right),\quad \forall \left( x,v \right)\in UTM.
\end{equation*}
As in Assumption~\ref{assum:utm_finite_sampling_noiseless}, we assume $p$ satisfies \eqref{eq:density_positivity_assumption}, i.e.,
\begin{equation*}
  0<p_m\leq p \left( x,v \right)=\overline{p}\left( x \right)\leq p_M <\infty,\quad \forall \left( x,v \right)\in UTM
\end{equation*}
for positive constants $p_m,p_M$.
\item\label{item:14} For $\epsilon>0$, $\delta>0$ and $1\leq i,j\leq N_B$, $1\leq r,s\leq N_F$, define
  \begin{equation*}
    \mathscr{K}_{\epsilon,\delta} \left( \overline{\tau}_{i,r}, \overline{\tau}_{j,s} \right) =
   \begin{cases}
    \displaystyle K \left( \frac{\|\xi_i-\xi_j\|^2}{\epsilon}, \frac{\|O_{ji}c_{i,r}-c_{j,s}\|^2}{\delta} \right), & i\neq j,\\
   0,&i=j.
    \end{cases}
  \end{equation*} 
where $O_{ji}$ is the estimated parallel-transport from $T_{\xi_i}M$ to $T_{\xi_j}M$.
\item\label{item:29} For $0\leq \alpha\leq 1$, define
\begin{equation*}
    \hat{q}_{\epsilon,\delta}\left( \overline{\tau}_{i,r} \right) = \sum_{j=1}^{N_B}\sum_{s=1}^{N_F}\mathscr{K}_{\epsilon,\delta} \left( \overline{\tau}_{i,r}, \overline{\tau}_{j,s} \right)
  \end{equation*}
and
  \begin{equation*}
    \mathscr{K}_{\epsilon,\delta}^{\alpha} \left( \overline{\tau}_{i,r}, \overline{\tau}_{j,s} \right) = \frac{\mathscr{K}_{\epsilon,\delta}\left( \overline{\tau}_{i,r}, \overline{\tau}_{j,s} \right)}{\hat{q}^{\alpha}_{\epsilon,\delta}\left( \overline{\tau}_{i,r}\right)\hat{q}^{\alpha}_{\epsilon,\delta}\left( \overline{\tau}_{j,s} \right)},\quad 1\leq i, j\leq N_B, 1\leq r,s\leq N_F.
  \end{equation*}
\item\label{item:30} For $0\leq \alpha\leq 1$ and $f\in C^{\infty}\left( UTM \right)$, denote
  \begin{equation*}
    \mathscr{H}_{\epsilon,\delta}^{\alpha}f\left( \overline{\tau}_{i,r} \right)=\frac{\displaystyle \sum_{j=1}^{N_B}\sum_{s=1}^{N_F}\mathscr{K}_{\epsilon,\delta}^{\alpha} \left( \overline{\tau}_{i,r}, \overline{\tau}_{j,s}\right)f \left( \overline{\tau}_{j,s} \right)}{\displaystyle \sum_{j=1}^{N_B}\sum_{s=1}^{N_F}\mathscr{K}_{\epsilon,\delta}^{\alpha} \left( \overline{\tau}_{i,r}, \overline{\tau}_{j,s}\right)}.
  \end{equation*}
\end{enumerate}
\end{definition}

\begin{theorem}[Finite Sampling from Empirical Tangent Planes]
\label{thm:utm_finite_sampling_noise}
  In addition to Assumption~{\rm\ref{assum:technical}}, suppose
\begin{enumerate}[(i)]
\item\label{item:33} $\epsilon_{\textrm{PCA}} = O \left( N_B^{-\frac{2}{d+2}} \right)$ as $N_B\rightarrow\infty$;
\item\label{item:32} As $\epsilon\rightarrow0$, $\delta=O \left( \epsilon \right)$ and $\delta\gg \left( \epsilon_{\mathrm{PCA}}^{\frac{1}{2}}+\epsilon^{\frac{3}{2}} \right)$;
\item\label{item:39}
\begin{equation*}
  \lim_{N_B\rightarrow\infty\atop N_F\rightarrow\infty}\frac{N_F}{N_B}=\beta\in \left( 0,\infty \right).
\end{equation*}
\end{enumerate}
Then for any $\tau_{i,r}$ with $1\leq i\leq N_B$ and $1\leq r\leq N_F$, as $\epsilon\rightarrow0$ (and thus $\delta\rightarrow0$), with high probability
\begin{equation}
\label{eq:finite_sample_theorem_noise}
\begin{aligned}
    \mathscr{H}_{\epsilon,\delta}^{\alpha}f\left( \overline{\tau}_{i,r} \right) &= f \left( \overline{\tau}_{i,r} \right)+\epsilon \frac{m_{21}}{2m_0}\left[\frac{\Delta_{UT\!M}^H\left[fp^{1-\alpha}\right]\left( \overline{\tau}_{i,r} \right)}{p^{1-\alpha}\left( \overline{\tau}_{i,r} \right)}-f \left( \overline{\tau}_{i,r} \right) \frac{\Delta_{UT\!M}^Hp^{1-\alpha}\left( \overline{\tau}_{i,r} \right)}{p^{1-\alpha}\left( \overline{\tau}_{i,r} \right)}  \right]\\
    &+\delta \frac{m_{22}}{2m_0}\left[\frac{\Delta_{UT\!M}^V\left[fp^{1-\alpha}\right]\left( \overline{\tau}_{i,r} \right)}{p^{1-\alpha}\left( \overline{\tau}_{i,r} \right)}-f \left( \overline{\tau}_{i,r} \right) \frac{\Delta_{UT\!M}^Vp^{1-\alpha}\left( \overline{\tau}_{i,r} \right)}{p^{1-\alpha}\left( \overline{\tau}_{i,r} \right)}  \right]\\
    &+O \left( \epsilon^2+\epsilon\delta+\delta^2+\theta_{*}^{-1}N_B^{-\frac{1}{2}}\epsilon^{-\frac{d}{4}}+\delta^{-1}\left( \epsilon_{\mathrm{PCA}}^{\frac{1}{2}}+\epsilon^{\frac{3}{2}} \right) \right),
\end{aligned}
\end{equation}
where
\begin{equation*}
  \theta_{*}=1-\frac{\displaystyle 1}{\displaystyle 1+\epsilon^{\frac{d}{4}}\delta^{\frac{d-1}{4}}\sqrt{\frac{N_F}{N_B}}}.
\end{equation*}
\end{theorem}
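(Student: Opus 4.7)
The plan is to reduce Theorem~\ref{thm:utm_finite_sampling_noise} to Theorem~\ref{thm:utm_finite_sampling_noiseless} by treating the empirical sampling $\bigl\{\overline{\tau}_{j,s}\bigr\}$ together with the approximate parallel transports $O_{ji}$ as a perturbation of an ``ideal'' sample on $UTM$ with exact parallel transports $P_{\xi_j,\xi_i}$. Thus I would first introduce the auxiliary kernel
\begin{equation*}
  \tilde{K}_{\epsilon,\delta}\bigl(\overline{\tau}_{i,r},\overline{\tau}_{j,s}\bigr)
  = K\!\left(\frac{\|\xi_i-\xi_j\|^{2}}{\epsilon},\,
             \frac{\|P_{\xi_j,\xi_i}\overline{\tau}_{i,r}-\overline{\tau}_{j,s}\|^{2}}{\delta}\right),
\end{equation*}
together with its $\alpha$-normalized version and the associated operator $\tilde{H}_{\epsilon,\delta}^{\alpha}$. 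Since the points $\overline{\tau}_{j,s}$ are distributed on $UTM$ with joint density $p(x,v)=\overline{p}(x)$ exactly in the manner prescribed by Assumption~\ref{assum:utm_finite_sampling_noiseless}, Theorem~\ref{thm:utm_finite_sampling_noiseless} applies verbatim to $\tilde{H}_{\epsilon,\delta}^{\alpha}f(\overline{\tau}_{i,r})$, yielding the full asymptotic expansion in~\eqref{eq:finite_sample_theorem_noise} \emph{except} for the PCA-induced term $\delta^{-1}\bigl(\epsilon_{\mathrm{PCA}}^{1/2}+\epsilon^{3/2}\bigr)$.

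Next I would estimate the difference $\mathscr{H}_{\epsilon,\delta}^{\alpha}f-\tilde{H}_{\epsilon,\delta}^{\alpha}f$. Both operators share the first kernel argument $\|\xi_i-\xi_j\|^{2}/\epsilon$, so it suffices to compare their second arguments. Invoking \cite[Lemma~B.1]{SingerWu2012VDM}, one has $\|B_j-Q_j\|_{\mathrm{HS}}=O(\epsilon_{\mathrm{PCA}})$, hence $\|\tau_{j,s}-\overline{\tau}_{j,s}\|=O(\epsilon_{\mathrm{PCA}})$; similarly $O_{ji}$ approximates $P_{\xi_j,\xi_i}$ up to an error of $O(\epsilon_{\mathrm{PCA}})$ on its coordinate representation, while the intrinsic parallel transport along a chord of length $\sqrt{\epsilon}$ differs from any ``straight-line'' linear transport by $O(\epsilon^{3/2})$ (this is the curvature contribution appearing in the normal-coordinate expansion of $P_{\xi_j,\xi_i}$). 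Combining these,
\begin{equation*}
  \Bigl\lvert\|O_{ji}c_{i,r}-c_{j,s}\|^{2}-\|P_{\xi_j,\xi_i}\overline{\tau}_{i,r}-\overline{\tau}_{j,s}\|^{2}\Bigr\rvert
  = O\!\bigl(\epsilon_{\mathrm{PCA}}^{1/2}+\epsilon^{3/2}\bigr)
\end{equation*}
uniformly over pairs with $\|\xi_i-\xi_j\|^{2}\le\epsilon$. A first-order Taylor expansion of $K$ in its second argument then produces a pointwise kernel perturbation of order $\delta^{-1}\bigl(\epsilon_{\mathrm{PCA}}^{1/2}+\epsilon^{3/2}\bigr)$, which is precisely where the $\delta^{-1}$ amplification enters: differentiating the kernel with respect to its second slot costs one factor of $\delta^{-1}$, while the compact support of $\partial_{2}K$ keeps the effective integration region unchanged.

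Finally I would propagate this kernel perturbation through both the numerator and the normalising denominator of $\mathscr{H}_{\epsilon,\delta}^{\alpha}$ (as well as through the $\alpha$-normalising factor $\hat q_{\epsilon,\delta}^{\alpha}$), using the uniform lower bound on $p$ to prevent division by small quantities and the variance bound already used in Theorem~\ref{thm:utm_finite_sampling_noiseless} to absorb the concentration-of-measure error $\theta_{\ast}^{-1}N_B^{-1/2}\epsilon^{-d/4}$. Adding the $O(\epsilon^{2}+\epsilon\delta+\delta^{2})$ bias from the ideal operator, the $\theta_{\ast}^{-1}N_B^{-1/2}\epsilon^{-d/4}$ variance, and the $\delta^{-1}(\epsilon_{\mathrm{PCA}}^{1/2}+\epsilon^{3/2})$ PCA error gives exactly~\eqref{eq:finite_sample_theorem_noise}. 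The condition $\epsilon_{\mathrm{PCA}}=O(N_B^{-2/(d+2)})$ is what is required to guarantee the Lemma~B.1 bound holds with high probability on the local PCA bases $B_j$, while the assumption $\delta\gg\epsilon_{\mathrm{PCA}}^{1/2}+\epsilon^{3/2}$ ensures the new PCA error is actually a lower-order correction. The principal technical obstacle is the careful bookkeeping in the second step: one must verify that the $\delta^{-1}$-blow-up from differentiating $K$ in its vertical slot is genuinely the only place where small $\delta$ is dangerous, and in particular that the $\alpha$-normalisation does not compound this into a $\delta^{-2}$ term.
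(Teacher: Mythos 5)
Your proposal follows essentially the same route as the paper's proof: both compare the empirical kernel built from $O_{ji}c_{i,r}$ with the one built from the exact parallel transport $P_{\xi_j,\xi_i}\overline{\tau}_{i,r}$, use \cite[Lemma B.1]{SingerWu2012VDM} and the approximate-parallel-transport estimate to bound the squared-norm discrepancy by $O\bigl(\epsilon_{\mathrm{PCA}}^{1/2}+\epsilon^{3/2}\bigr)$, Taylor-expand $K$ in its second slot to pick up the single factor of $\delta^{-1}$, and then invoke Theorem~\ref{thm:utm_finite_sampling_noiseless} for the remaining bias and variance terms. The only minor imprecision is your attribution of an $O(\epsilon_{\mathrm{PCA}})$ accuracy to $O_{ji}$ itself (the correct lemma gives $O(\epsilon_{\mathrm{PCA}}^{1/2}+\epsilon^{3/2})$ directly), but your final error bound and the overall argument match the paper's.
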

We give a proof of Theorem~\ref{thm:utm_finite_sampling_noise} in Appendix~\ref{sec:proof_variance}.

\subsection{Numerical Experiments}
\label{sec:unit-tangent-bundles}

The unit tangent bundle is of special interest since $UT\!M$ is a compact Riemannian manifold whenever $M$ is compact, enabling finite sampling and numerically validating Theorem~\ref{thm:warped_diffusion_generator} and Theorem~\ref{thm:warped_diffusion_generator_unbounded_ratio}. We present in below a numerical experiment on $\mathrm{SO(3)}$, the unit tangent bundle of the standard two-dimensional sphere in $\mathbb{R}^3$, along with an analysis of sampling errors on general unit tangent bundles. In the first step, we uniformly sample $N_B=2,000$ points $\left\{\xi_1,\cdots,\xi_{N_B} \right\}$ on the unit sphere $S^2$, and find for each sample point the $K_B=100$ nearest neighbors in the point cloud. Next, we sample $N_F=50$ vectors of unit length tangent to the unit sphere at each sample point (which in this case is a circle), thus collecting a total of $N_B\times N_F=100,000$ points on $UTS^2=\mathrm{SO(3)}$, denoted as
\begin{equation*}
\left\{ x_{j,s}\mid 1\leq j\leq N_B,1\leq s\leq N_F \right\}.
\end{equation*}
The horizontal diffusion matrix $H$ is then constructed as an $N_B\times N_B$ block matrix with block size $N_F\times N_F$, and $H_{ij}$ (the $\left( i,j \right)$-th block of $H$) is non-zero only if the sample points $\xi_i,\xi_j$ are each among the $K_B$-nearest neighbors of the other; when $H_{ij}$ is non-zero, its $\left( r,s \right)$-entry ($1\leq r,s\leq N_F$) is non-zero only if $P_{\xi_j,\xi_i}x_{i,r}$ and $x_{j,s}$ are each among the $K_F=50$ nearest neighbors of the other, and in that case for all $i\neq j$
\begin{equation}
\label{eq:matrix_entry_noiseless}
  H_{ij} \left( r,s \right)=\exp \left[ -\left(\frac{\left\|\xi_i-\xi_j\right\|^2}{\epsilon}+\frac{\left\|P_{\xi_j,\xi_i}x_{i,r}-x_{j,s}\right\|^2}{\delta}\right) \right],
\end{equation}
where the choices of $\epsilon,\delta$ will be explained below. The diagonal blocks are set to zero as in Definition~\ref{defn:utm_finite_sampling_noiseless}. Note that for the unit sphere $S^2$ the parallel-transport from $T_{\xi_i}S^2$ to $T_{\xi_j}S^2$ can be explicitly constructed as a rotation along the axis $\xi_i\times \xi_j$. Finally, we form the $\alpha$-normalized horizontal diffusion matrix $H_{\alpha}$ by
\begin{equation}
\label{eq:normalize_diffusion_matrix}
  \left(H_{\alpha}\right)_{ij}\left( r,s \right)=\frac{H_{ij} \left( r,s \right)}{\left( \displaystyle \sum_{l=1}^{N_B}\sum_{m=1}^{N_F}H_{il} \left( r,m \right) \right)^{\alpha}\left( \displaystyle \sum_{k=1}^{N_B}\sum_{n=1}^{ N_F}H_{jk} \left( r,n \right) \right)^{\alpha}},
\end{equation}
and solve the eigenvalue problem
\begin{equation}
\label{eq:generalized_eigen_problem}
  \left(D^{-\frac{1}{2}}H_{\alpha}D^{-\frac{1}{2}}\right)U = U\Lambda
\end{equation}
where $D$ is the $\left(N_BN_F\right)\times\left(N_BN_F\right)$ diagonal matrix with entry $\left( k,k \right)$ equal to the $k$-th column sum of $H_{\alpha}$:
\begin{equation*}
  D \left( k,k \right) = \sum_{v=1}^{N_B N_F}H_{\alpha}\left( k,v \right),
\end{equation*}
and $\Lambda$ is a diagonal matrix of the same dimensions. Throughout this experiment, we fix $\alpha=1$, $\epsilon=0.2$ and choose various values of $\delta$ ranging from $0.0005$ to $50$, and observe the spacing of the eigenvalues stored in $\Lambda$.

The purpose of this experiment is to investigate the influence of the ratio $\gamma=\delta/\epsilon$ on the spectral behavior of graph horizontal Laplacians. As shown in Figure~\ref{fig:laplacians}, the spacing in the spectrum of these graph horizontal Laplacians follow patterns similar to the multiplicities of the eigenvalues of corresponding Laplacians on $\mathrm{SO(3)}$ (governed by the relative size of $\delta$ and $\epsilon$). In Figure~\ref{fig:laplacians}(a), $\delta\ll\epsilon$, hence the graph horizontal Laplacian approximates the horizontal Laplacian on $\mathrm{SO(3)}$ (according to Theorem~\ref{thm:warped_diffusion_generator} and Corollary~\ref{cor:warped_diffusion_generator}), in which the smallest eigenvalues have  multiplicities $1,6,13,\cdots$; in Figure~\ref{fig:laplacians}(b), $\delta=O \left(\epsilon \right)$, hence the graph horizontal Laplacian approximates the total Laplacian on $\mathrm{SO(3)}$ (again, according to Theorem~\ref{thm:warped_diffusion_generator} and Corollary~\ref{cor:warped_diffusion_generator}), with eigenvalue multiplicities $1,9,25,\cdots$); in Figure~\ref{fig:laplacians}(c), $\delta\gg\epsilon$, hence the graph horizontal Laplacian approximates the Laplacian on the base manifold $S^2$ (according to Theorem~\ref{thm:warped_diffusion_generator_unbounded_ratio}), with eigenvalue multiplicities $1,3,5,\cdots$). Note that in Figure~\ref{fig:laplacians}(c) we fixed $\epsilon$ and pushed $\delta$ to $\infty$, which essentially corresponds to the limit process in \eqref{eq:warped_generator_thm_unbounded_ratio} rather than \eqref{eq:horizontal_generator_theorem}. Moreover, if in each figure we divide the sequence of eigenvalues by the smallest non-zero eigenvalue, the resulting sequence coincides with the list of eigenvalues of the corresponding manifold Laplacian up to numerical error. For a description of the spectrum of these partial differential operators, see \cite[Chapter 2]{Taylor1990NHA}.
\begin{figure}[htp]
  \centering
\begin{tabular}{ccc}
  \includegraphics[width=0.31\textwidth]{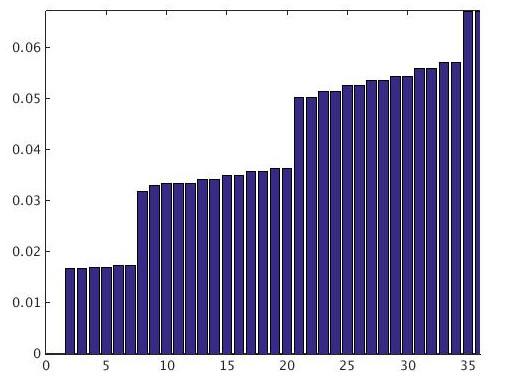}&
  \includegraphics[width=0.31\textwidth]{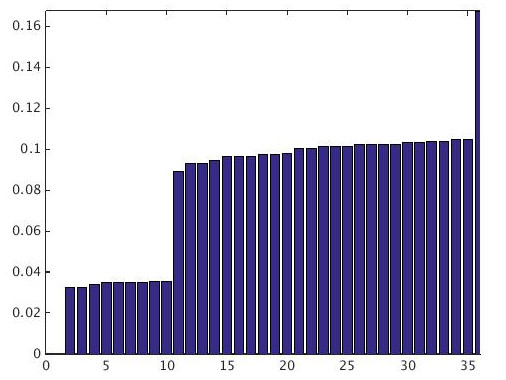}&
  \includegraphics[width=0.31\textwidth]{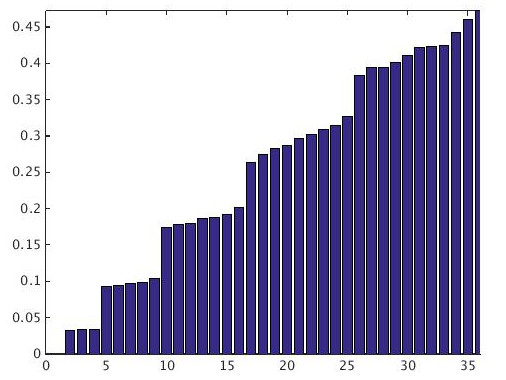}\\
  (a) $\delta=0.002, \Delta_{\mathrm{SO(3)}}^H$ & (b) $\delta=0.015, \Delta_{\mathrm{SO(3)}}$ & (c) $\delta=20, \Delta_{S^2}$\\
\end{tabular}
  \caption{Bar plots of the smallest $36$ eigenvalues of $I-D^{-1}H_{\alpha}$ with $\alpha=1, \epsilon=0.2$, and varying $\delta$ values (sampling without noise). \emph{Left}: When $\delta\ll \epsilon$, $H_{\epsilon,\delta}^1$ approximates the heat kernel of $\Delta_{\mathrm{SO(3)}}^H$, of which the multiplicities of largest eigenvalues are $1,6,13,\cdots$; \emph{Middle}: When $\delta\approx \epsilon$, $H_{\epsilon,\delta}^1$ approximates the heat kernel of $\Delta_{\mathrm{SO(3)}}$, of which the multiplicities of largest eigenvalues are $1,9,25,\cdots$; \emph{Right}: When $\delta\gg \epsilon$, $H_{\epsilon,\delta}^1$ approximates the heat kernel of $\Delta_{S^2}$, of which the multiplicities of largest eigenvalues are $1,3,5,\cdots$.}
  \label{fig:laplacians}
\end{figure}

Similar numerical results have been observed for sampling from empirically estimated tangent spaces; we refer interested readers to \cite[\S3.5.2]{Gao2015Thesis}.

\section{Application to Automated Geometric Morphometrics}
\label{sec:autogm}

The HDM framework can be applied to any data set with pairwise structural correspondences. In many applications, such structural correspondences are readily available through a registration procedure, and have been used to compute similarity scores or distances between objects of interest. In this section, we sketch the application of HDM to automated geometric morphometrics. In a nutshell, this is an unsupervised learning problem with heterogeneous or unorganized data, for which feature engineering is particularly difficult; moreover, it is hard to apply kernel methods due to the lack of an informative kernel function. We expect problems arising from machine learning, pattern recognition, and computer vision facing similar difficulties to benefit from the proposed approach.

Geometric morphometrics is the quantitative analysis of shape variation and their correlation with other traits for biological organisms. For instance, it is often of interest to geometric morphometricians to understand quantitatively the amount of the shape variation explained by geometric features within a collection of shapes. They typically select equal numbers of consistently \emph{homologous} landmark points on each surface \cite{Mitteroecker2009}, corresponding to a mental model of a latent ``template,'' of which every individual shape is an instantiation. In statistical shape analysis, this landmark-based approach is developed in the framework of \emph{Procrustes analysis} \cite{DrydenMardia1998SSA}. Obviously, such an analysis is limited by the knowledge of landmark placement. From a mathematical point of view, extracting a limited number of landmarks from a continuous surface inevitably loses geometric information, unless when the shapes under consideration are solely determined by the landmarks (e.g. polygonal shapes, as considered in \cite{Kendall1984}\cite{FryThesis1993}), which is rarely the case for geometric morphometricians in biology; from a practical point of view, the requirement that an equal number of landmarks must be chosen on each shape is sometimes unrealistic due to the complex evolutionary and developmental process. Manually placing landmarks on each shape among a large collection is also a tedious task, and the skill to perform it ``correctly'' typically requires years of professional training; even then the ``correctness'' or the number of landmarks one should fix for a collection of shapes can be subject to debate among experts. These difficulties are gradually and continuously being addressed by a recent trend that advocates \emph{automated} workflows to bypass the repetitive, laborious, and time-consuming process of manual landmark placement on large collections of 3D digitized anatomical surfaces (see \cite{CP13,PNAS2011,PuenteThesis2013,LipmanDaubechies2011,Boyer2012,LipmanPuenteDaubechies2013,Auto3dGM2015,KoehlHass2015,Gao2015Thesis,VMGBSB2017,HLB2016,GYDMB2018,GKD2019,GKBD2019} and references therein).

The digitized morphological data set contains hundreds of triangular meshes (see Figure~\ref{fig:mesh_data}) of diverse size, topology, and quality, each representing an anatomical surface reconstructed from MicroCT images.
\begin{figure}[h]
  \centering
  \includegraphics[width=0.3\textwidth]{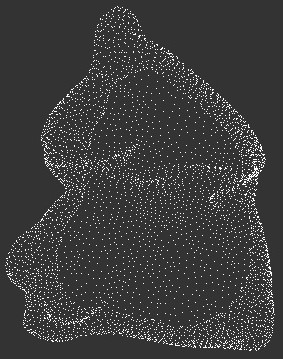}
  \includegraphics[width=0.3\textwidth]{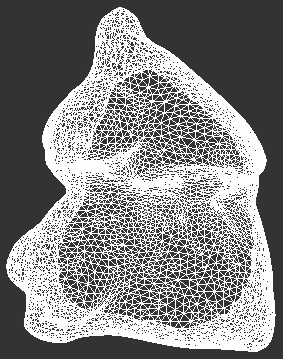}
  \includegraphics[width=0.3\textwidth]{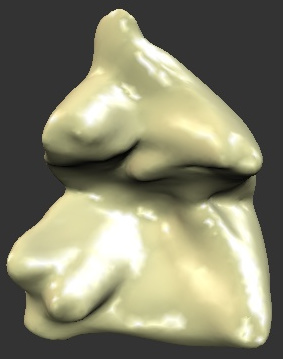}
  \caption{A second mandibular molar of a Philippine flying lemur (\emph{Cynocephalus volans}), represented as a point cloud (left), wireframe (middle), and a piecewise linear surface (right).}
  \label{fig:mesh_data}
\end{figure}
In \cite{CP13}, the authors introduced \emph{Continuous Procrustes Distance} (CPD) between surfaces with disk-type topology, and used conformal parameterization (uniformization) to design a fast algorithm that computes automatically (without landmarks) distances between pairs of morphological surfaces that would be at least as effective, for species discrimination, as Procrustes distances computed from user-defined landmarks \cite{PNAS2011}. Upon computing a distance between pairs of surfaces, the algorithm in \cite{CP13} minimize an energy functional depending on the pair, over an admissible set of \emph{correspondence maps}; the distance is indeed the value of the functional at the optimal correspondence map (Figure~\ref{fig:correspondence}). This approach has recently been followed by other authors as well \cite{Koehl2015}.

Detailed inspection of the optimal correspondence maps generated from the algorithm \cite{CP13} showed that some of them presented serious anomalies, such as reversed alignments of the anteroposterior/buccolingual axes \cite{GYDMB2018}. It may seem surprising that the algorithms, despite sometimes producing these erroneous maps, nevertheless were sufficiently successful in capturing sample geometry to achieve the success rate reported in \cite{PNAS2011}. As we extend the algorithm in \cite{CP13} in different directions, the correspondence maps became an important explicit goal of the algorithm, as opposed to an interesting by-product. While CPD automates the traditional Procrustes analysis, the optimal correspondence maps parallel the landmark-identification procedure performed mentally by geometric morphometricians. Moreover, these maps provide detailed information about correlations, often not fully retained when an energy functional summarizes a similarity measure, between functional or developmental regions on different shapes.

HDM is a natural algorithmic framework for unsupervised learning from structural correspondences maps. In this section, we apply HDM to a data set consisting of $50$ discretized triangular meshes of the second mandibular molar of prosimian primates and nonprimate close relatives. The $50$ meshes are evenly divided into $5$ genus groups: \emph{Alouatta}, \emph{Ateles}, \emph{Brachyteles}, \emph{Callicebus}, and \emph{Saimiri}; each mesh contains about $5,000$ vertices and $10,000$ faces. We compute first all pairwise CPD and correspondence maps, then all pairwise \emph{Horizontal Base Diffusion Distance} (HBDD) from the distances and maps. The $50\times 50$ distance matrices are finally embedded into $\mathbb{R}^3$ for comparison via \emph{multi-dimensional scaling} (MDS).

The HBDD is constructed from CPD as follows. For each pair of triangular meshes $S_i,S_j$ in the data set, denote their CPD as $d_{ij}$, and the optimal correspondence from $S_i$ to $S_j$ as $f_{ij}$. Note that $d_{ij}=d_{ji}$ and $f_{ij}=f_{ji}^{-1}$. In the first step, we discretize each surface area measure $\mu_j=d\mathrm{vol}_{S_j}$ into a linear combination of Dirac delta measures supported on vertices of $S_j$, where each vertex of $S_j$ is assigned $1/3$ of the surface area of its \emph{one-ring neighborhood}. We then \emph{soften} each bijective smooth map $f_{ij}$ into a \emph{transport plan} matrix $w_{ij}$, the $s$-th row of which records the transition probability from vertex $x_{i,s}$ of $S_i$ to each vertex on $S_j$; moreover, the specific softening we choose here allows each $x_{i,s}$ to jump (in one step) only to the three vertices of the unique\footnote{It is conceivable that $f_{ij}\left( x_{i,s} \right)$ could fall on the edge shared by two triangles in $S_j$, or even on a vertex of $S_j$ shared by more than $2$ triangles. While this rarely happens in practice, in our implementation for this application we resolve such conflicts by assigning $f_{ij}\left( x_{i,s} \right)$ randomly to any of the qualified triangles. This is because we express $f_{ij}\left( x_{i,s} \right)$ as a barycentric combination of the vertices of the triangle to which it is assigned, and thus the softening is in fact independent of the specific choice made.} triangular face on $S_j$ that contains $f_{ij} \left( x_{i,s} \right)$. If $x_{j,r}$ is a vertex on $S_j$ that can be reached from $x_{i,s}$ in one step of the random walk, we set the transition probability between $x_{i,s}$ and $x_{j,r}$ proportional to
\begin{equation*}
  \exp \left( -\frac{\left\|f_{ij}\left( x_{i,s} \right)-x_{j,r}\right\|^2}{\epsilon_F} \right),
\end{equation*}
where $\epsilon_F$ is a prescribed positive constant playing the role of the vertical bandwidth parameter $\delta$ in \eqref{eq:fibre_bundle_diffusion_operator}. For this specific data set, we choose $\epsilon_F=0.001$ which is the order of magnitude of the average distance between adjacent vertices on each mesh in the data set. Next, we construct the horizontal diffusion matrix $H$ as a $50\times 50$ block matrix, with block $\left( i,j \right)$
\begin{equation*}
  H \left( i,j \right)=
  \begin{cases}
    \displaystyle \exp \left( -\frac{d^2_{ij}}{\epsilon_B} \right)\cdot w_{ij}&\textrm{if $S_j$ is within the $N_B$-neighborhood of $S_i$,}\\
    0 &\textrm{otherwise.}
  \end{cases}
\end{equation*}
We chose for this data set $N_B=4$ and $\epsilon_B=0.03$. These parameters are picked empirically, where $0.03$ is usually the maximum CPD between surfaces that belong to the same species group. We then construct the normalized graph horizontal Laplacian $L_{\alpha,*}^H$ from $H$, as in \eqref{eq:hypoelliptic_graph_laplacian_normalized_alpha}, and solve for its largest $100$ eigenvalues and corresponding eigenvectors. From this eigen-decomposition we compute the \emph{horizontal base diffusion map} (HBDM) as in \eqref{eq:hbdm}, obtaining an embedding of the data set into $\mathbb{R}^{100 \choose 2}=\mathbb{R}^{4950}$. Though this embedding is still high dimensional, it is only $1/3$ of the original dimensionality (approximately $5000\times 3=15000$). The HBDD between each pair $S_i,S_j$ is then defined as the Euclidean distance between their images embedded in $\mathbb{R}^{4950}$, as in \eqref{eq:hbdd}. For comparison, we also embed the standard \emph{Diffusion Distance} matrix in to $\mathbb{R}^3$ using MDS. As shown in Figure~\ref{fig:MDS_comparison}, HBDD demonstrates the most clear pattern of species clusters among the three distances. It is even more interesting to notice that HBDD reflects the dietary groups within the data set (see Figure~\ref{fig:form_function}): folivores Alouatta (red) and Brachyteles (green) are adjacent to each other in the rightmost panel of Figure~\ref{fig:MDS_comparison}, so are frugivores Ateles (blue) and Callicebus (purple); the insectivore Saimiri (yellow) is far from the other herbivorous groups.
\begin{figure}[htp]
  \centering
  \includegraphics[width=.32\textwidth]{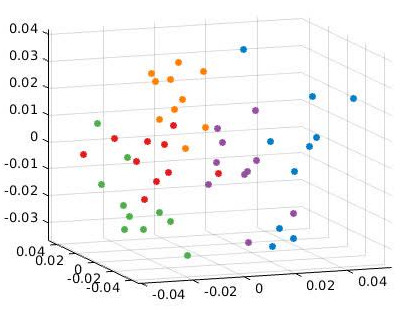}
  \includegraphics[width=.32\textwidth]{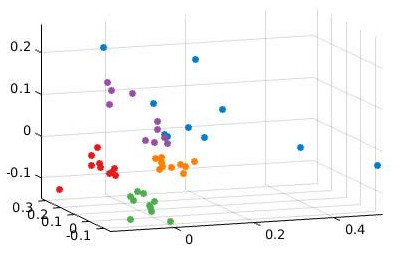}
  \includegraphics[width=.32\textwidth]{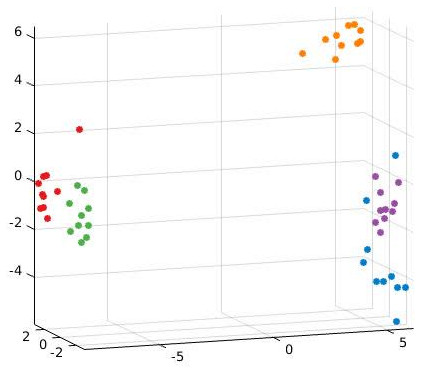}
  \caption{Embeddings of CPD (left), DM (middle), and HBDD (right) matrices into $\mathbb{R}^3$ using \emph{Multi-dimensional Scaling} (MDS).}
  \label{fig:MDS_comparison}
\end{figure}\vspace{-0.0in}
\begin{figure}[htp]
  \centering
  \includegraphics[width=.5\textwidth]{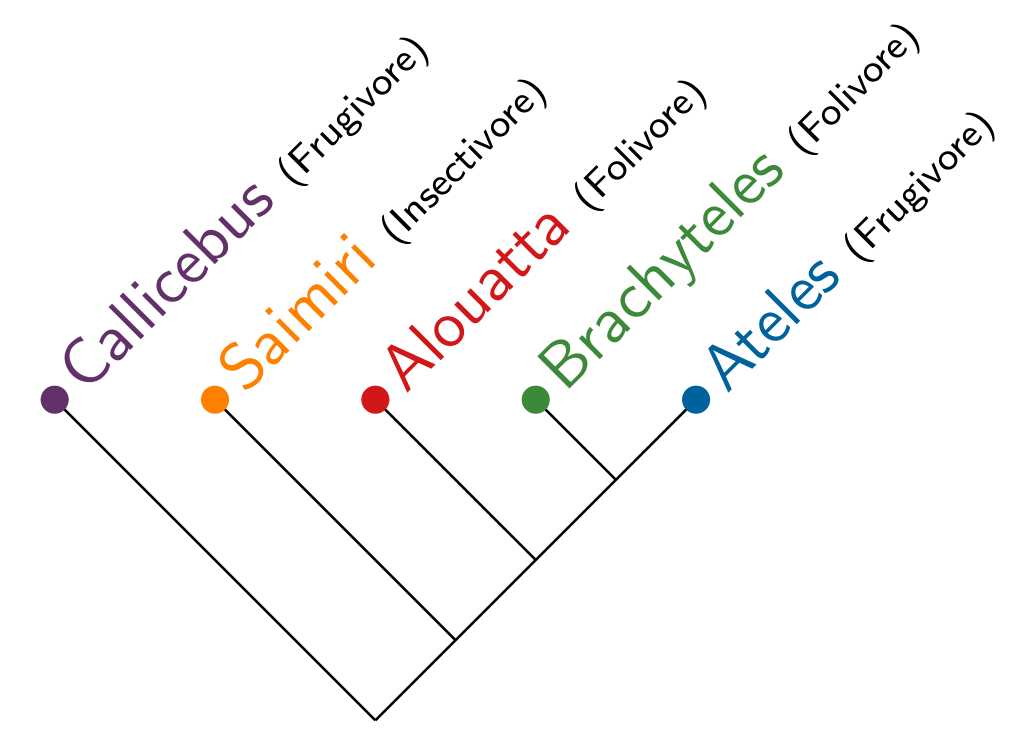}
  \caption{Phylogeny of the $5$ species groups \emph{Alouatta}, \emph{Ateles}, \emph{Brachyteles}, \emph{Callicebus}, and \emph{Saimiri}. HBDD (see Figure~\ref{fig:MDS_comparison}) reflects the dietary categories but not the phylogeny.}
  \label{fig:form_function}
\end{figure}\vspace{-0.1in}

For applications in geometric morphometrics, a major advantage of HDM over persistence-diagram-based methods is the morphological interpretability. This interpretability amounts to a globally consistent manner to identify corresponding regions on each shape in the data set and is potentially useful for subsequent studies of the evolutionary and developmental history. In standard morphologists' practice, such correspondences are assessed visually and manually; recent progress in techniques for generating and analyzing digital representations led to major advances~\cite{Zelditch2004,Wiley2005,PollyMacLeod2008} but still require the input of anatomical landmarks from the user. In contrast, by spectral clustering on the point cloud embedded into $\mathbb{R}^{100}$ by HDM, we can easily obtain a globally consistent segmentation for all surfaces, see Figure~\ref{fig:hdm_spectral_clustering}.
\begin{figure}[htp]
  \centering
  \includegraphics[width=1.0\textwidth]{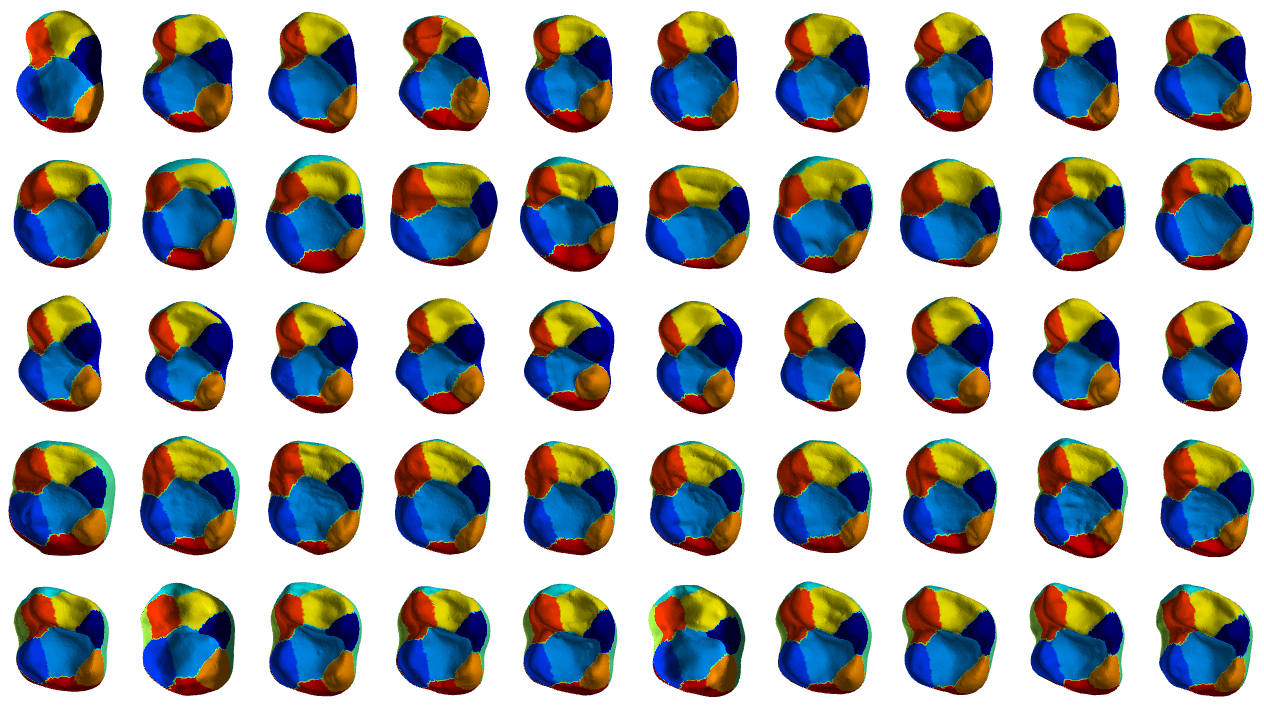}
  \caption{Automated landmarking: consistent segmentation of $50$ lemur teeth by spectral clustering in the Euclidean space to which HDM embeds. From the top row to the bottom row: Alouatta, Ateles, Brachyteles, Callicebus, Saimiri.}
  \label{fig:hdm_spectral_clustering}
\end{figure}

\section{Discussion and Future Work}
\label{sec:conlusion}
This paper introduced \emph{horizontal diffusion maps} (HDM), a novel semi-supervised learning framework for the analysis and organization of a class of complex data sets, in which individual structures at each data point carry abundant information that can not be easily abstracted away by a pairwise similarity measure. We also introduced the fibre bundle assumption, a generalization of the manifold assumption, and showed that under this assumption HDM provides embeddings for both the base and the total manifold; furthermore, the flexibility of the HDM framework enables us to view VDM and the standard diffusion maps (DM) as special cases. The rest of the paper focused on analyzing the asymptotic behavior of HDM, with convergence rate estimated for finite sampling on unit tangent bundles. These results provide the mathematical foundation for HDM on fibre bundles, and motivate further studies concerning both wider applicability and deeper mathematical understanding of the algorithmic framework. We conclude this paper by listing a few potential directions for further exploration.

\begin{enumerate}[1)]
\item\label{item:17} \emph{Spectral Convergence of HDM.} The convergence results in this paper are pointwise; as in~\cite{BelkinNiyogi2007,SW2016}, we believe that it is possible to show the convergence of the eigenvalues and eigenvectors of the graph horizontal Laplacians to the eigenvalues and eigenvectors of the manifold horizontal Laplacians, thus establishing the mathematical foundation for the spectral analysis of the HDM framework. Moreover, the horizontal diffusion maps differ from diffusion maps and vector diffusion maps in that the fibres tend to be registered to a common ``template'', which, to our knowledge, is a new phenomenon addressed here for the first time.
\item\label{item:18} \emph{Spectral Clustering and Cheeger-Type Inequalities.} An important application of graph Laplacian is spectral clustering (graph partitioning). In a simple case, for a connected graph, the eigenvector corresponding to the smallest positive eigenvalue of the graph Laplacian partitions the graph vertices into two similarly sized subsets, in such a way that the number of edges across the subsets is as small as possible. In spectral graph theory~\cite{Chung1997}, the classical Cheeger's Inequality provides upper and lower bounds for the performance of the partition; recently, \cite{Bandeira2013} established similar results for the graph connection Laplacian, the central object of VDM. We believe that similar inequalities can be established for graph horizontal Laplacians as well, with potentially more interesting behavior of the eigenvectors. For instance, we observed in practice that the eigenvector corresponding to the smallest positive eigenvalue of the graph horizontal Laplacian stably partitions all the fibres in a globally consistent manner.
\item\label{item:19} \emph{Multiscale Analysis and Hierarchical Coarse-Graining.} Multiscale representation of massive, complex data sets based on similarity graphs is an interesting and fruitful application of diffusion operators~\cite{LafonLee2006,CoifmanMaggioni2006}. Based on HDM, one can build a similar theory for data sets possessing fibre bundle structures, providing a natural framework for coarse-graining that is meaningful (or even possible) only when performed simultaneously on the base and fibre manifolds. Moreover, since the horizontal diffusion matrix is often of high dimensionality, an efficient approach to store and compute its powers will significantly improve the applicability of the HDM algorithm. We thus expect to develop a theory of \emph{horizontal diffusion wavelets} and investigate their performance on real data sets with underlying fibre bundle structures.
\end{enumerate}

\appendix
\renewcommand*{\thesection}{\Alph{section}}

\section{Fibre Bundles and Connections}
\label{sec:connections}

There are different ways to define a connection on a fibre bundle. For the sake of generality, we adopt here the treatment in~\cite{Michor2008} or \cite{Ehresmann1950Connexions} from a Riemannian submersion point of view; see also \cite{Besse2007EinsteinManifolds, GLP1999} for more detailed discussions.

For any fibre bundle $\mathscr{E}=\left(E,M,F,\pi\right)$, the bundle projection map $\pi:E\rightarrow M$ descends canonically to its differential $d\pi$ between tangent bundles $T\!E$ and $T\!M$, defining linear surjective homomorphisms between tangent planes $T_eE$ and $T_{\pi \left( e \right)}M$ for any $e\in E$. We denote $V\!E$ for the \emph{vertical bundle}, a sub-bundle of $T\!E$ defined as the kernel of the differential map $d\pi:T\!E\rightarrow T\!M$. A \emph{horizontal bundle} $H\!E$ is a sub-bundle of $TE$ that is supplementary to $V\!E$ in the sense that $T\!E=H\!E\oplus V\!E$, or equivalently
\begin{equation*}
  T_eE = H_eE\oplus V_eE \quad\textrm{for all }e\in E.
\end{equation*}
Here $H_eE$, $V_eE$ stand for the fibres of $H\!E, V\!E$ above $e\in E$, respectively; we shall refer to $H_eE$, $V_eE$ as the \emph{horizontal tangent space} and \emph{vertical tangent space} at $e\in E$ for future convenience, and denote
\begin{equation}
  \label{eq:horizontal_vertical_projections}
  \mathscr{H}:T\!E\rightarrow H\!E,\quad \mathscr{V}:T\!E\rightarrow V\!E
\end{equation}
for the corresponding \emph{horizontal projection} and \emph{vertical projection}. Note that although $V\!E$ is canonically defined, the choice of $H\!E$ is arbitrary at this point. Since $d\pi\big|_{H_eE}: H_eE\rightarrow T_{\pi \left( e \right)}M$ is a linear isomorphism, for any tangent vector $u\in T_{\pi \left( e \right)}M$ there exists a unique tangent vector $\overline{u}\in H_eE$ such that $d\pi_e \left( \overline{u} \right)=u$; we call $\bar{u}$ the \emph{horizontal lift} of $u$. Furthermore, we know from simple ODE theory (and the smoothness of $H\!E$) that for any vector field $X\in \Gamma \left( M, T\!M \right)$ there exists a unique horizontal lift $\bar{X}\in\Gamma \left( E, H\!E \right)$ such that $d\pi_e \left( \bar{X}_e \right)=X_{\pi\left(e\right)}$ for all $e\in E$.

In the rest of this paper, a path $\gamma: \left[ 0,T \right]\rightarrow E$ is \emph{horizontal} if all tangent vectors along $\gamma$ are in $H\!E$. Given a path $c:\left[ 0,T \right]\rightarrow M$, a \emph{horizontal lift} of $c$ is any horizontal path $\bar{c}$ in $E$ that projects to $c$ under the bundle projection $\pi$, i.e. $\pi\circ\bar{c}=c$. Again, by horizontally lifting the tangent vector field along the path from $T\!M$ to $H\!E$ and solving the ODE system (where the overline again stands for horizontally lifted tangent vectors)
\begin{equation*}
  \frac{d\tilde{c}}{dt}=\overline{\left(\frac{dc}{dt}\right)},\quad t\in \left[ 0,T \right]
\end{equation*}
we can uniquely lift any piecewise smooth path $c$ in $M$ starting at $\pi \left( e \right)\in M$ to a horizontal path $\bar{c}$ in $E$ starting at $e\in E$, at least locally around $c \left( 0 \right)$. We call $H\!E$ a \emph{Ehresmann connection}~\cite{Ehresmann1950Connexions}, or \emph{connection} hereafter, if any path in $M$ starting at $m\in M$ can be \emph{globally} horizontally lifted to $E$ with any given initial point $e\in E$ satisfying $e\in \pi^{-1}\left( m \right)$. Such a lifting property is guaranteed, for instance, on any Riemannian submersion $\pi:E\rightarrow M$ with geodesically complete total space $E$, in which case the submersion is known to be a locally trivial fibration~\cite{Hermann1960}.

We shall focus on Ehresmann connections so that the horizontal lift of any path in $M$ is uniquely determined once the starting point on $E$ is specified. Therefore, given a smooth curve $\gamma:\left[ 0,T \right]\rightarrow M$ that connects $\gamma \left( 0 \right)$ to $\gamma \left( T \right)$ on $M$, there exists a smooth map from $F_{\gamma \left( 0 \right)}$ to $F_{\gamma \left( T \right)}$  (at least when $\gamma \left( 0 \right)$ and $\gamma \left( T \right)$ are sufficiently close), defined as
\begin{equation*}
  F_{\gamma \left( 0 \right)}\ni e\mapsto \bar{\gamma}_e\left( T \right)\in F_{\gamma \left( T \right)},
\end{equation*}
where $\bar{\gamma}_e$ denotes the horizontal lift of $\gamma$ with starting point $p$. We call this construction of maps between fibres, obviously depending on the choice of path $\gamma$, the \emph{parallel transport along $\gamma$ (with respect to the connection)}, and denote $P^{\gamma}_{yx}:F_x\rightarrow F_y$ for the parallel transport from fibre $F_x$ to fibre $F_y$. When $\gamma$ is a unique geodesic on $M$ that connects $x$ to $y$, we drop the superscript $\gamma$ and simply write $P_{yx}:F_x\rightarrow F_y$. For future reference, we give the precise definition of the operator $P_{yx}$ here.
\begin{definition}[Parallel Transport on Fibre Bundles]
\label{def:bundle-parallel-transport}
  Let $\mathscr{E}=\left(E,M,F,\pi\right)$ be a fibre bundle, $x\in M$, $v\in F_x$, and $U$ a geodesic normal neighborhood of $x$ on the base manifold $M$. For any $y\in U$, denote the geodesic distance between $x$ and $y$ as $d_M \left( x,y \right)$. Let $\gamma:\left[ 0, d_M \left( x,y \right)\right]\rightarrow M$ be the unique unit-speed geodesic on $M$ connecting $x$ to $y$, i.e., $\gamma \left( 0 \right)=x, \gamma \left( d_M \left( x,y \right) \right)=y$; let $\bar{\gamma}$ be the unique horizontal lift of $\gamma$ starting at $v\in F_x$, i.e.,
  \begin{equation*}
    \begin{cases}
      \bar{\gamma}' \left( t \right) = \overline{\gamma'} \left( t \right),&\, t\in \left[ 0, d_M \left( x,y \right) \right],\\
      \bar{\gamma} \left( 0 \right)=v.&
    \end{cases}
  \end{equation*}
The parallel-transport of $v$ from $x$ to $y$, denoted as $P_{yx}v$, is defined as
\begin{equation*}
  P_{yx}v = \bar{\gamma}\left( d_M \left( x,y \right) \right)\in F_y.
\end{equation*}
\end{definition}

The probabilistic interpretation of HDM (and even VDM) implicitly depends on lifting from the base manifold a path that is continuous but not necessarily smooth. Though this can not be trivially achieved by the ODE-based approach, stochastic differential geometers developed tools appropriate for tackling this technicality (see e.g. ~\cite[\S 5.1.2]{Stroock2005AnalysisPaths}).

\section{Horizontal and Vertical Laplacians}
\label{sec:horiz-lapl}

Assume $\left(M, g^M\right)$ is a $d$-dimensional Riemannian manifold, and denote $\nabla^M$ for the canonical Levi-Civita connection on $M$. The \emph{Laplace-Beltrami operator} on $M$, or \emph{Laplacian} for short, is the analogy of the usual Laplace operator on the Euclidean space defined by
\begin{equation*}
  \Delta_Mf \left( x \right) = \mathrm{Trace}\,\nabla^M\nabla f \left( x \right)
\end{equation*}
for all $f\in C^{\infty}\left( M \right)$, $x\in M$. For an orthonormal local frame $\left\{ X_1,\cdots,X_d \right\}$ near $x\in M$, $\Delta_M$ can also be written as
\begin{equation}
\label{eq:base_laplacian}
  \Delta_Mf \left( x \right)=\sum_{j=1}^d g^M\left(\nabla_{X_j}^M\nabla f, X_j \right) \left( x \right)=\sum_{j=1}^dX_j^2f \left( x \right)-\left(\sum_{j=1}^d\nabla_{X_j}^MX_j \left( x \right)\right)f \left( x \right).
\end{equation}
If we further pick the frame to be a local geodesic frame centered at $x\in M$, then $\nabla^M_{X_j}X_k \left( x \right)=0$ for all $1\leq j,k\leq d$ and thus $\Delta_M$ takes the following sum-of-squares form
\begin{equation}
\label{eq:laplacian_geodesic_frame}
  \Delta_Mf \left( x \right)=\sum_{j=1}^dX_j^2f \left( x \right).
\end{equation}
The infinitesimal generator of the horizontal diffusion~\eqref{eq:horizontal_diffusion_operator} turns out to be a differential operator on $E$ that is a ``horizontal lift'' of $\Delta_M$ in a sense to be made clear later in this section. To characterize this infinitesimal generator, let us first introduce a Riemannian metric on $E$ that is adapted to the connection $HE$. For any $x\in M$, recall from Section~\ref{sec:fibre-bundles} that $F_x$ (the fibre at point $x\in M$) is a Riemannian submanifold of $E$, thus vertical tangent vectors at $e\in \pi^{-1}\left( x \right)$ can be canonically identified with tangent vectors to $F_x$; if each $F_x$ is equipped with a Riemannian metric $g^{F_x}$, we define for any $U,V\in V_eE$
\begin{equation}
  \label{eq:vertical_metric}
  g^E\left( U, V \right) = g^{F_x} \left( U, V \right).
\end{equation}
For any $X,Y\in H_eE$, by the linear isomorphism between $H_eE$ and $T_xM$ we define
\begin{equation}
  \label{eq:horizontal_metric}
  g^E \left( X,Y \right) = g^M\left( d\pi_e \left( X\right), d\pi_e\left(Y \right) \right)
\end{equation}
where $g^M$ stands for the Riemannian metric on $M$. Finally, impose orthogonality between $H_eE$ and $V_eE$ by setting for any $X\in H_eE$, $U\in V_eE$
\begin{equation}
  \label{eq:orthogonality}
  g^E \left( X, U \right) = 0.
\end{equation}
The smoothness of $g^E$ with respect to $e\in E$ follows from the smoothness of $g^M$ and $g^{F_x}$. In other words, $g^E$ is constructed so as to make the decomposition $T\!E=H\!E\oplus V\!E$ orthogonal. Some authors~\cite{Bismut2013,Baudoin2014} abbreviate this construction as
\begin{equation}
\label{eq:metric_on_total_manifold}
  g^E=g^M\oplus g^F.
\end{equation}
For future convenience, let us use superscripts to denote the horizontal and vertical components of tangent vectors to $E$, i.e. for any $Z\in T_eE$
\begin{equation*}
  Z = Z^H+Z^V
\end{equation*}
where $Z^H\in H_eE$, $Z^V\in V_eE$ are uniquely determined due to the direct sum decomposition $T_eE=H_eE\oplus V_eE$. Thus for any $W,Z\in T_eE$
\begin{equation*}
  g^E\left( W,Z\right) = g^M \left( d\pi_e \left( W^H\right), d\pi_e\left(Z^H \right) \right)+g^{F_{\pi \left( e \right)}}\left( W^V,Z^V \right).
\end{equation*}
We also write the horizontal and vertical components of the gradient of any smooth function $f\in C^{\infty}\left( E \right)$ as
\begin{equation}
  \label{eq:horizontal_vertical_gradient}
    \nabla^Hf := \left( \nabla f \right)^H,\quad \nabla^Vf := \left( \nabla f \right)^V.
\end{equation}

Let $\nabla^E$ denote the Levi-Civita connection with respect to $g^E$. Define the \emph{rough horizontal Laplacian} $\Delta_H$ on $E$ for $f \in C^{\infty}\left( E \right)$ as the following second order partial differential operator:
\begin{equation}
  \label{eq:bochner_horizontal_laplacian}
  \Delta_H f \left( e \right)=\mathrm{Trace}\,\left(\nabla^E\nabla^H f\right)^H \left( e \right)\quad\textrm{for all }e\in E.
\end{equation}
Let $\left\{ \bar{X}_1,\cdots,\bar{X}_d \right\}$ be the horizontal lift of an orthonormal frame $\left\{ X_1,\cdots,X_d \right\}$ near $\pi \left( e \right)=x\in M$. Since $g^E \left( \bar{X}_j,\bar{X}_k \right)=g^M \left( X_j,X_k \right)$ for $1\leq j,k\leq d$, the tangent vectors $\bar{X}_1 \left( e' \right), \cdots,\bar{X}_d \left( e' \right)$ form an orthonormal basis for $H_{e'}E$ for all $e'$ sufficiently close to $e$. We can write \eqref{eq:bochner_horizontal_laplacian} in terms of these horizontally lifted vector fields as
\begin{equation}
  \label{eq:bochner_lifted_orthonormal_frame}
  \begin{aligned}
    \Delta_H f \left( e \right)&=\sum_{j=1}^dg^E \left(\nabla_{\bar{X}_j}^E\nabla^H f, \bar{X}_j  \right) \left( e \right)\\
    &=\sum_{j=1}^d \bar{X}_j^2f \left( e \right)-\left( \sum_{j=1}^d\left(\nabla^E_{\bar{X}_j}\bar{X}_j\right)^H \right)f \left( e \right).
  \end{aligned}
\end{equation}
Loosely speaking, $\Delta_H$ is the ``horizontal lift'' of $\Delta_M$ from $M$ to $E$, since \eqref{eq:bochner_lifted_orthonormal_frame} can be obtained from \eqref{eq:base_laplacian} by replacing each $X_j$ with its horizontal lift $\bar{X}_j$ and noting that $\left(\nabla^E_{\bar{X}_j}\bar{X}_j\right)^H$ is the horizontal lift of $\nabla^M_{X_j}X_j$ (see e.g. \cite[Proposition 3.1]{Hermann1960}). More precisely, for any $g\in C^{\infty}\left( M \right)$, denote $\bar{g}=g\circ\pi\in C^{\infty}\left( E \right)$, then for any $e\in E$ and $x=\pi \left( e \right)\in M$ we have
\begin{equation}
  \label{eq:laplacian_lift}
  \Delta_M g \left( x \right) = \Delta_H\bar{g}\left( e \right).
\end{equation}
\begin{remark}
  When $E=\mathcal{O}\left( M \right)$ is the frame bundle of $M$, the rough horizontal Laplacian $\Delta_H$ coincides with the \emph{Bochner horizontal Laplacian} $\Delta_{\mathcal{O}\left( M \right)}$ in stochastic differential geometry \cite[Chapter 3]{Hsu2002Book}. The classical Eells-Elworthy-Malliavin approach intrinsically defines a Brownian motion on manifolds as a \emph{horizontal Brownnian motion} on $\mathcal{O}\left( M \right)$ generated by $\Delta_{\mathcal{O}\left( M \right)}$.
\end{remark}
\begin{remark}
\label{rem:different_horizontal_laplacian}
  In general, the rough horizontal Laplacian $\Delta_H$ differs from the concept of ``horizontal Laplacian'' commonly seen in sub-Riemannian geometry by a mean curvature term~\cite{Baudoin2014,BerardBourguignon1982}; the two types of horizontal Laplacian coincide only when the fibres of $E$ are minimal submanifolds of $E$. In fact, for any $f\in C^{\infty}\left( E \right)$, the Laplace-Beltrami operator on $E$ with respect to $g^E$ splits into two parts
\begin{equation*}
  \begin{aligned}
    \Delta_Ef &= \mathrm{Trace}\,\nabla^E\nabla f=\mathrm{Trace}\,\nabla^E\nabla^H f+\mathrm{Trace}\,\nabla^E\nabla^Vf
  \end{aligned}
\end{equation*}
Define the \emph{horizontal Laplacian} $\Delta_E^H$ and the \emph{vertical Laplacian} $\Delta_E^V$ as
\begin{equation}
  \label{eq:horizontal_vertical_laplacians}
  \Delta_E^Hf := \mathrm{Trace}\,\nabla^E\nabla^H f,\quad \Delta_E^Vf:=\mathrm{Trace}\,\nabla^E\nabla^V f,
\end{equation}
then
\begin{equation}
  \label{eq:laplacian_split}
  \Delta_E = \Delta_E^H+\Delta_E^V.
\end{equation}
Recalling the definition of $\Delta_H$ from \eqref{eq:bochner_horizontal_laplacian}, we have
\begin{equation*}
  \begin{aligned}
    \Delta_E^Hf &= \mathrm{Trace}\left(\nabla^E\nabla^Hf\right)^H+\mathrm{Trace}\left(\nabla^E\nabla^Hf\right)^V\\
    & = \Delta_Hf+\mathrm{Trace}\left(\nabla^E\nabla^Hf\right)^V
  \end{aligned}
\end{equation*}
and $\Delta_E^H=\Delta_H$ if and only if
\begin{equation*}
  \mathrm{Trace}\left(\nabla^E\nabla^H f\right)^V=0\quad\textrm{for all }f\in C^{\infty}\left( E \right)
\end{equation*}
which turns out to be equivalent to the requirement that $F_x$ are minimal submanifolds of $E$ for all $x\in M$. This holds, for instance, when all fibres of the Riemannian submersion $\pi:E\rightarrow M$ are totally geodesic, a scenario of great theoretic interest since it implies that all fibres are isometric~\cite{Hermann1960}; we do not make such an assumption in the HDM framework since this particularly simple case is obviously too restricted for practical purposes.
\end{remark}
\begin{remark}
  For any $x\in M$ and $e\in \pi^{-1}\left( x \right)$, if $\left\{ X_1,\cdots,X_d \right\}$ is a geodesic frame on $M$ near $x$, then the horizontal lifts $\left\{ \bar{X}_1,\cdots,\bar{X}_d \right\}$ near $e$ also constitute a ``horizontal geodesic frame'' in the sense that
  \begin{equation*}
    \left(\nabla_{\bar{X}_j}^E\bar{X}_k\right)^H \left( e \right)=0,\quad\textrm{for all }1\leq j,k\leq d,
  \end{equation*}
which simplifies \eqref{eq:bochner_lifted_orthonormal_frame} into a sum-of-squares form analogous to \eqref{eq:laplacian_geodesic_frame}
\begin{equation}
  \label{eq:bochner_geodesic_frame}
  \Delta_H f \left( e \right) = \sum_{j=1}^d\bar{X}_j^2 f \left( e \right)\quad\textrm{for all }f\in C^{\infty}\left( E \right).
\end{equation}
\end{remark}

\begin{remark}
  We make the observation that the vertical Laplacian $\Delta_E^V$, which turns out to characterize the ``vertical component'' of the coupled diffusion operator $H_{\epsilon,\delta}^{\left( \alpha \right)}$ on the fibre bundle, coincides with the Laplace-Beltrami operator on each fibre $F_x$. This fact will be needed in the proof of Theorem~\ref{thm:warped_diffusion_generator} in \ref{sec:proof_biase}. 
More precisely, for any $f\in C^{\infty}\left( E \right)$ and $e\in E$,
\begin{equation}
\label{eq:restriction_vertical_laplacian}
  \Delta_E^Vf \left( e \right) = \left[\Delta_{F_{\pi\left( e \right)}}\left(f\restriction F_{\pi \left( e \right)}\right)\right]\left( e \right)=\mathrm{Trace}\left(\nabla^E\nabla^Vf\right)^V.
\end{equation}
At a first glance this might seem a bit surprising since one may expect a mean curvature term in $\Delta_E^V$ from~\eqref{eq:horizontal_vertical_laplacians} (as is the case for $\Delta_E^H$):
\begin{equation}
\label{eq:vertical_laplacian_decomposition}
    \Delta_E^Vf = \mathrm{Trace}\,\nabla^E\nabla^Vf = \mathrm{Trace}\left(\nabla^E\nabla^Vf\right)^H+\mathrm{Trace}\left(\nabla^E\nabla^Vf\right)^V.
\end{equation}
However, the first trace term in \eqref{eq:vertical_laplacian_decomposition} vanishes for the following reason. Let $\left\{ X_1, \cdots, X_d \right\}$ be a local horizontal orthonormal frame around $e\in E$, and $\left\{ U_1,\cdots,U_n \right\}$ a local vertical orthonormal frame (recall that $\mathrm{dim}\left(F\right) = n$); $\left\{ X_1,\cdots,X_d,U_1,\cdots,U_n \right\}$ is then a local orthonormal frame on $E$. We have
\begin{align*}
  \mathrm{Trace}\left(\nabla^E\nabla^Vf\right)^H&=\sum_{j=1}^d \langle \left(\nabla^E_{X_j}\nabla^Vf\right)^H, X_j \rangle+\sum_{k=1}^n\langle \left(\nabla^E_{U_k}\nabla^Vf\right)^H, U_k \rangle=\sum_{j=1}^d \langle \left(\nabla^E_{X_j}\nabla^Vf\right)^H, X_j \rangle\\
  &=\sum_{j=1}^d \langle \nabla^E_{X_j}\nabla^Vf, X_j \rangle=\langle \nabla^Vf, \sum_{j=1}^d-\nabla^E_{X_j}X_j \rangle=\langle \nabla^Vf, \sum_{j=1}^d-\left(\nabla^E_{X_j}X_j\right)^V \rangle=0,
\end{align*}
where the last equality follows from~\cite[Lemma 2]{ONeill1966}:
\begin{equation*}
  \left(\nabla^E_{X_j}X_j\right)^V = \frac{1}{2}\left(\left[ X_j,X_j \right]\right)^V=0\quad\textrm{for all }1\leq j\leq d.
\end{equation*}
\end{remark}

In the remaining section we consider horizontal and coupled diffusion operators on a few classical examples. All fibre bundles in this section are Riemannian submersions with totally geodesic fibres, for which, as explained in Remark~\ref{rem:different_horizontal_laplacian}, the rough horizontal Laplacian $\Delta_H$ equals to the horizontal Laplacian $\Delta^H_E$. See \cite{Baudoin2014}\cite[\S9.F]{Besse2007EinsteinManifolds} for more details about Riemannian submersions with totally geodesic fibres.

\begin{example}[Heisenberg Group]
\label{sec:heisenberg-group}
  The Heisenberg group
$$\mathbb{H}^{2n+1}=\left\{ \left( x,y,z \right)\in \mathbb{R}^{2n+1}\mid x\in\mathbb{R}^n,y\in \mathbb{R}^n,z\in\mathbb{R} \right\}$$
is essentially $\mathbb{R}^{2n+1}$ endowed with the following group structure:
\begin{equation*}
  \left( x_1,y_1,z_1 \right)\cdot \left( x_2,y_2,z_2 \right) = \left( x_1+x_2,y_1+y_2,z_1+z_2+\frac{1}{2}\left( x_1\cdot y_2-x_2\cdot y_1 \right) \right).
\end{equation*}
The projection
\begin{equation*}
  \begin{aligned}
    \pi:\mathbb{H}^{2n+1}&\longrightarrow \mathbb{R}^2\\
    \left( x,y,z \right)&\longmapsto \left( x,y \right)
  \end{aligned}
\end{equation*}
is a Riemannian submersion with totally geodesic fibres \cite{Baudoin2014}. Since $\mathbb{H}^{2n+1}$ is complete, it follows from \cite[Theorem 1]{Hermann1960} that $\left( \mathbb{H}^{2n+1},\mathbb{R}^2,\mathbb{R},\pi\right)$ is a fibre bundle. In fact, $\mathbb{H}^{2n+1}$ is a Lie group, and its Lie algebra of left invariant vector fields at $\left( x,y,z \right)$ is spanned by
\begin{equation*}
  \frac{\partial}{\partial z}, \quad \frac{\partial}{\partial x_j}-\frac{1}{2}y_j \frac{\partial}{\partial z},\quad \frac{\partial}{\partial y_j}-\frac{1}{2}x_j \frac{\partial}{\partial z},\quad j=1,\cdots,n.
\end{equation*}
These invariant vector fields define a connection on $\mathbb{H}^{2n+1}$ in the sense of Ehresmann~\cite{Ehresmann1950Connexions}. The horizontal and vertical Laplacians on $\mathbb{H}^{2n+1}$ with respect to this connection are
\begin{equation*}
  \begin{aligned}
    \Delta_{\mathbb{H}^{2n+1}}^H&=\sum_{j=1}^n \left[ \frac{\partial^2}{\partial x_j^2}+\frac{\partial^2}{\partial y_j^2}+\frac{1}{4}\left( x_j^2+ y_j^2 \right)\frac{\partial^2}{\partial z^2}-y_j \frac{\partial^2}{\partial x_j\partial z}+x_j \frac{\partial^2}{\partial y_j\partial z} \right],\\
    \Delta_{\mathbb{H}^{2n+1}}^V&=\frac{\partial^2}{\partial z^2}.
  \end{aligned}
\end{equation*}
By Theorem~\ref{thm:warped_diffusion_generator}, for any $f\in C^{\infty}\left( \mathbb{H}^{2n+1} \right)$, if $\delta = O \left( \epsilon \right)$,
\begin{align*}
  H_{\epsilon,\delta}^{\left( \alpha \right)} & f \left( x,v \right) = f \left( x,v \right) + \epsilon \frac{m_{21}}{2m_0} \frac{\left[\Delta_{\mathbb{H}^{2n+1}}^H \left( fp^{1-\alpha} \right)-f\Delta_{\mathbb{H}^{2n+1}}^Hp^{1-\alpha} \right]\left( x,v \right)}{p^{1-\alpha}\left( x,v \right)}\\
      &+\delta\frac{m_{22}}{2m_0} \frac{\left[\Delta_{\mathbb{H}^{2n+1}}^V \left( fp^{1-\alpha} \right)-f\Delta_{\mathbb{H}^{2n+1}}^Vp^{1-\alpha} \right]\left( x,v \right)}{p^{1-\alpha}\left( x,v \right)}+O \left( \epsilon^2+\epsilon\delta+\delta^2 \right).
\end{align*}
When $n=1$, this is consistent with the conclusion obtained in \cite[Chapter 4]{Gao2015Thesis}.
\end{example}


\begin{example}[Tangent Bundles]
  \label{sec:tangent-bundles}
Tangent bundles play an important role in Riemannian geometry. For a closed $d$-dimensional Riemannian manifold $\left(M,g\right)$, its tangent bundle $T\!M$ is defined as
\begin{equation*}
  T\!M=\coprod_{x\in M}T_xM
\end{equation*}
equipped with a natural smooth structure (see e.g. \cite{doCarmo1992RG}). In a local coordinate chart $\left( U; x_1,\cdots,x_d \right)$ of $M$, $\left\{ E_j=\partial\slash\partial x_j\mid 1\leq j\leq d\right\}$ is a local frame on $M$, and we write $v\in T_xM$ as $v=v_jE_j \left( x \right)$. A local trivialization on $U$ can be chosen as
\begin{equation*}
  \begin{aligned}
  \left( x,v \right)&\mapsto \left( x_1,\cdots,x_d,v_1,\cdots,v_d \right),\quad \forall x\in U, v\in T_xM,
  \end{aligned}
\end{equation*}
and the corresponding basis for $T_{\left( x,v \right)}T\!M$ can be written as
\begin{equation*}
  \left\{\frac{\partial}{\partial x_1}\Big|_{\left( x,v \right)},\cdots,\frac{\partial}{\partial x_d}\Big|_{\left( x,v \right)},\frac{\partial}{\partial v_1}\Big|_{\left( x,v \right)},\cdots,\frac{\partial}{\partial v_d}\Big|_{\left( x,v \right)}\right\}.
\end{equation*}
Let $\Gamma_{\alpha j}^{\beta}$ be the connection coefficients of the Levi-Civita connection on $M$. The horizontal subbundle of $TT\!M$ determined by this connection is
\begin{equation*}
  \begin{aligned}
    HTM:=&\coprod_{\left(x,v\right)\in TM}HT_{\left( x,v \right)}M=\coprod_{\left(x,v\right)\in TM}\textrm{span}\left\{\frac{\partial}{\partial x_j}\Big|_{\left( x,v \right)}\!\!\!\!-\Gamma_{\alpha j}^{\beta}\left( x \right)v_{\alpha} \frac{\partial}{\partial v_{\beta}}\Big|_{\left( x,v \right)},\quad j=1\cdots,d \right\}.
  \end{aligned}
\end{equation*}
The metric \eqref{eq:metric_on_total_manifold} on $T\!M$ given by this construction is the \emph{Sasaki metric}~\cite{Sasaki1958,Sasaki1962}. The horizontal and vertical Laplacians acts on any $f\in C^{\infty}\left( T\!M \right)$ as
\begin{align*}
    &\Delta_{T\!M}^Hf \left( x,v \right)=\frac{1}{\sqrt{\left| g \left( x \right) \right|}}\left( \frac{\partial}{\partial x_j}-\Gamma_{\alpha j}^{\beta}\left( x \right)v_{\alpha} \frac{\partial}{\partial v_{\beta}}\right)\left[ \sqrt{\left| g \left( x \right) \right|}\,g^{jk}\left( x \right)\left(\frac{\partial f}{\partial x_k}-\Gamma_{\alpha k}^{\beta}\left( x \right)v_{\alpha} \frac{\partial f}{\partial v_{\beta}}\right) \right],\\
    &\Delta_{T\!M}^Vf \left( x,v \right)=\frac{1}{\sqrt{\left| g \left( x \right) \right|}}\frac{\partial}{\partial v_j}\left( \sqrt{\left| g \left( x \right) \right|}\,g^{jk}\left( x \right)\frac{\partial f}{\partial v_k}\right)=g^{jk}\left( x \right)\frac{\partial^2 f}{\partial v_j\partial v_k}.
\end{align*}
According to Theorem~\ref{thm:warped_diffusion_generator}, for any $f\in C^{\infty}\left( T\!M \right)$, if $\delta=O \left( \epsilon \right)$,
\begin{align*}
  H_{\epsilon,\delta}^{\left(\alpha\right)} & f\left( x,v \right) = f \left( x,v \right)+\epsilon \frac{m_{21}}{2m_0}\frac{\left[\Delta_{T\!M}^H\left(fp^{1-\alpha}\right)-f\Delta_{T\!M}^Hp^{1-\alpha}\right]\left( x,v \right)}{p^{1-\alpha}\left( x,v \right)}\\
    &+\delta \frac{m_{22}}{2m_0}\frac{\left[\Delta_{T\!M}^V\left(fp^{1-\alpha}\right)-f\Delta_{T\!M}^Vp^{1-\alpha}\right]\left( x,v \right)}{p^{1-\alpha}\left( x,v \right)}+O\left(\epsilon^2+\epsilon\delta+\delta^2 \right).
\end{align*}
This is consistent with the conclusion obtained in \cite[Chapter 3]{Gao2015Thesis}.
\end{example}


\section{Proofs of Theorem~\ref{thm:horizontal_diffusion_generator}, Theorem~\ref{thm:warped_diffusion_generator}, and Theorem~\ref{thm:warped_diffusion_generator_unbounded_ratio}}
\label{sec:proof_biase}
Throughout this appendix we assume the Einstein summation convention unless otherwise specified. Our starting point is the following lemma, in reminiscent of \cite[Lemma 8]{CoifmanLafon2006} and \cite[Lemma B.10]{SingerWu2012VDM}.

\begin{lemma}
\label{lem:basic_kernel_integral}
  Let $\Phi:\mathbb{R}\rightarrow\mathbb{R}$ be a smooth function compactly supported in $\left[ 0,1 \right]$. Assume $M$ is a $d$-dimensional compact Riemannian manifold without boundary, with injectivity radius $\mathrm{Inj}\left( M \right)>0$. For any $\epsilon>0$, define kernel function
  \begin{equation}
    \label{eq:epsilon_kernel_family}
    \Phi_{\epsilon}\left( x,y \right)=\Phi \left( \frac{d^2_M \left( x,y \right)}{\epsilon} \right)
  \end{equation}
on $M\times M$, where $d^2_M \left( \cdot,\cdot \right)$ is the geodesic distance on $M$. For sufficiently small $\epsilon$ satisfying $0\leq\epsilon\leq\sqrt{\mathrm{Inj}\left( M \right)}$, the integral operator associated with kernel $\Phi_{\epsilon}$
\begin{equation}
  \label{eq:integral_operator}
  \left(\Phi_{\epsilon}\,g\right)\left( x \right):=\int_M \Phi_{\epsilon}\left( x,y \right)g \left( y \right)d\mathrm{vol}_M \left( y \right)
\end{equation}
has the following asymptotic expansion as $\epsilon\rightarrow 0$:
\begin{equation}
  \label{eq:basic_asymp_expansion}
  \left(\Phi_{\epsilon}\,g\right)\left( x \right) = \epsilon^{\frac{d}{2}}\left[ m_0 g \left( x \right)+\epsilon \frac{m_2}{2}\left( \Delta_M g \left( x \right)-\frac{1}{3}\mathrm{Scal}^M\left( x \right)g \left( x \right) \right)+O \left( \epsilon^2 \right) \right],
\end{equation}
where $m_0,m_2$ are constants that depend on the moments of $\Phi$ and the dimension $d$ of the Riemannian manifold $M$, $\Delta_M$ is the Laplace-Beltrami operator on $M$, and $\mathrm{Scal}^M\left( x \right)$ is the scalar curvature of $M$ at $x$.
\end{lemma}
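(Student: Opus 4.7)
\medskip

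\noindent\textbf{Proof proposal.} The strategy is a standard localization-and-normal-coordinates Laplace-type expansion, but done carefully enough to extract the scalar curvature correction. First I would use the compact support of $\Phi$ in $[0,1]$ to observe that $\Phi_{\epsilon}(x,y)=0$ unless $d_M(x,y)\le\sqrt{\epsilon}$. For $\epsilon$ small enough that $\sqrt{\epsilon}<\mathrm{Inj}(M)$, the integration domain lies strictly inside a normal neighbourhood of $x$, so I can replace $y$ by $\exp_x u$ with $u\in B(0,\sqrt{\epsilon})\subset T_xM\cong\mathbb{R}^d$ and use $d_M(x,\exp_x u)=|u|$. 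This reduces the problem to a Euclidean integral with a controlled Jacobian.

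Next I would invoke the classical normal-coordinate expansion of the volume form,
\begin{equation*}
  \sqrt{\det g(u)}\;=\;1-\tfrac{1}{6}\,\mathrm{Ric}_{ij}(x)\,u^iu^j+O(|u|^3),
\end{equation*}
together with the Taylor expansion of $g$ at $x$ in normal coordinates,
\begin{equation*}
  g(\exp_x u)\;=\;g(x)+u^i\partial_i g(x)+\tfrac{1}{2}u^iu^j\,\nabla_i\nabla_j g(x)+O(|u|^3),
\end{equation*}
using that Christoffel symbols vanish at the origin of a geodesic frame so that $\partial_i\partial_j g(x)=\nabla_i\nabla_j g(x)$ and $\sum_i\nabla_i\nabla_i g(x)=\Delta_M g(x)$. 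Substituting these into \eqref{eq:integral_operator} and expanding the product to order $|u|^2$, the odd powers of $u$ drop out by the rotational symmetry of $\Phi(|u|^2/\epsilon)$. Then I would rescale $u=\sqrt{\epsilon}\,v$, which transforms the integration domain to $B(0,1)\subset\mathbb{R}^d$, produces the overall factor $\epsilon^{d/2}$, and turns moments in $u$ into moments in $v$ with an extra factor of $\epsilon$ per $|u|^2$. Defining
\begin{equation*}
  m_0:=\int_{\mathbb{R}^d}\Phi(|v|^2)\,dv,\qquad m_2:=\tfrac{1}{d}\int_{\mathbb{R}^d}\Phi(|v|^2)\,|v|^2\,dv,
\end{equation*}
the rotational symmetry identity $\int\Phi(|v|^2)v^iv^j\,dv=\delta_{ij}\,m_2$ reduces every quadratic term to a trace: the Hessian contribution yields $\tfrac{m_2}{2}\Delta_Mg(x)$, and the volume correction yields $-\tfrac{m_2}{6}\mathrm{Scal}^M(x)g(x)$ after tracing $\mathrm{Ric}_{ij}\delta_{ij}=\mathrm{Scal}^M$. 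Collecting these produces exactly \eqref{eq:basic_asymp_expansion}.

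The main care point, rather than a genuine obstacle, is the bookkeeping for the error term: I need to show that the cross-terms from $|u|^3$ pieces of the $g$-expansion and the $|u|^3$ remainder of $\sqrt{\det g}$, once integrated against $\Phi(|u|^2/\epsilon)$, contribute only $O(\epsilon^{d/2+2})$. This follows because every additional factor of $|u|^2$ in the integrand produces one extra power of $\epsilon$ after the rescaling $u=\sqrt{\epsilon}\,v$, while the odd $|u|$-powers integrate to zero by the parity argument used above. Uniformity in $x$ comes from the smoothness of $g$ and the compactness of $M$, which bound all higher covariant derivatives of $g$ and all higher-order terms in the volume-form expansion. This completes the asymptotic expansion as claimed.
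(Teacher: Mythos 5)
Your proposal is correct and follows essentially the same route as the paper's proof: localization via the compact support of $\Phi$, geodesic normal coordinates, the volume-form expansion $1-\tfrac{1}{6}R_{k\ell}(x)u^ku^\ell+O(|u|^3)$, a second-order Taylor expansion of the integrand, the rescaling $u=\sqrt{\epsilon}\,v$, and parity to kill odd moments (including the potential $O(\epsilon^{3/2})$ term), with the same moment constants $m_0$ and $m_2$. No substantive differences to report.
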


\begin{proof}
  Consider geodesic normal coordinates near $x\in M$. Let $\left\{e_1,\cdots,e_d\right\}$ be an orthonormal basis for $T_xM$, $\left(s_1,\cdots,s_d\right)$  the geodesic normal coordinates, and write $r=d_M \left( x,y \right)$. Then $r^2=s_1^2+\cdots+s_d^2$. Note that
  \begin{equation}
  \label{lem1:geodesicnormalcoords}
    \begin{aligned}
      \int_M \Phi_{\epsilon}\left( x,y \right)g \left( y \right)\,d\mathrm{vol}_M \left( y \right) &= \int_{B_{\sqrt{\epsilon}}\left( x \right)} \Phi\left(\frac{d^2_M \left( x,y \right)}{\epsilon}\right)g \left( y \right)\,d\mathrm{vol}_M \left( y \right)\\
      &=\int_{B_{\sqrt{\epsilon}}\left( 0 \right)}\Phi \left( \frac{r^2}{\epsilon} \right)\tilde{g} \left(s\right)\,d\mathrm{vol}_M \left(s\right)
    \end{aligned}
  \end{equation}
where
\begin{equation*}
  \begin{aligned}
    \tilde{g}\left( s\right)=\tilde{g}\left( s_1,\cdots,s_d \right)&=g \circ \mathrm{exp}_x \left( s_1e_1+\cdots+s_de_d \right),\\
    d\mathrm{vol}_M \left( s \right)=d\mathrm{vol}_M \left( s_1,\cdots,s_d \right)&=d\mathrm{vol}_M \left( \mathrm{exp}_x \left( s_1e_1+\cdots+s_de_d \right) \right).
  \end{aligned}
\end{equation*}
By a further change of variables
\begin{equation}
\label{eq:square_root_coc}
  \tilde{s}_1=\frac{s_1}{\sqrt{\epsilon}},\cdots,\tilde{s}_d=\frac{s_d}{\sqrt{\epsilon}}; \quad\tilde{r}=\frac{r}{\sqrt{\epsilon}},
\end{equation}
we have
\begin{equation}
\label{lem1:changeofcoordinates}
  \begin{aligned}
    \int_{B_{\sqrt{\epsilon}}\left( 0 \right)} & \Phi \left( \frac{r^2}{\epsilon} \right)\tilde{g} \left(s \right)\,d\mathrm{vol}_M \left(s \right)=\int_{B_1\left( 0 \right)}\Phi \left(\tilde{r}^2 \right)\tilde{g} \left(\sqrt{\epsilon}\,\tilde{s} \right)d\mathrm{vol}_M \left(\sqrt{\epsilon}\,\tilde{s}\right).
  \end{aligned}
\end{equation}
On the other hand, in geodesic normal coordinates the Riemannian volume form has asymptotic expansion (see e.g.~\cite{Petersen2006})
\begin{equation}
\label{eq:volume_form_expansion}
  d\mathrm{vol}_M \left( s_1,\cdots,s_d \right)=\left[1-\frac{1}{6}R_{k\ell}\left( x \right)s_ks_{\ell}+O \left( r^3 \right)\right]ds_1\cdots ds_d
\end{equation}
where $R_{k\ell}$ is the Ricci curvature tensor
\begin{equation*}
  R_{k\ell} \left( x \right)=g^{ij}R_{ki\ell j}\left( x \right).
\end{equation*}
Thus
\begin{equation}
\label{lem1:dvol}
  d\mathrm{vol}_M \left( \sqrt{\epsilon}\,\tilde{s}_1,\cdots,\sqrt{\epsilon}\,\tilde{s}_d \right)=\left[1-\frac{\epsilon}{6}R_{k\ell}\left( x \right)\tilde{s}_k\tilde{s}_{\ell}+O \left(\epsilon^{\frac{3}{2}}\tilde{r}^3 \right)\right]\cdot \epsilon^{\frac{d}{2}}d\tilde{s}_1\cdots d\tilde{s}_d.
\end{equation}
In the meanwhile, the Taylor expansion of $\tilde{g}\left( s \right)$ near $x$ reads
\begin{equation*}
  \tilde{g}\left( s \right)=\tilde{g}\left( 0 \right)+\frac{\partial\tilde{g}}{\partial s_j}\left( 0 \right)s_j+ \frac{1}{2}\frac{\partial^2\tilde{g}}{\partial s_k\partial s_{\ell}}\left( 0 \right)s_ks_{\ell}+O \left( r^3 \right)
\end{equation*}
and thus
\begin{equation}
\label{lem1:taylorexpansion}
  \tilde{g} \left(\sqrt{\epsilon}\,\tilde{s} \right)=g \left( x \right)+\sqrt{\epsilon}\cdot \frac{\partial\tilde{g}}{\partial s_j}\left( 0 \right)\tilde{s}_j+\epsilon\cdot \frac{1}{2} \frac{\partial^2\tilde{g}}{\partial s_k\partial s_{\ell}}\left( 0 \right)\tilde{s}_k\tilde{s}_{\ell}+O \left( \epsilon^{\frac{3}{2}}\tilde{r}^3 \right).
\end{equation}
By the symmetry of the kernel and the domain of integration $B_1 \left( 0 \right)$,
\begin{equation}
\label{eq:symmetry_nil}
  \begin{aligned}
    &\int_{B_1 \left( 0 \right)}\Phi \left( \tilde{r}^2 \right)\tilde{s}_jd\tilde{s}_1\cdots d\tilde{s}_d=0\quad\textrm{for all }1\leq j\leq d,\\
    &\int_{B_1 \left( 0 \right)}\!\!\Phi \left( \tilde{r}^2 \right)\tilde{s}_k\tilde{s}_{\ell}\,d\tilde{s}_1\cdots d\tilde{s}_d=0\quad\textrm{fir all }1\leq k\neq \ell\leq d.
  \end{aligned}
\end{equation}
Combining (\ref{lem1:geodesicnormalcoords})--(\ref{eq:symmetry_nil}), we have
\begin{equation*}
  \begin{aligned}
    &\int_M \Phi_{\epsilon}\left( x,y \right)g \left( y \right)\,d\mathrm{vol}_M \left( y \right)=\int_{B_1\left( 0 \right)}\!\!\Phi \left(\tilde{r}^2\right)\tilde{g} \left(\sqrt{\epsilon}\,\tilde{s}_1,\cdots,\sqrt{\epsilon}\,\tilde{s}_d  \right)\,d\mathrm{vol}_M \left(\sqrt{\epsilon}\,\tilde{s} \right)\\
    &=\epsilon^{\frac{d}{2}}\Bigg[ g \left( x \right)\!\!\int_{B_1 \left( 0 \right)}\!\!\!\Phi \left( \tilde{r}^2\right)\,d\tilde{s}+\frac{\epsilon}{2}\sum_{k=1}^d\left( \frac{\partial^2\tilde{g}}{\partial s_k^2}\left( 0 \right)-\frac{1}{3}g \left( x \right)R_{kk}\left( x \right) \right)\int_{B_1 \left( 0 \right)}\!\!\!\!\Phi \left( \tilde{r}^2 \right)\tilde{s}_k^2\,d\tilde{s}+O \left( \epsilon^2 \right) \Bigg]
  \end{aligned}
\end{equation*}
Note that $O \left( \epsilon^{3/2} \right)$ term vanishes again by symmetry (the same argument given in~\cite[\S 2]{Singer2006ConvergenceRate} applies). Define constants
\begin{equation}
\label{eq:base_constants}
  \begin{aligned}
    m_0&:=\int_{B_1 \left( 0 \right)}\!\!\Phi \left( \tilde{r}^2 \right)\,d\tilde{s}_1\cdots d\tilde{s}_d=\omega^{d-1}\int_0^1\Phi \left( \tilde{r}^2 \right)\tilde{r}^{d-1}d\tilde{r},\\
    m_2 &:= \int_{B_1 \left( 0 \right)}\!\!\Phi \left( \tilde{r}^2 \right)\left( \tilde{s}_k \right)^2\,d\tilde{s}_1\cdots d\tilde{s}_d\quad\textrm{for any $k\in \left\{ 1,\cdots,d \right\}$.}
  \end{aligned}
\end{equation}
Then
\begin{align*}
    \int_M &\Phi_{\epsilon}\left( x,y \right)g \left( y \right)\,d\mathrm{vol}_M \left( y \right)\\
    &=\epsilon^{\frac{d}{2}}\left[ m_0 g \left(x \right) +\epsilon \frac{m_2}{2}\left( \Delta_Mg \left( x \right)-\frac{1}{3}\mathrm{Scal}^M\left( x \right)g \left( x \right) \right) +O \left( \epsilon^2 \right) \right],
\end{align*}
where we used the fact that in geodesic normal coordinates
\begin{equation*}
  \begin{aligned}
    \sum_{k=1}^d \frac{\partial^2\tilde{g}}{\partial s_k^2}\left( 0 \right)=\Delta_M g \left( x \right),\quad\sum_{k=1}^dR_{kk}\left( x \right)=\mathrm{Scal}^M\left( x \right).
  \end{aligned}
\end{equation*}
\end{proof}

Before applying Lemma~\ref{lem:basic_kernel_integral} to compute the infinitesimal generators of $H_{\epsilon}^{\left( \alpha \right)}$ and $H_{\epsilon,\delta}^{\left( \alpha \right)}$, we need more local information about $f \left( x, P_{yx}v \right)$ near $\left( x,v \right)$. To this end, let $\left\{ X_1,\cdots,X_d \right\}$ be a local geodesic frame on $U$ at $x$, and denote $\left\{\bar{X}_1,\cdots,\bar{X}_d\right\}$ for the horizontal lift of this frame; in addition, let $\left\{ V_1,\cdots,V_n \right\}$ be vertical vector fields on $E$ such that
$$\left\{ \bar{X}_1 \left( e \right),\cdots,\bar{X}_d \left( e \right), V_1 \left( e \right),\cdots,V_n \left( e \right) \right\}$$
constitutes an orthonormal basis for all $e$ in a sufficiently small neighborhood of $\left(x,v\right)$ contained in $\pi^{-1}\left( U \right)$. Write $\left\{ \theta^1,\cdots,\theta^d,\phi^1,\cdots\phi^n \right\}$ for the $1$-forms dual to the vector fields $\left\{ \bar{X}_1,\cdots,\bar{X}_d,V_1,\cdots,V_n \right\}$, i.e.,
\begin{equation*}
  \begin{aligned}
    &\theta^j \left( \bar{X}_k \right)=\delta^j_k,\quad \theta^j \left( V_\ell \right)=0,\\
    &\phi^m \left( \bar{X}_k \right)=0,\quad \phi^m \left( V_{\ell} \right)=\delta^m_{\ell},
  \end{aligned}
\end{equation*}
for all $1\leq j,k\leq d$, $1\leq \ell, m\leq n$.

If $\gamma$ is a unit speed geodesic on $M$ starting at $x$, recall from Definition~\ref{def:bundle-parallel-transport} that $t\mapsto P_{\gamma \left( t \right), x}v$ is the unique horizontal lift of $\gamma$ with starting point $v\in F_x$, i.e.,
$$\bar{\gamma} \left( t \right) = P_{\gamma \left( t \right),x}v.$$
Since $\bar{\gamma}$ is horizontal, $\phi^m \left( \bar{\gamma}' \left( t \right) \right)=0$ for all $1\leq m\leq n$ and thus (adopting Einstein summation convention)
\begin{equation}
\label{eq:expansion_under_horizontal_frame}
  \bar{\gamma}' \left( t \right)=\theta^j \left( \bar{\gamma}' \left( t \right) \right)\bar{X}_j \left( t \right).
\end{equation}
Here, as well as in the rest of this appendix, we set
$$X_j \left( t \right)=X_j \left( \gamma \left( t \right) \right),\quad \bar{X}_j \left( t \right)=\bar{X}_j \left( \gamma \left( t \right) \right).$$
By \cite[Proposition 3.1]{Hermann1960}, $\bar{\gamma}\left( t \right)$ is a geodesic on $E$, thus
\begin{equation*}
  \begin{aligned}
    0&=\nabla^E_{\bar{\gamma}' \left( t \right)}\bar{\gamma}' \left( t \right)
    =\frac{d}{dt} \left[ \theta^j \left( \bar{\gamma}' \left( t \right) \right) \right]\bar{X}_j \left( t \right)+\theta^j \left( \bar{\gamma}' \left( t \right) \right)\theta^k \left( \bar{\gamma}' \left( t \right) \right)\nabla^E_{\bar{X}_k \left( t \right)}\bar{X}_j \left( t \right),
  \end{aligned}
\end{equation*}
which implies
\begin{equation}
\label{eq:bundle_geodesic}
  \frac{d}{dt} \left[ \theta^j \left( \bar{\gamma}' \left( t \right) \right) \right]\bar{X}_j \left( t \right) = -\theta^j \left( \bar{\gamma}' \left( t \right) \right)\theta^k \left( \bar{\gamma}' \left( t \right) \right)\nabla^E_{\bar{X}_k \left( t \right)}\bar{X}_j \left( t \right).
\end{equation}
In particular, the right hand side of \eqref{eq:bundle_geodesic} is horizontal. It follows that
\begin{equation*}
  \begin{aligned}
    \frac{d}{dt} \left[ \theta^j \left( \bar{\gamma}' \left( t \right) \right) \right]\bar{X}_j \left( t \right)&=-\theta^j \left( \bar{\gamma}' \left( t \right) \right)\theta^k \left( \bar{\gamma}' \left( t \right) \right)\mathscr{H}\nabla^E_{\bar{X}_k \left( t \right)}\bar{X}_j \left( t \right)\\
  &=-\theta^j \left( \bar{\gamma}' \left( t \right) \right)\theta^k \left( \bar{\gamma}' \left( t \right) \right)\langle\nabla^E_{\bar{X}_k \left( t \right)}\bar{X}_j \left( t \right), \bar{X}_i \left( t \right)\rangle_{\bar{\gamma}\left( t \right)} \bar{X}_i \left( t \right)\\
  &=-\theta^j \left( \bar{\gamma}' \left( t \right) \right)\theta^k \left( \bar{\gamma}' \left( t \right) \right)\langle\nabla^M_{X_k \left( t \right)}X_j \left( t \right), X_i \left( t \right)\rangle_{\gamma\left( t \right)} \bar{X}_i \left( t \right),
  \end{aligned}
\end{equation*}
where $\mathscr{H}$ is the horizontal projection as defined in \eqref{eq:horizontal_vertical_projections}. By linear independence,
\begin{equation}
  \label{eq:first_order_derivative}
  \frac{d}{dt} \left[ \theta^j \left( \bar{\gamma}' \left( t \right) \right) \right]=-\theta^i \left( \bar{\gamma}' \left( t \right) \right)\theta^k \left( \bar{\gamma}' \left( t \right) \right)\Gamma_{ik}^j \left( \gamma \left( t \right) \right),
\end{equation}
where $\Gamma_{ik}^j$ are the connection coefficients for the frame $\left\{ X_1,\cdots,X_d \right\}$ on $M$
\begin{equation*}
  \Gamma_{ik}^j = \langle\nabla^M_{X_k}X_j, X_i\rangle,\quad \forall 1\leq i,j,k\leq d.
\end{equation*}
Setting $t=0$ in \eqref{eq:first_order_derivative} to get
\begin{equation}
  \label{eq:first_order_derivative_at_zero}
  \frac{d}{dt}\Big|_{t=0} \left[ \theta^j \left( \bar{\gamma}' \left( t \right) \right) \right]=-\theta^i \left( \bar{\gamma}' \left( 0 \right) \right)\theta^k \left( \bar{\gamma}' \left( 0 \right) \right)\Gamma_{ik}^j \left( \gamma \left( 0 \right) \right) = 0
\end{equation}
where $\Gamma_{ik}^j \left( x \right)=0$ since we picked $\left\{ X_j\mid 1\leq j\leq d \right\}$ as a geodesic frame at $x$.

Now for any $f\in C^{\infty}\left( E \right)$ write
\begin{equation*}
  f \left( t \right):=f \left( \bar{\gamma} \left( t \right) \right)=f \left( \gamma \left( t \right), P_{\gamma \left( t \right), x}v \right).
\end{equation*}
Using \eqref{eq:expansion_under_horizontal_frame} and \eqref{eq:first_order_derivative_at_zero}, the first and second derivatives of $f \left( t \right)$ at $t=0$ can be written as
\begin{equation*}
  \begin{aligned}
    f'\left( 0 \right)&=\theta^j \left( \bar{\gamma}' \left( 0 \right) \right)\bar{X}_jf \left( 0 \right),\\
    f''\left( 0 \right)&=\frac{d}{dt}\Big|_{t=0} \left[ \theta^j \left( \bar{\gamma}' \left( t \right) \right) \right]\bar{X}_j f \left( t \right)+\theta^i \left( \bar{\gamma}' \left( 0 \right) \right)\theta^k \left( \bar{\gamma}' \left( 0 \right) \right)\bar{X}_k\bar{X}_j f \left( 0 \right)\\
    &=\theta^i \left( \bar{\gamma}' \left( 0 \right) \right)\theta^k \left( \bar{\gamma}' \left( 0 \right) \right)\bar{X}_k\bar{X}_i f \left( 0 \right).
  \end{aligned}
\end{equation*}
Furthermore, if we denote $\pi^{*}:\Lambda^{*}M\rightarrow\Lambda^{*}E$ for the pullback map, and write $\left\{\psi^j\mid 1\leq j\leq d\right\}$ for the dual $1$-forms to the geodesic frame $\left\{ X_j\mid 1\leq j\leq d \right\}$ on $M$, then $\theta^j=\pi^{*}\psi^j$ for all $1\leq j\leq d$ and
\begin{equation*}
  \theta^j \left( \bar{\gamma}' \left( 0 \right) \right)=\pi^{*}\psi^j \left( \bar{\gamma}' \left( 0 \right) \right)=\psi^j \left( \gamma' \left( 0 \right) \right).
\end{equation*}
Thus $\left( \theta^1 \left( \bar{\gamma}' \left( 0 \right) \right), \cdots, \theta^d \left( \bar{\gamma}' \left( 0 \right) \right) \right)$ is $\gamma' \left( 0 \right)$ represented in the geodesic normal coordinate system associated with the geodesic frame $\left\{ X_j\mid 1\leq j\leq d \right\}$. If we write $\sigma_j=\theta^j \left( \bar{\gamma}' \left( 0 \right) \right)$ and $s_j \left( t \right) = t\sigma_j$ for all $j=1,\cdots, d$, then $\sum_{j=1}^d\sigma_j^2=1$ and $\left( s_1,\cdots,s_d \right)$ are the geodesic coordinates of $\gamma \left( t \right)$ on $M$ with respect to the geodesic frame $\left\{ X_j\mid 1\leq j\leq d \right\}$. With this notation,
\begin{equation*}
  \begin{aligned}
    f'\left( 0 \right)&=\sigma_j\bar{X}_jf \left( 0 \right),\\
    f''\left( 0 \right)&=\sigma_i\sigma_k\bar{X}_i\bar{X}_kf \left( 0 \right).
  \end{aligned}
\end{equation*}
Using \eqref{eq:expansion_under_horizontal_frame}, \eqref{eq:first_order_derivative_at_zero}, and $f'' \left( t \right)$, it is straightforward to compute the third order derivative of $f$ at $t=0$:
\begin{equation*}
  f''' \left( 0 \right)=\sigma_i\sigma_j\sigma_k\bar{X}_i\bar{X}_j\bar{X}_kf \left( 0 \right),
\end{equation*}
hence the Taylor expansion of $f \left( t \right)$ near $t=0$ is
\begin{equation}
  \label{eq:taylor_function_compose_geodesic}
  \begin{aligned}
    &f \left( t \right)=f \left( 0 \right)+tf' \left( 0 \right)+\frac{t^2}{2}f'' \left( 0 \right)+\frac{t^3}{6}f''' \left( 0 \right)+O \left( t^4 \right)\\
    &=f \left( x,v \right)+t\sigma_j\bar{X}_jf \left( x,v \right)+\frac{t^2}{2}\sigma_i\sigma_k\bar{X}_i\bar{X}_kf \left( x,v \right)+\frac{t^3}{6}\sigma_i\sigma_j\sigma_k\bar{X}_i\bar{X}_j\bar{X}_kf \left( x,v \right)+O \left( t^4 \right).
  \end{aligned}
\end{equation}
This expansion immediately leads to the following lemma:
\begin{lemma}
\label{lem:kernel_parallel_transport}
  Following Lemma~\ref{lem:basic_kernel_integral}, let $P_{yx}:F_x\rightarrow F_y$ be as defined in Definition~\ref{def:bundle-parallel-transport}. For any $f\in C^{\infty}\left( E \right)$ and $v\in F_x$, as $\epsilon\rightarrow 0$,
\begin{equation}
  \label{eq:kernel_parallel_transport}
  \begin{aligned}
    \int_M &\Phi_{\epsilon}\left( x,y \right)f \left( y,P_{yx}v \right)\,d\mathrm{vol}_M \left( y \right)\\
    &=\epsilon^{\frac{d}{2}}\left\{m_0 f \left( x,v \right)+\epsilon \frac{m_2}{2}\left[ \Delta_H f \left( x,v \right)-\frac{1}{3}\mathrm{Scal}^M\left( x \right)f \left( x,v \right) \right]+O \left( \epsilon^2 \right) \right\},
  \end{aligned}
\end{equation}
where $m_0$, $m_2$ are constants, $\mathrm{Scal}^M\left( x \right)$ is the scalar curvature of $M$ at $x$, and $\Delta_H$ is the rough horizontal Laplacian on $E$ defined in \eqref{eq:bochner_horizontal_laplacian}.
\end{lemma}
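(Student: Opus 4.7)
The plan is to adapt the proof of Lemma~\ref{lem:basic_kernel_integral} verbatim, substituting the horizontal Taylor expansion \eqref{eq:taylor_function_compose_geodesic} of $y\mapsto f(y,P_{yx}v)$ along radial geodesics emanating from $x$ in place of the ordinary Taylor expansion of $\tilde g$ used there. This substitution is the natural move, since \eqref{eq:taylor_function_compose_geodesic} is precisely the statement that when the parallel-transported argument of $f$ is carried along a geodesic on $M$, the usual directional derivatives $\partial/\partial s_j$ are replaced by the horizontal lifts $\bar X_j$ on $E$, with no extra connection-symbol corrections at $(x,v)$.

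Concretely, I would fix a geodesic frame $\{X_1,\dots,X_d\}$ at $x\in M$, take its horizontal lift $\{\bar X_1,\dots,\bar X_d\}$ near $(x,v)\in E$, and pass to geodesic normal coordinates $(s_1,\dots,s_d)$ on $M$ around $x$. For each $y=\exp_x(s_j e_j)$ with $|s|=r$, the unit-speed radial geodesic from $x$ to $y$ has initial direction $\sigma_j=s_j/r$ and length $t=r$, so plugging $t\sigma_j=s_j$ into \eqref{eq:taylor_function_compose_geodesic} yields
\begin{equation*}
f(y,P_{yx}v)=f(x,v)+s_j\bar X_jf(x,v)+\tfrac12\, s_is_k\bar X_i\bar X_kf(x,v)+\tfrac16\, s_is_js_k\bar X_i\bar X_j\bar X_kf(x,v)+O(r^4).
\end{equation*}
From here the computation is cosmetically identical to Lemma~\ref{lem:basic_kernel_integral}: rescale $\tilde s=s/\sqrt{\epsilon}$, insert the volume-form expansion \eqref{eq:volume_form_expansion}, multiply the two series, and integrate term by term against $\Phi(\tilde r^2)$ over $B_1(0)$. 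The linear, cubic, and off-diagonal quadratic terms vanish by the symmetry of $B_1(0)$, the diagonal quadratic terms assemble into $\tfrac{\epsilon}{2}m_2\sum_j \bar X_j^2 f(x,v)$, which by the sum-of-squares formula \eqref{eq:bochner_geodesic_frame} equals $\tfrac{\epsilon}{2}m_2\,\Delta_H f(x,v)$, and the cross term between the leading $f(x,v)$ and the Ricci correction in \eqref{eq:volume_form_expansion} produces $-\tfrac{\epsilon}{6}m_2\,\mathrm{Scal}^M(x)f(x,v)$ exactly as in Lemma~\ref{lem:basic_kernel_integral}, because the Riemannian volume form on $M$ is oblivious to the fibre geometry.

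There is no genuine obstacle beyond book-keeping, because the differential-geometric preparation leading to \eqref{eq:taylor_function_compose_geodesic} has already absorbed the only nontrivial ingredient, namely the identity $(d/dt)\theta^j(\bar\gamma'(t))\big|_{t=0}=0$ that makes the second-order Taylor coefficient of $f\circ\bar\gamma$ a clean sum of horizontal second derivatives. The one subtlety worth verifying carefully is that $\sum_j\bar X_j^2 f(x,v)=\Delta_H f(x,v)$ holds at the specific basepoint $(x,v)$; this is guaranteed by construction, since the horizontal lift of a geodesic frame at $x$ is a horizontal geodesic frame at $(x,v)$, so \eqref{eq:bochner_geodesic_frame} applies there with no correction term. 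Assembling the pieces produces the claimed expansion with the same constants $m_0,m_2$ as in \eqref{eq:base_constants}.
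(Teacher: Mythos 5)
Your proposal is correct and follows essentially the same route as the paper's proof: substitute the horizontal Taylor expansion \eqref{eq:taylor_function_compose_geodesic} into the argument of Lemma~\ref{lem:basic_kernel_integral}, kill the odd and off-diagonal terms by symmetry of $B_1(0)$, extract the scalar-curvature correction from the volume-form expansion \eqref{eq:volume_form_expansion}, and identify $\sum_k\bar X_k^2 f(x,v)$ with $\Delta_H f(x,v)$ via \eqref{eq:bochner_geodesic_frame}. The one subtlety you flag --- that the horizontal lift of a geodesic frame is a horizontal geodesic frame at $(x,v)$ --- is exactly the point the paper relies on as well.
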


\begin{proof}
  Let $U\subset M$ be a geodesic normal neighborhood around $x\in M$, and $\epsilon$ sufficiently small that any point in $U$ can be connected to $x$ with a geodesic of length less than $\epsilon^{1/2}$. Let $\left\{ X_j\mid 1\leq j\leq d \right\}$ be a geodesic frame on $E$, $s_1,\cdots,s_d$ geodesic normal coordinates on $U$ with respect to this geodesic frame, and
  \begin{equation*}
    r>0,\quad r^2 = \sum_{j=1}^{d}s_j^2, \quad \sigma_j=\frac{s_j}{r}\quad \textrm{for all } 1\leq j\leq d.
  \end{equation*}
Following the proof of Lemma~\ref{lem:basic_kernel_integral}, let $\tilde{s}_j, \tilde{r}$ be as defined in \eqref{eq:square_root_coc} and use \eqref{eq:taylor_function_compose_geodesic} in place of \eqref{eq:expansion_under_horizontal_frame},
\begin{align*}
    &\int_M \Phi_{\epsilon}\left( x,y \right)f \left( y, P_{yx}v \right)\,d\mathrm{vol}_M \left( y \right)=\int_{B_1\left( 0 \right)}\!\!\Phi \left(\tilde{r}^2\right)\tilde{f} \left(\sqrt{\epsilon}\,\tilde{s},v \right)\,d\mathrm{vol}_M \left(\sqrt{\epsilon}\,\tilde{s} \right)\\
    &=f \left( x,v \right)\!\!\int_{B_1 \left( 0 \right)}\!\!\!\Phi \left( \tilde{r}^2\right)d\mathrm{vol}_M \left(\sqrt{\epsilon}\,\tilde{s} \right)+\epsilon^{\frac{1}{2}}\bar{X}_jf \left( x,v \right)\int_{B_1 \left( 0 \right)}\!\!\!\tilde{s}_j\Phi \left( \tilde{r}^2\right)d\mathrm{vol}_M \left(\sqrt{\epsilon}\,\tilde{s} \right) \\
    &\qquad+\frac{\epsilon}{2}\bar{X}_i\bar{X}_kf \left( x,v \right)\int_{B_1 \left( 0 \right)}\!\!\!\tilde{s}_i\tilde{s}_k\Phi \left( \tilde{r}^2\right)d\mathrm{vol}_M \left(\sqrt{\epsilon}\,\tilde{s} \right)\\
    &\qquad+\frac{\epsilon^{\frac{3}{2}}}{6}\bar{X}_i\bar{X}_j\bar{X}_kf \left( x,v \right)\int_{B_1 \left( 0 \right)}\!\!\!\tilde{s}_i\tilde{s}_j\tilde{s}_k\Phi \left( \tilde{r}^2\right)d\mathrm{vol}_M \left(\sqrt{\epsilon}\,\tilde{s} \right)+O \left( \epsilon^2 \right).
\end{align*}
Again by symmetry of these integrals and \eqref{eq:volume_form_expansion}, this reduces to
\begin{equation*}
  \begin{aligned}
    &\epsilon^{\frac{d}{2}}\left[ m_0f \left( x,v \right)-\frac{\epsilon}{6}m_2\mathrm{Scal}^M\left( x \right)f \left( x,v \right)+\frac{\epsilon}{2}m_2\sum_{k=1}^d\bar{X}^2_kf \left( x,v \right)+O \left( \epsilon^2 \right) \right]\\
    &=\epsilon^{\frac{d}{2}}\left[ m_0f \left( x,v \right)+\epsilon \frac{m_2}{2}\left(\Delta_H f \left( x,v \right)-\frac{1}{3}\mathrm{Scal}^M\left( x \right)f \left( x,v \right)\right)+O \left( \epsilon^2 \right) \right],
  \end{aligned}
\end{equation*}
where $m_0, m_2$ are constants defined in \eqref{eq:base_constants}, and
\begin{equation*}
  \sum_{k=1}^d\bar{X}_k^2f \left( x,v \right) = \Delta_Hf \left( x,v \right)
\end{equation*}
as explained in \eqref{eq:bochner_geodesic_frame}.
\end{proof}

We are now ready to give the proof of Theorem~{\rm\ref{thm:horizontal_diffusion_generator}}.

\begin{proof}[Proof of Theorem{\rm~\ref{thm:horizontal_diffusion_generator}}]
  By the definition of $H_{\epsilon}^{\left( \alpha \right)}$ in \eqref{eq:horizontal_diffusion_operator}, for any $f\in C^{\infty}\left( E \right)$,
  \begin{equation*}
    \begin{aligned}
      H_{\epsilon}^{\left( \alpha \right)}f \left( x,v \right)&=\frac{\displaystyle\int_MK_{\epsilon}^{\left( \alpha \right)}\left( x,y \right)f \left( y, P_{yx}v \right)p \left( y \right)d\mathrm{vol}_M \left( y \right)}{\displaystyle\int_MK_{\epsilon}^{\left( \alpha \right)}\left( x,y \right)p \left( y \right)d\mathrm{vol}_M \left( y \right)}\\
    &=\frac{\displaystyle\int_MK_{\epsilon}\left( x,y \right)f \left( y, P_{yx}v \right)p \left( y \right)p_{\epsilon}^{-\alpha} \left( y \right)d\mathrm{vol}_M \left( y \right)}{\displaystyle\int_MK_{\epsilon}\left( x,y \right)p \left( y \right)p_{\epsilon}^{-\alpha} \left( y \right)d\mathrm{vol}_M \left( y \right)}.
    \end{aligned}
  \end{equation*}
By Lemma~\ref{lem:basic_kernel_integral},
\begin{equation*}
  \begin{aligned}
    p_{\epsilon}\left( y \right) &= \int_MK_{\epsilon}\left( x,y \right)p \left( \eta \right)d\mathrm{vol}_M \left( \eta \right)\\
    &=\epsilon^{\frac{d}{2}}\left\{m_0p \left( y \right)+\epsilon\frac{m_2}{2}\left( \Delta_Mp \left( y \right)-\frac{1}{3}\mathrm{Scal}^M \left( y \right)p \left( y \right) \right)+O \left( \epsilon^2 \right)\right\}.
  \end{aligned}
\end{equation*}
Using this expansion of $p_{\epsilon}$ and applying Lemma~\ref{lem:basic_kernel_integral} to the denominator of $H_{\epsilon}^{\left( \alpha \right)}$,
\begin{align*}
  &\int_MK_{\epsilon}\left( x,y \right)p \left( y \right)p_{\epsilon}^{-\alpha} \left( y \right)d\mathrm{vol}_M \left( y \right)\\
  &=\epsilon^{\frac{\left( 1-\alpha \right)d}{2}}m_0^{-\alpha}\Bigg[ m_0 p^{1-\alpha}\left( x \right)+\epsilon \frac{m_2}{2}\left( \Delta_M p^{1-\alpha}\left( x \right)-\frac{1}{3}\mathrm{Scal}^M\left( x \right)p^{1-\alpha}\left( x \right) \right)\\
  &\qquad\qquad\qquad-\alpha\epsilon \frac{m_2}{2}p^{-\alpha}\left( x \right)\left( \Delta_Mp \left( x \right)-\frac{1}{3}\mathrm{Scal}^M \left( x \right)p \left( x \right) \right) +O\left( \epsilon^2 \right) \Bigg].
\end{align*}
Similarly, apply Lemma~\ref{lem:kernel_parallel_transport} to the numerator of $H_{\epsilon}^{\left( \alpha \right)}$ to get
\begin{align*}
    &\int_MK_{\epsilon}\left( x,y \right)f \left( y,P_{yx}v \right)p \left( y \right)p_{\epsilon}^{-\alpha} \left( y \right)d\mathrm{vol}_M \left( y \right)\\
    &=\epsilon^{\frac{\left( 1-\alpha \right)d}{2}}m_0^{-\alpha}\Bigg\{m_0 \left(f\bar{p}^{1-\alpha}\right) \left( x,v \right)\\
    &\qquad\qquad+\epsilon \frac{m_2}{2}\left[ \Delta_H \left(f\bar{p}^{1-\alpha}\right) \left( x,v \right)-\frac{1}{3}\mathrm{Scal}^M\left( x \right)\left(f\bar{p}^{1-\alpha}\right) \left( x,v \right) \right]\\
    &\qquad\qquad-\alpha\epsilon \frac{m_2}{2}f \left( x,v \right)p^{-\alpha}\left( x \right)\left( \Delta_Mp \left( x \right)-\frac{1}{3}\mathrm{Scal}^M \left( x \right)p \left( x \right) \right)+O \left( \epsilon^2 \right) \Bigg\}.
\end{align*}
Noting that $\bar{p}=p\circ \pi$ and by \eqref{eq:laplacian_lift}
$$\Delta_H\bar{p}^{1-\alpha}=\Delta_Mp^{1-\alpha},$$
a direct computation (plus assumption~\eqref{eq:assum_density_below} for the density $p$) concludes
\begin{align*}
  H_{\epsilon}^{\left( \alpha \right)}f \left( x,v \right)=f \left( x,v \right)+\epsilon \frac{m_2}{2m_0}\frac{\left[\Delta_H \left( f\bar{p}^{1-\alpha}\right)-f\Delta_H\bar{p}^{1-\alpha}\right] \left( x,v \right)}{\bar{p}^{1-\alpha} \left( x,v \right)}+O \left( \epsilon^2 \right),
\end{align*}
whence \eqref{eq:horizontal_generator_theorem} follows.
\end{proof}

We now turn to the proof of Theorem~\ref{thm:warped_diffusion_generator}. The basic idea is to apply Lemma~\ref{lem:basic_kernel_integral} and Lemma~\ref{lem:kernel_parallel_transport} repeatedly in both vertical and horizontal directions.
\begin{lemma}
\label{lem:warped_kernel_integral}
Suppose $\mathscr{E}=\left(E,M,F,\pi\right)$ is a fibre bundle, $M$ is a smooth closed Riemannian manifold with $\mathrm{Inj}\left( M \right)>0$, and $E$ equipped with the Riemannian metric \eqref{eq:metric_on_total_manifold}. Assume $\mathrm{dim}\,M=d$ and $\mathrm{dim}\,F=n$. Let $K_{\epsilon,\delta}$ be defined as in \eqref{eq:warped_kernel} with $\epsilon\in \left(0,\mathrm{Inj}\left( M \right)^2  \right)$, $\delta= O \left( \epsilon \right)$. For any function $f\in C^{\infty}\left( E \right)$,
\begin{equation}
  \label{eq:numerator_taylor_expansion}
  \begin{aligned}
   &\int_M\!\int_{F_y}K_{\epsilon,\delta}\left( x,v; y,w \right)f \left( y,w \right)\,d\mathrm{vol}_{F_y}\left(w\right)d\mathrm{vol}_M\left(y\right)\\
    &=\epsilon^{\frac{d}{2}}\delta^{\frac{n}{2}}\Bigg\{m_0f \left( x,v \right)+\epsilon\frac{m_{21}}{2}\left( \Delta_Hf \left( x,v \right)-\frac{1}{3}\mathrm{Scal}^M\left( x \right)f \left( x,v \right) \right)\\
    &\phantom{aaaaaaaaaaaa}+\delta\frac{m_{22}}{2}\left(\Delta_E^Vf \left( x,v \right)-\frac{1}{3}\mathrm{Scal}^{F_x} \left( v \right)f \left( x,v \right)  \right)+O \left( \epsilon^2+\epsilon\delta+\delta^2 \right)\Bigg\},
  \end{aligned}
\end{equation}
where $m_0,m_{21},m_{22}$ are positive constants depending only on the kernel $K$ and the fibre bundle, $\mathrm{Scal}^M$, $\mathrm{Scal}^{F_x}$ are scalar curvatures of $M$, $F_x$ respectively, and $\Delta_H$, $\Delta_E^V$ are defined in \eqref{eq:bochner_horizontal_laplacian} and \eqref{eq:horizontal_vertical_laplacians}.
\end{lemma}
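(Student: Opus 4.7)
The plan is to apply Fubini's theorem to decompose the integral into an inner integration over the fibre $F_y$ followed by an outer integration over the base $M$, then apply Lemma~\ref{lem:basic_kernel_integral} and Lemma~\ref{lem:kernel_parallel_transport} in succession. The Riemannian metric $g^E = g^M\oplus g^F$ from \eqref{eq:metric_on_total_manifold} makes $\pi:E\to M$ a Riemannian submersion whose volume element factors as $d\mathrm{vol}_E = d\mathrm{vol}_{F_y}\,d\mathrm{vol}_M(y)$, justifying the iterated integration.

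First, fix $y$ in a geodesic normal neighborhood of $x$ and view $K(u_1,\cdot)$, with $u_1 := d_M^2(x,y)/\epsilon$, as a univariate smooth compactly supported kernel on $F_y$. Applying the proof pattern of Lemma~\ref{lem:basic_kernel_integral} to the fibre $F_y$ in geodesic normal coordinates centered at $P_{yx}v$, with vertical scaling by $\sqrt{\delta}$, yields
\begin{equation*}
\int_{F_y} K\bigl(u_1, d_{F_y}^2(P_{yx}v, w)/\delta\bigr)\, f(y,w)\, d\mathrm{vol}_{F_y}(w) = \delta^{n/2}\bigl[A(u_1) f(y, P_{yx}v) + \delta B(u_1)\, C(y) + O(\delta^2)\bigr],
\end{equation*}
where $A(u_1), B(u_1)$ are the zeroth and second $u_2$-moments of $K(u_1,\cdot)$ on the unit ball in $\mathbb{R}^n$, and
\begin{equation*}
C(y) = \tfrac{1}{2}\bigl[\Delta_{F_y}(f\restriction F_y)(y, P_{yx}v) - \tfrac{1}{3}\mathrm{Scal}^{F_y}(P_{yx}v)\, f(y, P_{yx}v)\bigr].
\end{equation*}
By the identity \eqref{eq:restriction_vertical_laplacian}, $\Delta_{F_y}(f\restriction F_y)$ coincides with $\Delta_E^V f$ on $F_y$.

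Second, substitute this expansion into the outer integration over $y\in M$ and invoke Lemma~\ref{lem:kernel_parallel_transport} term-by-term. The leading $A(u_1) f(y, P_{yx}v)$ part contributes
\begin{equation*}
\epsilon^{d/2}\bigl\{m_0 f(x,v) + \epsilon \tfrac{m_{21}}{2}\bigl[\Delta_H f(x,v) - \tfrac{1}{3}\mathrm{Scal}^M(x) f(x,v)\bigr] + O(\epsilon^2)\bigr\},
\end{equation*}
with $m_0 = \int A(\tilde r^2)\,d\tilde s$ and $m_{21} = \int A(\tilde r^2)\tilde s_k^2\,d\tilde s$ emerging as double moments of $K$ over the product unit ball. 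The subleading $\delta B(u_1) C(y)$ part already carries the factor $\delta$, so only its leading $\epsilon$-order is needed; Lemma~\ref{lem:kernel_parallel_transport} produces $\epsilon^{d/2}\delta\, m_{22}\, C(x) + O(\epsilon^{d/2}\epsilon\delta)$, where $m_{22} = \int B(\tilde r^2)\,d\tilde s$. Finally, $\mathrm{Scal}^{F_y}(P_{yx}v)$ differs from $\mathrm{Scal}^{F_x}(v)$ by a smooth function of $y$ whose linear part in geodesic normal coordinates vanishes by symmetry upon outer integration, leaving only an $O(\epsilon\delta)$ correction, absorbed into the $O(\epsilon^2+\epsilon\delta+\delta^2)$ remainder.

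The main obstacle is bookkeeping: the $y$-dependent Taylor expansion produced by the inner integration must be smooth in $y$ with uniformly controlled remainders on the support of the outer kernel, so that Lemma~\ref{lem:kernel_parallel_transport} applies cleanly. This relies on the smooth dependence of $P_{yx}v$ and of the fibre exponential map on $y$, both guaranteed by the smoothness of the Ehresmann connection $HE$ together with standard ODE regularity, and on the observation that $A(u_1), B(u_1)$ are themselves smooth compactly supported functions of $u_1$, so the outer integrals are precisely of the type covered by Lemma~\ref{lem:kernel_parallel_transport} with kernels $\Phi(u) = A(u)$ or $\Phi(u) = B(u)$.
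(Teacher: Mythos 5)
Your proposal is correct and follows essentially the same route as the paper's proof: an inner application of Lemma~\ref{lem:basic_kernel_integral} on the fibre $F_y$ (your $A(u_1)$, $B(u_1)$ are the paper's moments $M_0$, $M_2$ of $K(r^2,\cdot)$), followed by an outer application of Lemma~\ref{lem:kernel_parallel_transport} with kernels $A$ and $B$, yielding the same double-moment constants $m_0$, $m_{21}$, $m_{22}$. Your closing remarks on the smoothness of $A$, $B$ in $u_1$ and on replacing $\mathrm{Scal}^{F_y}(P_{yx}v)$ by $\mathrm{Scal}^{F_x}(v)$ up to $O(\epsilon\delta)$ make explicit two points the paper leaves implicit.
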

\begin{proof}
  By definition of $K_{\epsilon,\delta}$,
\begin{equation*}
  \begin{aligned}
    &\int_M\!\int_{F_y}K_{\epsilon,\delta}\left( x,v; y,w \right)\,f \left( y,w \right)\,d\mathrm{vol}_{F_y}\left(w\right)d\mathrm{vol}_M\left(y\right)\\
    &=\int_M\!\int_{F_y}K\left( \frac{d^2_M \left( x,y \right)}{\epsilon}, \frac{d^2_{F_y}\left(P_{yx}v,w  \right)}{\delta} \right)f \left( y,w \right)\,d\mathrm{vol}_{F_y}\left(w\right)d\mathrm{vol}_M\left(y\right).
  \end{aligned}
\end{equation*}
For any fixed $y\in M$, apply Lemma~\ref{lem:basic_kernel_integral} to the inner integral over $F_y$ with
\begin{equation*}
  \Phi \left( p,q \right)=K\left( \frac{d^2_M \left( x,y \right)}{\epsilon}, \frac{d^2_{F_y}\left(p,q\right)}{\delta} \right)
\end{equation*}
then the constants $m_0,m_2$ will depend on $d^2_M \left( x,y \right)/\epsilon$. More specifically, if we set
\begin{align*}
    M_0 \left( r^2 \right)&=\int_{B_1^n \left( 0 \right)}K \left( r^2,\rho^2 \right)d\theta_1\cdots d\theta_n,\quad  M_2\left( r^2 \right)=\int_{B_1^n \left( 0 \right)}\theta_1^2K \left( r^2,\rho^2 \right)d\theta_1\cdots d\theta_n,\\
    M_3 \left( r^2 \right)&=\int_{B_1^n \left( 0 \right)}\theta_1^4K \left( r^2,\rho^2 \right)d\theta_1\cdots d\theta_n,\quad\textrm{where }\rho^2 = \sum_{j=1}^n\theta_j^2
\end{align*}
and recall from \eqref{eq:restriction_vertical_laplacian} that $\Delta_E^V$ coincides with $\Delta_{F_y}$ if one restricts a smooth function in $C^{\infty}\left( E \right)$ to $F_y$, then Lemma~\ref{lem:basic_kernel_integral} leads to
\begin{align*}
  &\int_{F_y}K_{\epsilon,\delta}\left( x,v; y,w \right)f \left( y,w \right)\,d\mathrm{vol}_{F_y}\left(w\right)\\
  &=\delta^{\frac{n}{2}}\Bigg\{ M_0 \left( \frac{d^2_M \left( x,y \right)}{\epsilon} \right)f \left( y,P_{yx}v \right)+\frac{\delta}{2}M_2 \left( \frac{d^2_M \left( x,y \right)}{\epsilon} \right)\times\\
  &\quad\qquad\left[ \Delta_E^V\,f \left( y,P_{y,x}v \right)-\frac{1}{3}\mathrm{Scal}^{F_y} \left( P_{yx}v \right)f \left( y,P_{yx}v \right) \right]+O \left( \delta^2M_3 \left( \frac{d^2_M \left( x,y \right)}{\epsilon} \right) \right) \Bigg\},
\end{align*}
Now integrate over $M$ and apply Lemma~\ref{lem:kernel_parallel_transport} multiple times:
\begin{align*}
    \int_M & M_0 \left( \frac{d^2_M \left( x,y \right)}{\epsilon} \right)f \left( y,P_{yx}v \right)d\mathrm{vol}_M \left( y \right)\\
    & = \epsilon^{\frac{d}{2}}\left\{ m_0f \left( x,v \right)+\epsilon \frac{m_{21}}{2}\left( \Delta_Hf \left( x,v \right)-\frac{1}{3}\mathrm{Scal}^M \left( x \right)f \left( x,v \right) \right)+O \left( \epsilon^2 \right) \right\},\\
    \int_M & M_2 \left( \frac{d^2_M \left( x,y \right)}{\epsilon} \right)\left[ \Delta_E^V\,f \left( y,P_{y,x}v \right)-\frac{1}{3}\mathrm{Scal}^{F_y} \left( P_{yx}v \right)f \left( y,P_{yx}v \right) \right]d\mathrm{vol}_M \left( y \right)\\
    &=\epsilon^{\frac{d}{2}}\left\{ m_{22}\left[ \Delta_E^V\,f \left( x,v \right)-\frac{1}{3}\mathrm{Scal}^{F_x} \left( v \right)f \left( x, v \right) \right]+O \left( \epsilon \right) \right\},
\end{align*}
where the constants $m_0, m_{21}, m_{22}$ are determined by (writing $r^2=\sum_{j=1}^ds_j^2$)
\begin{equation*}
  m_0=\int_{B_1^d \left( 0 \right)}M_0 \left( r^2 \right)ds_1\cdots ds_d,
\end{equation*}
\begin{equation*}
  \begin{aligned}
   &m_{21}=\int_{B_1^d \left( 0 \right)}M_0 \left( r^2 \right)s_1^2\,ds_1\cdots ds_d,\quad m_{22}=\int_{B_1^d \left( 0 \right)}M_2 \left( r^2 \right)ds_1\cdots ds_d.
  \end{aligned}
\end{equation*}
Therefore
\begin{align*}
    &\int_M\!\int_{F_y}K_{\epsilon,\delta}\left( x,v; y,w \right)\,f \left( y,w \right)\,d\mathrm{vol}_{F_y}\left(w\right)d\mathrm{vol}_M\left(y\right)\\
    &=\epsilon^{\frac{d}{2}}\delta^{\frac{n}{2}}\Bigg\{m_0f \left( x,v \right)+\epsilon\frac{m_{21}}{2}\left( \Delta_Hf \left( x,v \right)-\frac{1}{3}\mathrm{Scal}^M\left( x \right)f \left( x,v \right) \right)\\
    &\phantom{aaaaaaaaaaaa}+\delta\frac{m_{22}}{2}\left(\Delta_E^Vf \left( x,v \right)-\frac{1}{3}\mathrm{Scal}^{F_x} \left( v \right)f \left( x,v \right)  \right)+O \left( \epsilon^2+\epsilon\delta+\delta^2 \right)\Bigg\}.
\end{align*}
\end{proof}

\begin{proof}[Proof of Theorem{\rm~\ref{thm:warped_diffusion_generator}}]
Note that
\begin{equation}
\begin{aligned}
\label{eq:rough_cancellation}
  &H_{\epsilon,\delta}^{\left( \alpha \right)} f \left( x,v \right) = \frac{\displaystyle\int_M\!\int_{F_y}K_{\epsilon,\delta}^{\left( \alpha \right)}\left( x,v;y,w \right)f \left( y,w \right)p \left( y,w \right)d\mathrm{vol}_{F_y}\left( w \right)d\mathrm{vol}_M \left( y \right)}{\displaystyle\int_M\!\int_{F_y}K_{\epsilon,\delta}^{\left( \alpha \right)}\left( x,v;y,w \right)p \left( y,w \right)d\mathrm{vol}_{F_y}\left( w \right)d\mathrm{vol}_M \left( y \right)}\\
  &=\frac{\displaystyle\int_M\!\int_{F_y}K_{\epsilon,\delta}\left( x,v;y,w \right)f \left( y,w \right)p_{\epsilon,\delta}^{-\alpha}\left( y,w \right)p \left( y,w \right)d\mathrm{vol}_{F_y}\left( w \right)d\mathrm{vol}_M \left( y \right)}{\displaystyle\int_M\!\int_{F_y}K_{\epsilon,\delta}\left( x,v;y,w \right)p_{\epsilon,\delta}^{-\alpha}\left( y,w \right)p \left( y,w \right)d\mathrm{vol}_{F_y}\left( w \right)d\mathrm{vol}_M \left( y \right)}.
\end{aligned}
\end{equation}
Applying Lemma~\ref{lem:warped_kernel_integral} to $p_{\epsilon,\delta}$ to get
\begin{align*}
 &p_{\epsilon,\delta}\left( y,w \right)=\epsilon^{\frac{d}{2}}\delta^{\frac{n}{2}}\Bigg\{m_0p \left( y,w \right)+\epsilon\frac{m_{21}}{2}\left( \Delta_Hp \left( y,w \right)-\frac{1}{3}\mathrm{Scal}^M\left( y \right)p \left( y,w \right) \right)\\
    &\phantom{aaaaaaaaaaaa}+\delta\frac{m_{22}}{2}\left(\Delta_E^Vp \left( y,w \right)-\frac{1}{3}\mathrm{Scal}^{F_y} \left( w \right)p \left( y,w \right)  \right)+O \left( \epsilon^2+\epsilon\delta+\delta^2 \right)\Bigg\}.
\end{align*}
Using this and applying Lemma~\ref{lem:basic_kernel_integral}, Lemma~\ref{lem:kernel_parallel_transport} to the denominator and numerator of \eqref{eq:rough_cancellation} respectively:
\begin{align*}
   &\int_M\!\int_{F_y}K_{\epsilon,\delta}\left( x,v;y,w \right)p_{\epsilon,\delta}^{-\alpha}\left( y,w \right)p \left( y,w \right)d\mathrm{vol}_{F_y}\left( w \right)d\mathrm{vol}_M \left( y \right)\\
   &=\epsilon^{\frac{\left( 1-\alpha \right)d}{2}}\delta^{\frac{\left( 1-\alpha \right)n}{2}}m_0^{1-\alpha}p^{1-\alpha} \left( x,v \right)\Bigg\{1+\epsilon \frac{m_{21}}{2m_0}\left( \frac{\Delta_Hp^{1-\alpha}\left( x,v \right)}{p^{1-\alpha}\left( x,v \right)}-\frac{1}{3}\mathrm{Scal}^M \left( x \right)\right)\\
   &+\delta\frac{m_{22}}{2m_0}\left( \frac{\Delta_E^Vp^{1-\alpha}\left( x,v \right)}{p^{1-\alpha}\left( x,v \right)}-\frac{1}{3}\mathrm{Scal}^{F_x} \left( x \right)\right)-\alpha\epsilon \frac{m_{21}}{2m_0}\left( x,v \right)\left( \frac{\Delta_Hp \left( x,v \right)}{p \left( x,v \right)}-\frac{1}{3}\mathrm{Scal}^M \left( x \right)\right)\\
   &-\alpha\delta \frac{m_{22}}{2m_0}\left( \frac{\Delta_E^Vp \left( x,v \right)}{p \left( x,v \right)}-\frac{1}{3}\mathrm{Scal}^{F_x}\left( w \right) \right)+O \left( \epsilon^2+\epsilon\delta+\delta^2 \right) \Bigg\},
\end{align*}
\begin{align*}
&\int_M\!\int_{F_y}K_{\epsilon,\delta}\left( x,v;y,w \right)f \left( y,w \right)p_{\epsilon,\delta}^{-\alpha}\left( y,w \right)p \left( y,w \right)d\mathrm{vol}_{F_y}\left( w \right)d\mathrm{vol}_M \left( y \right)\\
   &=\epsilon^{\frac{\left( 1-\alpha \right)d}{2}}\delta^{\frac{\left( 1-\alpha \right)n}{2}}m_0^{1-\alpha}\left(fp^{1-\alpha}\right) \left( x,v \right)\Bigg\{1+\epsilon \frac{m_{21}}{2m_0}\left( \frac{\Delta_H\left(fp^{1-\alpha}\right)\left( x,v \right)}{\left(fp^{1-\alpha}\right)\left( x,v \right)}-\frac{1}{3}\mathrm{Scal}^M \left( x \right)\right)\\
   &+\delta\frac{m_{22}}{2m_0}\left( \frac{\Delta_E^V\left(fp^{1-\alpha}\right)\left( x,v \right)}{\left(fp^{1-\alpha}\right)\left( x,v \right)}-\frac{1}{3}\mathrm{Scal}^{F_x} \left( x \right)\right)-\alpha\epsilon \frac{m_{21}}{2m_0}\left( x,v \right)\left( \frac{\Delta_Hp \left( x,v \right)}{p \left( x,v \right)}-\frac{1}{3}\mathrm{Scal}^M \left( x \right)\right)\\
   &-\alpha\delta \frac{m_{22}}{2m_0}\left( \frac{\Delta_E^Vp \left( x,v \right)}{p \left( x,v \right)}-\frac{1}{3}\mathrm{Scal}^{F_x}\left( w \right) \right)+O \left( \epsilon^2+\epsilon\delta+\delta^2 \right) \Bigg\}.
\end{align*}
Combining these two expansions, a direct computation concludes
\begin{align*}
  &H_{\epsilon,\delta}^{\left( \alpha \right)} f \left( x,v \right) = f \left( x,v \right) + \epsilon \frac{m_{21}}{2m_0} \frac{\left[\Delta_H \left( fp^{1-\alpha} \right)-f\Delta_Hp^{1-\alpha} \right]\left( x,v \right)}{p^{1-\alpha}\left( x,v \right)}\\
  &\qquad\qquad\quad+\delta\frac{m_{22}}{2m_0} \frac{\left[\Delta_E^V \left( fp^{1-\alpha} \right)-f\Delta_E^Vp^{1-\alpha} \right]\left( x,v \right)}{p^{1-\alpha}\left( x,v \right)}+O \left( \epsilon^2+\epsilon\delta+\delta^2 \right).
\end{align*}
\end{proof}

\begin{proof}[Proof of Theorem{\rm~\ref{thm:warped_diffusion_generator_unbounded_ratio}}]
Since $P_{y,x}v$ does not depend on $\gamma$,
\begin{align*}
    \lim_{\gamma\rightarrow\infty}p_{\epsilon,\gamma\epsilon}\left( x,v \right)&=\lim_{\gamma\rightarrow\infty}\int_M\!\int_{F_y}K \left( \frac{d^2_M \left( x,y \right)}{\epsilon}, \frac{d^2_{S_y}\left(P_{y,x}v,w  \right)}{\gamma\epsilon} \right) p \left( y,w \right)d\mathrm{vol}\left(w\right)d\mathrm{vol}_M\left(y\right)\\
    &=\int_M\!K \left( \frac{d^2_M \left( x,y \right)}{\epsilon},0 \right) \Bigg[\int_{F_y}p \left( y,w \right)d\mathrm{vol}\left(w\right)\Bigg]d\mathrm{vol}_M\left(y\right).
\end{align*}
Define
\begin{equation*}
  \overline{K}_{\epsilon}\left( x,y \right)=\overline{K}\left( \frac{d^2_M \left( x,y \right)}{\epsilon} \right)=K \left( \frac{d^2_M \left( x,y \right)}{\epsilon},0 \right),\quad \overline{K}_{\epsilon}^{\left(\alpha\right)}\left( x,y \right)=\frac{\overline{K}_{\epsilon}\left( x,y \right)}{\langle p\rangle_{\epsilon}^{\alpha}\left( x \right)\langle p\rangle_{\epsilon}^{\alpha}\left( y \right)}.
\end{equation*}
By direct computation,
\begin{equation}
\label{eq:homogenized_H_alpha}
  \begin{aligned}
    \lim_{\gamma\rightarrow\infty}H_{\epsilon,\gamma\epsilon}^{\left(\alpha\right)}f \left( x,v \right)&=\frac{\displaystyle\int_M\overline{K}_{\epsilon}^{\left(\alpha\right)}\left( x,y \right)\Bigg[\int_{F_y}f \left( y,w \right)\frac{p \left( y,w \right)}{\langle p\rangle\left( y \right)}d\mathrm{vol}\left(w\right)\Bigg]\langle p\rangle\left( y \right)d\mathrm{vol}_M\left(y\right)}{\displaystyle\int_M\overline{K}_{\epsilon}^{\left(\alpha\right)}\left( x,y \right)\langle p\rangle\left( y \right)d\mathrm{vol}_M\left(y\right)}\\
    &=\frac{\displaystyle\int_M\overline{K}_{\epsilon}^{\left(\alpha\right)}\left( x,y \right)\langle f\rangle_p\left( y \right)\langle p\rangle\left( y \right)d\mathrm{vol}_M\left(y\right)}{\displaystyle\int_M\overline{K}_{\epsilon}^{\left(\alpha\right)}\left( x,y \right)\langle p\rangle\left( y \right)d\mathrm{vol}_M\left(y\right)}.
  \end{aligned}
\end{equation}
By \cite[Theorem 2]{CoifmanLafon2006}, as $\epsilon\rightarrow0$
\begin{equation}
\label{eq:homogenized_limit}
\begin{aligned}
  \lim_{\gamma\rightarrow\infty}&H^{\left(\alpha\right)}_{\epsilon,\gamma\epsilon}f \left( x,v \right)= \langle f\rangle_p \left( x \right) + \epsilon \frac{m_{2}'}{2m_0'} \frac{\left[\Delta_M \left( \langle f\rangle_p\langle p\rangle^{1-\alpha} \right)-\langle f\rangle_p\Delta_M\langle p\rangle^{1-\alpha} \right]\left( x \right)}{\langle p\rangle^{1-\alpha}\left( x \right)}+O \left( \epsilon^2 \right),
\end{aligned}
\end{equation}
where
\begin{equation*}
    m'_0=\int_{B_1^d \left( 0 \right)}\overline{K}\left( r^2 \right)ds_1\cdots ds_d,\quad
    m'_2=\int_{B_1^d \left( 0 \right)}\overline{K}\left( r^2 \right)s_1^2ds_1\cdots ds_d.
\end{equation*}
\end{proof}

\section{Proofs of Theorem~\ref{thm:utm_finite_sampling_noiseless} and Theorem~\ref{thm:utm_finite_sampling_noise}}
\label{sec:proof_variance}
In this appendix, we prove the two finite sampling theorems on unit tangent bundles in Section~\ref{sec:conv-rate-from}, following the paths paved by \cite{BelkinNiyogi2005,HeinAudibertVonLuxburg2007,Singer2006ConvergenceRate,SingerWu2012VDM}. Recall from Section~\ref{sec:unit-tangent-bundles} that for any $f\in C^{\infty}\left( UT\!M \right)$
\begin{align*}
  H_{\epsilon,\delta}^{\left(\alpha\right)}f \left( x,v \right)=\frac{\displaystyle\int_{UT\!M}K_{\epsilon,\delta}^{\left(\alpha\right)}\left( x,v;y,w \right)f \left( y,w \right)p \left( y,w \right)\,d\Theta \left( y,w \right)}{\displaystyle\int_{UT\!M}K_{\epsilon,\delta}^{\left(\alpha\right)}\left( x,v;y,w \right)p \left( y,w \right)\,d\Theta \left( y,w \right)},
\end{align*}
where $d\Theta \left( x,v \right)=d\mathrm{vol}_{S_x}d\mathrm{vol}_M \left( v \right)$ is the \emph{Liouville measure}. Since $S_x$ is a unit ball in $T_xM$, we shall also write $d\sigma_x=d\mathrm{vol}_{S_x}$ for convenience.

\subsubsection{Sampling without Noise}
\label{sec:proof-theor-noiseless}

The following lemma builds the bridge between the geodesic distance on the manifold and the Euclidean distance in the ambient space.

\begin{lemma}
\label{lem:distance_expansion}
Let $\iota:M\hookrightarrow \mathbb{R}^D$ be an isometric embedding of the smooth $d$-dimensional closed Riemannian manifold $M$ into $\mathbb{R}^D$. For any $x,y\in M$ such that $d_M \left( x,y \right)<\mathrm{Inj}\left( M \right)$, we have
\begin{equation}
  \label{eq:taylor_geodesic_distance}
  d_M^2 \left( x,y \right)=\left\|\iota \left( x \right)-\iota \left( y \right)  \right\|^2+\frac{1}{12}d_M^4 \left( x,y \right)\left\|\Pi \left( \theta,\theta \right) \right\|^2+O \left( d_M^5 \left( x,y \right)\right),
\end{equation}
where $\theta\in T_xM$, $\left\|\theta\right\|_x=1$ comes from the geodesic polar coordinates of $y$ in a geodesic normal neighborhood of $x$:
\begin{equation*}
  y=\mathrm{exp}_xr\theta,\quad r=d_M \left( x,y \right).
\end{equation*}
\end{lemma}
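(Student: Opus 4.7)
\bigskip

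\noindent\textbf{Proof proposal.}
The plan is to Taylor expand the ambient curve $\iota\circ\gamma$ to fourth order, where $\gamma$ is the unit-speed geodesic from $x$ to $y$, and then read off the expansion of $\|\iota(x)-\iota(y)\|^2$ from this. Concretely, let $\gamma:[0,r]\to M$ with $r=d_M(x,y)<\mathrm{Inj}(M)$, $\gamma(0)=x$, $\gamma(r)=y$, $\gamma'(0)=\theta\in T_xM$, $\|\theta\|_x=1$. Write
\begin{equation*}
  \iota(\gamma(t)) = \iota(x) + tA_1 + \tfrac{t^2}{2}A_2 + \tfrac{t^3}{6}A_3 + O(t^4),
\end{equation*}
where $A_k=(\iota\circ\gamma)^{(k)}(0)\in\mathbb{R}^D$. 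Since $\iota$ is an isometric embedding, $A_1=d\iota_x(\theta)$ is a unit vector tangent to $\iota(M)$ at $\iota(x)$. Because $\gamma$ is a geodesic on $M$, the Gauss formula identifies $A_2$ with a purely normal vector, namely the second fundamental form $\Pi(\theta,\theta)\in(T_{\iota(x)}\iota(M))^{\perp}$.

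Next I compute $\|\iota(\gamma(t))-\iota(x)\|^2$ from this expansion; the key simplifications come from orthogonality and from isometric parametrization. Expanding the squared norm gives
\begin{equation*}
  \|\iota(\gamma(t))-\iota(x)\|^2 = t^2\|A_1\|^2 + t^3\langle A_1,A_2\rangle + \tfrac{t^4}{4}\|A_2\|^2 + \tfrac{t^4}{3}\langle A_1,A_3\rangle + O(t^5).
\end{equation*}
By isometry $\|A_1\|^2=1$; and $\langle A_1,A_2\rangle=0$ since $A_1$ is tangent and $A_2=\Pi(\theta,\theta)$ is normal. To handle the $t^4$ terms I will exploit the fact that $t\mapsto\|(\iota\circ\gamma)'(t)\|^2\equiv 1$: differentiating twice in $t$ at $t=0$ yields $0=2\|A_2\|^2+2\langle A_1,A_3\rangle$, hence $\langle A_1,A_3\rangle=-\|A_2\|^2=-\|\Pi(\theta,\theta)\|^2$. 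Substituting back collapses the two $t^4$ terms into $(\tfrac14-\tfrac13)\|\Pi(\theta,\theta)\|^2\,t^4=-\tfrac{1}{12}\|\Pi(\theta,\theta)\|^2\,t^4$, so
\begin{equation*}
  \|\iota(\gamma(t))-\iota(x)\|^2 = t^2 - \tfrac{1}{12}\|\Pi(\theta,\theta)\|^2\,t^4 + O(t^5).
\end{equation*}

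Finally, setting $t=r=d_M(x,y)$ and solving for $d_M^2(x,y)$ gives \eqref{eq:taylor_geodesic_distance}. There is no serious obstacle: the proof is a controlled Taylor expansion, and the only nonobvious step is the use of the unit-speed constraint to extract the coefficient $\langle A_1,A_3\rangle=-\|\Pi(\theta,\theta)\|^2$ without having to compute $A_3$ explicitly via covariant derivatives of $\Pi$. The uniformity of the $O(r^5)$ remainder across $\theta\in S_xM$ and $x\in M$ follows from compactness of $M$ together with smoothness of $\iota$ and $\Pi$.
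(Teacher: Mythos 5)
Your proposal is correct. The paper itself does not prove this lemma; it simply cites \cite{SWW2007} (Proposition 6), so your self-contained argument is a genuine addition rather than a reproduction. The computation checks out: with $c(t)=\iota(\gamma(t))$ one has $\|c(t)-c(0)\|^2=t^2\|A_1\|^2+t^3\langle A_1,A_2\rangle+\tfrac{t^4}{4}\|A_2\|^2+\tfrac{t^4}{3}\langle A_1,A_3\rangle+O(t^5)$ (the omitted $A_4$ term only enters the squared norm at order $t^5$ through its pairing with $tA_1$, so truncating the curve at $O(t^4)$ is legitimate); the Gauss formula applied to the geodesic equation gives $A_2=\Pi(\theta,\theta)$ normal, killing the $t^3$ term; and your trick of differentiating $\|c'(t)\|^2\equiv 1$ twice to obtain $\langle A_1,A_3\rangle=-\|A_2\|^2$ is exactly the right way to avoid computing $A_3$ via covariant derivatives of $\Pi$. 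The coefficients combine to $\bigl(\tfrac14-\tfrac13\bigr)\|\Pi(\theta,\theta)\|^2=-\tfrac{1}{12}\|\Pi(\theta,\theta)\|^2$, and inverting at $t=r=d_M(x,y)$ yields \eqref{eq:taylor_geodesic_distance}. The appeal to compactness of the unit tangent bundle for uniformity of the $O(r^5)$ remainder is also the right justification, and it matters for the way the lemma is used later (the error must be uniform in $x$ and $\theta$ when integrated against the kernel). The only cosmetic caveat is that the hypothesis $d_M(x,y)<\mathrm{Inj}(M)$ should be invoked explicitly to guarantee the existence of the unique unit-speed minimizing geodesic $\gamma$ you expand along; you use it implicitly in the setup.
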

\begin{proof}
  See \cite[Proposition 6]{SWW2007}.
\end{proof}

For proving Theorem~\ref{thm:utm_finite_sampling_noiseless}, it is convenient to introduce the ``Euclidean distance version'' of the diffusion operators introduced in Section~\ref{sec:horiz-rand-walk}. Note that in Definition~\ref{defn:utm_finite_sampling_noiseless} the hat ``$\hat{\phantom{a}}$'' is used for empirical quantities; for the remainder of this appendix, the tilde ``$\tilde{\phantom{a}}$'' will be used for quantities in the definition of $H^{\left( \alpha \right)}_{\epsilon}$ and $H^{\left( \alpha \right)}_{\epsilon,\delta}$ with Euclidean distance in place of geodesic distance. For instance\footnote{Note that here $\hat{K}^{\left( \alpha \right)}_{\epsilon,\delta}=\tilde{K}_{\epsilon,\delta}^{\left(\alpha\right)}$, but this equality no longer holds in next subsection where $\hat{K}_{\epsilon,\delta}^{\left(\alpha\right)}$ is constructed from estimated parallel-transports.},

\begin{align*}
    \tilde{K}_{\epsilon,\delta}\left( x,v;y,w \right)&=K \left( \frac{\left\|x-y\right\|^2}{\epsilon},\frac{\left\| P_{y,x}v-w \right\|^2_y}{\delta} \right),\\
    \tilde{p}_{\epsilon,\delta}\left( x,v \right)&=\int_{UT\!M}\tilde{K}_{\epsilon,\delta}\left( x,v;y,w \right)p \left( y,w \right)d\Theta \left( y,w \right),\\
    \tilde{K}_{\epsilon,\delta}^{\left(\alpha\right)}\left( x,v;y,w \right)&=\frac{\tilde{K}_{\epsilon,\delta}\left( x,v;y,w \right)}{\tilde{p}_{\epsilon,\delta}^{\alpha}\left( x,v \right)\tilde{p}_{\epsilon,\delta}^{\alpha}\left( y,w \right)},
\end{align*}
and eventually
\begin{equation*}
  \begin{aligned}
    \tilde{H}_{\epsilon,\delta}^{\left(\alpha\right)}f \left( x,v \right)=\frac{\displaystyle\int_{UT\!M}\tilde{K}_{\epsilon,\delta}^{\left(\alpha\right)}\left( x,v;y,w \right)f \left( y,w \right)p \left( y,w \right)\,d\Theta \left( y,w \right)}{\displaystyle\int_{UT\!M}\tilde{K}_{\epsilon,\delta}^{\left(\alpha\right)}\left( x,v;y,w \right)p \left( y,w \right)\,d\Theta \left( y,w \right)}.
  \end{aligned}
\end{equation*}

The next step is to establish an asymptotic expansion of type \eqref{eq:warped_horizontal_generator_theorem} for $\tilde{H}_{\epsilon,\delta}^{\left(\alpha\right)}$. We deduce the following Lemma~\ref{lem:euclidean_kernel_integral}, the ``Euclidean distance version'' of Lemma~\ref{lem:basic_kernel_integral}, from Lemma~\ref{lem:distance_expansion} and Lemma~\ref{lem:basic_kernel_integral} itself.
\begin{lemma}
\label{lem:euclidean_kernel_integral}
  Let $\Phi:\mathbb{R}\rightarrow\mathbb{R}$ be a smooth function compactly supported in $\left[ 0,1 \right]$. Assume $M$ is a $d$-dimensional closed Riemannian manifold isometrically embedded in $\mathbb{R}^D$, with injectivity radius $\mathrm{Inj}\left( M \right)>0$. For any $\epsilon>0$, define kernel function
  \begin{equation}
    \hat{\Phi}_{\epsilon}\left( x,y \right)=\Phi \left( \frac{\left\| x-y\right\|^2}{\epsilon} \right)
  \end{equation}
on $M\times M$, where $\left\| \cdot \right\|$ is the Euclidean distance on $\mathbb{R}^D$. If the parameter $\epsilon$ is sufficiently small such that $0\leq\epsilon\leq\sqrt{\mathrm{Inj}\left( M \right)}$, then the integral operator associated with kernel $\Phi_{\epsilon}$
\begin{equation}
  \left(\hat{\Phi}_{\epsilon}\,g\right)\left( x \right):=\int_M \Phi_{\epsilon}\left( x,y \right)g \left( y \right)d\mathrm{vol}_M \left( y \right)
\end{equation}
has the following asymptotic expansion as $\epsilon\rightarrow 0$
\begin{equation}
  \left(\hat{\Phi}_{\epsilon}\,g\right)\left( x \right) = \epsilon^{\frac{d}{2}}\left[ m_0 g \left( x \right)+\epsilon \frac{m_2}{2}\left( \Delta_M g \left( x \right)+E\left( x \right)g \left( x \right) \right)+O \left( \epsilon^2 \right) \right],
\end{equation}
with
\begin{equation*}
  E \left( x \right) = -\frac{1}{3}\mathrm{Scal}^M\left( x \right)+\frac{d \left( d+2 \right)}{12}A \left( x \right)
\end{equation*}
where $m_0,m_2$ are constants that depend on the moments of $\Phi$ and the dimension $d$ of the Riemannian manifold $M$, $\Delta_M$ is the Laplace-Beltrami operator on $M$, $\mathrm{Scal}^M\left( x \right)$ is the scalar curvature of $M$ at $x$, and $A \left( x \right)$ is a scalar function on $M$ that only depends on the intrinsic dimension $d$ and the second fundamental form of the isometric embedding $\iota: M\hookrightarrow\mathbb{R}^D$.
\end{lemma}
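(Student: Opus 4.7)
\medskip

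\noindent\textbf{Proof proposal.} The plan is to reduce the Euclidean-distance kernel $\hat{\Phi}_{\epsilon}$ to the geodesic-distance kernel $\Phi_{\epsilon}$ of Lemma~\ref{lem:basic_kernel_integral} via a Taylor expansion of $\Phi$, using the extrinsic/intrinsic distance comparison of Lemma~\ref{lem:distance_expansion}. Specifically, working in geodesic polar coordinates $y = \exp_x(r\theta)$ around $x$ with $r = d_M(x,y)$ and $\theta \in S^{d-1} \subset T_xM$, Lemma~\ref{lem:distance_expansion} gives
\begin{equation*}
  \frac{\|x-y\|^2}{\epsilon} = \frac{r^2}{\epsilon} - \frac{r^4}{12\epsilon}\|\Pi(\theta,\theta)\|^2 + O\!\left(\frac{r^5}{\epsilon}\right).
\end{equation*}
Since $\Phi \in C_c^{\infty}([0,1])$, Taylor-expanding $\Phi$ about $r^2/\epsilon$ yields
\begin{equation*}
  \Phi\!\left(\frac{\|x-y\|^2}{\epsilon}\right) = \Phi\!\left(\frac{r^2}{\epsilon}\right) - \frac{r^4}{12\epsilon}\|\Pi(\theta,\theta)\|^2\,\Phi'\!\left(\frac{r^2}{\epsilon}\right) + R_{\epsilon}(x,y),
\end{equation*}
where the remainder $R_{\epsilon}$ is controlled by $\Phi', \Phi''$ evaluated on $[0,1]$ and by higher powers of $r$ which, after the change of variables $\tilde r = r/\sqrt{\epsilon}$, contribute at order $\epsilon^{d/2+2}$ or smaller, i.e.\ $O(\epsilon^2)$.

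The first term, integrated against $g(y)\,d\mathrm{vol}_M$, is exactly the content of Lemma~\ref{lem:basic_kernel_integral} and furnishes the $m_0 g(x)$ leading term together with the $-\tfrac{1}{3}\mathrm{Scal}^M(x) g(x)$ and $\Delta_M g(x)$ contributions at order $\epsilon^{d/2+1}$. For the second term, I rescale via $\tilde r = r/\sqrt{\epsilon}$ and use the volume form expansion \eqref{eq:volume_form_expansion}; the factor $r^4/\epsilon = \epsilon\,\tilde r^4$ contributes one extra power of $\epsilon$, so to leading order I only need to freeze $g(y)=g(x)$ and evaluate
\begin{equation*}
  -\frac{\epsilon^{\frac{d}{2}+1}}{12}\,g(x)\int_0^1 \tilde r^{d+3}\,\Phi'(\tilde r^2)\,d\tilde r \,\int_{S^{d-1}}\|\Pi(\theta,\theta)\|^2\,d\sigma(\theta),
\end{equation*}
where the splitting into radial and angular integrals uses that the leading-order volume form on $T_xM$ is Euclidean. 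The angular integral is a purely intrinsic quantity on $M$ that depends only on $d$ and the second fundamental form of $\iota$, and defines the function $A(x)$ up to a constant multiple.

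The remaining manipulation is to rewrite the radial integral $\int_0^1 \tilde r^{d+3}\Phi'(\tilde r^2)\,d\tilde r$ in terms of moments of $\Phi$ itself so as to expose the factor $m_2/2$. Using $\tilde r\,\Phi'(\tilde r^2) = \tfrac{1}{2}\tfrac{d}{d\tilde r}\Phi(\tilde r^2)$ and integrating by parts (the boundary term at $\tilde r=1$ vanishes by compact support, and the term at $\tilde r=0$ vanishes by the power of $\tilde r$), I obtain
\begin{equation*}
  \int_0^1 \tilde r^{d+3}\,\Phi'(\tilde r^2)\,d\tilde r = -\frac{d+2}{2}\int_0^1 \tilde r^{d+1}\Phi(\tilde r^2)\,d\tilde r,
\end{equation*}
and the right-hand integral is proportional to $m_2$ by the polar formula $d\cdot m_2 = \omega^{d-1}\int_0^1 \tilde r^{d+1}\Phi(\tilde r^2)\,d\tilde r$. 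Substituting back and normalizing $A(x) := \omega^{-(d-1)}\int_{S^{d-1}}\|\Pi(\theta,\theta)\|^2\,d\sigma(\theta)$ produces the claimed coefficient $\tfrac{d(d+2)}{12}A(x)$ on top of the $-\tfrac{1}{3}\mathrm{Scal}^M(x)$ coming from the intrinsic part, yielding the stated $E(x)$.

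The main obstacle is bookkeeping rather than any single hard step: I must confirm that (i)~the cubic remainder $O(r^5/\epsilon)$ in Lemma~\ref{lem:distance_expansion} together with the quadratic Taylor remainder involving $\Phi''$ genuinely combine to $O(\epsilon^2)$ after the radial rescaling and the volume form expansion, and (ii)~the correction introduced by freezing $g$ and the volume form to leading order at $x$ in the second term does not pollute the $\epsilon^{d/2+1}$-order coefficient (it only enters at order $\epsilon^{d/2+2}$, by the same symmetry arguments that eliminate half-integer powers in the proof of Lemma~\ref{lem:basic_kernel_integral}). Once those two bounds are in place, the expansion assembles exactly as claimed.
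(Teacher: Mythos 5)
Your proposal is correct and follows essentially the same route as the paper's proof: reduce to the geodesic-distance kernel of Lemma~\ref{lem:basic_kernel_integral} plus a correction term obtained by Taylor-expanding $\Phi$ with the distance comparison of Lemma~\ref{lem:distance_expansion}, evaluate that correction at leading order with $g$ and the volume form frozen at $x$, and integrate by parts in the radial variable to express $\int_0^1 \tilde r^{d+3}\Phi'(\tilde r^2)\,d\tilde r$ through $m_2$, yielding the coefficient $\tfrac{d(d+2)}{12}A(x)$. The two bookkeeping checks you flag at the end are exactly the points the paper handles (the remainder is $O(\epsilon^2)$ because $d_M(x,y)=O(\sqrt{\epsilon})$ on the support of the kernel, and the corrections from the Taylor expansion of $\tilde g$ and of the volume form in the second term enter only at order $\epsilon^{d/2+2}$ by symmetry).
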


\begin{proof}
  Since we already established Lemma~\ref{lem:basic_kernel_integral}, it suffices to expand the difference
\begin{equation}
\label{eq:distance_approximation_error}
  \int_M \left[\Phi \left( \frac{\left\| x-y\right\|^2}{\epsilon} \right) - \Phi \left( \frac{d^2_M \left( x,y \right)}{\epsilon} \right)\right] g \left( y \right)d\mathrm{vol}_M \left( y \right).
\end{equation}
Put $y$ in geodesic polar coordinates in a geodesic normal neighborhood of $x\in M$:
\begin{equation*}
  y=\mathrm{exp}_xr\theta, \quad\textrm{with } r=d_M \left( x,y \right), \theta\in T_xM, \left\|\theta\right\|_x=1,
\end{equation*}
and denote the geodesic normal coordinates around $x$ as $\left( s_1,\cdots,s_d \right)$. By Lemma~\ref{lem:distance_expansion},
\begin{equation*}
  \left\|x-y\right\|^2-d_M^2 \left( x,y \right)=-\frac{1}{12}d_M^4 \left( x,y \right)\left\|\Pi \left( \theta,\theta \right)\right\|^2+O \left( d_M^5 \left( x,y \right) \right)
\end{equation*}
thus
\begin{equation}
\label{eq:taylor_kernel_function}
  \begin{aligned}
    &\Phi \left( \frac{\left\| x-y\right\|^2}{\epsilon} \right) - \Phi \left( \frac{d^2_M \left( x,y \right)}{\epsilon} \right)\\
    &=\Phi' \left(\frac{d_M^2 \left( x,y \right)}{\epsilon}  \right)\cdot \left( -\frac{1}{12\epsilon}d_M^4 \left( x,y \right)\left\|\Pi \left( \theta,\theta \right)\right\|^2 \right)+O \left( \frac{d_M^8 \left( x,y \right)}{\epsilon^2} \right).
  \end{aligned}
\end{equation}
Recall that $\Phi$ is supported on the unit interval, which implies that in \eqref{eq:distance_approximation_error} only those $y\in M$ satisfying $\left\|x-y \right\|\leq \sqrt{\epsilon}$ or $d_M \left( x,y \right)\leq \sqrt{\epsilon}$ are involved. According to Lemma~\ref{lem:distance_expansion}, for sufficiently small $\epsilon>0$, $\left\|x-y\right\|\leq \sqrt{\epsilon}$ implies $d_M \left( x,y \right)<2\sqrt{\epsilon}$, thus the higher order error in \eqref{eq:taylor_kernel_function} is indeed
\begin{equation*}
  O \left( \frac{d_M^8 \left( x,y \right)}{\epsilon^2} \right)=O \left(\frac{\left(\sqrt{\epsilon}\right)^8}{\epsilon^2} \right)=O \left( \epsilon^2 \right).
\end{equation*}
Therefore,
\begin{equation}
\label{eq:error_involves_second_fundamental_form}
  \begin{aligned}
    \int_M & \left[\Phi \left( \frac{\left\| x-y\right\|^2}{\epsilon} \right) - \Phi \left( \frac{d^2_M \left( x,y \right)}{\epsilon} \right)\right] g \left( y \right)d\mathrm{vol}_M \left( y \right)\\
    &=-\frac{1}{12\epsilon}\int_M\Phi' \left(\frac{d_M^2 \left( x,y \right)}{\epsilon}  \right)d_M^4 \left( x,y \right)\left\|\Pi \left( \theta,\theta \right)\right\|^2g \left( y \right)d\mathrm{vol}_M \left( y \right)+\epsilon^{\frac{d}{2}}\cdot O \left( \epsilon^2 \right)\\
    &=-\frac{1}{12\epsilon}\int_M\Phi' \left(\frac{r^2}{\epsilon}  \right)r^4\left\|\Pi \left( \theta,\theta \right)\right\|^2g \left( y \right)d\mathrm{vol}_M \left( y \right)+\epsilon^{\frac{d}{2}}\cdot O \left( \epsilon^2 \right).
  \end{aligned}
\end{equation}
In geodesic normal coordinates $\left( s_1,\cdots,s_d \right)$,
\begin{equation}
\label{eq:put_geodesic_coordinates}
  \begin{aligned}
    \int_M& \Phi' \left(\frac{r^2}{\epsilon}  \right)r^4\left\|\Pi \left(\theta,\theta \right)\right\|^2g \left( y \right)d\mathrm{vol}_M \left( y \right)\\
    &=\int_{B_{\sqrt{\epsilon}}\left( 0 \right)}\Phi' \left(\frac{r^2}{\epsilon}  \right)r^4\left\|\Pi \left( \theta,\theta \right)\right\|^2\tilde{g}\left( s \right)\left[ 1-\frac{1}{6}R_{k\ell}\left( x \right)s_ks_{\ell}+O \left( r^3 \right) \right]ds_1\cdots ds_d.
  \end{aligned}
\end{equation}
Using the Taylor expansion of $\tilde{g} \left( s \right)$ around $s=0$ and the symmetry of the integral, \eqref{eq:put_geodesic_coordinates} reduces to
\begin{align*}
    \int_{B_{\sqrt{\epsilon}}\left( 0 \right)} & \Phi' \left(\frac{r^2}{\epsilon}  \right)r^4\left\|\Pi \left( \theta,\theta \right)\right\|^2 g \left( x \right)ds_1\cdots ds_d\\
    &+\int_{B_{\sqrt{\epsilon}}\left( 0 \right)}\Phi' \left(\frac{r^2}{\epsilon}  \right)r^4\left\|\Pi \left( \theta,\theta \right)\right\|^2 O \left( r^2 \right)ds_1\cdots ds_d\\
    &=\epsilon^{\frac{d}{2}}\cdot\epsilon^2g \left( x \right)\int_{S_1 \left( 0 \right)}\left\|\Pi \left( \theta,\theta \right)\right\|^2d\theta\int_0^1\Phi' \left(\tilde{r}^2\right)\tilde{r}^{3+d}d\tilde{r}+\epsilon^{\frac{d}{2}}\cdot O \left( \epsilon^3 \right).
\end{align*}
Let $m_2$ be the constant as in Lemma~\ref{lem:kernel_parallel_transport}, $\omega_{d-1}$ the volume of the standard unit sphere in $\mathbb{R}^d$. Note that
\begin{equation*}
  \omega_{d-1}\int_0^1\Phi \left( \tilde{r}^2 \right)\tilde{r}^{d+1}dr = \int_{B_1 \left( 0 \right)}\Phi \left( \tilde{r}^2 \right)\tilde{r}^2d\tilde{s}^1\cdots d\tilde{s}^d = m_2d.
\end{equation*}
Let $A \left( x \right)$ be the average of the length of the second fundamental form over the standard unit sphere, i.e.,
\begin{equation*}
  A \left( x \right)= \frac{1}{\omega_{d-1}}\int_{S_1 \left( 0 \right)}\left\|\Pi \left( \theta,\theta \right)\right\|^2d\theta.
\end{equation*}
Integrating the term involving $\Phi' \left( \tilde{r}^2 \right)$ by parts to get
\begin{equation}
\label{eq:last_piece}
  \begin{aligned}
    \epsilon^{\frac{d}{2}}\cdot\epsilon^2g \left( x \right)\int_{S_1 \left( 0 \right)}\left\|\Pi \left( \theta,\theta \right)\right\|^2d\theta\int_0^1\Phi' \left(\tilde{r}^2\right)\tilde{r}^{3+d}d\tilde{r}=-\epsilon^{\frac{d}{2}}\cdot \epsilon^2 \frac{m_2}{2}d \left( d+2 \right)g \left( x \right)A \left( x \right).
  \end{aligned}
\end{equation}
Therefore,
\begin{align*}
  \int_M &\left[\Phi \left( \frac{\left\| x-y\right\|^2}{\epsilon} \right) - \Phi \left( \frac{d^2_M \left( x,y \right)}{\epsilon} \right)\right] g \left( y \right)d\mathrm{vol}_M \left( y \right)\\
  &=\epsilon^{\frac{d}{2}}\left[ \epsilon \frac{m_2}{24}d \left( d+2 \right)A \left( x \right)g \left( x \right)+O \left( \epsilon^2 \right) \right],
\end{align*}
and thus
\begin{align*}
    \left(\hat{\Phi}_{\epsilon}\,g\right)\left( x \right)=\epsilon^{\frac{d}{2}}\left[ m_0 g \left( x \right)+\epsilon \frac{m_2}{2}\left( \Delta_M g \left( x \right)+E \left( x \right)g \left( x \right)\right) +O \left( \epsilon^2 \right) \right]
\end{align*}
where
\begin{equation*}
  E \left( x \right) := -\frac{1}{3}\mathrm{Scal}\left( x \right)+\frac{1}{12}d \left( d+2 \right)A \left( x \right).
\end{equation*}
\end{proof}

\begin{remark}
\label{rem:lazy_remark}
  The only difference between the conclusions in Lemma~\ref{lem:euclidean_kernel_integral} and Lemma~\ref{lem:basic_kernel_integral} is that the scalar function $E \left( x \right)$ takes the place of the scalar curvature $\mathrm{Scal}^M\left( x \right)$; one can check, essentially by going through the proof of Theorem~\ref{thm:warped_diffusion_generator}, that the proof still works through, due to the cancellation of the terms involving $E \left( x \right)$. In fact, applying Lemma~\ref{lem:euclidean_kernel_integral} repeatedly, one has
\begin{equation}
  \label{eq:kernel_parallel_transport_utm_hat}
  \begin{aligned}
    \int_M &\hat{\Phi}_{\epsilon}\left( x,y \right)f \left( y,P_{y,x}v \right)d\mathrm{vol}_M \left( y \right)\\
    &=\epsilon^{\frac{d}{2}}\left\{m_0 f \left( x,v \right)+\epsilon \frac{m_2}{2}\left[ \Delta_H f \left( x,v \right)+E_1 \left( x \right) f \left( x,v \right) \right]+O \left( \epsilon^2 \right) \right\},
  \end{aligned}
\end{equation}
and
\begin{equation}
  \label{eq:numerator_taylor_expansion_utm_hat}
  \begin{aligned}
    &\int_{UT\!M}\tilde{K}_{\epsilon,\delta}\left( x,v; y,w \right)g \left( y,w \right)\,d\Theta \left( y,w \right)\\
    &=\epsilon^{\frac{d}{2}}\delta^{\frac{d-1}{2}}\Bigg\{m_0g \left( x,v \right)+\epsilon\frac{m_{21}}{2}\left[ \Delta_{UT\!M}^Hg \left( x,v \right)+E_1\left( x \right)g \left( x,v \right) \right]\\
    &\phantom{aaaaaaaaaaaaaaaaa}+\delta\frac{m_{22}}{2}\left[\Delta_{UT\!M}^Vg \left( x,v \right)+E_2\cdot g \left( x,v \right)  \right]+O \left( \epsilon^2+\delta^2 \right) \Bigg\},
  \end{aligned}
\end{equation}
where
\begin{equation*}
  E_1 \left( \xi_i \right)=-\frac{1}{3}\mathrm{Scal}^M\left( \xi_i \right)+\frac{d \left( d+2 \right)}{12}\cdot \frac{1}{\omega_{d-1}}\int_{S_1 \left( 0 \right)}\left\|\Pi_M \left( \theta,\theta \right)\right\|^2d\theta
\end{equation*}
only depends on the scalar curvature $\mathrm{Scal}^M$ and the second fundamental form $\Pi_M$ of the base manifold $M $at $\xi$, and
\begin{equation*}
  E_2=-\frac{1}{3}\mathrm{Scal}^S+\frac{\left( d-1 \right)\left( d+1 \right)}{12}\cdot \frac{1}{\omega_{d-2}}\int_{S_1 \left( 0 \right)}\left\|\Pi_S \left( \theta,\theta \right)\right\|^2d\theta
\end{equation*}
is a constant because
\begin{equation*}
    \mathrm{Scal}^S \equiv \left( d-1 \right)\left( d-2 \right),\quad \left\|\Pi_S \left( \theta,\theta \right)\right\|^2 \equiv1\quad\textrm{for any unit tangent vector $\theta$}.
\end{equation*}
These expansions are essentially the equivalents of Lemma~\ref{lem:kernel_parallel_transport} and Lemma~\ref{lem:warped_kernel_integral} for $\tilde{K}_{\epsilon,\delta}$. Using \eqref{eq:kernel_parallel_transport_utm_hat}, \eqref{eq:numerator_taylor_expansion_utm_hat} and picking $\delta=O \left( \epsilon \right)$ as $\epsilon\rightarrow0$, a version of Theorem~\ref{thm:warped_diffusion_generator} holds true when $K_{\epsilon,\delta}^{\left(\alpha\right)}$ is replaced with $\tilde{K}_{\epsilon,\delta}^{\left(\alpha\right)}$, i.e., as $\epsilon\rightarrow0$ (and thus $\delta\rightarrow0$),
\begin{equation}
\label{eq:bias_error_term}
  \begin{aligned}
    \tilde{H}_{\epsilon,\delta}^{\left(\alpha\right)}&f\left( x,v \right) = f \left( x,v \right)+\epsilon \frac{m_{21}}{2m_0}\left[\frac{\Delta_H\left[fp^{1-\alpha}\right]\left( x,v \right)}{p^{1-\alpha}\left( x,v \right)}-f \left( x,v \right) \frac{\Delta_Hp^{1-\alpha}\left( x,v \right)}{p^{1-\alpha}\left( x,v \right)}  \right]\\
    &+\delta \frac{m_{22}}{2m_0}\left[\frac{\Delta_E^V\left[fp^{1-\alpha}\right]\left( x,v \right)}{p^{1-\alpha}\left( x,v \right)}-f \left( x,v \right) \frac{\Delta_E^Vp^{1-\alpha}\left( x,v \right)}{p^{1-\alpha}\left( x,v \right)}  \right]+O\left(\epsilon^2+\epsilon\delta+\delta^2 \right).\\
\end{aligned}
\end{equation}
As we shall see below, this observation is the key to establishing estimates for the bias error in the proof of Theorem~\ref{thm:utm_finite_sampling_noiseless}.
\end{remark}

Before we present the proof of Theorem~\ref{thm:utm_finite_sampling_noiseless}, we establish a large deviation bound for our two-step sampling strategy. Recall from Assumption~\ref{assum:utm_finite_sampling_noiseless} that we first sample $N_B$ points $\xi_1,\cdots,\xi_{N_B}$ i.i.d. with respect to $\langle p\rangle$ on the base manifold $M$, then sample $N_F$ points on each fibre $S_{\xi_j}$ i.i.d. with respect to $p \left( \cdot\mid\xi_j \right)$. The resulting $N_B\times N_F$ points on $UT\!M$
\begin{equation*}
  \begin{matrix}
    x_{1,1}, &x_{1,2}, &\cdots, &x_{1,N_F}\\
    x_{2,1}, &x_{2,2}, &\cdots, &x_{2,N_F}\\
    \vdots & \vdots &\cdots &\vdots\\
    x_{N_B,1}, &x_{N_B,2}, &\cdots, &x_{N_B,N_F}
  \end{matrix}
\end{equation*}
are generally not i.i.d. sampled from $UT\!M$. This forbids applying the \emph{Law of Large Numbers} directly to quantities that take the form of an average over the entire unit tangent bundle, such as
\begin{equation*}
  \frac{1}{N_BN_F}\sum_{j=1}^{N_B}\sum_{s=1}^{N_F}\hat{K}_{\epsilon,\delta}\left( x_{i,r},x_{j,s} \right)f \left( x_{j,s} \right).
\end{equation*}
However, due to the conditional i.i.d. fibrewise sampling, it makes sense to apply the law of large numbers to average quantities on a fixed fibre, e.g.,
\begin{equation*}
  \frac{1}{N_F}\sum_{s=1}^{N_F}\hat{K}_{\epsilon,\delta}\left( x_{i,r},x_{j,s} \right)f \left( x_{j,s} \right)\longrightarrow \mathbb{E}_Z \left[ \tilde{K}_{\epsilon,\delta}\left( x_{i,r},\left( \xi_j,Z \right) \right)f \left( \xi_j,Z \right) \right],
\end{equation*}
where $\mathbb{E}_Z$ stands for the expectation with respect to the ``fibre component'' of the coordinates of the points on $S_{\xi_j}$. Explicitly,
\begin{equation*}
  \mathbb{E}_Z \left[ \tilde{K}_{\epsilon,\delta}\left( x_{i,r},\left( \xi_j,\cdot \right) \right)f \left( \xi_j,\cdot \right) \right]=\int_{S_{\xi_j}}\tilde{K}_{\epsilon,\delta}\left( x_{i,r},\left( \xi_j,w \right) \right)f \left( \xi_j,w \right)p \left( w\mid \xi_j \right)d\sigma_{\xi_j} \left( w \right).
\end{equation*}
Next, note that $\xi_1,\cdots,\xi_{N_B}$ are i.i.d. sampled from the base manifold $M$, the partial expectations
\begin{equation*}
  \left\{ \mathbb{E}_Z \left[ \tilde{K}_{\epsilon,\delta}\left( x_{i,r},\left( \xi_j,Z \right) \right)f \left( \xi_j,Z \right) \right] \right\}_{j=1}^{N_B}
\end{equation*}
are i.i.d. random variables on $M$ with respect to $\langle p \rangle$. Thus
\begin{equation*}
  \frac{1}{N_B}\sum_{j=1}^{N_B}\mathbb{E}_Z \left[ \tilde{K}_{\epsilon,\delta}\left( x_{i,r},\left( \xi_j,Z \right) \right)f \left( \xi_j,Z \right) \right]\longrightarrow \mathbb{E}_Y\left[ \mathbb{E}_Z \left[\tilde{K}_{\epsilon,\delta}\left( x_{i,r},\left( Y,Z \right) \right)f \left(Y,Z \right) \right] \right],
\end{equation*}
where
\begin{equation*}
  \begin{aligned}
    &\mathbb{E}_Y\left[ \mathbb{E}_Z \left[\tilde{K}_{\epsilon,\delta}\left( x_{i,r},\left( Y,Z \right) \right)f \left(Y,Z \right) \right] \right]\\
   =&\int_M\langle p\rangle\left( y \right)\int_{S_{\xi_j}}\tilde{K}_{\epsilon,\delta}\left( x_{i,r},\left( \xi_j,w \right) \right)f \left( y,w \right)p \left( w\mid y \right)d\sigma_{y} \left( w \right)d\mathrm{vol}_M \left( y \right)\\
   =&\int_M\!\int_{S_{\xi_j}}\tilde{K}_{\epsilon,\delta}\left( x_{i,r},\left( \xi_j,w \right) \right)f \left( y,w \right)p \left( y,w \right)d\sigma_{y} \left( w \right)d\mathrm{vol}_M \left( y \right).
  \end{aligned}
\end{equation*}
This gives
\begin{equation*}
  \begin{aligned}
    \lim_{N_B\rightarrow\infty}&\lim_{N_F\rightarrow\infty}\frac{1}{N_BN_F}\sum_{j=1}^{N_B}\sum_{s=1}^{N_F}\hat{K}_{\epsilon,\delta}\left( x_{i,r},x_{j,s} \right)f \left( x_{j,s} \right)\\
    &=\int_M\!\int_{S_{\xi_j}}\tilde{K}_{\epsilon,\delta}\left( x_{i,r},\left( \xi_j,w \right) \right)f \left( y,w \right)p \left( y,w \right)d\sigma_{y} \left( w \right)d\mathrm{vol}_M \left( y \right),
  \end{aligned}
\end{equation*}
in which the two limits on the left hand side do not commute in general. Thus it is natural to consider iterated partial expectations rather than expectation on the entire $UT\!M$. From now on, we denote $\mathbb{E}_Y,\mathbb{E}_Z$ as $\mathbb{E}_1,\mathbb{E}_2$ for simplicity.
\begin{definition}
\label{defn:special_sampling}
Let $p$ be a probability density function on $UT\!M$, and
\begin{equation*}
  \langle p\rangle\left( x \right)=\int_{S_x}p \left( x,w \right)d\sigma_x \left( w \right),\quad p \left( v\mid x \right)=\frac{p \left( x,v \right)}{\langle p\rangle\left( x \right)}
\end{equation*}
be as defined in \eqref{eq:fibre_average}\eqref{eq:conditional_pdf}. For any $f\in C^{\infty}\left( M \right)$ and $g\in C^{\infty}\left( S_{\xi} \right)$, $\xi\in M$, define
\begin{align*}
  \mathbb{E}_1 f&:= \int_Mf \left( y \right)p \left( y \right)d\mathrm{vol}_M \left( y \right),\\
  \mathbb{E}_2^{\xi}g &:= \int_{S_{\xi}}g \left( \xi,w \right)p \left( w\mid \xi \right)d\sigma_{\xi}\left( w \right).
\end{align*}
\end{definition}

\begin{definition}
\label{defn:procrustesian}
Let $p$ be a probability density function on $UT\!M$. We call a collection of $N_B\times N_F$ real-valued random functions
\begin{equation*}
  \left\{ X_{j,s}\mid 1\leq j\leq N_B,1\leq s\leq N_F \right\}
\end{equation*}
\emph{Procrustean with respect to $p$ on $UT\!M$}, if
\begin{enumerate}[(i)]
\item\label{item:37} For each $1\leq j\leq N_B$, the subcollection $\left\{ X_{j,s}\mid 1\leq s\leq N_F \right\}$ are i.i.d. on $S_{\xi_j}$ for some $\xi_j\in M$, with respect to the conditional probability density $p \left( \cdot\mid \xi_j \right)$;
\item\label{item:38} The points $\left\{ \xi_j\mid 1\leq j\leq N_B \right\}$ are i.i.d. on $M$ with respect to the fibre average density $\langle p\rangle \left( \cdot \right)$.
\end{enumerate}
Due to \eqref{item:37}, we can drop the dependency of $X_{j,s}$ with respect to $s$ and simply write
\begin{equation*}
  \mathbb{E}_2^{\xi_j}X_j:=\mathbb{E}_2^{\xi_j}X_{j,s},\quad \mathbb{E}_2^{\xi_j}X_j^2:=\mathbb{E}_2^{\xi_j}X_{j,s}^2.
\end{equation*}
Similarly, because of \eqref{item:38} we can write
\begin{equation*}
  \mathbb{E}_1\mathbb{E}_2X:=\mathbb{E}_1\mathbb{E}_2^{\xi_j}X_j,\quad \mathbb{E}_1 \left( \mathbb{E}_2X \right)^2:=\mathbb{E}_1\left(\mathbb{E}_2^{\xi_j}X_j\right).
\end{equation*}

\end{definition}

\begin{lemma}
\label{lem:large_deviation_bound}
Let $\left\{ X_{j,s}\mid 1\leq j\leq N_B,1\leq s\leq N_F \right\}$ be a collection of Procrustean random functions with respect to some density function $p$ on $UT\!M$. If
\begin{equation*}
  \left| X_{j,s} \right|\leq M_0,\,\,\left| \mathbb{E}_2^{\xi_j}X_j \right|\leq M_1,\,\,\left| \mathbb{E}_1\mathbb{E}_2X \right|\leq M_2\quad\textrm{a.s. for all }1\leq j\leq N_B,1\leq s\leq N_F,
\end{equation*}
then for any $t>0$ and $0<\theta<1$,
\begin{align*}
    &\mathbb{P}\left\{ \frac{1}{N_BN_F}\sum_{j=1}^{N_B}\sum_{s=1}^{N_F}X_{j,s}-\mathbb{E}_1\mathbb{E}_2X>t \right\}\\
    &\leq \sum_{j=1}^{N_B}\exp \left\{-\frac{\displaystyle \frac{1}{2}\left(1-\theta\right)^2N_Ft^2}{\displaystyle \left[\mathbb{E}_2^{\xi_j}X_j^2-\left(\mathbb{E}_2^{\xi_j}X_j \right)^2\right]+ \frac{1}{3}\left(M_0+M_1\right)\left(1-\theta\right) t}  \right\}\\
    &+\exp \left\{ -\frac{\displaystyle \frac{1}{2}\theta^2N_Bt^2}{\displaystyle \left[\mathbb{E}_1 \left( \mathbb{E}_2X\right)^2-\left(\mathbb{E}_1\mathbb{E}_2X \right)^2\right]+\frac{1}{3}\left(M_1+M_2\right)\theta t} \right\}.
\end{align*}
\end{lemma}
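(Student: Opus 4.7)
The plan is to exploit the hierarchical Procrustean structure: conditional on the base points $\xi_1,\ldots,\xi_{N_B}$, the fibrewise samples $\{X_{j,s}\}_{s=1}^{N_F}$ are i.i.d., and the conditional expectations $\mathbb{E}_2^{\xi_j}X_j$ are themselves i.i.d. over $j$. This suggests a two-level telescoping of the deviation combined with two applications of Bernstein's inequality.

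First, I would write the deviation as the sum of a \emph{fibrewise} part and a \emph{base} part:
\begin{equation*}
\frac{1}{N_BN_F}\sum_{j=1}^{N_B}\sum_{s=1}^{N_F}X_{j,s}-\mathbb{E}_1\mathbb{E}_2X
=\underbrace{\frac{1}{N_B}\sum_{j=1}^{N_B}\!\left[\frac{1}{N_F}\sum_{s=1}^{N_F}X_{j,s}-\mathbb{E}_2^{\xi_j}X_j\right]}_{A}
+\underbrace{\frac{1}{N_B}\sum_{j=1}^{N_B}\mathbb{E}_2^{\xi_j}X_j-\mathbb{E}_1\mathbb{E}_2X}_{B}.
\end{equation*}
If the total deviation exceeds $t$, then for any $\theta\in(0,1)$ either $A>(1-\theta)t$ or $B>\theta t$; by a union bound it suffices to bound each event separately.

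For the $A$ term, I would condition on $\xi_j$. Fixing $j$, the variables $X_{j,s}-\mathbb{E}_2^{\xi_j}X_j$ are i.i.d. with mean zero, are bounded in absolute value by $M_0+M_1$, and have variance $\mathbb{E}_2^{\xi_j}X_j^2-(\mathbb{E}_2^{\xi_j}X_j)^2$. Applying the classical Bernstein inequality to the $N_F$ samples on fibre $j$ controls the event that the $j$-th inner bracket exceeds $(1-\theta)t$; a union bound over $j=1,\ldots,N_B$ produces the sum appearing in the statement. For the $B$ term, the summands $\mathbb{E}_2^{\xi_j}X_j-\mathbb{E}_1\mathbb{E}_2X$ are i.i.d. over $j$ (since the $\xi_j$ are i.i.d. on $M$), bounded by $M_1+M_2$, with variance $\mathbb{E}_1(\mathbb{E}_2X)^2-(\mathbb{E}_1\mathbb{E}_2X)^2$; a single Bernstein bound with deviation parameter $\theta t$ yields the remaining exponential.

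The main obstacle is really bookkeeping rather than mathematical depth: one must verify that the almost-sure bounds $|X_{j,s}|\le M_0$, $|\mathbb{E}_2^{\xi_j}X_j|\le M_1$, $|\mathbb{E}_1\mathbb{E}_2X|\le M_2$ translate correctly into the boundedness constants $M_0+M_1$ and $M_1+M_2$ appearing in the Bernstein denominators, and that the measurability issues (the inner Bernstein bound is conditional on $\xi_j$, but the hypotheses $|X_{j,s}|\le M_0$ are almost-sure uniform) permit the union bound to be uniform in the realization of $\xi_1,\ldots,\xi_{N_B}$. Once these points are in hand, the two Bernstein estimates assemble exactly into the claimed inequality.
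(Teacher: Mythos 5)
Your proposal is correct and follows essentially the same route as the paper: the same split of the deviation into a fibrewise part and a base part with thresholds $(1-\theta)t$ and $\theta t$, the same pigeonhole/union bound over the $N_B$ fibres for the first part, and the same two applications of Bernstein's inequality with boundedness constants $M_0+M_1$ and $M_1+M_2$. No gaps.
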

\begin{proof}
  Note that for any $\theta\in \left( 0,1 \right)$
  \begin{align*}
      &\mathbb{P}\left\{ \frac{1}{N_BN_F}\sum_{j=1}^{N_B}\sum_{s=1}^{N_F}X_{j,s}-\mathbb{E}_1\mathbb{E}_2X>t \right\}\\
      &\leq \mathbb{P}\left\{\frac{1}{N_BN_F}\sum_{j=1}^{N_B}\sum_{s=1}^{N_F}X_{j,s}-\frac{1}{N_B}\sum_{j=1}^{N_B}\mathbb{E}_2^{\xi_j}X_j>\left(1-\theta\right) t \right\}\\
      &\quad+\mathbb{P}\left\{\frac{1}{N_B}\sum_{j=1}^{N_B}\mathbb{E}_2^{\xi_j}X_j-\mathbb{E}_1\mathbb{E}_2X> \theta t \right\}=: \left( \mathrm{I} \right)+\left( \mathrm{II} \right).
  \end{align*}
Since
\begin{equation*}
  \left| \mathbb{E}_2^{\xi_j}X_j-\mathbb{E}_1\mathbb{E}_2X \right|\leq M_1+M_2,
\end{equation*}
by Bernstein's Inequality~\cite[\S2.2]{Chung2006},
\begin{align*}
    \left( \mathrm{II} \right)&=\mathbb{P}\left\{ \sum_{j=1}^{N_B}\left( \mathbb{E}_2^{\xi_j}X_j -\mathbb{E}_1\mathbb{E}_2X\right)> \theta N_Bt \right\}\\
    &\leq \exp \left\{ -\frac{\displaystyle \frac{1}{2}\theta^2N_Bt}{\displaystyle\left[\mathbb{E}_1 \left(\mathbb{E}_2X\right)^2 -\left(\mathbb{E}_1\mathbb{E}_2X\right)^2\right]+\frac{1}{3} \left(M_1+M_2\right) \theta t} \right\}.
\end{align*}
For $\left( \mathrm{I} \right)$, a union bound plus Bernstein's Inequality gives
\begin{align*}
    \left( \mathrm{I} \right)&=\mathbb{P}\left\{\sum_{j=1}^{N_B}\left( \frac{1}{N_F}\sum_{s=1}^{N_F}X_{j,s}-\mathbb{E}_2^{\xi_j}X_j \right)>\left(1-\theta\right) N_Bt \right\}\\
    &\leq \sum_{j=1}^{N_B}\mathbb{P}\left\{ \sum_{s=1}^{N_F}\left( X_{j,s}-\mathbb{E}_2^{\xi_j}X_j \right)>\left(1-\theta\right) N_F t \right\}\\
    &\leq \sum_{j=1}^{N_B}\exp \left\{-\frac{\displaystyle \frac{1}{2}\left(1-\theta\right)^2N_F t^2}{\displaystyle \left[\mathbb{E}_2^{\xi_j}X_j^2-\left(\mathbb{E}_2^{\xi_j}X_j \right)^2\right]+\frac{1}{3}\left(M_0+M_1\right)\left(1-\theta\right) t}  \right\}.
\end{align*}
The conclusion follows from combining these two bounds.
\end{proof}
\begin{remark}
\label{rem:insight_large_deviation_bound}
  Intuitively, the second term in the bound comes from the sampling error on the base manifold, and is thus independent of $\delta$ and $N_F$; the first term in the bound comes from accumulating fibrewise sampling error across all $N_B$ fibres.
\end{remark}

\begin{proof}[Proof of Theorem{\rm ~\ref{thm:utm_finite_sampling_noiseless}}]
  We first establish the result for $\alpha=0$. In this case, $\hat{K}^{\left( 0 \right)}_{\epsilon,\delta}\left( \cdot,\cdot \right)=\hat{K}_{\epsilon,\delta}\left( \cdot,\cdot \right)$, and
\begin{align*}
    \hat{H}_{\epsilon,\delta}^{\left( 0 \right)}f \left( x_{i,r} \right)&=\frac{\displaystyle\sum_{j=1}^{N_B}\sum_{s=1}^{N_F}\hat{K}_{\epsilon,\delta}\left( x_{i,r},x_{j,s} \right)f \left( x_{j,s} \right)}{\displaystyle\sum_{j=1}^{N_B}\sum_{s=1}^{N_F}\hat{K}_{\epsilon,\delta}\left( x_{i,r},x_{j,s} \right)}\\
    &=\frac{\displaystyle \frac{1}{N_BN_F} \sum_{j=1}^{N_B}\sum_{s=1}^{N_F}K \left( \frac{\|\xi_i-\xi_j\|^2}{\epsilon}, \frac{\|P_{\xi_j,\xi_i}x_{i,r}-x_{j,s}\|^2}{\delta} \right)f \left( x_{j,s} \right)}{\displaystyle\frac{1}{N_BN_F}\sum_{j=1}^{N_B}\sum_{s=1}^{N_F}K \left( \frac{\|\xi_i-\xi_j\|^2}{\epsilon}, \frac{\|P_{\xi_j,\xi_i}x_{i,r}-x_{j,s}\|^2}{\delta} \right)}.
\end{align*}
Since $\left\{x_{j,s}\right\}_{s=1}^{N_F}$ are i.i.d. with respect to $p \left( \cdot\mid \xi_j \right)$, by the law of large numbers, for each fixed $j=1,\cdots,N_B$, as $N_F\rightarrow\infty$,
\begin{equation*}
  \begin{aligned}
    \lim_{N_F\rightarrow\infty}&\frac{1}{N_F}\sum_{s=1}^{N_F}K \left( \frac{\|\xi_i-\xi_j\|^2}{\epsilon}, \frac{\|P_{\xi_j,\xi_i}x_{i,r}-x_{j,s}\|^2}{\delta} \right)f \left( x_{j,s} \right)\\
    &= \int_{S_{\xi_j}}K \left( \frac{\|\xi_i-\xi_j\|^2}{\epsilon}, \frac{\|P_{\xi_j,\xi_i}x_{i,r}-w\|^2}{\delta} \right)f \left( \xi_j,w \right) p \left( w\mid\xi_j \right) d\sigma_{\xi_j} \left( w \right),
  \end{aligned}
\end{equation*}
Note that $\left\{\xi_j\right\}_{j=1}^{N_B}$ are i.i.d. with respect to $\langle p\rangle$, it follows again from the law of large numbers that
\begin{equation*}
  \begin{aligned}
    &\lim_{N_B\rightarrow\infty}\frac{1}{N_B}\sum_{j=1}^{N_B}\lim_{N_F\rightarrow\infty}\frac{1}{N_F}\sum_{s=1}^{N_F} K \left( \frac{\|\xi_i-\xi_j\|^2}{\epsilon}, \frac{\|P_{\xi_j,\xi_i}x_{i,r}-x_{j,s}\|^2}{\delta} \right)f \left( x_{j,s} \right)\\
    &=\int_{UT\!M}K \left( \frac{\|\xi_i-y\|^2}{\epsilon}, \frac{\|P_{y,\xi_i}x_{i,r}-w\|^2}{\delta} \right)f \left( y,w \right)p \left( y,w \right) d\Theta \left( y,w \right),
  \end{aligned}
\end{equation*}
where we used $p \left( y,w \right)=\langle p\rangle\left( y \right)p \left( w\mid y \right)$. For $f\equiv 1$,
\begin{equation*}
  \begin{aligned}
    &\lim_{N_B\rightarrow\infty}\frac{1}{N_B}\sum_{j=1}^{N_B}\lim_{N_F\rightarrow\infty}\frac{1}{N_F}\sum_{s=1}^{N_F} K \left( \frac{\|\xi_i-\xi_j\|^2}{\epsilon}, \frac{\|P_{\xi_j,\xi_i}x_{i,r}-x_{j,s}\|^2}{\delta} \right)\\
    &=\int_{UT\!M}K \left( \frac{\|\xi_i-y\|^2}{\epsilon}, \frac{\|P_{y,\xi_i}x_{i,r}-w\|^2}{\delta} \right)p \left( y,w \right) d\Theta \left( y,w \right).
  \end{aligned}
\end{equation*}
Therefore,
\begin{align*}
    \lim_{N_B\rightarrow\infty}&\lim_{N_F\rightarrow\infty}\hat{H}_{\epsilon,\delta}^{\left( 0 \right)}f \left( x_{i,r} \right)=\tilde{H}_{\epsilon,\delta}^{\left( 0 \right)}f \left( x_{i,r} \right)\\
    &=f \left( x_{i,r} \right)+\epsilon \frac{m_{21}}{2m_0}\left[\frac{\Delta_{UT\!M}^H\left[fp\right]\left( x_{i,r} \right)}{p\left( x_{i,r} \right)}-f \left( x_{i,r} \right) \frac{\Delta_{UT\!M}^Hp\left( x_{i,r} \right)}{p\left( x_{i,r} \right)}  \right]\\
    &+\delta \frac{m_{22}}{2m_0}\left[\frac{\Delta_{UT\!M}^V\left[fp\right]\left( x_{i,r} \right)}{p\left( x_{i,r} \right)}-f \left( x_{i,r} \right) \frac{\Delta_{UT\!M}^Vp\left( x_{i,r} \right)}{p\left( x_{i,r} \right)}  \right]+O \left( \epsilon^2+\epsilon\delta+\delta^2 \right).
\end{align*}
The last equality makes use of the assumption $\delta=O \left( \epsilon \right)$ as $\epsilon\rightarrow0$ and Remark~\ref{rem:lazy_remark}. This establishes the bias error for the special case $\alpha=0$ and it remains to estimate the variance error. To this end, denote
\begin{equation*}
  \begin{aligned}
    F_{j,s} = \hat{K}_{\epsilon,\delta}\left( x_{i,r},x_{j,s} \right)f \left( x_{j,s} \right),\qquad G_{j,s} = \hat{K}_{\epsilon,\delta}\left( x_{i,r},x_{j,s} \right)
  \end{aligned}
\end{equation*}
for any fixed $x_{i,r}\in UT\!M$. Note that $F_{i,s}=0$, $G_{i,s}=0$ for all $s=1,\cdots,N_F$, by Definition~\ref{defn:utm_finite_sampling_noiseless}~\eqref{item:34}; by the compactness of $UT\!M$ we have the following trivial bounds uniform in $j,s$:
\begin{equation*}
  \left| F_{j,s} \right|\leq \left\|K\right\|_{\infty}\left\|f\right\|_{\infty},\quad \left| G_{j,s} \right|\leq \left\|K\right\|_{\infty}.
\end{equation*}
Thus we already have
\begin{equation*}
  \lim_{N_B\rightarrow\infty}\lim_{N_F\rightarrow\infty}\hat{H}_{\epsilon,\delta}^{\left( 0 \right)}f \left( x_{i,r} \right)=\frac{\mathbb{E}_1\mathbb{E}_2 F}{\mathbb{E}_1\mathbb{E}_2 G},
\end{equation*}
and would like to estimate
\begin{equation*}
  p \left( N_B,N_F,\beta \right):=\mathbb{P} \left\{ \frac{\sum_j\sum_sF_{j,s}}{\sum_j\sum_sG_{j,s}}-\frac{\mathbb{E}_1\mathbb{E}_2 F}{\mathbb{E}_1\mathbb{E}_2 G}>\beta \right\}
\end{equation*}
for sufficiently small $\beta>0$. An upper bound for
\begin{equation*}
  \mathbb{P} \left\{ \frac{\sum_j\sum_sF_{j,s}}{\sum_j\sum_sG_{j,s}}-\frac{\mathbb{E}_1\mathbb{E}_2 F}{\mathbb{E}_1\mathbb{E}_2 G}<-\beta \right\}
\end{equation*}
can be obtained in a similar manner. Since $G_{j,s}>0$,
\begin{equation*}
  \begin{aligned}
    &p \left( N_B,N_F,\beta \right) = \mathbb{P} \left\{ \frac{\left(\sum_j\sum_sF_{j,s}\right)\mathbb{E}_1\mathbb{E}_2 G-\left( \sum_j\sum_sG_{j,s} \right)\mathbb{E}_1\mathbb{E}_2 F}{\left(\sum_j\sum_sG_{j,s}\right)\mathbb{E}_1\mathbb{E}_2 G} > \beta \right\}\\
    &=\mathbb{P} \left\{ \left(\sum_j\sum_sF_{j,s}\right)\mathbb{E}_1\mathbb{E}_2 G-\left( \sum_j\sum_sG_{j,s} \right)\mathbb{E}_1\mathbb{E}_2 F>\beta\left(\sum_j\sum_sG_{j,s}\right)\mathbb{E}_1\mathbb{E}_2 G \right\}.
  \end{aligned}
\end{equation*}
Denote
\begin{equation*}
  Y_{j,s}:=F_{j,s}\mathbb{E}_1\mathbb{E}_2G-G_{j,s}\mathbb{E}_1\mathbb{E}_2F+\beta \left( \mathbb{E}_1\mathbb{E}_2G-G_{j,s} \right)\mathbb{E}_1\mathbb{E}_2G,
\end{equation*}
then it is easily verifiable that $\mathbb{E}_1\mathbb{E}_2Y_{j,s}=0$ for all $1\leq j\leq N_B$, $1\leq s\leq N_F$, and
\begin{equation*}
  p \left( N_B,N_F,\beta \right)=\mathbb{P}\left\{ \frac{1}{N_BN_F}\sum_j\sum_s Y_{j,s}> \beta\left( \mathbb{E}_1\mathbb{E}_2G \right)^2 \right\}.
\end{equation*}
By Lemma~\ref{lem:large_deviation_bound}, bounding this quantity reduces to computing various moments. 
Define
\begin{equation*}
  X_j:=\mathbb{E}_2Y_j,
\end{equation*}
then $X_1,\cdots,X_{N_B}$ are i.i.d. on $M$ with respect to $\langle p \rangle$, and $\mathbb{E}_1X_j=0$ for $1\leq j\leq N_B$. Furthermore, $X_1,\cdots,X_{N_B}$ are uniformly bounded. To find this bound explicitly, note that
\begin{equation*}
  \begin{aligned}
    \left| X_j \right|&=\left| \mathbb{E}_2Y_j \right|=\left| \left(\mathbb{E}_2F_j\right)\mathbb{E}_1\mathbb{E}_2G-\left(\mathbb{E}_2G_j\right)\mathbb{E}_1\mathbb{E}_2F+\beta \left( \mathbb{E}_1\mathbb{E}_2G-\mathbb{E}_2G_j \right)\mathbb{E}_1\mathbb{E}_2G \right|\\
    &\leq \left|\left(\mathbb{E}_2F_j\right)\mathbb{E}_1\mathbb{E}_2G \right|+ \left|\left(\mathbb{E}_2G_j\right)\mathbb{E}_1\mathbb{E}_2F\right|+\beta \left( \mathbb{E}_1\mathbb{E}_2G \right)^2+\beta \left| \mathbb{E}_2G_j \right|\left| \mathbb{E}_1\mathbb{E}_2G \right|,
  \end{aligned}
\end{equation*}
and recall from Lemma~\ref{lem:euclidean_kernel_integral} and Remark~\ref{rem:lazy_remark} that
\begin{equation*}
  \begin{aligned}
    \mathbb{E}_1\mathbb{E}_2F &= O \left( \epsilon^{\frac{d}{2}}\delta^{\frac{d-1}{2}} \right),\quad \mathbb{E}_1\mathbb{E}_2G = O \left( \epsilon^{\frac{d}{2}}\delta^{\frac{d-1}{2}} \right),\\
    \mathbb{E}_2F_j&=O \left( \delta^{\frac{d-1}{2}} \right),\quad \mathbb{E}_2G_j=O \left( \delta^{\frac{d-1}{2}} \right),
  \end{aligned}
\end{equation*}
thus
\begin{equation*}
  \left| X_j \right|\leq  \tilde{C}\epsilon^{\frac{d}{2}}\delta^{d-1}+\beta \left( \epsilon^d\delta^{d-1}+\epsilon^{\frac{d}{2}}\delta^{d-1} \right)
\end{equation*}
where $\tilde{C}$ is some positive constant depending on the pointwise bounds of $K$, $p$, and $f$. Since we will be mostly interested in small $\beta>0$, let us pick $\beta = O \left( \epsilon^2+\epsilon\delta+\delta^2 \right)$ and rewrite the upper bound as
\begin{equation}
\label{eq:constant_C}
  \left| X_j \right|\leq C\epsilon^{\frac{d}{2}}\delta^{d-1}, \quad C=C \left(\left\|K\right\|_{\infty}, \left\|f\right\|_{\infty}, p_m, p_M\right)>0.
\end{equation}
We then need to bound $\mathbb{E}_1X_j^2$. Since
\begin{align*}
    \mathbb{E}_1 &X_j^2=\left[\mathbb{E}_1\left(\mathbb{E}_2F_j\right)^2\right]\left(\mathbb{E}_1\mathbb{E}_2G\right)^2+\left[\mathbb{E}_1\left(\mathbb{E}_2G_j\right)^2\right]\left(\mathbb{E}_1\mathbb{E}_2F\right)^2\\
    &-2 \mathbb{E}_1\left[\left( \mathbb{E}_2F_j \right)\left( \mathbb{E}_2G_j \right)\right]\left( \mathbb{E}_1\mathbb{E}_2F \right)\left( \mathbb{E}_1\mathbb{E}_2G \right)+\beta^2 \left( \mathbb{E}_1\mathbb{E}_2G \right)^2 \left[ \mathbb{E}_1 \left(\mathbb{E}_2G\right)^2-\left( \mathbb{E}_1\mathbb{E}_2G \right)^2 \right]\\
    &+2\beta\left(\mathbb{E}_1\mathbb{E}_2G\right) \left[ \mathbb{E}_1 \left( \mathbb{E}_2G \right)^2\mathbb{E}_1\mathbb{E}_2F-\left(\mathbb{E}_1\mathbb{E}_2G\right)\mathbb{E}_1 \left( \mathbb{E}_2F_j\mathbb{E}_2G_j \right) \right],
\end{align*}
it suffices to compute the first and second moments of $\mathbb{E}_2F_j$, $\mathbb{E}_2G_j$ for $1\leq j\leq N_B$. By~\eqref{eq:numerator_taylor_expansion_utm_hat},
\begin{align*}
  \mathbb{E}_1\mathbb{E}_2F&=\epsilon^{\frac{d}{2}}\delta^{\frac{d-1}{2}}\bigg\{m_0\left[fp\right] \left( x_{i,r} \right)+\epsilon \frac{m_{21}}{2}\left( \Delta_{UT\!M}^H\left[fp\right]\left( x_{i,r} \right)+E_1 \left( \xi_i \right)\left[ fp\right]\left( x_{i,r} \right) \right)\\
   &+\delta \frac{m_{22}}{2}\left( \Delta_{UT\!M}^V \left[ fp\right]\left( x_{i,r} \right)+E_2\cdot\left[fp\right]\left( x_{i,r} \right)\right) +O \left( \epsilon^2+\epsilon\delta+\delta^2 \right) \bigg\},\\
   \mathbb{E}_1\mathbb{E}_2G&=\epsilon^{\frac{d}{2}}\delta^{\frac{d-1}{2}}\bigg\{m_0p \left( x_{i,r} \right)+\epsilon \frac{m_{21}}{2}\left( \Delta_{UT\!M}^Hp\left( x_{i,r} \right)+E_1 \left( \xi_i \right)p\left( x_{i,r} \right) \right)\\
   &+\delta \frac{m_{22}}{2}\left( \Delta_{UT\!M}^Vp\left( x_{i,r} \right)+E_2 p\left( x_{i,r} \right)\right) +O \left( \epsilon^2+\epsilon\delta+\delta^2 \right) \bigg\}.
\end{align*}
Using the notation and applying Lemma~\ref{lem:euclidean_kernel_integral} onc,e
\begin{equation*}
  p \left( v\mid x \right)=\frac{p \left( x,v \right)}{\langle p\rangle\left( x \right)}=\frac{p \left( x,v \right)}{\langle p\rangle\circ\pi \left( x,v \right)}=\left[ \frac{p}{\langle p\rangle\circ\pi} \right]\left( x,v \right).
\end{equation*}
we have
\begin{align*}
  &\mathbb{E}_2F_j=\delta^{\frac{d-1}{2}}\bigg\{ M_0 \left( \frac{\left\|\xi_i-\xi_j\right\|^2}{\epsilon} \right)\left[ \frac{fp}{\langle p\rangle\circ\pi} \right]\left(P_{\xi_j,\xi_i}x_{i,r} \right)\\
  &+\frac{\delta}{2}M_2 \left( \frac{\left\|\xi_i-\xi_j\right\|^2}{\epsilon} \right)\left( \Delta_{UT\!M}^V \left[ \frac{fp}{\langle p\rangle\circ\pi} \right]+E_2\cdot\left[ \frac{fp}{\langle p\rangle\circ\pi} \right] \right)\left(P_{\xi_j,\xi_i}x_{i,r} \right)+O \left( \delta^2 \right) \bigg\},
\end{align*}
where $M_0 \left( \cdot \right)$, $M_2 \left( \cdot \right)$ are functions depending only on the kernel $K$, as in the proof of Lemma~\ref{lem:warped_kernel_integral}. By a direct computation using Lemma~\ref{lem:euclidean_kernel_integral},
\begin{align*}
  &\mathbb{E}_1\left( \mathbb{E}_2F_j \right)^2\\
  &=\epsilon^{\frac{d}{2}}\delta^{d-1}\bigg\{m'_0\left[ \frac{\left(fp\right)^2}{\langle p\rangle\circ\pi} \right]\left( x_{i,r} \right)+\epsilon \frac{m_{21}'}{2} \left( \Delta_{UT\!M}^H\left[ \frac{\left(fp\right)^2}{\langle p\rangle\circ\pi} \right]\left( x_{i,r} \right)+E_1 \left( \xi_i \right)\left[ \frac{\left(fp\right)^2}{\langle p\rangle\circ\pi} \right]\left( x_{i,r} \right) \right)\\
  &+\delta m_{22}' \left[ fp \right] \left( x_{i,r} \right)\left( \Delta_{UT\!M}^V \left[ \frac{fp}{\langle p\rangle\circ\pi} \right]\left( x_{i,r} \right)+E_2\cdot\left[ \frac{fp}{\langle p\rangle\circ\pi} \right]\left( x_{i,r} \right) \right)+O \left( \epsilon^2+\epsilon\delta+\delta^2 \right)\bigg\},
\end{align*}
where $m_0'$, $m_{21}'$, $m_{22}'$ are positive constants determined by the kernel function $K$ and dimension $d$:
\begin{align*}
    m_0'&=\int_{B_1^d \left( 0 \right)}M_0^2 \left( r^2 \right)ds^1\cdots ds^d,\quad r^2=\sum_{k=1}^d\left(s^k\right)^2,\\
    m_{21}'&=\int_{B_1^d \left( 0 \right)}M_0 \left( r^2 \right)\left( s^1 \right)^2ds^1\cdots ds^d,\quad m_{22}'=\int_{B_1^d \left( 0 \right)}M_0 \left( r^2 \right)M_2 \left( r^2 \right)ds^1\cdots ds^d.
\end{align*}
Setting $f\equiv 1$,
\begin{align*}
  &\mathbb{E}_1\left( \mathbb{E}_2G_j \right)^2\\
  &=\epsilon^{\frac{d}{2}}\delta^{d-1}\bigg\{m'_0\left[ \frac{p^2}{\langle p\rangle\circ\pi} \right]\left( x_{i,r} \right)+\epsilon \frac{m_{21}'}{2} \left( \Delta_{UT\!M}^H\left[ \frac{p^2}{\langle p\rangle\circ\pi} \right]\left( x_{i,r} \right)+E_1 \left( \xi_i \right)\left[ \frac{p^2}{\langle p\rangle\circ\pi} \right]\left( x_{i,r} \right) \right)\\
  &+\delta m_{22}' p \left( x_{i,r} \right) \left(\Delta_{UT\!M}^V \left[ \frac{p}{\langle p\rangle\circ\pi} \right]\left( x_{i,r} \right)+E_2\cdot\left[ \frac{p}{\langle p\rangle\circ\pi} \right]\left( x_{i,r} \right) \right)+O \left( \epsilon^2+\epsilon\delta+\delta^2 \right)\bigg\}.
\end{align*}
Similarly,
\begin{align*}
  &\mathbb{E}_1 \left[ \left(\mathbb{E}_2F_j\right)\left(\mathbb{E}_2G_j\right) \right]=\epsilon^{\frac{d}{2}}\delta^{d-1}\bigg\{m'_0 \left[ \frac{fp^2}{\langle p\rangle\circ\pi}\right]\left( x_{i,r} \right)\\
  &+\epsilon \frac{m_{21}'}{2} \left( \Delta_{UT\!M}^H\left[ \frac{fp^2}{\langle p\rangle\circ\pi} \right]\left( x_{i,r} \right)+E_1 \left( \xi_i \right)\left[ \frac{fp^2}{\langle p\rangle\circ\pi} \right]\right)\\
  &+\delta \frac{m_{22}'}{2} \bigg( p\left( x_{i,r} \right) \Delta_{UT\!M}^V \left[ \frac{fp}{\langle p\rangle\circ\pi} \right]\left( x_{i,r} \right)+\left[fp\right]\left( x_{i,r} \right) \Delta_{UT\!M}^V \left[ \frac{p}{\langle p\rangle\circ\pi} \right]\left( x_{i,r} \right)\\
  &+2E_2\cdot \left[ \frac{fp^2}{\langle p\rangle\circ\pi} \right]\left( x_{i,r} \right) \bigg)+O \left( \epsilon^2+\epsilon\delta+\delta^2 \right)\bigg\}.
\end{align*}
Take $\beta=O \left( \epsilon^2+\epsilon\delta+\delta^2 \right)$ so that $O \left( \beta \right)$ and $O \left( \beta^2 \right)$ terms are absorbed into
\begin{align*}
  O \left[\epsilon^{\frac{3d}{2}}\delta^{2 \left( d-1 \right)}\left( \epsilon^2+\epsilon\delta+\delta^2 \right)\right].
\end{align*}
Direct computation using
\begin{align*}
  \Delta_{UT\!M}^H \left( f^2g \right)+f^2\Delta_{UT\!M}^Hg-2f\Delta_{UT\!M}^H \left( fg \right)=2\left\|\nabla^H_{UT\!M}f\right\|^2g
\end{align*}
gives
\begin{align*}
  \mathbb{E}_1X_j^2&=\left[\mathbb{E}_1\left(\mathbb{E}_2F_j\right)^2\right]\left(\mathbb{E}_1\mathbb{E}_2G\right)^2+\left[\mathbb{E}_1\left(\mathbb{E}_2G_j\right)^2\right]\left(\mathbb{E}_1\mathbb{E}_2F\right)^2\\
   &\qquad-2 \mathbb{E}_1\left[\left( \mathbb{E}_2F_j \right)\left( \mathbb{E}_2G_j \right)\right]\left( \mathbb{E}_1\mathbb{E}_2F \right)\left( \mathbb{E}_1\mathbb{E}_2G \right)+O \left( \epsilon^2+\delta^2 \right)\\
   &=\epsilon^{\frac{3d}{2}}\delta^{2 \left( d-1 \right)}\bigg\{\epsilon m_0^2m_{21}'\left[ \frac{p^4}{\langle p\rangle\circ\pi} \right]\left( x_{i,r} \right)\left\|\nabla_{UT\!M}^H f \right\|^2 \left( x_{i,r} \right)+O \left( \epsilon^2+\epsilon\delta+\delta^2 \right)\bigg\}\\
   &\leq \epsilon^{\frac{3d}{2}}\delta^{2 \left( d-1 \right)}\left( C'\epsilon+O \left( \epsilon^2+\epsilon\delta+\delta^2 \right)\right)
\end{align*}
where
\begin{equation*}
  C'=\frac{m_0^2m_{21}'p_M^4\left\|\nabla_{UT\!M}^Hf\right\|_{\infty}^2}{\omega_{d-1}^4p_m}>0.
\end{equation*}
Note that $O \left( \delta \right)$ terms do not show up in this bound, intuitively because $X_j=\mathbb{E}_2Y_{j,s}$ is already the expectation along the fibre direction, which ``freezes'' the variability controlled by the fibrewise bandwidth $\delta$.

It remains to bound
\begin{equation*}
  \mathbb{E}_2^{\xi_j}Y_j^2-\left(\mathbb{E}_2^{\xi_j}Y_j\right)^2
\end{equation*}
for each $1\leq j\leq N_B$. Since we picked $\beta= O \left( \epsilon^2+\epsilon\delta+\delta^2 \right)$,
\begin{align*}
    \left| Y_{j,s} \right| = \left| F_{j,s}\mathbb{E}_1\mathbb{E}_2G-G_{j,s}\mathbb{E}_1\mathbb{E}_2F+\beta \left( \mathbb{E}_1\mathbb{E}_2G-G_{j,s} \right)\mathbb{E}_1\mathbb{E}_2G \right|\leq C\epsilon^{\frac{d}{2}}\delta^{\frac{d-1}{2}}
\end{align*}
where
\begin{equation*}
  C = C \left( \left\|K\right\|_{\infty},\left\|f\right\|_{\infty},p_m,p_M \right)
\end{equation*}
is a positive constant. Again taking advantage of $\beta=O \left( \epsilon^2+\epsilon\delta+\delta^2 \right)$, we have
\begin{equation*}
  \begin{aligned}
    \mathbb{E}_2Y_j^2-\left(\mathbb{E}_2Y_j\right)^2&=\left[ \mathbb{E}_2F_{j,s}^2- \left( \mathbb{E}_2F_j \right)^2 \right]\left(\mathbb{E}_1\mathbb{E}_2G\right)^2+\left[ \mathbb{E}_2G_{j,s}^2- \left( \mathbb{E}_2G_j \right)^2 \right]\left(\mathbb{E}_1\mathbb{E}_2F\right)^2\\
    &\quad+2 \left[ \left( \mathbb{E}_2F_j \right)\left( \mathbb{E}_2G_j \right)-\mathbb{E}_2 \left( F_{j,s}G_{j,s} \right) \right])\left(\mathbb{E}_1\mathbb{E}_2F\right)\left(\mathbb{E}_1\mathbb{E}_2G\right)\\
    &\quad+O \left[\epsilon^d\delta^{2 \left( d-1 \right)} \left( \epsilon^2+\epsilon\delta+\delta^2 \right)\right].
  \end{aligned}
\end{equation*}
Note that
\begin{equation*}
  \mathbb{E}_2F_{j,s}^2=O \left( \delta^{\frac{d-1}{2}} \right),\quad \mathbb{E}_2G_{j,s}^2=O \left( \delta^{\frac{d-1}{2}} \right),\quad \mathbb{E}_2 \left[ F_{j,s}G_{j,s} \right]=O \left( \delta^{\frac{d-1}{2}} \right),
\end{equation*}
but
\begin{equation*}
  \left(\mathbb{E}_2F_{j,s}\right)^2=O \left( \delta^{d-1} \right),\quad\left(\mathbb{E}_2G_{j,s}\right)^2=O \left( \delta^{d-1} \right),\quad\left( \mathbb{E}_2F_j \right)\left( \mathbb{E}_2G_j \right)=O \left( \delta^{d-1} \right),
\end{equation*}
the leading order error term in $\mathbb{E}_2Y_j^2-\left(\mathbb{E}_2Y_j\right)^2$ is
\begin{equation*}
  \left(\mathbb{E}_2F_{j,s}^2\right)\left(\mathbb{E}_1\mathbb{E}_2G\right)^2+\left(\mathbb{E}_2G_{j,s}^2\right)\left(\mathbb{E}_1\mathbb{E}_2F\right)^2-2\mathbb{E}_2 \left( F_{j,s}G_{j,s} \right)\left(\mathbb{E}_1\mathbb{E}_2F\right)\left(\mathbb{E}_1\mathbb{E}_2G\right).
\end{equation*}
By Lemma~\ref{lem:euclidean_kernel_integral},
\begin{align*}
    &\mathbb{E}_2F_{j,s}^2=\delta^{\frac{d-1}{2}}\bigg\{\widetilde{M}_0\left[\frac{f^2p}{\langle p\rangle\circ\pi}\right]\left( P_{\xi_j,\xi_i}x_{i,r} \right)\\
    &+\frac{\delta}{2}\widetilde{M}_2\bigg( \Delta_{UT\!M}^V \left[ \frac{f^2p}{\langle p\rangle\circ\pi} \right]\left(P_{\xi_j,\xi_i}x_{i,r}\right)+E_2\cdot\left[ \frac{f^2p}{\langle p\rangle\circ\pi} \right] \bigg)\left( P_{\xi_j,\xi_i}x_{i,r} \right)+O \left( \delta^2 \right)  \bigg\}.
\end{align*}
Similarly,
\begin{align*}
   \mathbb{E}_2G_{j,s}^2=&\delta^{\frac{d-1}{2}}\bigg\{\widetilde{M}_0\left[\frac{p}{\langle p \rangle\circ\pi}\right]\left( P_{\xi_j,\xi_i}x_{i,r} \right)\\
   &+\frac{\delta}{2}\widetilde{M}_2\bigg( \Delta_{UT\!M}^V \left[ \frac{p}{\langle p\rangle\circ\pi} \right]+E_2\cdot\left( \frac{p}{\langle p \rangle\circ\pi} \right) \bigg)\left( P_{\xi_j,\xi_i}x_{i,r} \right)+O \left( \delta^2 \right)  \bigg\},\\
   \mathbb{E}_2\left(F_{j,s}G_{j,s}\right)&=\delta^{\frac{d-1}{2}}\bigg\{\widetilde{M}_0\left[\frac{fp}{\langle p \rangle\circ\pi}\right]\left( P_{\xi_j,\xi_i}x_{i,r} \right)\\
   &+\frac{\delta}{2}\widetilde{M}_2\bigg( \Delta_{UT\!M}^V \left[ \frac{fp}{\langle p \rangle\circ\pi} \right]+E_2\cdot\left[ \frac{fp}{\langle p \rangle\circ\pi} \right] \bigg)\left( P_{\xi_j,\xi_i}x_{i,r} \right)+O \left( \delta^2 \right)  \bigg\}.\\
\end{align*}
Since the kernel $K$ is compactly supported and $f$ is Lipschitz ($UT\!M$ compact), the difference $ f \left( P_{\xi_j,\xi_i}x_{i,r} \right)-f \left( x_{i,r} \right)$ is of order $O \left(d_M \left( \xi_j,\xi_i \right)  \right)=O \left( \epsilon^{\frac{1}{2}} \right)$. Thus
\begin{equation*}
  \begin{aligned}
    &\left| \left(\mathbb{E}_2F_{j,s}^2\right)\left(\mathbb{E}_1\mathbb{E}_2G\right)^2+\left(\mathbb{E}_2G_{j,s}^2\right)\left(\mathbb{E}_1\mathbb{E}_2F\right)^2-2\mathbb{E}_2 \left( F_{j,s}G_{j,s} \right)\left(\mathbb{E}_1\mathbb{E}_2F\right)\left(\mathbb{E}_1\mathbb{E}_2G\right) \right|\\
    &\leq \epsilon^d\delta^{\frac{3 \left( d-1 \right)}{2}} \left( C'\epsilon+C''\delta \right),\quad C'>0,C''>0
  \end{aligned}
\end{equation*}
and
\begin{equation*}
  \mathbb{E}_2^{\xi_j}Y_j^2-\left(\mathbb{E}_2^{\xi_j}Y_j\right)^2=O \left( \epsilon^d\delta^{\frac{3 \left( d-1 \right)}{2}}\left( \epsilon+\delta \right) \right).
\end{equation*}
If we let $C''_1,C''_2$ be constants such that
\begin{equation*}
  C_1''\epsilon^{\frac{d}{2}}\delta^{\frac{d-1}{2}}\leq \left|\mathbb{E}_1\mathbb{E}_2G\right| \leq C_2''\epsilon^{\frac{d}{2}}\delta^{\frac{d-1}{2}},
\end{equation*}
then for any $\theta\in\left( 0,1 \right)$, by $\beta=O \left( \epsilon^2+\epsilon\delta+\delta^2 \right)$ and Lemma~\ref{lem:large_deviation_bound},
\begin{equation}
\label{eq:special_case_bound}
\begin{aligned}
  &p \left( N_B,N_F,\beta \right)\leq\\
   & N_B\exp \left\{ -\frac{\left(1-\theta\right)^2N_F\epsilon^d\delta^{\frac{d-1}{2}}\beta^2}{C_1 \left( \epsilon+\delta \right)+O\left(\epsilon^{\frac{d}{2}}\left( \epsilon^2+\delta^2 \right)\right)} \right\}+\exp \left\{ -\frac{\theta^2N_B\epsilon^{\frac{d}{2}}\beta^2}{C_2\epsilon+O \left( \epsilon^2+\delta^2 \right)} \right\}.
\end{aligned}
\end{equation}
As pointed out in Remark~\ref{rem:insight_large_deviation_bound}, the second term in this bound is the sampling error on the base manifold; the noise error resulted from this term is of the order
\begin{equation*}
  O\left[\left( N_B\epsilon^{\frac{d}{2}-1} \right)^{-\frac{1}{2}}\right]=O \left(N^{-\frac{1}{2}}_B\epsilon^{\frac{1}{2}-\frac{d}{4}} \right),
\end{equation*}
which is in accordance with the convergence rate obtained in \cite{Singer2006ConvergenceRate}. The first term in the bound reflects the accumulated fibrewise sampling error and grows linearly with respect to the number of fibres sampled, but can be reduced as one increases $N_F$ accordingly (which has an effect of reducing fibrewise sampling errors). The choice of $\theta$ is important: as $\theta$ increases from $0$ to $1$, the first term in the bound decreases but the second term increases. One may wish to pick an ``optimal'' $\theta\in \left( 0,1 \right)$, but this does not make sense unless one chooses $\epsilon,\delta,N_F$ appropriately so as to make the sum of the two terms smaller than $1$. Let us consider $\theta_{*}\in \left( 0,1 \right)$ satisfying
\begin{equation}
\label{eq:seraching_for_theta_star}
  \left(1-\theta_{*}\right)^2N_F\epsilon^d\delta^{\frac{d-1}{2}}=\theta_{*}^2N_B\epsilon^{\frac{d}{2}},
\end{equation}
or equivalently
\begin{equation}
\label{eq:theta_star}
  \epsilon^{\frac{d}{4}}\delta^{\frac{d-1}{4}}\sqrt{\frac{N_F}{N_B}}=\frac{\theta_{*}}{1-\theta_{*}}\Leftrightarrow\theta_{*}=\frac{\displaystyle \epsilon^{\frac{d}{4}}\delta^{\frac{d-1}{4}}\sqrt{\frac{N_F}{N_B}}}{\displaystyle 1+\epsilon^{\frac{d}{4}}\delta^{\frac{d-1}{4}}\sqrt{\frac{N_F}{N_B}}}.
\end{equation}
Setting $\theta=\theta_{*}$ in \eqref{eq:special_case_bound}, we have for some $C>0$
\begin{equation}
\label{eq:bound_with_theta_star}
  \begin{aligned}
    p \left( N_B,N_F,\beta \right)& \leq \left( N_B+1 \right)\exp \left\{ -\frac{\theta_{*}^2N_B\epsilon^{\frac{d}{2}}\beta^2}{C \left( \epsilon+\delta \right)} \right\}=\exp \left(-\frac{\theta_{*}^2N_B\epsilon^{\frac{d}{2}}\beta^2}{C \left( \epsilon+\delta \right)}+\log \left( N_B+1 \right) \right),
  \end{aligned}
\end{equation}
Since
\begin{equation*}
  \lim_{N_B\rightarrow\infty}\frac{N_B}{\log N_B}=\infty,
\end{equation*}
for any fixed $\epsilon,\delta$ we have $p \left( N_B,N_F,\beta \right)\rightarrow0$ as $N_B\rightarrow\infty$, as long as one increases $N_F$ accordingly so as to prevent $\theta_{*}$ from approaching $0$ or $1$; for instance, this is the case if the assumption \eqref{item:42} in Theorem~{\rm \ref{thm:utm_finite_sampling_noiseless}} is satisfied:
\begin{equation}
\label{eq:balance_sampling_number}
  \lim_{N_B\rightarrow\infty\atop N_F\rightarrow\infty}\frac{N_F}{N_B}=\beta\in \left( 0,\infty \right).
\end{equation}
This completes the proof for the pointwise convergence of $\hat{H}_{\epsilon,\delta}^{\left( 0 \right)}f$ in probability.

We now turn to the general case $\alpha\neq 0$. Recall that
\begin{equation*}
  \begin{aligned}
    \hat{H}_{\epsilon,\delta}^{\left( \alpha \right)} f \left( x_{i,r} \right)&=\frac{\displaystyle\sum_{j=1}^{N_B}\sum_{s=1}^{N_F}\hat{K}^{\left(\alpha\right)}_{\epsilon,\delta}\left( x_{i,r},x_{j,s} \right)f \left( x_{j,s} \right)}{\displaystyle\sum_{j=1}^{N_B}\sum_{s=1}^{N_F}\hat{K}^{\left(\alpha\right)}_{\epsilon,\delta}\left( x_{i,r},x_{j,s} \right)}=\frac{\displaystyle\sum_{j=1}^{N_B}\sum_{s=1}^{N_F}\frac{\hat{K}_{\epsilon,\delta}\left( x_{i,r},x_{j,s} \right)f \left( x_{j,s} \right)}{\hat{p}^{\alpha}_{\epsilon,\delta}\left( x_{i,r} \right)\hat{p}^{\alpha}_{\epsilon,\delta}\left( x_{j,s} \right)}}{\displaystyle\sum_{j=1}^{N_B}\sum_{s=1}^{N_F}\frac{\hat{K}_{\epsilon,\delta}\left( x_{i,r},x_{j,s} \right)}{\hat{p}^{\alpha}_{\epsilon,\delta}\left( x_{i,r} \right)\hat{p}^{\alpha}_{\epsilon,\delta}\left( x_{j,s} \right)}}
  \end{aligned}
\end{equation*}
where
\begin{equation*}
  \begin{aligned}
    \hat{p}\left( x_{j,s} \right)=\sum_{k=1}^{N_B}\sum_{t=1}^{N_F}\hat{K}_{\epsilon,\delta}\left( x_{j,s},x_{k,t} \right).
  \end{aligned}
\end{equation*}
By the law of large numbers,
\begin{equation*}
  \begin{aligned}
    \lim_{N_B\rightarrow\infty}\frac{1}{N_B}\lim_{N_F\rightarrow\infty}\frac{1}{N_F}\hat{p}\left( x_{j,s} \right)&=\int_{UT\!M}\tilde{K}_{\epsilon,\delta}\left( x_{i,r},\eta \right)p \left( \eta \right)d\Theta \left( \eta \right)\\
     &=\tilde{p} \left( x_{i,r} \right)=\mathbb{E}_1\mathbb{E}_2\left[\tilde{K}_{\epsilon,\delta}\left( x_{j,s},\cdot \right)\right].
  \end{aligned}
\end{equation*}
Therefore, as $N_B\rightarrow\infty,N_F\rightarrow\infty$, we expect $\hat{H}_{\epsilon,\delta}^{\left( \alpha \right)} f \left( x_{i,r} \right)$ to converge to
\begin{equation*}
  \begin{aligned}
    &\frac{\displaystyle\int_{UT\!M}\tilde{K}^{\left(\alpha\right)}_{\epsilon,\delta} \left( x_{i,r},\eta \right)f \left( \eta \right)p \left( \eta \right)d\Theta \left( y,w \right)}{\displaystyle\int_{UT\!M}\tilde{K}^{\left(\alpha\right)}_{\epsilon,\delta} \left( x_{i,r},\eta \right)p \left( \eta \right)d\Theta \left( y,w \right)}=\tilde{H}_{\epsilon,\delta}^{\left(\alpha\right)}f \left( x_{i,r} \right)\\
    =&f \left( x_{i,r} \right)+\epsilon \frac{m_{21}}{2m_0}\left(\frac{\Delta_{UT\!M}^H\left[fp^{1-\alpha}\right]\left( x_{i,r} \right)}{p^{1-\alpha}\left( x_{i,r} \right)}-f \left( x_{i,r} \right) \frac{\Delta_{UT\!M}^Hp^{1-\alpha}\left( x_{i,r} \right)}{p^{1-\alpha}\left( x_{i,r} \right)}  \right)\\
    &+\delta \frac{m_{22}}{2m_0}\left(\frac{\Delta_{UT\!M}^V\left[fp^{1-\alpha}\right]\left( x_{i,r} \right)}{p^{1-\alpha}\left( x_{i,r} \right)}-f \left( x_{i,r} \right) \frac{\Delta_{UT\!M}^Vp^{1-\alpha}\left( x_{i,r} \right)}{p^{1-\alpha}\left( x_{i,r} \right)}  \right)+O \left( \epsilon^2+\epsilon\delta+\delta^2 \right),
  \end{aligned}
\end{equation*}
which gives the same bias error $O \left( \epsilon^2+\epsilon\delta+\delta^2 \right)$ as in the $\alpha=0$ case.

It remains to estimate the variance error. By
\begin{equation*}
  \begin{aligned}
    \hat{H}_{\epsilon,\delta}^\alpha f \left( x_{i,r} \right)=\frac{\displaystyle\sum_{j=1}^{N_B}\sum_{s=1}^{N_F}\hat{K}_{\epsilon,\delta}\left( x_{i,r},x_{j,s} \right)\hat{p}^{-\alpha}_{\epsilon,\delta}\left( x_{j,s} \right)f \left( x_{j,s} \right)}{\displaystyle\sum_{j=1}^{N_B}\sum_{s=1}^{N_F}\hat{K}_{\epsilon,\delta}\left( x_{i,r},x_{j,s} \right)\hat{p}^{-\alpha}_{\epsilon,\delta}\left( x_{j,s} \right)},
  \end{aligned}
\end{equation*}
and
\begin{align*}
  &\frac{\displaystyle\sum_{j=1}^{N_B}\sum_{s=1}^{N_F}\hat{K}_{\epsilon,\delta}\left( x_{i,r},x_{j,s} \right)\hat{p}^{-\alpha}_{\epsilon,\delta}\left( x_{j,s} \right)f \left( x_{j,s} \right)}{\displaystyle\sum_{j=1}^{N_B}\sum_{s=1}^{N_F}\hat{K}_{\epsilon,\delta}\left( x_{i,r},x_{j,s} \right)\hat{p}^{-\alpha}_{\epsilon,\delta}\left( x_{j,s} \right)}-\frac{\displaystyle\sum_{j=1}^{N_B}\sum_{s=1}^{N_F}\hat{K}_{\epsilon,\delta}\left( x_{i,r},x_{j,s} \right)\tilde{p}^{-\alpha}_{\epsilon,\delta}\left( x_{j,s} \right)f \left( x_{j,s} \right)}{\displaystyle\sum_{j=1}^{N_B}\sum_{s=1}^{N_F}\hat{K}_{\epsilon,\delta}\left( x_{i,r},x_{j,s} \right)\tilde{p}^{-\alpha}_{\epsilon,\delta}\left( x_{j,s} \right)}\\
  =&\frac{\displaystyle\sum_{j=1}^{N_B}\sum_{s=1}^{N_F}\hat{K}_{\epsilon,\delta}\left( x_{i,r},x_{j,s} \right)\left[N_B^{\left(\alpha\right)}N^{\left(\alpha\right)}_F\hat{p}^{-\alpha}_{\epsilon,\delta}\left( x_{j,s} \right)-\tilde{p}^{-\alpha}_{\epsilon,\delta}\left( x_{j,s} \right)\right]f \left( x_{j,s} \right)}{\displaystyle\sum_{j=1}^{N_B}\sum_{s=1}^{N_F}\hat{K}_{\epsilon,\delta}\left( x_{i,r},x_{j,s} \right)N_B^{\alpha}N_F^{\alpha}\hat{p}^{-\alpha}_{\epsilon,\delta}\left( x_{j,s} \right)}\\
  &+\sum_{j=1}^{N_B}\sum_{s=1}^{N_F}\hat{K}_{\epsilon,\delta}\left( x_{i,r},x_{j,s} \right)\tilde{p}^{-\alpha}_{\epsilon,\delta}\left( x_{j,s} \right)f \left( x_{j,s} \right)\times\\
  &\quad\left[\frac{-\displaystyle \sum_{j=1}^{N_B}\sum_{s=1}^{N_F}\hat{K}_{\epsilon,\delta}\left( x_{i,r},x_{j,s} \right)\left[N_B^{\alpha}N_F^{\alpha}\hat{p}^{-\alpha}_{\epsilon,\delta}\left( x_{j,s} \right)-\tilde{p}^{-\alpha}_{\epsilon,\delta}\left( x_{j,s} \right)\right]}{\displaystyle\left(\sum_{j=1}^{N_B}\sum_{s=1}^{N_F}\hat{K}_{\epsilon,\delta}\left( x_{i,r},x_{j,s} \right)N_B^{\alpha}N_F^{\alpha}\hat{p}^{-\alpha}_{\epsilon,\delta}\left( x_{j,s} \right)\right)\left(\sum_{j=1}^{N_B}\sum_{s=1}^{N_F}\hat{K}_{\epsilon,\delta}\left( x_{i,r},x_{j,s} \right)\tilde{p}^{-\alpha}_{\epsilon,\delta}\left( x_{j,s} \right) \right)} \right]\\
  &=: \left( A \right)+ \left( B \right),
\end{align*}
thus if we estimate $\left( A \right)$, $\left( B \right)$ by controlling the error
\begin{equation*}
\left[N_B^{\alpha}N^{\alpha}_F\hat{p}^{-\alpha}_{\epsilon,\delta}\left( x_{j,s} \right)-\tilde{p}^{-\alpha}_{\epsilon,\delta}\left( x_{j,s} \right)\right]
\end{equation*}
then it suffices to estimate the variance error caused by
\begin{equation}
\label{eq:better_variance_error}
  \left[\sum_{j=1}^{N_B}\sum_{s=1}^{N_F}\frac{\hat{K}_{\epsilon,\delta}\left( x_{i,r},x_{j,s} \right)f \left( x_{j,s} \right)}{\tilde{p}^{\left(\alpha\right)}_{\epsilon,\delta}\left( x_{i,r} \right)\tilde{p}^{\left(\alpha\right)}_{\epsilon,\delta}\left( x_{j,s} \right)}\right]\left[\sum_{j=1}^{N_B}\sum_{s=1}^{N_F}\frac{\hat{K}_{\epsilon,\delta}\left( x_{i,r},x_{j,s} \right)}{\tilde{p}^{\left(\alpha\right)}_{\epsilon,\delta}\left( x_{i,r} \right)\tilde{p}^{\left(\alpha\right)}_{\epsilon,\delta}\left( x_{j,s} \right)}\right]^{-1}.
\end{equation}
Our previous proof for the special case $\alpha=0$ can then be applied to \eqref{eq:better_variance_error}: the only adjustment is to replace the kernel $\hat{K}_{\epsilon,\delta}\left( x,y \right)$ in that proof with the $\alpha$-normalized kernel
\begin{equation*}
  \frac{\tilde{K}_{\epsilon,\delta}\left( x,y \right)}{\tilde{p}_{\epsilon,\delta}^{\alpha}\left( x \right)\tilde{p}_{\epsilon,\delta}^{\alpha}\left( y \right)}.
\end{equation*}
We would like to estimate the tail probability
\begin{equation*}
  \mathbb{P}\left\{ \frac{1}{N_BN_F}\hat{p}_{\epsilon,\delta}\left( x_{j,s} \right)-\tilde{p}_{\epsilon,\delta}\left( x_{j,s} \right)>\beta \right\},
\end{equation*}
but since $\tilde{p}_{\epsilon,\delta}\left( x_{j,s} \right)=O \left( \epsilon^{\frac{d}{2}}\delta^{\frac{d-1}{2}} \right)$, it is not lower bounded away from $0$ as $\epsilon,\delta\rightarrow0$. We thus estimate the following tail probability instead:
\begin{equation*}
  \begin{aligned}
    q \left( N_B, N_F, \beta \right)&:=\mathbb{P}\left\{ \frac{1}{N_BN_F}\epsilon^{-\frac{d}{2}}\delta^{-\frac{d-1}{2}}\tilde{p}_{\epsilon,\delta}\left( x_{j,s} \right)-\epsilon^{-\frac{d}{2}}\delta^{-\frac{d-1}{2}}\tilde{p}_{\epsilon,\delta}\left( x_{j,s} \right)>\beta \right\}\\
    &=\mathbb{P}\left\{ \frac{1}{N_BN_F}\tilde{p}_{\epsilon,\delta}\left( x_{j,s} \right)-\tilde{p}_{\epsilon,\delta}\left( x_{j,s} \right)>\epsilon^{\frac{d}{2}}\delta^{\frac{d-1}{2}}\beta \right\},
  \end{aligned}
\end{equation*}
where
\begin{equation*}
  \begin{aligned}
    \hat{p}_{\epsilon,\delta}\left( x_{j,s} \right)=\sum_{k=1}^{N_B}\sum_{t=1}^{N_F}\hat{K}_{\epsilon,\delta}\left( x_{j,s},x_{k,t} \right),\quad\tilde{p}_{\epsilon,\delta}\left( x_{j,s} \right)=\mathbb{E}_1\mathbb{E}_2\left[\tilde{K}_{\epsilon,\delta}\left( x_{j,s},\cdot \right)\right].
  \end{aligned}
\end{equation*}
Noting that for some positive constant $C = C \left( \left\|K\right\|_{\infty},p_M,p_m,d \right)$
\begin{equation*}
  \begin{aligned}
    \left|\tilde{K}_{\epsilon,\delta}\left( x_{i,r},x_{j,s} \right)\right| &\leq \left\|K\right\|_{\infty},\quad \left|\mathbb{E}_2\left[\tilde{K}_{\epsilon,\delta}\left( x_{i,r},\cdot \right)\right]\right|\leq C\delta^{\frac{d-1}{2}},\\
    &\left|\mathbb{E}_1\mathbb{E}_2 \left[ \tilde{K}_{\epsilon,\delta}\left( x_{i,r},\cdot \right)\right]\right|\leq C\epsilon^{\frac{d}{2}}\delta^{\frac{d-1}{2}},
  \end{aligned}
\end{equation*}
and by direct computation
\begin{equation*}
  \mathbb{E}_2^{\xi_j}\left[\tilde{K}_{\epsilon,\delta}\left( x_{i,r},\cdot \right)\right]^2=O \left( \delta^{\frac{d-1}{2}} \right),\quad \mathbb{E}_1\left[\mathbb{E}_2\tilde{K}_{\epsilon,\delta}\left( x_{i,r},\cdot \right)\right]^2=O \left( \epsilon^{\frac{d}{2}}\delta^{d-1} \right),
\end{equation*}
Lemma~\ref{lem:large_deviation_bound} and $\beta=O \left( \epsilon^2+\epsilon\delta+\delta^2 \right)$ gives
\begin{align*}
  \begin{aligned}
    q \left( N_B,N_F,\beta \right)&\leq N_B\exp \left\{ -\frac{\left(1-\theta\right)^2N_F\epsilon^d\delta^{d-1}\beta^2}{2C_1\delta^{\frac{d-1}{2}}} \right\}+\exp \left\{ -\frac{\theta^2N_B\epsilon^d\delta^{d-1}\beta^2}{2C_1\epsilon^{\frac{d}{2}}\delta^{d-1}} \right\}\\
    &=N_B\exp \left\{ -\frac{\left(1-\theta\right)^2N_F\epsilon^d\delta^{\frac{d-1}{2}}\beta^2}{2C_1} \right\}+\exp \left\{ -\frac{\theta^2N_B\epsilon^{\frac{d}{2}}\beta^2}{2C_1} \right\}
  \end{aligned}
\end{align*}
for $C_1>0$ some constant. A simple union bound gives
\begin{equation}
\label{eq:density_uniform_bound}
  \begin{aligned}
    &\mathbb{P}\left(\bigcup_{j,s}\left\{\left|\frac{1}{N_BN_F}\hat{p}_{\epsilon,\delta}\left( x_{j,s} \right)-\tilde{p}_{\epsilon,\delta}\left( x_{j,s} \right) \right|>\epsilon^{\frac{d}{2}}\delta^{\frac{d-1}{2}}\beta\right\}\right)\\
    &\leq N_BN_F \left[ N_B\exp \left\{ -\frac{\left(1-\theta\right)^2N_F\epsilon^d\delta^{\frac{d-1}{2}}\beta^2}{2C_1} \right\}+\exp \left\{ -\frac{\theta^2N_B\epsilon^{\frac{d}{2}}\beta^2}{2C_1} \right\} \right]\\
    &=N_B \left( N_B+1 \right)N_F\exp \left\{ -\frac{\theta_{*}^2N_B\epsilon^{\frac{d}{2}}\beta^2}{2C_1} \right\}\quad\textrm{setting $\theta=\theta_{*}$ as in \eqref{eq:seraching_for_theta_star}.}
  \end{aligned}
\end{equation}
We are interested in seeing how this bound compares with the bound in \eqref{eq:bound_with_theta_star}. As $N_B,N_F\rightarrow\infty$, as long as \eqref{eq:balance_sampling_number} holds,
\begin{align*}
  &\frac{\displaystyle N_B \left( N_B+1 \right)N_F \exp \left\{ -\frac{\theta_{*}^2N_B\epsilon^{\frac{d}{2}}\beta^2}{2C_1} \right\}}{\displaystyle \left( N_B+1 \right)\exp \left\{ -\frac{\theta_{*}^2N_B\epsilon^{\frac{d}{2}}\beta^2}{C \left( \epsilon+\delta \right)} \right\}}\\
  =&N_B N_F\exp \left\{ -\theta_{*}^2N_B\epsilon^{\frac{d}{2}}\beta^2\left[\frac{1}{2C_1}-\frac{1}{C \left( \epsilon+\delta \right)}\right] \right\}\longrightarrow\infty\quad\textrm{for small $\epsilon,\delta$},
\end{align*}
thus the bound in \eqref{eq:bound_with_theta_star} is asymptotically negligible compared to the bound in \eqref{eq:density_uniform_bound}. This means that when $\alpha\neq 0$ the density estimation in general slows down the convergence rate by a factor $\left( \epsilon+\delta \right)^{\frac{1}{2}}$, which is consistent with the conclusion for standard diffusion maps on manifolds~\cite{HeinAudibertVonLuxburg2007,SingerWu2012VDM}. Therefore, for probability at least
\begin{equation*}
  1-N_B \left( N_B+1 \right)N_F\exp \left\{ -\frac{\theta_{*}^2N_B\epsilon^{\frac{d}{2}}\beta^2}{2C_1} \right\}
\end{equation*}
we have
\begin{equation*}
  \left|\frac{\displaystyle\sum_{j=1}^{N_B}\sum_{s=1}^{N_F}\frac{\displaystyle\hat{K}_{\epsilon,\delta}\left( x_{i,r},x_{j,s} \right)f \left( x_{j,s} \right)}{\tilde{p}^{\alpha}_{\epsilon,\delta}\left( x_{i,r} \right)\tilde{p}^{\alpha}_{\epsilon,\delta}\left( x_{j,s} \right)}}{\displaystyle \sum_{j=1}^{N_B}\sum_{s=1}^{N_F}\frac{\hat{K}_{\epsilon,\delta}\left( x_{i,r},x_{j,s} \right)\tilde{p}^{-\alpha}_{\epsilon,\delta}\left( x_{j,s} \right)}{\tilde{p}^{\alpha}_{\epsilon,\delta}\left( x_{i,r} \right)\tilde{p}^{\alpha}_{\epsilon,\delta}\left( x_{j,s} \right)}}-\tilde{H}^{\left(\alpha\right)}_{\epsilon,\delta}f \left( x_{i,r} \right)\right|\leq \beta
\end{equation*}
as well as
\begin{equation*}
  \left|\frac{1}{N_BN_F}\hat{p}_{\epsilon,\delta}\left( x_{j,s} \right)-\tilde{p}_{\epsilon,\delta}\left( x_{j,s} \right) \right|\leq \epsilon^{\frac{d}{2}}\delta^{\frac{d-1}{2}}\beta\quad\textrm{for all }1\leq j\leq N_B, 1\leq s\leq N_F.
\end{equation*}
Note that by our assumption
\begin{equation*}
  0<p_m\leq p \left( x,v \right)\leq p_M<\infty\quad\textrm{for all }\left( x,v \right)\in UT\!M
\end{equation*}
there exist constants $C_1,C_2$ such that
\begin{equation*}
  0<C_1 < \epsilon^{-\frac{d}{2}}\delta^{-\frac{d-1}{2}}\tilde{p}_{\epsilon,\delta}\left( x_{j,s} \right) < C_2<\infty.
\end{equation*}
For sufficiently small $\beta$, we also have
\begin{equation*}
  0<C_1 < \frac{1}{N_BN_F}\epsilon^{-\frac{d}{2}}\delta^{-\frac{d-1}{2}}\hat{p}_{\epsilon,\delta}\left( x_{j,s} \right) < C_2<\infty.
\end{equation*}
Thus
\begin{equation*}
  \left| N_BN_F\hat{p}_{\epsilon,\delta}^{-1}\left( x_{j,s} \right)-\tilde{p}_{\epsilon,\delta}^{-1}\left( x_{j,s} \right) \right|\leq \epsilon^{\frac{d}{2}}\delta^{\frac{d-1}{2}}\beta\cdot \frac{1}{C_1^2\epsilon^d\delta^{d-1}}=\frac{\beta}{C_1^2\epsilon^{\frac{d}{2}}\delta^{\frac{d-1}{2}}}.
\end{equation*}
and $\left( A \right)$, $\left( B \right)$ can be bounded as
\begin{align*}
  &\left|\left( A \right)\right|\leq C_2^{\left(\alpha\right)}\epsilon^{\frac{\alpha d}{2}}\delta^{\frac{\alpha \left( d-1 \right)}{2}}\left\|f\right\|_{\infty}\cdot \alpha \left(\frac{2}{C_2\epsilon^{\frac{d}{2}}\delta^{\frac{d-1}{2}}}\right)^{\alpha-1}\frac{\beta}{C_1^2\epsilon^{\frac{d}{2}}\delta^{\frac{d-1}{2}}}=\frac{2^{\alpha-1}\alpha C_2 \left\|f\right\|_{\infty}}{C_1^2}\beta,\\
  &\left|\left( B \right)\right|\leq \frac{C_2^{2\alpha}\epsilon^{\alpha d}\delta^{\alpha \left( d-1 \right)}}{C_1^{\left(\alpha\right)}\epsilon^{\frac{\alpha d}{2}}\delta^{\frac{\alpha \left( d-1 \right)}{2}}}\left\|f\right\|_{\infty}\cdot \alpha \left(\frac{2}{C_2\epsilon^{\frac{d}{2}}\delta^{\frac{d-1}{2}}}\right)^{\alpha-1}\frac{\beta}{C_1^2\epsilon^{\frac{d}{2}}\delta^{\frac{d-1}{2}}}=\frac{2^{\alpha-1}\alpha C_2^{\alpha+1} \left\|f\right\|_{\infty}}{C_1^{\alpha+2}}\beta.
\end{align*}
Since $C_1,C_2$ only depend on the kernel function $K$, the dimension $d$, and $p_m,p_M$, these bounds ensures that
\begin{equation*}
  \left| \hat{H}_{\epsilon,\delta}^{\left(\alpha\right)}f \left( x_{i,r} \right)-\tilde{H}_{\epsilon,\delta}^{\left(\alpha\right)}f \left( x_{i,r} \right) \right| < C\beta
\end{equation*}
with probability at least
\begin{equation*}
  1-N_B \left( N_B+1 \right)N_F\exp \left\{ -\frac{\theta_{*}^2N_B\epsilon^{\frac{d}{2}}\beta^2}{2C_1} \right\},
\end{equation*}
where constants $C,C_1$ only depend on the kernel function $K$, the dimension $d$, and $p_m,p_M$. This establishes the conclusion for all $\alpha\in \left[ 0,1 \right]$.
\end{proof}

\subsubsection{Sampling from Empirical Tangent Spaces}
\label{sec:proor-theor-noise}

The following two lemmas from \cite{SingerWu2012VDM} provide estimates for the error of approximating parallel-transports from local PCA. We adapted these lemmas to our notation; note that the statements are more compact than their original form since we assume $M$ is closed.

\begin{lemma}
\label{lem:local_PCA}
  Suppose $K_{\mathrm{PCA}}\in C^2 \left( \left[ 0,1 \right] \right)$. If $\epsilon_{\mathrm{PCA}}=O \left( N_B^{-\frac{2}{d+2}} \right)$, then, with high probability, the columns of the $D\times d$ matrix $O_i$ determined by local PCA form an orthonormal basis to a $d$-dimensional subspace of $\mathbb{R}^D$ that deviates from $\iota_{*}T_{x_i}M$ by $O \left( \epsilon_{\mathrm{PCA}}^{\frac{3}{2}} \right)$, in the following sense:
  \begin{equation}
    \label{eq:localPCAerror}
    \min_{O\in O \left( d \right)}\|O_i^{\top}\Theta_i-O\|_{\mathrm{HS}}=O \left( \epsilon_{\mathrm{PCA}}^{\frac{3}{2}} \right)=O \left( N_B^{-\frac{3}{d+2}} \right),
  \end{equation}
where $\Theta_i$ is a $D\times d$ matrix whose columns form an orthonormal basis to $\iota_{*}T_{x_i}M$. Let the minimizer if \eqref{eq:localPCAerror} be
\begin{equation}
  \label{eq:localPCAminimizer}
  \hat{O}_i=\argmin_{O\in O \left( d \right)}\|O_i^{\top}\Theta_i-O\|_{\mathrm{F}},
\end{equation}
and denote by $Q_i$ the $D\times d$ matrix
\begin{equation}
  \label{eq:localPCAbasis}
  Q_i:=\Theta_i\hat{O}_i^{\top},
\end{equation}
The columns of $Q_i$ form an orthonormal basis to $\iota_{*}T_{x_i}M$, and
\begin{equation}
  \label{eq:localPCAbasiserror}
  \|O_i-Q_i\|_{\mathrm{F}}=O \left( \epsilon_{\mathrm{PCA}} \right),
\end{equation}
where $\left\|\cdot\right\|_{\mathrm{F}}$ is the matrix Frobenius norm.
\end{lemma}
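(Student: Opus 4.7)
The plan is to follow the local PCA analysis developed in Singer-Wu \cite[Appendix B]{SingerWu2012VDM}, combining a deterministic expansion of the weighted covariance $X_jD_j^2X_j^\top$ with a matrix-concentration argument. Geometrically, for each neighbor $\xi_{j_k}$ lying within the geodesic ball of radius $\sqrt{\epsilon_{\mathrm{PCA}}}$ around $\xi_j$, I decompose the chord orthogonally as
\[
\xi_{j_k}-\xi_j = \Theta_j u_k + \Theta_j^{\perp}v_k
\]
in $\mathbb{R}^D = \iota_{*}T_{\xi_j}M \oplus (\iota_{*}T_{\xi_j}M)^{\perp}$. Lemma~\ref{lem:distance_expansion} together with a normal-coordinate Taylor expansion at $\xi_j$ yields $\|u_k\|=O(\sqrt{\epsilon_{\mathrm{PCA}}})$ and $\|v_k\|=O(\epsilon_{\mathrm{PCA}})$, the latter order coming from the second fundamental form of $\iota$. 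The relative size of tangential to normal components is therefore $O(\sqrt{\epsilon_{\mathrm{PCA}}})$, which drives every subsequent rate.

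First I would compute the expectation $\mathbb{E}[X_jD_j^2X_j^\top]$ block-by-block, writing
\[
\mathbb{E}[X_jD_j^2X_j^\top] = \Theta_j A \Theta_j^\top + \Theta_j B (\Theta_j^\perp)^\top + \Theta_j^\perp B^\top \Theta_j^\top + \Theta_j^\perp C (\Theta_j^\perp)^\top,
\]
and integrating in normal coordinates against the sampling density and the kernel $K_{\mathrm{PCA}}$. To leading order $A$ is a positive multiple of $\epsilon_{\mathrm{PCA}}\, I_d$ (radial symmetry of the kernel kills odd moments), while both $\|B\|_{\mathrm{op}}$ and $\|C\|_{\mathrm{op}}$ are higher-order in $\epsilon_{\mathrm{PCA}}$; after appropriate rescaling by $N_B\epsilon_{\mathrm{PCA}}^{d/2}$ the tangential block is separated from the remaining spectrum by a gap of order $\epsilon_{\mathrm{PCA}}$. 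The Davis-Kahan $\sin\Theta$ theorem, applied to the perturbation between the tangent-plane projector $\Theta_j\Theta_j^\top$ and the top-$d$ eigenprojector of $\mathbb{E}[X_jD_j^2X_j^\top]$, then converts the off-diagonal-norm-over-gap ratio into the principal-angle estimate which, in turn, yields~\eqref{eq:localPCAerror} for the population version.

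The main obstacle is passing from the population to the empirical covariance. Since only $O(N_B\epsilon_{\mathrm{PCA}}^{d/2})$ samples fall inside each neighborhood, a matrix Bernstein inequality applied to the sum of bounded rank-one terms $K_{\mathrm{PCA}}(\cdot)(\xi_{j_k}-\xi_j)(\xi_{j_k}-\xi_j)^\top$ gives, with high probability,
\[
\bigl\|X_jD_j^2X_j^\top - \mathbb{E}[X_jD_j^2X_j^\top]\bigr\|_{\mathrm{op}} = O\!\left(\sqrt{N_B^{-1}\epsilon_{\mathrm{PCA}}^{d/2+2}}\right).
\]
For Davis-Kahan to survive, this fluctuation must be dominated by the deterministic gap of order $N_B\epsilon_{\mathrm{PCA}}^{d/2+1}$, which forces $\epsilon_{\mathrm{PCA}}\gtrsim N_B^{-2/(d+2)}$, exactly the hypothesized scaling; choosing $\epsilon_{\mathrm{PCA}} = O(N_B^{-2/(d+2)})$ balances these bias and variance contributions and preserves the $O(\epsilon_{\mathrm{PCA}}^{3/2})$ Procrustes bound. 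Finally, \eqref{eq:localPCAbasiserror} follows algebraically: decomposing $O_j = Q_j(Q_j^\top O_j) + (I-Q_jQ_j^\top)O_j$ and using the Procrustes-optimal rotation $\hat O_j$, both the in-plane deviation $Q_j^\top O_j - \hat O_j^\top$ (which is a square root of $I - O_j^\top\Theta_j\Theta_j^\top O_j$, i.e.\ a sine rather than a $1-\cos$) and the normal component $(I-Q_jQ_j^\top)O_j$ inherit the weaker bound $O(\epsilon_{\mathrm{PCA}})$. Since this computation is carried out in full detail in \cite[Appendix B]{SingerWu2012VDM}, the cleanest presentation simply cites that reference after verifying that the present kernel regularity hypothesis $K_{\mathrm{PCA}}\in C^2([0,1])$ matches theirs.
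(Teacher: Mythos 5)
Your proposal is correct and takes essentially the same route as the paper: the paper's entire proof of Lemma~\ref{lem:local_PCA} is the single line ``See \cite[Lemma B.1]{SingerWu2012VDM},'' and what you have written is a sketch of how that cited lemma is itself established (tangential/normal chord decomposition, weighted covariance expansion, spectral gap plus matrix concentration, Davis--Kahan, and the Procrustes algebra for \eqref{eq:localPCAbasiserror}), ending with the same citation. Nothing further is needed.
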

\begin{proof}
  See \cite[Lemma B.1]{SingerWu2012VDM}.
\end{proof}

\begin{lemma}
\label{lem:approximate_parallel_transport}
  Consider points $x_i,x_j\in M$ such that the geodesic distance between them is $O \left( \epsilon^{\frac{1}{2}} \right)$. For $\epsilon_{\mathrm{PCA}}=O \left( N_B^{-\frac{2}{d+2}} \right)$, with high probability, $O_{ij}$ approximates $P_{x_i,x_j}$ in the following sense:
  \begin{equation}
    \label{eq:approximate_parallel_transport}
    O_{ij}\bar{X}_j=\left( \langle\iota_{*}P_{x_i,x_j}X \left( x_j \right),u_l \left( x_i \right)\rangle \right)_{l=1}^d+O \left( \epsilon_{\mathrm{PCA}}^{\frac{1}{2}}+\epsilon^{\frac{3}{2}} \right),\quad\textrm{for all }X\in \Gamma \left( M,TM \right),
  \end{equation}
where $\left\{ u_l \left( x_i \right) \right\}_{l=1}^d$ is an orthonormal set determined by local PCA, and
\begin{equation*}
  \bar{X}_i\equiv \left( \langle\iota_{*}X \left( x_i \right),u_l \left( x_i \right)\rangle \right)_{l=1}^d\in\mathbb{R}^d.
\end{equation*}
\end{lemma}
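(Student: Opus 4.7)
The plan is to adapt the argument from \cite[Lemmas~B.1--B.2]{SingerWu2012VDM}, which the preceding Lemma~\ref{lem:local_PCA} has already set up. Writing $O_{ij}=UV^{\top}$ for the orthogonal polar factor of the SVD $B_j^{\top}B_i=U\Sigma V^{\top}$, I would first use Lemma~\ref{lem:local_PCA} to replace the empirical PCA frames $B_i,B_j$ (whose columns are the $u_l(x_i)$ and $u_l(x_j)$) by the exact tangent frames $Q_i,Q_j$: since $\|B_i-Q_i\|_{\mathrm{F}},\|B_j-Q_j\|_{\mathrm{F}}=O(\epsilon_{\mathrm{PCA}})$ with high probability, we obtain
\begin{equation*}
B_j^{\top}B_i=Q_j^{\top}Q_i+E,\qquad \|E\|_{\mathrm{F}}=O(\epsilon_{\mathrm{PCA}}),
\end{equation*}
and by the Lipschitz stability of the orthogonal polar factor on a neighborhood of the identity, $O_{ij}$ differs from the polar factor $\widetilde{O}_{ij}$ of $Q_j^{\top}Q_i$ by at most $O(\epsilon_{\mathrm{PCA}}^{1/2})$ (the square root being the worst case when $Q_j^{\top}Q_i$ is only $O(\epsilon^{1/2})$ from the manifold of orthogonal matrices).

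Next I would compare $\widetilde{O}_{ij}$ with the true parallel transport $P_{x_i,x_j}$ expressed in the bases $Q_i,Q_j$. Let $\gamma:[0,r]\to M$ be the unit-speed geodesic from $x_j$ to $x_i$ with $r=d_M(x_i,x_j)=O(\epsilon^{1/2})$, and introduce geodesic normal coordinates at $x_j$. Parallel transport along $\gamma$ admits a Taylor expansion in $r$ whose leading departure from the identity is driven by the ambient second fundamental form $\Pi$ of $\iota$, and a direct computation shows
\begin{equation*}
Q_i^{\top}\iota_{*}P_{x_j,x_i}Q_j=\mathrm{Id}+rA_1+r^{2}A_2+O(r^{3}),
\end{equation*}
where $A_1$ is skew-symmetric (pure rotation) and $A_2$ splits into a symmetric piece (curvature of the embedding) and a skew piece (intrinsic holonomy). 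The extrinsic map $Q_j^{\top}Q_i$ has an expansion of the same form whose symmetric second-order piece coincides with that of the parallel transport to this order (both come from $\|\Pi(\dot\gamma,\cdot)\|^{2}$), while the skew components differ only at order $r^{3}$. Since polar decomposition projects out exactly the symmetric perturbation, $\widetilde{O}_{ij}$ equals the orthogonal part of the parallel transport up to $O(r^{3})=O(\epsilon^{3/2})$.

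Combining the two estimates with the triangle inequality gives
\begin{equation*}
\|O_{ij}-Q_i^{\top}\iota_{*}P_{x_i,x_j}Q_j\|_{\mathrm{F}}=O(\epsilon_{\mathrm{PCA}}^{1/2}+\epsilon^{3/2}),
\end{equation*}
and applying both sides to the coordinate vector $\bar{X}_j=(\langle\iota_{*}X(x_j),u_l(x_j)\rangle)_l$ (noting that the $u_l(x_i),u_l(x_j)$ differ from the columns of $Q_i,Q_j$ by an error already of the controlled order) yields \eqref{eq:approximate_parallel_transport}. The main obstacle is the middle step, namely the careful Taylor expansion showing that the ambient projection map $Q_j^{\top}Q_i$ and the intrinsic parallel transport agree up to a symmetric $O(\epsilon)$ perturbation coming from $\Pi$; this is where the polar decomposition buys the extra half-order, and where the Gauss--Codazzi--Mainardi identities enter to handle the third-order skew terms. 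Everything else is a fairly standard perturbation argument plus the high-probability success of local PCA from Lemma~\ref{lem:local_PCA}.
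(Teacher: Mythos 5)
The first thing to note is that the paper does not actually prove this lemma: its entire proof reads ``See \cite[Theorem B.2]{SingerWu2012VDM}'', so there is no in-paper argument to compare against, and your sketch should be judged as a reconstruction of that external proof. As such it follows the right skeleton: pass from the empirical PCA frames $B_i,B_j$ to exact tangent frames $Q_i,Q_j$ via Lemma~\ref{lem:local_PCA}, then compare the polar factor of $Q_j^{\top}Q_i$ with the parallel transport through a normal-coordinate Taylor expansion in which the second fundamental form contributes a purely symmetric second-order defect that the polar decomposition removes, leaving $O(\epsilon^{3/2})$ agreement. That is indeed the heart of the Singer--Wu argument, though you assert the expansion rather than carry it out, and carrying it out (together with the identification of which second-order pieces are symmetric versus skew) is where essentially all the work lives.

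The one step that is wrong as reasoned is your derivation of the $O(\epsilon_{\mathrm{PCA}}^{1/2})$ term. Because $\iota$ maps a geodesic through $x_j$ to a curve whose first-order deviation from $\iota_{*}T_{x_j}M$ is normal-valued, $Q_j^{\top}Q_i$ lies within $O(\epsilon)$ --- not $O(\epsilon^{1/2})$ --- of $O(d)$, so its smallest singular value is $1-O(\epsilon)$ and the polar-factor map is uniformly Lipschitz on the relevant neighborhood. A Lipschitz map cannot convert an $O(\epsilon_{\mathrm{PCA}})$ perturbation of the input into an $O(\epsilon_{\mathrm{PCA}}^{1/2})$ perturbation of the output; that loss would require H\"older-$\tfrac{1}{2}$ behaviour near a singular matrix, which is not the situation here. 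Your route therefore actually yields the stronger bound $O(\epsilon_{\mathrm{PCA}}+\epsilon^{3/2})$, which still implies the lemma as stated, so nothing false is concluded --- but the justification you give for the half-power is spurious, and it signals that you have not located the true source of the $\epsilon_{\mathrm{PCA}}^{1/2}$ in the quoted statement (in \cite{SingerWu2012VDM} it does not arise from polar-factor stability). If you intend this as a self-contained proof rather than a pointer to the literature, the explicit expansions of $Q_j^{\top}Q_i$ and of the parallel transport in the chosen frames, plus a correct accounting of how $\epsilon_{\mathrm{PCA}}$ enters, are the pieces that still need to be written down.
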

\begin{proof}
  See \cite[Theorem B.2]{SingerWu2012VDM}.
\end{proof}

\begin{proof}[Proof of Theorem{\rm ~\ref{thm:utm_finite_sampling_noise}}]
By Definition~\ref{defn:utm_finite_sampling_noise}~\eqref{item:40},
\begin{equation*}
  O_{ji}c_{i,r}=\frac{O_{ji}B_i^{\top}\overline{\tau}_{i,r}}{\left\|B_i^{\top}\overline{\tau}_{i,r}\right\|}.
\end{equation*}
By Lemma~\ref{lem:approximate_parallel_transport},
\begin{equation*}
  O_{ji}B_i^{\top}\overline{\tau}_{i,r}=B_j^{\top}\left( P_{\xi_j,\xi_i}\overline{\tau}_{i,r} \right)+O \left( \epsilon_{\mathrm{PCA}}^{\frac{1}{2}}+\epsilon^{\frac{3}{2}} \right),
\end{equation*}
thus
\begin{align*}
  \frac{O_{ji}B_i^{\top}\overline{\tau}_{i,r}}{\left\|B_i^{\top}\overline{\tau}_{i,r}\right\|}=\frac{B_j^{\top}\left( P_{\xi_j,\xi_i}\overline{\tau}_{i,r} \right)}{\left\|B_i^{\top}\overline{\tau}_{i,r}\right\|}+O \left( \epsilon_{\mathrm{PCA}}^{\frac{1}{2}}+\epsilon^{\frac{3}{2}} \right),
\end{align*}
where we used $\left\|B_j^{\top}\left( P_{\xi_j,\xi_i}\overline{\tau}_{i,r} \right)\right\|\leq \left\|P_{\xi_j,\xi_i}\overline{\tau}_{i,r}\right\|=1$ and
\begin{align*}
  \left| \left\|B_i^{\top}\overline{\tau}_{i,r}\right\|_{\mathrm{F}}-1 \right|&=\left| \left\|B_i^{\top}\overline{\tau}_{i,r}\right\|_{\mathrm{F}}-\left\|Q_i^{\top}\overline{\tau}_{i,r}\right\|_{\mathrm{F}}\right|\leq \left\|B_i^{\top}\overline{\tau}_{i,r}-Q_i^{\top}\overline{\tau}_{i,r}\right\|_{\mathrm{F}}\\
  &\leq \left\|B_i^{\top}-Q_i^{\top}\right\|_{\mathrm{F}}=O \left( \epsilon_{\mathrm{PCA}} \right).
\end{align*}
Thus
\begin{align*}
  O_{ji}c_{i,r}-c_{j,s}&=\frac{B_j^{\top}\left( P_{\xi_j,\xi_i}\overline{\tau}_{i,r} \right)}{\left\|B_i^{\top}\overline{\tau}_{i,r}\right\|}+O \left( \epsilon_{\mathrm{PCA}}^{\frac{1}{2}}+\epsilon^{\frac{3}{2}} \right)-\frac{B_j^{\top}\overline{\tau}_{j,s}}{\left\|B_j^{\top}\overline{\tau}_{j,s}\right\|}\\
  &=P_{\xi_j,\xi_i}\overline{\tau}_{i,r}-\overline{\tau}_{j,s}+O \left( \epsilon_{\mathrm{PCA}}^{\frac{1}{2}}+\epsilon^{\frac{3}{2}} \right),
\end{align*}
\begin{align*}
  \left|\left\|O_{ji}c_{i,r}-c_{j,s}\right\|^2-\left\|P_{\xi_j,\xi_i}\overline{\tau}_{i,r}-\overline{\tau}_{j,s}\right\|^2\right|=O \left( \epsilon_{\mathrm{PCA}}^{\frac{1}{2}}+\epsilon^{\frac{3}{2}} \right),
\end{align*}
and
\begin{align*}
  &K\left( \frac{\left\|\xi_i-\xi_j\right\|^2}{\epsilon},\frac{\left\|O_{ji}c_{i,r}-c_{j,s}\right\|^2}{\delta} \right)=K \left(\frac{\left\|\xi_i-\xi_j\right\|^2}{\epsilon},\frac{\left\|P_{\xi_j,\xi_i}\overline{\tau}_{i,r}-\overline{\tau}_{j,s}\right\|^2}{\delta} \right)\\
  &\qquad\qquad\qquad\qquad+\partial_2K\left(\frac{\left\|\xi_i-\xi_j\right\|^2}{\epsilon},\frac{\left\|P_{\xi_j,\xi_i}\overline{\tau}_{i,r}-\overline{\tau}_{j,s}\right\|^2}{\delta} \right)\cdot \frac{O \left( \epsilon_{\mathrm{PCA}}^{\frac{1}{2}}+\epsilon^{\frac{3}{2}} \right)}{\delta}.
\end{align*}
Thus for any function $g\in C^{\infty}\left( UT\!M \right)$ we have
\begin{align*}
  &\int_{UT\!M}\mathscr{K}_{\epsilon,\delta}\left( \overline{\tau}_{i,r},\eta \right)g \left( \eta \right)d\Theta \left( \eta \right)\\
  &=\int_{UT\!M}\tilde{K}_{\epsilon,\delta}\left( \overline{\tau}_{i,r},\eta \right)g \left( \eta \right)d\Theta \left( \eta \right)+\epsilon^{\frac{d}{2}}\delta^{\frac{d-1}{2}-1}O \left( \epsilon_{\mathrm{PCA}}^{\frac{1}{2}}+\epsilon^{\frac{3}{2}} \right).
\end{align*}
Following the notation used in the proof of Theorem~\ref{thm:utm_finite_sampling_noiseless}, by the law of large numbers
\begin{align*}
  &\lim_{N_B\rightarrow\infty}\lim_{N_F\rightarrow\infty}\frac{1}{N_BN_F}\hat{q}_{\epsilon,\delta}\left( \overline{\tau}_{i,r} \right)=\mathbb{E}_1\mathbb{E}_2\left[\mathscr{K}_{\epsilon,\delta}\left(\overline{\tau}_{i,r},\cdot \right)\right]\\
  &=\mathbb{E}_1\mathbb{E}_2\left[\tilde{K}_{\epsilon,\delta}\left(\overline{\tau}_{i,r},\cdot \right)\right]+\epsilon^{\frac{d}{2}}\delta^{\frac{d-1}{2}-1}O \left( \epsilon_{\mathrm{PCA}}^{\frac{1}{2}}+\epsilon^{\frac{3}{2}} \right),
\end{align*}
hence we expect $\mathscr{H}_{\epsilon,\delta}^{\left(\alpha\right)}f \left( \overline{\tau}_{i,r} \right)$ to converge to
\begin{align*}
    &\tilde{H}_{\epsilon,\delta}^{\left(\alpha\right)}f \left( \overline{\tau}_{i,r} \right)+O \left( \delta^{-1}\left( \epsilon_{\mathrm{PCA}}^{\frac{1}{2}}+\epsilon^{\frac{3}{2}} \right) \right)\\
    &= f \left( \overline{\tau}_{i,r} \right)+\epsilon \frac{m_{21}}{2m_0}\left[\frac{\Delta_{UT\!M}^H\left[fp^{1-\alpha}\right]\left( \overline{\tau}_{i,r} \right)}{p^{1-\alpha}\left( \overline{\tau}_{i,r} \right)}-f \left( \overline{\tau}_{i,r} \right) \frac{\Delta_{UT\!M}^Hp^{1-\alpha}\left( \overline{\tau}_{i,r} \right)}{p^{1-\alpha}\left( \overline{\tau}_{i,r} \right)}  \right]\\
    &\quad+\delta \frac{m_{22}}{2m_0}\left[\frac{\Delta_{UT\!M}^V\left[fp^{1-\alpha}\right]\left( \overline{\tau}_{i,r} \right)}{p^{1-\alpha}\left( \overline{\tau}_{i,r} \right)}-f \left( \overline{\tau}_{i,r} \right) \frac{\Delta_{UT\!M}^Vp^{1-\alpha}\left( \overline{\tau}_{i,r} \right)}{p^{1-\alpha}\left( \overline{\tau}_{i,r} \right)}  \right]\\
    &\quad+O \left( \epsilon^2+\epsilon\delta+\delta^2\right)+O \left( \delta^{-1}\left( \epsilon_{\mathrm{PCA}}^{\frac{1}{2}}+\epsilon^{\frac{3}{2}} \right) \right).
\end{align*}
In fact, noting that
\begin{align*}
  \frac{1}{\epsilon^{\frac{d}{2}}\delta^{\frac{d-1}{2}}N_BN_F}\hat{q}_{\epsilon,\delta}\left( \overline{\tau}_{i,r} \right)=\frac{1}{\epsilon^{\frac{d}{2}}\delta^{\frac{d-1}{2}}N_BN_F}\hat{p}_{\epsilon,\delta}\left( \overline{\tau}_{i,r},\overline{\tau}_{j,s} \right)+\frac{O \left( \delta^{-1}\left( \epsilon_{\mathrm{PCA}}^{\frac{1}{2}}+\epsilon^{\frac{3}{2}} \right) \right)}{\epsilon^{\frac{d}{2}}\delta^{\frac{d-1}{2}}},
\end{align*}
we have
\begin{align*}
  &\epsilon^{\alpha d}\delta^{\alpha\left(d-1\right)}N_B^{2\alpha}N_F^{2\alpha}\mathscr{K}_{\epsilon,\delta}^{\left(\alpha\right)} \left( \overline{\tau}_{i,r}, \overline{\tau}_{j,s} \right)\\
  &=\epsilon^{\alpha d}\delta^{\alpha\left(d-1\right)}N_B^{2\alpha}N_F^{2\alpha}K_{\epsilon,\delta}^{\left(\alpha\right)}\left( \overline{\tau}_{i,r},\overline{\tau}_{j,s} \right)+O \left( \delta^{-1}\left( \epsilon_{\mathrm{PCA}}^{\frac{1}{2}}+\epsilon^{\frac{3}{2}} \right) \right).
\end{align*}
Consequently,
\begin{align*}
  &\mathscr{H}_{\epsilon,\delta}^{\left(\alpha\right)}f \left( \overline{\tau}_{i,r} \right)=\frac{\displaystyle \sum_{j=1}^{N_B}\sum_{s=1}^{N_F}\tilde{K}_{\epsilon,\delta}^{\left(\alpha\right)} \left( \overline{\tau}_{i,r}, \overline{\tau}_{j,s}\right)f \left( \overline{\tau}_{j,s} \right)}{\displaystyle \sum_{j=1}^{N_B}\sum_{s=1}^{N_F}\tilde{K}_{\epsilon,\delta}^{\left(\alpha\right)} \left( \overline{\tau}_{i,r}, \overline{\tau}_{j,s}\right)}+O \left( \delta^{-1}\left( \epsilon_{\mathrm{PCA}}^{\frac{1}{2}}+\epsilon^{\frac{3}{2}} \right) \right)\\
  &=\hat{H}_{\epsilon,\delta}^{\left(\alpha\right)}f \left( \overline{\tau}_{i,r} \right)+O \left( \delta^{-1}\left( \epsilon_{\mathrm{PCA}}^{\frac{1}{2}}+\epsilon^{\frac{3}{2}} \right) \right).
\end{align*}
Under the assumption that
\begin{align*}
  \delta^{-1}\left( \epsilon_{\mathrm{PCA}}^{\frac{1}{2}}+\epsilon^{\frac{3}{2}} \right)\longrightarrow 0\quad \textrm{as $\epsilon\rightarrow0$,}
\end{align*}
we can apply Theorem~\ref{thm:utm_finite_sampling_noiseless}. This completes the proof of Theorem~\ref{thm:utm_finite_sampling_noise}.
\end{proof}

\bibliographystyle{alpha}
\bibliography{ref}

\end{document}